\DeclareSymbolFont{cyrletters}{OT2}{wncyr}{m}{n}
\DeclareMathSymbol{\Sha}{\mathalpha}{cyrletters}{"58}
\newcommand{\defi}[1]{\textsf{#1}} 
\newcommand{\Schubert}{\mathcal{S}}
\newcommand{\Aff}{{\mathbb A}}
\newcommand{\Abar}{\overline{A}}
\newcommand{\Bbar}{\overline{B}}
\newcommand{\Dbar}{\overline{D}}
\newcommand{\F}{{\mathbb F}}
\newcommand{\G}{{\mathbb G}}
\newcommand{\Nbar}{\overline{N}}
\newcommand{\Q}{{\mathbb Q}}
\newcommand{\R}{{\mathbb R}}
\newcommand{\Z}{{\mathbb Z}}
\newcommand{\Vbar}{{\overline{V}}}
\newcommand{\Wbar}{{\overline{W}}}
\newcommand{\Zhat}{{\widehat{\Z}}}
\newcommand{\Zbar}{{\overline{Z}}}
\newcommand{\ksep}{{k^{\operatorname{sep}}}}
\newcommand{\Adeles}{{\mathbf A}}
\newcommand{\mm}{{\mathfrak m}}
\newcommand{\calA}{{\mathcal A}}
\newcommand{\calO}{{\mathcal O}}
\newcommand{\calZ}{{\mathcal Z}}
\newcommand{\Adist}{{\mathscr A}}
\newcommand{\EE}{{\mathscr E}}
\newcommand{\GG}{{\mathscr G}}
\newcommand{\Qdist}{{\mathscr Q}}
\newcommand{\sequence}{{\mathscr S}}
\newcommand{\Tdist}{{\mathscr T}}
\DeclareMathOperator{\Graph}{graph}
\DeclareMathOperator{\alt}{alt}
\DeclareMathOperator{\parity}{parity}
\DeclareMathOperator{\cyc}{cyc}
\DeclareMathOperator{\even}{even}
\DeclareMathOperator{\odd}{odd}
\DeclareMathOperator{\diag}{diag}
\DeclareMathOperator{\Surj}{Surj}
\DeclareMathOperator{\Prob}{Prob}
\DeclareMathOperator{\sat}{sat}
\DeclareMathOperator{\Stab}{Stab}
\DeclareMathOperator{\coker}{coker}
\DeclareMathOperator{\rk}{rk}
\DeclareMathOperator{\Char}{char}
\DeclareMathOperator{\res}{res}
\DeclareMathOperator{\im}{im}
\DeclareMathOperator{\Hom}{Hom}
\DeclareMathOperator{\Aut}{Aut}
\DeclareMathOperator{\AUT}{\bf Aut}
\DeclareMathOperator{\Gal}{Gal}
\DeclareMathOperator{\Gr}{Gr}
\DeclareMathOperator{\OGr}{OGr}
\DeclareMathOperator{\Sel}{Sel}
\DeclareMathOperator{\Seq}{Seq}
\DeclareMathOperator{\Spec}{Spec}
\DeclareMathOperator{\Frac}{Frac}
\newcommand{\tors}{{\operatorname{tors}}}
\newcommand{\tH}{{\operatorname{th}}}
\newcommand{\HH}{{\operatorname{H}}}
\newcommand{\Orthogonal}{\operatorname{O}}
\newcommand{\GL}{\operatorname{GL}}
\newcommand{\SL}{\operatorname{SL}}
\newcommand{\Sp}{\operatorname{Sp}}
\newcommand{\PGL}{\operatorname{PGL}}
\newcommand{\To}{\longrightarrow}
\newcommand{\injects}{\hookrightarrow}
\newcommand{\isom}{\simeq}
\newcommand{\Intersection}{\bigcap} 
\newcommand{\intersect}{\cap} 
\newcommand{\Union}{\bigcup} 
\newcommand{\tensor}{\otimes} 
\newcommand{\directsum}{\oplus} 
\newcommand{\Directsum}{\bigoplus} 
\newcommand{\isomto}{\overset{\sim}{\rightarrow}}
\newcommand{\stratum}{V}
\newcommand{\Zpstratum}{\calA}
\newtheorem{theorem}{Theorem}[section]
\newtheorem{lemma}[theorem]{Lemma}
\newtheorem{corollary}[theorem]{Corollary}
\newtheorem{proposition}[theorem]{Proposition}
\theoremstyle{definition}
\newtheorem{question}[theorem]{Question}
\newtheorem{conjecture}[theorem]{Conjecture}
\theoremstyle{remark}
\newtheorem{remark}[theorem]{Remark}
\begin{document}

\title{Modeling the distribution of ranks, Selmer groups, \\and Shafarevich--Tate groups of elliptic curves}
\subjclass[2010]{Primary 11G05; Secondary 11E08, 14G25}
\keywords{Selmer group, Shafarevich-Tate group, rank, maximal isotropic, quadratic space, Weil pairing}
\author{Manjul Bhargava}
\address{Department of Mathematics, Princeton University, Princeton, NJ 08544, USA}
\email{bhargava@math.princeton.edu}

\author{Daniel M. Kane}
\address{Department of Mathematics, Stanford University, Stanford, CA 94305, USA}
\email{dankane@math.stanford.edu}
\urladdr{http://math.stanford.edu/~dankane/}

\author{Hendrik W. Lenstra jr.}
\address{Mathematisch Instituut, Universiteit Leiden, Postbus 9512, 2300 RA Leiden, The Netherlands}
\email{hwl@math.leidenuniv.nl}
\urladdr{http://www.math.leidenuniv.nl/~hwl/}

\author{Bjorn Poonen}
\address{Department of Mathematics, Massachusetts Institute of Technology, Cambridge, MA 02139-4307, USA}
\email{poonen@math.mit.edu}
\urladdr{http://math.mit.edu/~poonen/}

\author{Eric Rains}
\address{Department of Mathematics, California Institute of Technology, Pasadena, CA 91125}
\email{rains@caltech.edu}

\thanks{M.B.\ was supported by the National Science Foundation grant DMS-1001828.  D.K.\ was supported by a National Science Foundation Graduate Fellowship.  B.P.\ was supported by the Guggenheim Foundation and National Science Foundation grants DMS-0841321 and DMS-1069236.}

\date{August 5, 2013}

\begin{abstract}
Using maximal isotropic submodules in a quadratic module over $\Z_p$, 
we prove the existence of a natural discrete probability
distribution on the set of isomorphism classes of short exact sequences
of co-finite type $\Z_p$-modules,
and then conjecture that as $E$ varies over elliptic curves
over a fixed global field $k$, the distribution of
\[
   0 \to E(k) \tensor \Q_p/\Z_p \to \Sel_{p^\infty} E \to \Sha[p^\infty] \to 0
\]
is that one.
We show that this single conjecture would explain many of the 
known theorems and conjectures on 
ranks, Selmer groups, and Shafarevich--Tate groups of elliptic curves.
We also prove the existence of a discrete probability distribution
of the set of isomorphism classes of finite abelian $p$-groups equipped
with a nondegenerate alternating pairing, 
defined in terms of the cokernel of a random alternating matrix over $\Z_p$,
and we prove that the two probability distributions are compatible
with each other and with Delaunay's predicted distribution for $\Sha$.
Finally, we prove new theorems on the fppf cohomology of elliptic curves
in order to give further evidence for our conjecture.
\end{abstract}

\maketitle

\section{Introduction}\label{S:introduction}

\subsection{Selmer and Shafarevich--Tate groups}
\label{S:Selmer and Sha}

Fix a global field $k$.
Let $\Omega$ be the set of nontrivial places of $k$.
Let $\EE$ be the set of elliptic curves over $k$,
or more precisely, 
a set containing one representative of each isomorphism class.
Given $E \in \EE$ and a positive integer $n$,
the \defi{$n$-Selmer group} $\Sel_n E$ is a finite group that is used
to bound the rank of the finitely generated abelian group $E(k)$.
If $n$ is a product of prime powers $p^e$,
then $\Sel_n E$ is the direct sum of the $\Sel_{p^e}$,
so we focus on the latter groups.
If $p$ is prime, one may also form the direct limit
$\Sel_{p^\infty} E \colonequals \varinjlim \Sel_{p^e} E$.
This group, together with the $p$-primary subgroup of 
the \defi{Shafarevich--Tate group} $\Sha = \Sha(E) \colonequals 
\ker\left( \HH^1(k,E) \to \prod_{v \in \Omega} \HH^1(k_v,E) \right)$,
fits into an exact sequence
\begin{equation}
  \label{E:Seq_E}
\tag{$\textup{Seq}_E$}  
	0 \To E(k) \tensor \frac{\Q_p}{\Z_p} \To \Sel_{p^\infty} E 
	\To \Sha[p^\infty] \To 0
\end{equation}
of $\Z_p$-modules.

\begin{question}
Given a short exact sequence $\sequence$ of $\Z_p$-modules,
what is the probability that $\Seq_E \isom \sequence$
as $E$ varies over $\EE$, ordered by height?
\end{question}

Our goal is to formulate a conjectural answer
and to prove that it would imply
many of the known theorems and conjectures
on ranks, Selmer groups, and Shafarevich--Tate groups
of elliptic curves.
For example, it would imply that asymptotically 
50\% of elliptic curves over $k$ have rank~$0$,
50\% have rank~$1$, and 
0\% have rank $2$ or more: 
see Section~\ref{S:rank}.

\subsection{Intersection of random maximal isotropic \texorpdfstring{$\Z_p$}{Zp}-modules}
\label{S:intersection}

Let $n \in \Z_{\ge 0}$.
Equip $V \colonequals \Z_p^{2n}$ 
with the standard hyperbolic quadratic form $Q \colon V \to \Z_p$
given by
\begin{equation}
  \label{E:hyperbolic form}
	Q(x_1,\ldots,x_n,y_1,\ldots,y_n) \colonequals \sum_{i=1}^n x_i y_i.
\end{equation}
A $\Z_p$-submodule $Z$ of $V$ is called \defi{isotropic} if $Q|_Z=0$.
Let $\OGr_V(\Z_p)$ be the set of maximal isotropic direct summands $Z$ of $V$;
each such $Z$ is free of rank~$n$.
There is a natural probability measure on $\OGr_V(\Z_p)$, defined so that 
for each $e \ge 0$, the
distribution of $Z/p^e Z$ in $V/p^e V$ is uniform among all possibilities
(see Sections \ref{S:measures} and~\ref{S:orthogonal Grassmannian}).
Choose $Z,W \in \OGr_V(\Z_p)$ independently at random.
View $Z \tensor \frac{\Q_p}{\Z_p}$ and $W \tensor \frac{\Q_p}{\Z_p}$
as $\Z_p$-submodules of $V \tensor \frac{\Q_p}{\Z_p}$,
where the tensor products are over $\Z_p$.
Define
\[
	R \colonequals (Z \intersect W) \tensor \frac{\Q_p}{\Z_p}
	\quad \text{and} \quad
	S \colonequals \left( Z \tensor \frac{\Q_p}{\Z_p} \right)
		\intersect \left( W \tensor \frac{\Q_p}{\Z_p} \right)
\]
and define $T$ to complete an exact sequence
\[
	0 \to R \to S \to T \to 0.
\]
Then $R$ is a finite power of $\Q_p/\Z_p$,
the module $T$ is finite, and the sequence splits: 
see Section~\ref{S:RST}.
In particular, each of the $\Z_p$-modules $R$, $S$, $T$ is of co-finite type.
(A $\Z_p$-module $M$ is of \defi{co-finite type} 
if its Pontryagin dual is finitely generated over $\Z_p$,
or equivalently if $M$ is isomorphic
to $(\Q_p/\Z_p)^s \directsum F$ for some $s \in \Z_{\ge 0}$
and finite abelian $p$-group $F$.)

\begin{theorem}
\label{T:limit of RST distribution}
\hfill
\begin{enumerate}[\upshape (a)]
\item 
As $Z$ and $W$ vary, the sequence $0 \to R \to S \to T \to 0$
defines a discrete probability distribution $\Qdist_{2n}$
on the set of isomorphism classes of short exact sequences
of co-finite type $\Z_p$-modules.
\item
The distributions $\Qdist_{2n}$ converge 
to a discrete probability distribution $\Qdist$ as $n \to \infty$.
\end{enumerate}
\end{theorem}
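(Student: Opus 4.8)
The plan is to reduce everything to finite level, where the defining property of the measure on $\OGr_V(\Z_p)$ is available: for each $k$, the image of $Z$ in $V/p^kV$ is uniformly distributed over the finite set $\OGr_V(\Z/p^k)$ of maximal isotropic direct summands of $V/p^kV$, independently of the corresponding image of $W$. Granting that, part~(a) is essentially bookkeeping, and part~(b) comes down to two points: that the relevant finite-level quantities (equivalently, suitable moments) stabilize as $n\to\infty$, and that no probability mass escapes to infinity in the process.

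For part~(a), I would first invoke the structural results of Section~\ref{S:RST}: $Z\cap W$, being an intersection of two saturated submodules of the free module $V$, is itself saturated, hence free of some rank $r\le n$, so that $R\isom(\Q_p/\Z_p)^r$; the module $T$ is finite; and, $R$ being a power of the injective $\Z_p$-module $\Q_p/\Z_p$, the sequence $0\to R\to S\to T\to 0$ splits. Its isomorphism class is therefore determined by the pair $(r,[T])$, and the set of such pairs is countable, so the induced map to isomorphism classes has countable image. Each fibre is moreover a countable union of cylinder sets, since $(r,[T])$ is a function of $(Z\bmod p^k,\,W\bmod p^k)$ for all sufficiently large $k$, as one reads off from the description of $R$, $S$, $T$ in Section~\ref{S:RST}; hence the map is measurable and $\Qdist_{2n}$ is a well-defined discrete probability distribution.

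For part~(b), fix an isomorphism class $\sigma=(r,[T])$. By the above, $\Qdist_{2n}(\sigma)$ is determined by the joint law of the reductions $(Z\bmod p^k,\,W\bmod p^k)$ for $k$ large relative to $|T|$, i.e.\ by certain cardinalities in $\OGr_V(\Z/p^k)\times\OGr_V(\Z/p^k)$; these are conveniently packaged as moments, namely the expected numbers of homomorphisms between fixed finite $p$-groups and the modules $R$, $S$, $T$, chosen so as to record the rank $r$ as well as the isomorphism type of $T$. The crux is a stable-range lemma: each such moment, evaluated on $V=\Z_p^{2n}$, is independent of $n$ once $n\ge N$ for an $N$ depending only on the groups and the level $k$ involved. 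I would prove this using the transitivity properties of the orthogonal group acting on $\OGr_V(\Z/p^k)$, in parallel with (and partly reusing) the analogous stabilization at the $\F_p$ level for the ``Selmer'' model $Z\cap W$ of Poonen and Rains. Granting the lemma, each moment converges, and Möbius inversion over the finite abelian $p$-groups, together with the obvious inversion in the rank variable, shows that each $\Qdist_{2n}(\sigma)$ converges to a limit $\Qdist(\sigma)\ge 0$.

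It remains to show $\sum_\sigma\Qdist(\sigma)=1$; otherwise the $\Qdist_{2n}$ would converge merely to a sub-probability measure. This is a tightness statement, and since a set of the form $\{(r,[T]):r\le m,\ |T|\le p^m\}$ is finite, it suffices to bound, uniformly in $n$, the tails of $r$ and of $|T|$. For $r$: because $Z\cap W$ is saturated, $r=\dim_{\F_p}\bigl((Z\cap W)/p(Z\cap W)\bigr)\le\dim_{\F_p}\bigl((Z\bmod p)\cap(W\bmod p)\bigr)$, and the intersection dimension of two independent uniformly random maximal isotropic subspaces of the hyperbolic space $\F_p^{2n}$ has a tail that is summable uniformly in $n$ (this is classical, and is what underlies the predicted equidistribution between ranks $0$ and $1$). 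For $|T|$: a large $T$ forces $Z+W$ to be far from saturated in $V$, a finite-level event whose probability decays geometrically in $\log_p|T|$ uniformly in $n$, by an estimate of the same flavour as the one for $r$. With both tails controlled the pointwise limits $\Qdist(\sigma)$ sum to $1$, so $\Qdist$ is a discrete probability distribution; note that the theorem asserts only existence, so no closed form for $\Qdist$ is required here. The main obstacle is exactly the uniformity in $n$ — the stable-range lemma together with the uniform tail bound on $|T|$ — since the raw finite-level counts in $\OGr_V(\Z/p^k)$ do depend on $n$, and only their ratios (the moments) become independent of it.
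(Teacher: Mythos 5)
Your overall architecture (split sequence determined by the pair $(r,[T])$, reduction to finite level, tail control) is reasonable, but it is not the paper's route --- the paper deduces Theorem~\ref{T:limit of RST distribution} in a few lines from Proposition~\ref{P:rank 0 or 1} and Corollary~\ref{C:splits}, reducing $\Qdist_{2n}$ to the conditional distributions $\Tdist_{2n,r}$ ($r=0,1$), and it establishes existence and convergence of those by identifying them with the alternating-matrix distributions $\Adist_{n,r}$, where everything is computed explicitly (Theorems \ref{T:u=0} and~\ref{T:singular distribution}, Lemma~\ref{L:Igusa}) --- and, more importantly, your version has two genuine gaps. The first is the step ``each moment converges, and M\"obius inversion \dots shows that each $\Qdist_{2n}(\sigma)$ converges.'' Convergence of the injection-moments is indeed provable (cf.\ Theorem~\ref{T:free m-tuples in Sel_q}; note they are not \emph{independent of $n$} in a stable range, they only converge, with limits $q^{m(m+1)/2}$), but passing from moments to point probabilities is not routine here: the inversion is an infinite alternating sum over isomorphism classes (the number of possible types of $Z/p^k Z\intersect W/p^k W$ is unbounded as $n\to\infty$), and with moments growing like $q^{m(m+1)/2}$ the interchange of the $n\to\infty$ limit with that sum --- equivalently, determinacy of this moment problem together with uniform integrability --- is a substantial theorem, not an ``obvious inversion.'' You would need quantitative tail bounds strong enough to dominate the M\"obius coefficients, and you have not indicated how to get them.

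The second gap is exactly the uniform tail bound on $\#T$ that you flag but dismiss as ``of the same flavour'' as the bound for $r$. It is not: large $\#T$ with small $p$-rank is a depth phenomenon invisible mod $p$, so the classical estimate on $\dim_{\F_p}(\Zbar\intersect\Wbar)$ says nothing about it; what is needed is a bound on $\Prob\bigl(\#(Z/p^kZ\intersect W/p^kW)\ge p^{kr+j}\bigr)$ uniform in both $n$ and $k$, and producing it is precisely where the work lies. In the paper this control is what the alternating-matrix model buys: after writing $Z$ (essentially) as a graph of an alternating $A$, one has $T\isom(\coker A)_\tors$, and uniform-in-$n$ control of $\#\coker A$ comes from explicit Haar-measure computations (e.g.\ Lemma~\ref{L:Igusa} bounds $\int|\det|^{-s}$ uniformly, and the Fatou/dominated-convergence argument in Theorem~\ref{T:u=0} gives total mass $1$). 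Finally, a smaller point on (a): your claim that each fibre is a countable union of cylinder sets (i.e.\ open) is false for the fibres with $r\ge 2$ (the rank can drop under arbitrarily small perturbations), and even on the rank-$\le 1$ locus the local constancy of $[T]$ is not something one ``reads off'' from Section~\ref{S:RST}; the paper only asserts local closedness and in fact proves measurability of $\Qdist_{2n}$ and $\Tdist_{2n,r}$ through the matrix parametrization in Section~\ref{S:comparison}. Measure-zero fibres aside, you still owe an argument there.
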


(Here $\Qdist$ is for ``quadratic''.)
Theorem~\ref{T:limit of RST distribution}
will be proved in Section~\ref{S:comparison}.

\subsection{The model}
\label{S:model}

Let $E \in \EE$, and let $r$ be the rank of $E(k)$.
Each term in $\Seq_E$ is a co-finite type $\Z_p$-module.
In fact, 
$E(k) \tensor \frac{\Q_p}{\Z_p} \isom \left(\frac{\Q_p}{\Z_p} \right)^r$
and $\Sha[p^\infty]$ is conjecturally finite.
Moreover, since $E(k) \tensor \frac{\Q_p}{\Z_p}$ is divisible,
the sequence~\eqref{E:Seq_E} splits.

\begin{conjecture}
\label{C:main}
Fix a global field $k$.
For each short exact sequence $\sequence$ of $\Z_p$-modules,
the density of $\{E \in \EE : \Seq_E \isom \sequence \}$
equals the $\Qdist$-probability of $\sequence$.
\end{conjecture}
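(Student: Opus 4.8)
Because $E(k) \tensor \frac{\Q_p}{\Z_p}$ is divisible, hence injective as a $\Z_p$-module, the sequence $\Seq_E$ always splits, and (by the cited facts of Section on $R,S,T$) so do the sequences $0 \to R \to S \to T \to 0$ underlying $\Qdist$. Thus Conjecture~\ref{C:main} is equivalent to the assertion that the joint distribution of the rank $r$ of $E(k)$ and the isomorphism class of $\Sha[p^\infty]$, as $E$ ranges over $\EE$ ordered by height, coincides with the joint distribution of $\dim_{\Q_p}(R \tensor \Q_p)$ and $T$ under $\Qdist$. The first part of the plan is to record this reduction and to isolate the finite-level content: since $\Sel_{p^\infty} E = \varinjlim_e \Sel_{p^e} E$ with $E(k)/p^e$ sitting inside $\Sel_{p^e}E$, and since the model outputs $R$, $S$, $T$ are determined by the reductions $Z/p^e Z$, $W/p^e W$ for $e$ large, the conjecture should follow from the family of statements — one for each $e$ — that the distribution of the compatible system $\bigl(E(k)/p^{e} \injects \Sel_{p^{e}} E\bigr)$, together with its transition maps, matches the distribution obtained by intersecting two independent uniformly random maximal isotropic direct summands of $(\Z/p^e)^{2N}$ and letting $N \to \infty$.

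The second part of the plan is to reduce each finite-level statement to a comparison of moments. The level-$p^e$ model distribution is, by its explicit construction, concentrated with rapidly decaying tails, so one expects it to be determined by its moments $\mathbb{E}\bigl[\#\Surj(\Sel_{p^e} E, G)\bigr]$ over finite abelian $p$-groups $G$ (and their refinements incorporating the nondegenerate alternating pairing, via the alternating-matrix model of the abstract); establishing such a determinacy statement is itself a step. One would then match these moments arithmetically using the geometry-of-numbers technology of Bhargava--Shankar: for $p \in \{2,3,5\}$ and small $e$ the averages $\mathbb{E}[\#\Sel_{p^e}E]$ are known and should be checked to agree with $\Qdist$ (and with the Poonen--Rains heuristic) as the first nontrivial case. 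A cleaner conceptual route, if it can be carried out, is to lift the Poonen--Rains realization of $\Sel_p E$ as an intersection $X \intersect Y$ of maximal isotropic subspaces of a quadratic $\F_p$-space to a $\Z_p$-level statement expressing the full $p$-adic Selmer package as $Z \intersect W$ for maximal isotropic direct summands $Z,W$ of a hyperbolic quadratic $\Z_p$-module, then prove that $Z$ equidistributes in $\OGr$ while $W$ behaves like an independent random element; equidistribution of $Z$ would deliver Conjecture~\ref{C:main} immediately via Theorem~\ref{T:limit of RST distribution}.

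The main obstacle is the arithmetic input, and it is a real one. Controlling the distribution — not merely the average — of $\Sel_{p^e} E$ across a family of elliptic curves is beyond current methods except for $n$-Selmer groups with $n \in \{2,3,4,5\}$, and even there only first moments (average sizes) are known, so already the case $p=2$, $e=2$ of the program above is open. The hoped-for $\Z_p$-refinement of the Poonen--Rains construction, and in particular the equidistribution of the maximal isotropic submodule $Z$ attached to $E$ — a statement about the image of a global Galois-cohomology construction, akin to a ``large image'' assertion for the mod-$p^e$ arithmetic of $E$ — appears to require ideas well beyond orbit counting. For these reasons the conjecture is left unproved here; the remaining sections instead establish the internal consistency of $\Qdist$ (Theorem~\ref{T:limit of RST distribution}), its compatibility with the alternating-matrix model and with Delaunay's heuristic for $\Sha$, and the new cohomological results on elliptic curves that make the conjecture plausible.
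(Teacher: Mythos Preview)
The statement is a \emph{conjecture}; the paper offers no proof, and you correctly acknowledge at the end of your proposal that it must be left unproved. Your discussion of possible strategies (reduction to finite level, moment matching, lifting the Poonen--Rains construction to $\Z_p$) and of the arithmetic obstacles is reasonable and broadly consistent with the paper's own perspective in Section~\ref{S:arithmetic justification}, though the paper does not lay out a proof plan as you do---it instead proves internal-consistency results (Theorems~\ref{T:limit of RST distribution}, \ref{T:T distribution}, \ref{T:A=T}) and partial arithmetic justifications (Proposition~\ref{P:Sha^1}, Corollary~\ref{C:adelic direct summand}, Conjecture~\ref{C:global H^1 is direct summand}) as evidence.
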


In other words, the sequence $0 \to R \to S \to T \to 0$ models $\Seq_E$.
In particular, $R$ conjecturally measures 
the rank of the group of rational points,
$S$ models the $p^\infty$-Selmer group,
and $T$ models the $p$-primary part of the Tate--Shafarevich group.

\begin{remark}
To define density of a subset of $\EE$ precisely, 
one orders $\EE$ by height as explained in the introductions to 
\cite{Bhargava-Shankar-preprint1} and \cite{Poonen-Rains2012-selmer}.
The reason for ordering $\EE$ by height 
is that most other orderings lead to statements that are
difficult to corroborate:
for instance, the asymptotic behavior of the number of
elliptic curves over $\Q$ of conductor up to $X$
is unknown, even when no condition on its Selmer sequence is imposed.)
\end{remark}

\begin{remark}
Certain restricted families of elliptic curves 
can exhibit very different Selmer group behavior.
The average size of $\Sel_2 E$ can even be infinite in certain families.
See \cite{Yu2005}, \cite{Xiong-Zaharescu2009}, 
\cite{Xiong-Zaharescu2008}, and \cite{Feng-Xiong2012} 
for work in this direction.
\end{remark}

We will prove that Conjecture~\ref{C:main} has the following consequences,
the first of which was mentioned already:
\begin{itemize}
\item 
Asymptotically, 50\% of elliptic curves over $k$ have rank~$0$,
and 50\% have rank~$1$; 
cf.~\cite{Goldfeld1979}*{Conjecture~B} 
and~\cites{Katz-Sarnak1999a,Katz-Sarnak1999b}.
\item 
$\Sha[p^\infty]$ is finite for 100\% of elliptic curves over $k$.
\item 
Conjecture~1.1(a) of \cite{Poonen-Rains2012-selmer} concerning
the distribution of $\Sel_p E$ holds.
In fact, our Conjecture~\ref{C:main} implies a generalization
concerning the distribution of $\Sel_{p^e} E$ for every $e \ge 0$ 
(see Section~\ref{S:p^e}).
These consequences are consistent with the partial results that
have been proved: see the introduction of \cite{Poonen-Rains2012-selmer} 
for discussion.
\item 
Delaunay's conjecture in \cites{Delaunay2001,Delaunay2007,Delaunay-Jouhet-preprint} \`a la Cohen--Lenstra 
regarding the distribution of $\Sha[p^\infty]$
for rank $r$ elliptic curves over $\Q$ 
holds for $r=0$ and $r=1$.\footnote{For this conjecture,
see \cite{Delaunay2001}*{Heuristic Assumption}, 
with the modification that $u/2$ is replaced by $u$,
as suggested by the $u=1$ case discussed in~\cite{Delaunay2007}*{\S3.2} 
(his $u$ is our $r$);
see also \cite{Poonen-Rains2012-selmer}*{Section~6} and \cite{Delaunay-Jouhet-preprint}*{Section~6.2}.
Strictly speaking, in order to have our model match Delaunay's conjecture,
we modify his conjecture to order elliptic curves over $\Q$ by height
instead of conductor.}
\end{itemize}

For $r \ge 2$,
Conjecture~\ref{C:main} cannot say anything
about the distribution of $\Sha[p^\infty]$ 
as $E$ varies over the set $\EE_r \colonequals \{E \in \EE: \rk E(k) = r\}$,
because the locus of $(Z,W) \in \OGr_V(\Z_p)^2$
where $\rk(Z \intersect W)=r$ 
is of measure $0$ (Proposition~\ref{P:rank 0 or 1}).
On the other hand, that locus carries another natural probability measure, 
so we may formulate a variant of Theorem~\ref{T:limit of RST distribution}:

\begin{theorem}
\label{T:T distribution}
\hfill
\begin{enumerate}[\upshape (a)]
\item 
If we choose $(Z,W)$ at random from the locus in $\OGr_V(\Z_p)^2$
where $\rk(Z \intersect W)=r$,
then the isomorphism type of $T$ 
is given by a discrete probability distribution $\Tdist_{2n,r}$.
\item
The distributions $\Tdist_{2n,r}$ converge to a limit $\Tdist_r$
as $n \to \infty$.
\item 
The distribution $\Tdist_r$ is the same as the distribution
in Delaunay's conjecture on $\Sha[p^\infty]$ for rank $r$ elliptic curves
over $\Q$.
\end{enumerate}
\end{theorem}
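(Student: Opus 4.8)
The plan is to reduce Theorem~\ref{T:T distribution} to known facts about cokernels of random alternating matrices over $\Z_p$, whose comparison with Delaunay's heuristic is handled separately (via the distribution $\Sdist$ of such cokernels). First, by the analysis of Section~\ref{S:RST}, $T$ is canonically the torsion subgroup of $V/(Z+W)$, a finite abelian $p$-group; and because $Q$ is $\Z_p$-valued while $Z$ and $W$ are isotropic, the linking form that $T$ inherits from $Q$ is a nondegenerate \emph{alternating} pairing. Thus the construction really produces the pair $(T,\text{pairing})$, an object of the category of finite abelian $p$-groups with nondegenerate alternating pairing --- the category in which Delaunay's distribution is most cleanly phrased --- and I would carry this pairing along throughout; it sharpens the statement and makes the final comparison routine.

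Next I would set up the matrix model. Fix a maximal isotropic direct summand $L\subseteq V$ and a complement $L'$, so $Q$ identifies $L'$ with $\Hom(L,\Z_p)$ and the maximal isotropic direct summands admitting $L'$ as a complement are exactly the graphs $Z_A\colonequals\{x+Ax:x\in L\}$ of the alternating $n\times n$ matrices $A$ over $\Z_p$. An elementary computation gives $Z_A\intersect Z_C\isom\ker(A-C)$ and $V/(Z_A+Z_C)\isom\coker(A-C)$, compatibly with the linking pairings, so on this chart $T\isom\coker(A-C)$ while $\rk(Z\intersect W)$ equals the $\Q_p$-corank of $A-C$. The natural measure on $\OGr_V(\Z_p)$ is $\Orthogonal_V(\Z_p)$-invariant, and the translations $A\mapsto A+N$ (for $N$ alternating) are induced by elements of $\Orthogonal_V(\Z_p)$ fixing $L$ and $L'$; hence the measure transported to this chart is Haar, and --- $Z$ and $W$ being independent --- $M\colonequals A-C$ is a Haar-random alternating $n\times n$ matrix over $\Z_p$. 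Charts of this type, with the reference Lagrangians taken in the relevant connected component of $\OGr_V$, together cover $\OGr_V(\Z_p)^2$, and cover each locus $\{\rk(Z\intersect W)=r\}$, up to sets of measure zero.

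This yields part (a): for $r\le1$ the locus $\{\rk(Z\intersect W)=r\}$ has positive measure in $\OGr_V(\Z_p)^2$ (Proposition~\ref{P:rank 0 or 1}), so $\Tdist_{2n,r}$ is literally the conditional law of $T$ from Theorem~\ref{T:limit of RST distribution}; for $r\ge2$ the locus is cut out by the vanishing of Pfaffian minors and carries its natural (intrinsic) $p$-adic measure, which under the dictionary above is the natural measure on the locus of alternating matrices of $\Q_p$-corank $r$; in all cases $\coker M\isom\Z_p^{\,r}\directsum(\text{finite group})$ with the finite part a random variable on a countable set, so $\Tdist_{2n,r}$ is a discrete probability distribution. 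Parts (b) and (c) then reduce to the corresponding facts for alternating matrices: the law of the finite part of $\coker M$, with $M$ Haar-random alternating conditioned to have $\Q_p$-corank $r$, converges as $n\to\infty$ (proved alongside $\Sdist$ by computing the mixed moments $\mathbf E\,\#\Surj(\coker M,G)$ and their pairing-refined analogues and showing they stabilize in $n$), and in the alternating-matrix formulation Delaunay's predicted distribution for $\Sha[p^\infty]$ of rank-$r$ curves over $\Q$ \emph{is} precisely the law of this limit --- with the corrected normalization of the footnote it is the corank-$r$ conditioning (rather than any coarser one) that is forced. So (c) follows by combining (b) with the identification of $\Sdist$, conditioned on corank, with Delaunay's distribution.

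The main obstacle I expect is the measure-theoretic bookkeeping for $r\ge2$: pinning down the ``natural'' probability measure on the measure-zero locus $\{\rk(Z\intersect W)=r\}$, and verifying that the alternating-matrix charts transport it faithfully onto the natural measure on the corank-$r$ locus in $\mathrm{Alt}_n(\Z_p)$. This is delicate precisely because the answer must retain its dependence on $r$ (as Delaunay's does): a naive conditioning of the ambient measure on $\OGr_V(\Z_p)^2$ loses this, so the correct measure, and the Jacobian factors arising in its transport, have to be controlled carefully --- together with the interplay between $\rk(Z\intersect W)$, the parity of $n$, and the two connected components of $\OGr_V$. The moment computations underlying (b) and (c) are also substantial, but they coincide with those needed for the $\Sdist$/Delaunay comparison and can be quoted.
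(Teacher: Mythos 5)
Your reduction to random alternating matrices is the right target (it is exactly the paper's strategy via $\Adist_{n,r}$), but the chart argument you use to get there has a genuine gap. Fix a Lagrangian decomposition $V=L\directsum L'$ and represent $Z=\Graph(A)$, $W=\Graph(C)$ with $A,C$ alternating. Then $\rk(Z\intersect W)=\rk\ker(A-C)\equiv n\pmod 2$, and more intrinsically, if a single $L'$ is transverse (mod $p$) to both $\Zbar$ and $\Wbar$, then Corollary~\ref{C:sum of three intersection dimensions} forces $\dim(\Zbar\intersect\Wbar)\equiv n\pmod 2$. So your charts only ever contain pairs with $\rk(Z\intersect W)\equiv n\pmod 2$: they miss half of $\OGr_V(\Z_p)^2$ (not a measure-zero set), and they miss the \emph{entire} locus $\rk(Z\intersect W)=r$ whenever $r\not\equiv n\pmod 2$ — choosing reference Lagrangians ``in the relevant component'' cannot repair this, since the obstruction is the three-term parity relation itself. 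Even in the parity-compatible case there is a second problem: the chart events (both $\Zbar$ and $\Wbar$ transverse to $\bar{L'}$) have positive but not full measure and overlap, and equality of the conditional law of $T$ on each chart with the full-size alternating-matrix law does not imply the unconditional law equals it; indeed at finite $n$ the true law $\Tdist_{2n,r}$ is \emph{not} $\Adist_{n,r}$ but a mixture of $\Adist_{n-m,r}$ over the random integer $m=\dim(\Zbar\intersect\Lambda)$, with $n-m\equiv r\pmod2$.

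This is precisely what the paper's proof is built to handle: a general $Z$ is written as a hybrid $W_1^T\directsum\Graph(A)$ with $\rk W_1=m$, the resulting distribution is shown (using the transitivity statement of Lemma~\ref{L:ugly}) to coincide with the canonical measure on $\Schubert_{n,r}(\Z_p)$ coming from Proposition~\ref{P:measure} — which is also the paper's answer to your question of what the ``natural'' measure on the measure-zero locus is — and then Lemma~\ref{L:kernel at least n/2} shows $m<n/2$ with probability tending to $1$, so the matrix size $n-m\to\infty$ and the mixture converges to $\Adist_r$. Your parts (b)–(c) would then be fine as a quotation of the alternating-matrix results, which the paper proves as Theorems~\ref{T:u=0} and~\ref{T:singular distribution} (giving the explicit formula $(\#G)^{1-r}\#\Sp(G)^{-1}\prod_{i=r+1}^\infty(1-p^{1-2i})$ that matches Delaunay), but as written your proposal asserts rather than proves them, and the bridge from the Grassmannian to the matrices needs the hybrid decomposition and the parity bookkeeping, not just graph charts.
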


Theorem~\ref{T:T distribution} will be proved in Section~\ref{S:comparison}.

\begin{conjecture}
\label{C:Sha for rank r}
Fix a global field $k$ and $r \in \Z_{\ge 0}$ such that $\EE_r$ is infinite.
For each finite abelian $p$-group $G$,
the density of $\{E \in \EE_r : \Sha[p^\infty] \isom G \}$ in $\EE_r$
equals the $\Tdist_r$-probability of $G$.
\end{conjecture}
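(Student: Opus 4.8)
The plan is to deduce Conjecture~\ref{C:Sha for rank r} from Conjecture~\ref{C:main} together with Theorem~\ref{T:T distribution} in the cases $r\in\{0,1\}$, and to recognize it as the natural extrapolation of the same model for $r\ge 2$. For $r\in\{0,1\}$ the locus $\{\rk(Z\intersect W)=r\}$ in $\OGr_V(\Z_p)^2$ has positive measure by Proposition~\ref{P:rank 0 or 1}, so $\EE_r$ has positive density by Conjecture~\ref{C:main}. For $E\in\EE_r$ the sequence $\Seq_E$ has first term isomorphic to $(\Q_p/\Z_p)^r$ and splits, so $\Sha[p^\infty]$ is exactly its last term; hence Conjecture~\ref{C:main}, conditioned on $\EE_r$, predicts that over $\EE_r$ the group $\Sha[p^\infty]$ is distributed like the last term $T$ of the model sequence under $\Qdist$ conditioned on $\rk(Z\intersect W)=r$. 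The one analytic point then is to check that this conditional law of $T$ coincides with $\Tdist_r$ --- that is, that conditioning $\Qdist$ on the positive-measure locus $\{\rk(Z\intersect W)=r\}$ agrees with the other natural measure on that locus used to define $\Tdist_{2n,r}$; granting this together with part~(c) of Theorem~\ref{T:T distribution}, Conjecture~\ref{C:Sha for rank r} follows for $r\le 1$.

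The real content is thus Theorem~\ref{T:T distribution}, which I would prove in three moves. First, identify $T$ intrinsically: applying $-\tensor\frac{\Q_p}{\Z_p}$ to the exact sequence $0\to Z\intersect W\to Z\oplus W\to V\to V/(Z+W)\to 0$ (middle map $(z,w)\mapsto z-w$) and using $\operatorname{Tor}_1(M,\Q_p/\Z_p)\isom M_{\tors}$ produces a canonical exact sequence $0\to R\to S\to (V/(Z+W))_{\tors}\to 0$, so comparison with the defining sequence gives $T\isom (V/(Z+W))_{\tors}$, with the nondegenerate alternating pairing induced by $Q$. Second, parametrize the relevant locus and compute: setting $U=Z\intersect W$ (a rank-$r$ isotropic direct summand) gives $V/(Z+W)\isom V'/(Z'+W')\directsum\Z_p^r$ with $V'=U^\perp/U$ hyperbolic of rank $2(n-r)$ and $Z'=Z/U$, $W'=W/U$ maximal isotropic direct summands meeting in $0$, so $T\isom V'/(Z'+W')$ is finite; fixing a maximal isotropic complement of $Z'$ in $V'$, a transverse $W'$ is the integral part of a graph cut out by an alternating bilinear form over $\Q_p$, and one identifies the induced law of $V'/(Z'+W')$ with that of the cokernel of a random alternating matrix over $\Z_p$ (the bridge to the companion result on such cokernels). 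Normalizing the canonical Haar measures on the parameter spaces --- the orthogonal Grassmannian of rank-$r$ isotropic summands, $\OGr_{V'}(\Z_p)$, and the space of alternating matrices --- yields the discrete law $\Tdist_{2n,r}$, which is part~(a). Third, for part~(b) use the Cohen--Lenstra method: from the alternating-matrix model compute, for each finite abelian $p$-group $G$ with a nondegenerate alternating pairing, the expected number of pairing-preserving surjections $T\surjects G$; show these moments stabilize as $n\to\infty$ and do not grow too fast, so that they determine a unique limiting law and $\Tdist_{2n,r}\to\Tdist_r$. Part~(c) is then a comparison of two explicit prescriptions: Delaunay's conjectural law for rank-$r$ curves is itself given by this same alternating-matrix/moment recipe weighted by $(\# G)^{-r}$, so it suffices to match the moment formulas.

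For $r\ge 2$ the locus $\{\rk(Z\intersect W)=r\}$ has $\Qdist$-measure $0$, so Conjecture~\ref{C:main} says nothing about the distribution of $\Sha[p^\infty]$ over $\EE_r$; there Conjecture~\ref{C:Sha for rank r} is a genuinely new hypothesis, supported by the intrinsic measure of Theorem~\ref{T:T distribution}(a), by consistency with the cases $r\le 1$, and by the agreement with Delaunay in~(c), but not implied by Conjecture~\ref{C:main}. I therefore expect the obstacles to be, in increasing order of seriousness: the uniform control of the moments as $n\to\infty$ and the explicit comparison with Delaunay's constants inside Theorem~\ref{T:T distribution}; the case $p=2$, where ``alternating'' is strictly stronger than ``skew-symmetric'', $\OGr$ over $\Z_2$ behaves differently, and the parametrization of transverse pairs and the identification $T\isom\coker(\text{alternating matrix})$ both need a separate treatment; and, ultimately, that Conjecture~\ref{C:main} is itself open, so that the statement is only conditionally provable, and only for $r\le 1$.
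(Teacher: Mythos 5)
The statement you are addressing is Conjecture~\ref{C:Sha for rank r}, and the paper offers no proof of it --- nor could it: it is an assertion about the actual statistical behavior of $\Sha[p^\infty]$ over the family of elliptic curves of rank $r$, which is far beyond what is currently provable. Your proposal does not close that gap. For $r\in\{0,1\}$ you only obtain a conditional implication: Conjecture~\ref{C:main} (itself open) plus Theorem~\ref{T:T distribution} yields Conjecture~\ref{C:Sha for rank r}, which is exactly the logical relationship the paper already records when it lists Delaunay's conjecture for $r=0,1$ among the consequences of Conjecture~\ref{C:main} (and the identification of the conditional law of $T$ given $\rk(Z\cap W)=r$ with $\Tdist_{2n,r}$, which you flag as ``the one analytic point,'' is indeed fine for $r\le 1$, since both measures are the normalized canonical measure on the component $\OGr_n^{\parity(r)}$; this is implicit in the paper's proof of Theorem~\ref{T:limit of RST distribution} via $\Qdist'_{2n}$). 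For $r\ge 2$ you concede that nothing follows from Conjecture~\ref{C:main}, so your argument establishes nothing there either. In short, what you have written is a heuristic reduction and a plausibility argument, not a proof, and it should not be presented as one: the correct conclusion is that the statement remains a conjecture, supported by (i) its consistency with Conjecture~\ref{C:main} for $r\le 1$ and (ii) the agreement of $\Tdist_r$ with Delaunay's predictions and with the alternating-matrix model (Theorems \ref{T:T distribution} and~\ref{T:A=T}).

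On the provable ingredient, Theorem~\ref{T:T distribution}, your sketch is broadly aligned with the paper but diverges in the technical execution. The paper does not run a Cohen--Lenstra moment/surjection argument to get convergence of $\Tdist_{2n,r}$; instead it computes the limiting probabilities of $\Adist_{n,r}$ explicitly (Theorems \ref{T:u=0} and~\ref{T:singular distribution}, using Corollary~\ref{C:probability of pairing}, Lemma~\ref{L:Igusa}, and Lemma~\ref{L:pushforward}), and then couples $Z\in\Schubert_{n,r}(\Z_p)$ with an alternating matrix $A\colon W_2\to W_2^T$ via the decomposition $Z=W_1^T\oplus\Graph(A)$, showing $T\isom(\coker A)_{\tors}$ and that the auxiliary dimension $m$ is small with high probability (Lemmas \ref{L:ugly} and~\ref{L:kernel at least n/2}); part (c) is then the explicit formula $(\#G)^{1-r}/\#\Sp(G)\cdot\prod_{i=r+1}^{\infty}(1-p^{1-2i})$ compared with Delaunay, not a comparison of moments. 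Your intrinsic identification $T\isom(V/(Z+W))_{\tors}$ and the reduction to $U^\perp/U$ are reasonable alternatives, but if you pursue the moment route you would still owe the uniqueness/convergence argument that the paper avoids by computing probabilities directly, and you would need to be careful that your parametrization of $W'$ transverse to $Z'$ respects the $\Z_p$-measure (the paper's Lemma~\ref{L:pushforward} is precisely the measure-theoretic bookkeeping for this step).
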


\begin{remark}
In fact, Delaunay made predictions for
the whole group $\Sha$ and not only $\Sha[p^\infty]$ for one $p$ at a time.
We will formulate a corresponding model,
and prove its compatibility with Delaunay's: see Section~\ref{S:Zhat}.
\end{remark}

\begin{remark}
In fact, one can turn things around, 
and use Delaunay's conjecture for $\Sha[p^\infty]$ together with the 
conjecture that the rank $r$ is $0$ or $1$ with 50\% probability each
to obtain conjectural distributions for $\Sel_{p^\infty} E$ and $\Sel_{p^e} E$ 
(cf. \cite{Delaunay-Jouhet-preprint}*{\S 6.2}
and \cite{Delaunay-Jouhet-preprint2}*{\S 5}).
Specifically, 
$\Sel_{p^\infty} E \isom (\Q_p/\Z_p)^r \directsum \Sha[p^\infty]$;
if moreover $E(k)_{\tors}=0$, 
as holds for 100\% of elliptic curves (Lemma~\ref{L:torsion=0}),
then $\Sel_{p^\infty} E$ determines $\Sel_{p^e} E$ 
(Proposition~\ref{P:small Selmer, big Selmer}\eqref{I:Sel_q in Sel_p^infty}).
\end{remark}

\subsection{Cokernel of a random alternating matrix}
\label{S:cokernel}

Moreover, Conjecture~\ref{C:Sha for rank r} implies that
another natural distribution on finite abelian $p$-groups 
yields a model for $\Sha[p^\infty]$.
We now describe it.
For an even integer $n$, 
choose an alternating $n\times n$ matrix $A \in M_n(\Z_p)$ 
(see Section~\ref{S:pairings} for definitions)
at random with respect to Haar measure,
and let $\Adist_{n,0}$ be the distribution of $\coker A$.

More generally, for any fixed $r \ge 0$, 
for $n$ with $n-r \in 2\Z_{\ge 0}$,
choose $A$ at random from the set of alternating matrices in $M_n(\Z_p)$
such that $\rk A = n-r$ (with respect to a measure to be described), 
and let $\Adist_{n,r}$ be the distribution of $(\coker A)_{\tors}$.

\begin{theorem}
\label{T:A=T}
For each $r \ge 0$, 
\begin{enumerate}[\upshape (a)]
\item \label{I:Adist exists}
the distributions $\Adist_{n,r}$ converge to a limit $\Adist_r$
as $n \to \infty$ through integers with $n-r \in 2\Z_{\ge 0}$, and
\item \label{I:Adist=Tdist}
the distributions $\Adist_r$ and $\Tdist_r$ coincide.
\end{enumerate}
\end{theorem}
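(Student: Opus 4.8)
The plan is to prove parts \eqref{I:Adist exists} and \eqref{I:Adist=Tdist} together by identifying $(\coker A)_{\tors}$, for a random alternating matrix of corank $r$, with the group $T$ attached to a random pair $(Z,W) \in \OGr_V(\Z_p)^2$ with $\rk(Z\intersect W)=r$; the convergence in \eqref{I:Adist exists} then follows from Theorem~\ref{T:T distribution}(b). The bridge is the well-known linear-algebra fact that an alternating form on a free $\Z_p$-module $U$ of rank $n$ corresponds to a point of the orthogonal Grassmannian of the hyperbolic space $U \directsum U^{\vee}$: namely, send the form $A$ to its graph $\Gamma_A = \{(u, A u) : u \in U\} \subseteq U \directsum U^{\vee}$, which is a maximal isotropic direct summand with respect to the standard hyperbolic pairing $\langle (u_1,\phi_1),(u_2,\phi_2)\rangle = \phi_1(u_2) + \phi_2(u_1)$ precisely because $A$ is alternating. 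Taking $W = \Gamma_A$ and $Z = U^{\vee}$ (the ``diagonal'' maximal isotropic summand $0 \directsum U^{\vee}$), one computes $Z \intersect W = \ker A$ and, after tensoring with $\Q_p/\Z_p$, finds that $T \isom (\coker A)_{\tors}$ and $\rk(Z\intersect W) = \rk(\ker A) = n - \rk A = r$. This matches the two sides of the corank bookkeeping: a corank-$r$ alternating $n\times n$ matrix goes to a pair with $\rk(Z \intersect W)=r$, and $n - r$ is forced to be even, as in the statement.

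The first real step is to make precise that the pushforward of the natural measure on $\{Z\in\OGr_V(\Z_p) : \rk(Z \intersect U^{\vee})=r\}$ (in the sense defining $\Tdist_{2n,r}$) agrees, under $A \mapsto \Gamma_A$, with the measure defining $\Adist_{n,r}$. For $r=0$ this should be straightforward: Haar measure on the space of alternating matrices pushes forward to the restriction of the $\OGr_V$-measure to the big cell $Z \intersect U^{\vee}=0$, because the graph chart $A \mapsto \Gamma_A$ is an algebraic isomorphism onto that cell and both measures are characterized by uniformity of reductions modulo $p^e$. For $r>0$, one must check that the conditional measure on corank-$r$ alternating matrices described in Section~\ref{S:cokernel} corresponds to the conditional measure on the locus $\rk(Z\intersect W)=r$ used in Theorem~\ref{T:T distribution}; here one uses that $\OGr_V$ is homogeneous under the orthogonal group and the stratum by $\rk(Z \intersect W)$ is a single orbit of positive codimension, so ``the'' natural measure on it is well defined, and the graph construction is equivariant enough to match it.

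The second step is the group-theoretic computation that $T\isom (\coker A)_{\tors}$. Writing $S = (Z\tensor\tfrac{\Q_p}{\Z_p}) \intersect (W\tensor\tfrac{\Q_p}{\Z_p})$ with $Z = U^\vee$ and $W = \Gamma_A$, an element of $S$ is a pair $(\bar 0, \bar\phi)$ in $(U\directsum U^\vee)\tensor\tfrac{\Q_p}{\Z_p}$ that also lies in $\Gamma_A \tensor \tfrac{\Q_p}{\Z_p}$; unwinding, $S$ is identified with the kernel of the map $U^\vee \tensor \tfrac{\Q_p}{\Z_p}\to (\coker A)\tensor \tfrac{\Q_p}{\Z_p}$ induced by $A^{\vee}$ (equivalently $A$, up to sign), and $R = (Z\intersect W)\tensor\tfrac{\Q_p}{\Z_p} = (\ker A)\tensor\tfrac{\Q_p}{\Z_p}$ is its divisible part. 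Then $T = S/R$ is the torsion quotient, and a Smith/diagonalization normal form for the alternating matrix $A$ over the PID $\Z_p$ (blocks $\begin{psmallmatrix}0 & p^{a_i}\\ -p^{a_i} & 0\end{psmallmatrix}$ plus a zero block of size $r$) makes both $(\coker A)_{\tors} \isom \bigoplus_i (\Z_p/p^{a_i})^2$ and $T\isom\bigoplus_i(\Z_p/p^{a_i})^2$ visible, hence equal. This normal form also re-proves cleanly that $R$ is a finite power of $\Q_p/\Z_p$ and $T$ is finite, consistent with Section~\ref{S:RST}.

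The main obstacle I expect is not the group isomorphism $T \isom (\coker A)_{\tors}$, which is essentially formal once the right dictionary is set up, but rather the \emph{measure-matching} on the positive-codimension strata in the $r>0$ case: one must be sure that the measure on corank-$r$ alternating matrices implicitly promised in Section~\ref{S:cokernel} is exactly the one for which the graph map is measure-preserving onto $\{\rk(Z\intersect W)=r\}$, and not off by a Jacobian factor coming from the nontrivial geometry of the rank stratification. I would handle this by defining both measures via the same recipe---push forward Haar measure from a suitable ``resolution'' parametrizing pairs $(A, \text{splitting of }\ker A)$ or, equivalently, use that each stratum is a homogeneous space and normalize by total mass---so that the equality becomes a consequence of uniqueness of invariant measures rather than an explicit change-of-variables computation. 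Once that is in place, part~\eqref{I:Adist=Tdist} is immediate for every $n$, and part~\eqref{I:Adist exists} follows by transporting the convergence statement of Theorem~\ref{T:T distribution}(b).
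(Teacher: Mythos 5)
There is a genuine gap, and it occurs at the two places your argument leans hardest. First, the logic is circular: you propose to deduce part~(a) from Theorem~\ref{T:T distribution}(b), but in the paper Theorem~\ref{T:T distribution} is not an independent input --- it is proved \emph{together with} Theorem~\ref{T:A=T}\eqref{I:Adist=Tdist}, taking as its main ingredient the prior existence of $\Adist_r$, i.e.\ part~(a) itself (this is Theorem~\ref{T:singular distribution}, proved directly by the mass computation: Corollary~\ref{C:probability of pairing} and Lemma~\ref{L:invertible alternating matrices} for $r=0$, then Lemma~\ref{L:pushforward} and Igusa's integral Lemma~\ref{L:Igusa} for general $r$). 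You give no independent proof that either family $\Adist_{n,r}$ or $\Tdist_{2n,r}$ converges, so part~(a) is simply not established by your argument; some direct computation (or other convergence mechanism) for the matrix side is unavoidable.

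Second, the claimed finite-level identification is false, so ``part~(b) is immediate for every $n$'' cannot be right. The graph chart $A \mapsto \Graph(A)$ is indeed an open immersion onto the locus of $W \in \OGr_n(\Z_p)$ with $\Wbar \intersect \Zbar = 0$ (complementarity mod $p$), and on that locus your measure matching is fine; but the locus $\rk(Z \intersect W)=r$ defining $\Tdist_{2n,r}$ contains, with \emph{positive} probability at each finite $n$, points $W$ whose mod-$p$ intersection with the fixed complementary Lagrangian is nonzero, and these are not graphs. So your construction computes the law of $T$ conditioned on rank $r$ \emph{and} on lying in the big cell, which is exactly $\Adist_{n,r}$, not $\Tdist_{2n,r}$. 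Concretely, for $r=0$ one has $\Prob_{\Adist_{n,0}}(T=0)=\prod_{i=1}^{n/2}(1-p^{1-2i})$, whereas under $\Tdist_{2n,0}$ the event $T=0$ has strictly larger probability (it is a mixture of the analogous products with fewer factors), so $\Adist_{n,r} \ne \Tdist_{2n,r}$ at finite level. The paper bridges this gap by parametrizing a general $Z$ as $W_1^T \directsum \Graph(A)$ with $A$ alternating of size $n-m$, where $m=\dim(\Zbar \intersect \Lambda)$, proving (via Lemma~\ref{L:ugly} and Corollary~\ref{C:sum of three intersection dimensions}) that this reproduces the uniform measure on $\Schubert_{n,r}(\Z_p)$, and then showing by Lemma~\ref{L:kernel at least n/2} that $m<n/2$ with probability tending to $1$; thus $\Tdist_{2n,r}$ is a mixture of the $\Adist_{n-m,r}$ concentrating on large sizes, and equality with $\Adist_r$ holds only in the limit --- and only because the convergence of the $\Adist_{n',r}$ has already been proved. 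Relatedly, your fallback of ``uniqueness of invariant measures'' on the corank-$r$ strata will not substitute for this: the measures in play are the canonical ones of Proposition~\ref{P:measure}, which are not characterized by invariance, and comparisons between parametrizations of these strata do pick up honest density factors (the $|\det|^r$ in Lemma~\ref{L:pushforward} is precisely such a Jacobian).
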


\begin{remark}
Delaunay's conjecture for $\Sha$ 
was made in analogy with 
the Cohen--Lenstra heuristics for class groups~\cite{Cohen-Lenstra1983}.
Later, Friedman and Washington~\cite{Friedman-Washington1989} 
recognized the Cohen--Lenstra distribution on $p$-primary parts
as being the distribution of the cokernel of a random matrix over $\Z_p$.
This was our motivation for modeling $\Sha[p^\infty]$ as the cokernel of
a random \emph{alternating} matrix over $\Z_p$.
\end{remark}

\subsection{Cassels--Tate pairing}
\label{S:Cassels-Tate}

The Cassels--Tate pairing on $\Sha$ is an alternating bilinear pairing
\[
	\Sha \times \Sha \to \Q/\Z
\]
whose kernel on each side is the maximal divisible subgroup of $\Sha$.
Taking $p$-primary parts yields an alternating bilinear pairing
\[
	\Sha[p^\infty] \times \Sha[p^\infty] \to \Q_p/\Z_p.
\]
If $\Sha[p^\infty]$ is finite, then this pairing is nondegenerate.

Recall that we are modeling $\Sha[p^\infty]$ by 
the random groups $T$ and $(\coker A)_{\tors}$ 
constructed in Sections~\ref{S:intersection} and~\ref{S:cokernel},
respectively.
As evidence that these models are reasonable,
we will construct a canonical nondegenerate alternating pairing
on each of $T$ and $(\coker A)_{\tors}$: 
see Section~\ref{S:model for Cassels-Tate}.

\subsection{Arithmetic justification}

We have used theorems on the arithmetic of elliptic curves
to guide the development of our models for ranks, Selmer groups, and
Shafarevich--Tate groups.
Conversely, part of the reason for developing such models 
is to suggest new structure in the arithmetic
of elliptic curves that might be discovered.

In \cite{Poonen-Rains2012-selmer},
modeling $\Sel_p E$ by an intersection of two random maximal isotropic 
$\F_p$-subspaces
was suggested by a theorem that $\Sel_p E$ \emph{is} an intersection of
two maximal isotropic subspaces 
in an infinite-dimensional quadratic space over~$\F_p$.
In Section~\ref{S:arithmetic justification},
we prove an analogue for prime powers: for $100\%$ of $E \in \EE$,
the group $\Sel_{p^e} E$ 
is isomorphic to an intersection of two maximal isotropic subgroups
of an infinite quadratic $\Z/p^e\Z$-module $\HH^1(\Adeles,E[p^e])$.
But why in our model do we assume that the two subgroups
we intersect are \emph{direct summands}?
Answer: if maximal isotropic subgroups of $(\Z/p^e\Z)^{2n}$ are to be sampled
from a distribution that is invariant under the orthogonal group,
the only distribution that yields predictions compatible
with the distribution in \cite{Poonen-Rains2012-selmer} modeling $\Sel_p$ 
is the distribution 
supported on direct summands (Remark~\ref{R:why direct summands}).
Moreover, we provide some justification from the arithmetic of
elliptic curves: 
we prove that one of the two subgroups of $\HH^1(\Adeles,E[p^e])$ 
actually is a direct summand (Corollary~\ref{C:adelic direct summand}),
and conjecture that the other one is too for $100\%$ of $E \in \EE$ 
(Conjecture~\ref{C:global H^1 is direct summand}).
All this is the motivation for our model for $\Sel_{p^\infty}$.

Venkatesh and Ellenberg~\cite{Venkatesh-Ellenberg2010}*{Section~4.1}
observed that the Friedman--Washington 
reinterpretation of the Cohen--Lenstra distribution
could be justified by a parallel construction in the arithmetic
of number fields, namely that the class group is the cokernel
of the homomorphism from the $S$-units to the group of fractional
ideals supported on $S$, for a suitably large set of places $S$.

\begin{question}
Is there a construction in the arithmetic of elliptic curves
that realizes $\Sha$ as the torsion subgroup of the cokernel 
of a natural alternating map of free $\Z$-modules?
\end{question}

\section{The canonical measure on the set of \texorpdfstring{$\Z_p$}{Zp}-points of a scheme}
\label{S:measures}

The following is a consequence of work of Oesterl\'e and Serre.

\begin{proposition}
\label{P:measure}
Let $X$ be a finite-type $\Z_p$-scheme.
Let $d=\dim X_{\Q_p}$.
Equip $X(\Z_p)$ with the $p$-adic topology.
\begin{enumerate}[\upshape (a)]
\item\label{I:measure exists}
There exists a unique bounded $\R_{\ge 0}$-valued measure $\mu = \mu_X$ on
the Borel $\sigma$-algebra of $X(\Z_p)$
such that for any open and closed subset $S$ of $X(\Z_p)$, we have
\[
	\mu(S) = \lim_{e \to \infty}
	\frac{\#(\textup{image of $S$ in $X(\Z/p^e \Z)$})}{(p^e)^d}.
\]
\item\label{I:lower dimensional}
If $Y$ is a subscheme of $X$ and $\dim Y_{\Q_p} < d$,
then $\mu(Y(\Z_p))=0$.
\item\label{I:positive measure}
If $S$ is an open subset of $X(\Z_p)$,
and $X_{\Q_p}$ is smooth of dimension $d$ at $s_{\Q_p}$ for some $s \in S$,
then $\mu(S)>0$.
\end{enumerate}
\end{proposition}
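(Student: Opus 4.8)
The plan is to reduce everything to the affine case and then feed in the point-counting estimates of Oesterl\'e and Serre quoted above; the genuine content is theirs, and the rest is standard measure-theoretic and Hensel-type bookkeeping.

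\emph{Reductions, and the shape of $X(\Z_p)$.} Since $\Hom(A,-)$ takes inverse limits of rings to inverse limits of sets, $X(\Z_p)=\varprojlim_e X(\Z/p^e\Z)$, so $X(\Z_p)$ with its $p$-adic topology is a profinite set: compact, metrizable, totally disconnected. The ``congruence balls'' $B_{\bar a}\colonequals\{x\in X(\Z_p):x\equiv\bar a\bmod p^e\}$, for $\bar a$ ranging over images of points of the $X(\Z/p^e\Z)$, form a basis of clopen sets; every clopen subset is a finite disjoint union of congruence balls; and these clopen sets form a Boolean algebra $\calA$ generating the Borel $\sigma$-algebra. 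Reduction modulo $p$ partitions $X(\Z_p)$ into finitely many clopen pieces, each consisting of $\Z_p$-points whose closed point lies in a fixed affine open $U\subseteq X$ and hence factoring through $U$; the corresponding images in $X(\Z/p^e\Z)$ lie in $U(\Z/p^e\Z)$. Thus both the construction in (a) and the dimension estimates in (b), (c) may be carried out on the affine opens $U$, and I fix a closed immersion $U\hookrightarrow\Aff^N_{\Z_p}$, identifying $U(\Z_p)$ with a closed analytic subset of $\Z_p^N$ of dimension $\le\dim U_{\Q_p}\le d$.

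\emph{Part (a).} By the work of Oesterl\'e and Serre, $\#\bigl(\text{image of }U(\Z_p)\text{ in }U(\Z/p^e\Z)\bigr)=O(p^{ed})$, and for every clopen $S\subseteq X(\Z_p)$ the limit $\ell(S)\colonequals\lim_{e\to\infty}p^{-ed}\,\#\bigl(\text{image of }S\text{ in }X(\Z/p^e\Z)\bigr)$ exists and is finite (compute it piece by piece on the affine opens). If $S=S_1\sqcup S_2$ with $S_i$ clopen, the $S_i$ are cut out by congruences modulo $p^{e_0}$ for some $e_0$, so their images modulo $p^e$ are disjoint once $e\ge e_0$; hence $\ell$ is finitely additive on $\calA$, and it is bounded by $\ell(X(\Z_p))<\infty$. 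On the clopen algebra of a profinite set a bounded finitely additive function is automatically a premeasure — a countable clopen partition of a clopen, hence compact, set has only finitely many nonempty parts — so Carath\'eodory's extension theorem yields a finite Borel measure $\mu=\mu_X$ with $\mu|_{\calA}=\ell$. Since $\calA$ is a $\pi$-system generating the Borel $\sigma$-algebra and $\mu$ is finite, $\mu$ is the unique measure restricting to $\ell$, hence the unique bounded measure satisfying the displayed limit formula.

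\emph{Part (b).} Replacing $Y$ by its closure, assume $Y$ is closed, and set $d'\colonequals\dim Y_{\Q_p}<d$. For $e\ge1$ let $U_e\colonequals\{x\in X(\Z_p):x\bmod p^e\in\text{image of }Y(\Z_p)\}$; this is clopen, contains $Y(\Z_p)$, and is a disjoint union of at most $\#\bigl(\text{image of }Y(\Z_p)\text{ in }Y(\Z/p^e\Z)\bigr)=O(p^{ed'})$ balls of level $e$, the bound being Oesterl\'e's estimate applied to $Y(\Z_p)$ (of analytic dimension $\le d'$). In the uniform form of that estimate, every level-$e$ congruence ball of $X(\Z_p)$ has $\mu$-measure $\le C\,p^{-ed}$, where $C$ depends only on $X$: rescale such a ball by $p^{-e}$ to all of $\Z_p^N$ and apply the estimate to the resulting bounded family of analytic sets. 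Hence $\mu(U_e)\le C'\,p^{e(d'-d)}$, and since $Y(\Z_p)=\bigcap_e U_e$ and $\mu$ is finite, $\mu(Y(\Z_p))=\lim_{e\to\infty}\mu(U_e)=0$.

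\emph{Part (c).} Work on an affine open $U\subseteq X$ containing the image of $s$ in $X(\F_p)$, fix $U\hookrightarrow\Aff^N_{\Z_p}$, and regard $s\in\Z_p^N$. Since $U_{\Q_p}$ is smooth of dimension $d$ at $s_{\Q_p}$ (it is open in $X_{\Q_p}$), after shrinking $U$ we may pick $g_1,\dots,g_{N-d}\in\Z_p[x_1,\dots,x_N]$ cutting out $U_{\Q_p}$ near $s_{\Q_p}$ whose Jacobian $\bigl(\partial g_i/\partial x_j\bigr)(s)$ has rank $N-d$ over $\Q_p$; choosing $d$ complementary coordinates, the $p$-adic implicit function theorem shows that the corresponding coordinate projection $\pi\colon\Z_p^N\to\Z_p^d$ restricts, on every sufficiently deep congruence ball $W$ about $s$, to a homeomorphism onto an open neighborhood of $\pi(s)$. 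Shrinking further we may also assume $W\subseteq S$, and we fix a level-$e_0$ ball $B\subseteq\pi(W)$. As $\pi$ is a coordinate projection it is compatible with reduction modulo $p^e$, so the image of $W$ in $U(\Z/p^e\Z)$ surjects onto the image of $\pi(W)$ in $(\Z/p^e\Z)^d$; therefore $\#\bigl(\text{image of }W\bigr)\ge\#\bigl(\text{image of }B\bigr)=p^{(e-e_0)d}$ for $e\ge e_0$, whence $\mu(S)\ge\mu(W)=\ell(W)\ge p^{-e_0d}>0$.

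The only nontrivial input is the Oesterl\'e--Serre point-counting — the convergence of the limit defining $\ell$ in (a), and its uniform version used in (b) — and that is where I expect the real difficulty to lie; everything else is the Carath\'eodory construction over a profinite set and a routine application of Hensel's lemma.
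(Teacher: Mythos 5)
Your proposal follows the paper's proof in substance: both defer all of the analytic content to Oesterl\'e and Serre and handle the rest by gluing and measure-theoretic extension. For (a), your construction (partition $X(\Z_p)$ by residue mod $p$ into clopen pieces factoring through affine opens, Oesterl\'e's Th\'eor\`eme~2 for existence of the limits on clopen sets, Hahn--Kolmogorov/Carath\'eodory for extension, and a $\pi$-system argument for uniqueness) is exactly the paper's argument written out in detail. Part (c) is a hands-on expansion, via the $p$-adic implicit function theorem, of what the paper simply cites from the discussion preceding Oesterl\'e's Th\'eor\`eme~2; it is fine, modulo the standard point (which you gloss) that a sufficiently deep ball about $s$ meets only the branch of the zero locus of $g_1,\dots,g_{N-d}$ that coincides with $U$. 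The one place you genuinely diverge is (b), and it is also the one soft spot: your covering argument requires $\mu(B)\le C\,p^{-ed}$ for \emph{every} level-$e$ ball $B$, with $C$ independent of $e$ and of the ball. Your rescaling correctly reduces this to bounding the normalized residue counts of the sets $\{u: f_i(a+p^eu)=0\}$, a family of algebraic sets of bounded degree and dimension $\le d$; but the estimate you quoted (the per-set bound $O(p^{ed})$, whose implied constant depends on the set) does not by itself give uniformity over this family. What you need is the degree-effective form of Oesterl\'e's bound, with constant depending only on $N$, $d$, and the degrees of the defining polynomials --- that statement is true and available, but it is a strictly stronger input than the one you invoked, and as written this step is asserted rather than proved. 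The paper's route sidesteps it: by Oesterl\'e's \S2, for a closed analytic subset such as $Y(\Z_p)$ the formula $\mu(Y(\Z_p))=\lim_{e}\#(\textup{image of }Y(\Z_p)\textup{ in }X(\Z/p^e\Z))/(p^e)^d$ still holds, and Serre's Th\'eor\`eme~8 gives $\#Y(\Z/p^e\Z)=O((p^e)^{d-1})$, so the limit is $0$ immediately. Either cite the uniform (degree-dependent) bound precisely or switch to that route; with that repair your write-up is complete.
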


\begin{proof}\hfill
\begin{enumerate}[\upshape (a)]
\item
If $X$ is affine, this is a consequence of 
the discussion surrounding Th\'eor\`eme~2 of~\cite{Oesterle1982},
which builds on~\cite{Serre1981}*{S3}, 
and the Hahn--Kolmogorov extension theorem.
In general, let $(X_i)$ be a finite affine open cover of $X$.
Each set $X_i(\Z_p)$ is open \emph{and closed} in $X(\Z_p)$,
because $X_i(\Z_p)$ equals the inverse image of $X_i(\F_p)$ under the reduction
map $X(\Z_p) \to X(\F_p)$.
Since $\Z_p$ is a local ring, the sets $X_i(\Z_p)$ form a cover of $X(\Z_p)$.
The measures on $X_i(\Z_p)$ and $X_j(\Z_p)$ are compatible on the
intersection, by uniqueness, so they glue to give the required measure
on $X(\Z_p)$.
\item
We may assume that $X$ is affine and that $Y$ is a closed subscheme of $X$.
Even though $Y(\Z_p)$ might not be open in $X(\Z_p)$,
it is an analytic closed subset (see~\cite{Oesterle1982}*{\S2}),
so 
\[
	\mu(Y(\Z_p)) = \lim_{e \to \infty}
	\frac{\# Y(\Z/p^e \Z)}{(p^e)^d}
\]
still holds.
According to \cite{Serre1981}*{p.~145, Th\'eor\`eme~8},
$\#Y(\Z/p^e \Z) = O((p^e)^{d-1})$ as $e \to \infty$,
so the limit is $0$.
\item
See the discussion before Th\'eor\`eme~2 of~\cite{Oesterle1982}.\qedhere
\end{enumerate}
\end{proof}

\begin{corollary}
\label{C:prob measure}
If $X$ is as in Proposition~\ref{P:measure},
and $X_{\Q_p}$ is smooth of dimension $d$ at $x_{\Q_p}$ for some $x \in X(\Z_p)$,
then $\mu$ can be normalized to yield
a probability measure $\nu$ on $X(\Z_p)$.
\end{corollary}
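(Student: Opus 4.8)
The plan is to obtain $\nu$ simply by rescaling the measure $\mu$ furnished by Proposition~\ref{P:measure}\eqref{I:measure exists}, so that the only point requiring verification is that the total mass $\mu(X(\Z_p))$ is a finite, strictly positive real number; once that is known, $\nu \colonequals \mu / \mu(X(\Z_p))$ is the desired probability measure. Concretely, I would proceed in three short steps.

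First I would record that $\mu(X(\Z_p)) < \infty$. This is immediate: Proposition~\ref{P:measure}\eqref{I:measure exists} produces $\mu$ as a \emph{bounded} $\R_{\ge 0}$-valued measure, and the total mass is the supremum of $\mu$ over all Borel sets. (If one prefers, the same conclusion follows from the fact that $X(\Z_p)$ is compact, being covered by the finitely many open-and-closed compact subsets $X_i(\Z_p)$ coming from a finite affine open cover $(X_i)$ of $X$, exactly as in the proof of Proposition~\ref{P:measure}\eqref{I:measure exists}.) Next I would establish positivity by invoking Proposition~\ref{P:measure}\eqref{I:positive measure} with $S \colonequals X(\Z_p)$: the set $S$ is trivially open in $X(\Z_p)$, and by hypothesis $X_{\Q_p}$ is smooth of dimension $d$ at $x_{\Q_p}$ for the given point $x \in X(\Z_p) = S$, so part~\eqref{I:positive measure} applies and gives $\mu(X(\Z_p)) > 0$. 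In particular $X(\Z_p) \neq \emptyset$, which is anyway guaranteed by the existence of $x$.

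Finally, with $0 < \mu(X(\Z_p)) < \infty$ in hand, I would set $\nu \colonequals \mu / \mu(X(\Z_p))$; this is again a Borel measure on $X(\Z_p)$ since $\mu$ is, and it satisfies $\nu(X(\Z_p)) = 1$, hence is a probability measure. There is no real obstacle in this argument: all of the substance has already been packaged into Proposition~\ref{P:measure}, and the corollary is essentially the observation that the smoothness hypothesis at some $\Z_p$-point is precisely the input needed to make the normalization legitimate. The only place meriting even a moment's care is the appeal to \eqref{I:positive measure}, where one must note that the ambient space $X(\Z_p)$ itself qualifies as the open set $S$ in that statement.
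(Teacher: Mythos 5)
Your argument is correct and is precisely the intended one: finiteness of the total mass from the boundedness in Proposition~\ref{P:measure}(a), strict positivity from Proposition~\ref{P:measure}(c) applied to the open set $S = X(\Z_p)$ containing the given point $x$, and then normalization. The paper leaves this verification implicit, and your write-up fills it in exactly as expected.
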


{}From now on, when we speak of choosing an element of $X(\Z_p)$
uniformly at random for $X$ as in Corollary~\ref{C:prob measure},
we mean choosing it according to the measure $\nu$.

\section{Modeling Shafarevich--Tate groups using alternating matrices}

\subsection{Notation}
\label{S:notation}

Let $R$ be a principal ideal domain.
(We could work with more general rings, but we have no need to.)
Let $K=\Frac R$.
Given an $R$-module $L$, let $L^T\colonequals \Hom_R(L,R)$,
let $L_K\colonequals L \tensor_R K$,
and let $L_\tors\colonequals \{x \in L: rx=0 \textup{ for some nonzero $r \in R$}\}
= \ker(L \to L_K)$.
If a free $R$-module $L$ has been fixed,
and $N$ is a submodule of $L$,
define the \defi{saturation}
\[
	N^{\sat}\colonequals  N_K \intersect L = \{x \in L : rx \in N \textup{ for some nonzero $r \in R$}\}.
\]
Given a homomorphism $A \colon L \to M$,
let $A^t \colon M^T \to L^T$ denote the dual homomorphism;
this notation is compatible with
the notation $A^t$ for the transpose of a matrix.
Let $M_n(R)_{\alt}$ be the set of \defi{alternating} $n \times n$ matrices,
i.e., matrices $A$ with zeros on the diagonal satisfying $A^t=-A$.
For $S \subseteq M_n(R)$, define $S_{\alt} = S \intersect M_n(R)_{\alt}$.

\subsection{Symplectic abelian groups}

Define a \defi{symplectic abelian group} 
(called \defi{group of type $S$} in \cite{Delaunay2001})
to be a finite abelian group $G$ equipped with
a nondegenerate alternating pairing $[\;,\;]\colon G \times G \to \Q/\Z$.
An isomorphism between two symplectic abelian groups
is a group isomorphism that respects the pairings.
Let $\Sp(G)$ be the group of automorphisms of $G$ respecting $[\;,\;]$.
One can show that two symplectic abelian groups are isomorphic 
if and only if their underlying abelian groups are isomorphic.
If $p$ is a prime, define a \defi{symplectic $p$-group}
to be a symplectic abelian group whose order is a power of $p$;
in this case, $[\;,\;]$ may be viewed as taking values in $\Q_p/\Z_p$.

\subsection{Pairings on the cokernel of  an alternating matrix}
\label{S:pairings}

Let $L$ be a free $R$-module of rank $n$.
Let $\calA \colon L \times L \to R$ be an alternating $R$-bilinear pairing;
\defi{alternating} means that $\calA(x,x)=0$ for all $x \in L$.
Let $A \colon L \to L^T$ be the induced $R$-homomorphism.
If we choose a basis for $L$ and use the dual basis for $L^T$,
then $A$ corresponds to a matrix $A \in M_n(R)_{\alt}$;
then $\calA$ is identified with
\begin{align*}
	R^n \times R^n &\to R \\
	x,y &\mapsto x^t A y.
\end{align*}
A change of basis of $L$ is given by a matrix $M \in \GL_n(R)$,
which changes $A$ to $M^t A M$;
this defines an action of $\GL_n(R)$ on $M_n(R)_{\alt}$.

Let $L^\perp \colonequals \{x \in L_K \colon \calA(L,x) \subseteq R\}$.
Then $\calA_K\colonequals \calA \tensor K$ induces an alternating pairing
\begin{equation}
\label{E:L^perp/L pairing}
	\frac{L^\perp}{L} \times \frac{L^\perp}{L} \to \frac{K}{R}.
\end{equation}

\subsection{The pairing in the nonsingular case}
\label{S:nonsingular}

Suppose that $A$ is nonsingular
in the sense that $A_K \colon L_K \to (L^\vee)_K$ is an isomorphism
(i.e., $\det A \ne 0$).
Then $L^\perp = A^{-1} L^T$ and multiplication-by-$A$ induces an
isomorphism $L^\perp/L \isom L^T/AL = \coker A$ of finite torsion $R$-modules.
Substituting (and flipping a sign)
rewrites~\eqref{E:L^perp/L pairing} as an alternating pairing
\begin{equation}
\label{E:coker pairing}
	\langle \;,\;\rangle_A \colon \coker A \times \coker A \to \frac{K}{R}
\end{equation}
induced by
\begin{align}
\label{E:bracket A}
	[\;,\;]_A \colon R^n \times R^n &\to \frac{K}{R} \\
\nonumber
	x,y &\mapsto x^t A^{-1} y,
\end{align}
where each $R^n$ is $L^T$.
The right kernel of $[\;,\;]_A$ is the image $\im(A \colon R^n \to R^n)$
since one has $x^t(A^{-1} y) \in R$ for all $x \in R^n$
if and only if $A^{-1} y \in R^n$.
Since the pairing is alternating, the left kernel is the same.
Thus the left and right kernels of $\langle\;,\;\rangle_A$ are $0$;
i.e., $\langle\;,\;\rangle_A$ is nondegenerate.

\subsection{The pairing in the singular case}

Suppose that $\det A=0$.
Let $L_0= \ker A$.
The quotient $L/L_0$ is torsion-free, and hence free,
since $R$ is a principal ideal domain.
Then $\calA$ induces a nonsingular alternating pairing $\calA_1$ on $L/L_0$,
corresponding to some $A_1$.
The submodule $(L/L_0)^T$ of $L^T$ is the saturation of $\im A$ in $L^T$,
i.e., $(\im A)^{\sat} = (\im A)_K \intersect L^T$.
Then $\coker(A_1 \colon L/L_0 \to (L/L_0)^T)$
identifies with $(\im A)^{\sat}/(\im A) \isom (\coker A)_\tors$.
Applying Section~\ref{S:nonsingular} to $A_1$,
we obtain an alternating $R$-bilinear pairing
\[
	\langle \;,\; \rangle_A \colon
	(\coker A)_\tors \times (\coker A)_\tors \to \frac{K}{R}
\]
whose left and right kernels are $0$.

\subsection{Lemmas}
\label{S:lemmas}

Take $R=\Z_p$.
Given $A \in M_n(\Z_p)$, let $\Abar\colonequals A \bmod p \in M_n(\F_p)$.
If $A \in M_n(\Z_p)_{\alt}$ and $\det A \ne 0$,
then by Section~\ref{S:nonsingular},
$\coker A$ with $\langle \;,\; \rangle_A$ is a symplectic $p$-group.

\begin{lemma}
\label{L:same pairing}
Suppose that $A,D \in M_n(\Z_p)_{\alt}$, and $\det D \ne 0$.
We have $[\;,\;]_A = [\;,\;]_D$ if and only if $\det A\ne 0$
and $A^{-1}-D^{-1} \in M_n(\Z_p)$.
\end{lemma}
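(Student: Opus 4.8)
The plan is to unwind the definition of the pairing $[\;,\;]_A$ from~\eqref{E:bracket A} and compare the two bilinear forms $x,y \mapsto x^t A^{-1} y$ and $x,y \mapsto x^t D^{-1} y$ as maps $\Z_p^n \times \Z_p^n \to \Q_p/\Z_p$, where $A^{-1}$ is interpreted over $\Q_p$. First I would observe that $[\;,\;]_D$ makes sense because $\det D \ne 0$ by hypothesis, and the statement implicitly asserts that $[\;,\;]_A$ only makes sense (equivalently, only equals $[\;,\;]_D$) when $\det A \ne 0$ as well; so one direction of the argument has to establish that $\det A \ne 0$ is forced. For the easy direction, suppose $\det A \ne 0$ and $B \colonequals A^{-1} - D^{-1} \in M_n(\Z_p)$. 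Then for $x,y \in \Z_p^n$ we have $x^t A^{-1} y - x^t D^{-1} y = x^t B y \in \Z_p$, so the two forms agree after reduction modulo $\Z_p$; hence $[\;,\;]_A = [\;,\;]_D$.

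For the converse, suppose $[\;,\;]_A = [\;,\;]_D$, so $x^t A^{-1} y \equiv x^t D^{-1} y \pmod{\Z_p}$ for all $x,y \in \Z_p^n$ — here I must first argue that the left side is even defined, i.e.\ that $\det A \ne 0$. The cleanest route: if $\det A = 0$, then by the singular-case discussion $A$ has a nonzero kernel vector, and one can extract a contradiction with the equality of pairings; alternatively, work directly with $A$ as a $\Q_p$-linear map and note that $\det A = 0$ would make $A^{-1}$ undefined, so $[\;,\;]_A$ as written in~\eqref{E:bracket A} is not even a pairing — in that reading the hypothesis $[\;,\;]_A = [\;,\;]_D$ already presupposes $\det A \ne 0$. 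Granting $\det A \ne 0$, set $B \colonequals A^{-1} - D^{-1} \in M_n(\Q_p)$; the hypothesis says $x^t B y \in \Z_p$ for all $x, y \in \Z_p^n$. Taking $x = e_i$ and $y = e_j$ ranging over standard basis vectors shows every entry $B_{ij}$ lies in $\Z_p$, i.e.\ $B \in M_n(\Z_p)$, which is exactly $A^{-1} - D^{-1} \in M_n(\Z_p)$.

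The only genuinely delicate point — and the step I expect to be the main obstacle — is making precise the claim hidden in the phrase ``$[\;,\;]_A = [\;,\;]_D$ if and only if $\det A \ne 0$ and \ldots'': one must decide whether ``$[\;,\;]_A = [\;,\;]_D$'' is being asserted as a statement that already carries ``$\det A\ne 0$'' as part of its meaning, or whether $\det A = 0$ is a case to be ruled out. If the latter, the argument is: when $\det A = 0$, pick $0 \ne v \in (\ker A)^{\sat} \subseteq \Z_p^n$ (primitive), extend to a basis, and use the singular-case identification of $[\;,\;]_A$ with a pairing on $(\coker A)_\tors$ whose left kernel contains the class of $v$ in a controlled way, contradicting nondegeneracy of $[\;,\;]_D$ (which is nondegenerate by Section~\ref{S:nonsingular} since $\det D \ne 0$) together with $[\;,\;]_A = [\;,\;]_D$. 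Once that case is dispatched, everything else is the two-line basis-vector computation above. I would write the proof in the order: (1) reduce to $\det A \ne 0$; (2) easy direction; (3) converse via evaluation on basis vectors.
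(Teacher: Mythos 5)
Your proposal is correct and is essentially the paper's proof, which consists of the single line ``This is immediate from~\eqref{E:bracket A}'': the easy direction plus evaluation on standard basis vectors is exactly that unwinding. Your worry about the $\det A = 0$ case resolves in favor of your second reading — $[\;,\;]_A$ as defined in~\eqref{E:bracket A} requires $A^{-1}$, so the equality $[\;,\;]_A = [\;,\;]_D$ already presupposes $\det A \ne 0$, and no appeal to the singular-case pairing is needed.
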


\begin{proof}
This is immediate from~\eqref{E:bracket A}.
\end{proof}

\begin{lemma}
\label{L:A=D+DND}
Suppose that $D \in M_n(\Z_p)$ and $\det D \ne 0$.
Then
\[
	\{ A \in M_n(\Z_p) : \det A \ne 0 \textup{ and }
		A^{-1} - D^{-1} \in M_n(\Z_p) \}
	=
	\{A \in D + D M_n(\Z_p) D : \rk \Abar = \rk \Dbar\}.
\]
\end{lemma}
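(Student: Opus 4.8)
The plan is to prove the two inclusions separately; in each direction one manipulates the matrices $A$, $D$, $A^{-1}$, $D^{-1}$ over $\Q_p$ (note that $D \in \GL_n(\Q_p)$ by hypothesis, though not necessarily over $\Z_p$) and then reads the resulting identities modulo $p$ to compare ranks. For ``$\subseteq$'', suppose $\det A \ne 0$ and $C \colonequals A^{-1} - D^{-1} \in M_n(\Z_p)$. Multiplying $A^{-1} = D^{-1} + C$ by $A$ and by $D$ on the appropriate sides yields $A - D = -ACD$ and $D = (I + DC)A$. Using the first identity together with $D^{-1} = A^{-1} - C$ gives
\[
	D^{-1}(A - D) D^{-1} = -D^{-1}AC = -(I - CA)C = CAC - C \in M_n(\Z_p),
\]
since $A, C \in M_n(\Z_p)$; hence $A \in D + D M_n(\Z_p) D$. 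The rank equality is then formal: writing $A = D(I + ND)$ with $N = CAC - C \in M_n(\Z_p)$ gives $\rk \Abar \le \rk \Dbar$, while $D = (I + DC)A$ with $I + DC \in M_n(\Z_p)$ gives $\rk \Dbar \le \rk \Abar$.

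For ``$\supseteq$'', write $A = D + DND = (I + DN)D = D(I + ND)$ with $N \in M_n(\Z_p)$, and assume $\rk \Abar = \rk \Dbar$. The key point is that $I + DN$ — equivalently $I + ND$, using $\det(I + XY) = \det(I + YX)$ — is invertible over $\Z_p$, i.e.\ that $B \colonequals I + \Dbar\Nbar \in M_n(\F_p)$ is invertible. To see this: the perturbation $B - I = \Dbar\Nbar$ has image contained in $\im \Dbar$, so $v \in \ker B$ forces $v = -(B-I)v \in \im \Dbar$, i.e.\ $\ker B \subseteq \im \Dbar$; on the other hand $\Abar = B\Dbar$ together with $\rk \Abar = \rk \Dbar$ shows that $B$ is injective on $\im \Dbar$; combining these, $\ker B = 0$. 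Once $I + DN$ and $I + ND$ lie in $\GL_n(\Z_p)$, we get $\det A = \det(I+DN)\det D \ne 0$, and from $A^{-1} = (I+ND)^{-1}D^{-1}$ the identity
\[
	A^{-1} - D^{-1} = \bigl((I + ND)^{-1} - I\bigr)D^{-1} = -(I + ND)^{-1} N \in M_n(\Z_p)
\]
finishes the proof.

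I expect the only non-bookkeeping step to be the invertibility of $I + \Dbar\Nbar$ in the ``$\supseteq$'' direction: this is the single place where the hypothesis $\rk \Abar = \rk \Dbar$ is genuinely used, and it rests on the elementary observation that a perturbation of the identity whose image lies in a subspace $U$ can have kernel only inside $U$. Everything else reduces to the identities $A - D = -ACD$ and $D = (I+DC)A$ (and their inverses/transposes), which should be routine.
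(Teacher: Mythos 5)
Your proof is correct and follows essentially the same route as the paper's: elementary manipulation of the identities relating $A$, $D$, and $A^{-1}-D^{-1}$ for the forward inclusion, and for the converse the key step of showing $I+ND$ (in your version, its mirror $I+DN$) is invertible mod $p$ from the rank hypothesis. The only differences are cosmetic — the paper argues with kernels ($\ker\Abar=\ker\Dbar$ forces $\ker(\overline{I+ND})=0$) where you argue dually with images of $\Dbar$, and it obtains the membership $A\in D+DM_n(\Z_p)D$ via $D-A=DNA$ rather than your explicit $N=CAC-C$.
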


\begin{proof}
Suppose that $A \in M_n(\Z_p)$ is such that $\det A \ne 0$
and $A^{-1} - D^{-1} = N$ for some $N \in M_n(\Z_p)$.
Multiplying by $A$ on the left yields $I-AD^{-1} = AN$,
so $AD^{-1} \in M_n(\Z_p)$;
similarly $DA^{-1} \in M_n(\Z_p)$, so $AD^{-1} \in \GL_n(\Z_p)$,
and in particular $\rk \Dbar = \rk \Abar$.
Multiplying instead by $D$ on the left and $A$ on the right
yields $D-A=DNA=D(N \cdot AD^{-1})D \in D M_n(\Z_p) D$,
so $A \in D + D M_n(\Z_p) D$.

Conversely, suppose that $A=D+DND$ with $N \in M_n(\Z_p)$,
and $\rk \Abar = \rk \Dbar$.
Then $\Abar = \Dbar + \Dbar \Nbar \Dbar$,
so $\ker \Dbar \subseteq \ker \Abar$,
and the rank condition implies $\ker \Dbar = \ker \Abar$.
If $v \in \ker(\overline{I+ND})$,
then $v \in \ker \Abar = \ker \Dbar$; so both $\overline{I+ND}$
and $\overline{ND}$ kill $v$, so $v=0$.
Thus $\overline{I+ND} \in \GL_n(\F_p)$,
so $I+ND \in \GL_n(\Z_p)$.
Now $D^{-1} A = I + ND$, so its inverse $A^{-1} D$ is in $\GL_n(\Z_p)$ too.
Multiplying $A=D+DND$ by $A^{-1}$ on the left and $D^{-1}$ on the right
yields $D^{-1} = A^{-1} + A^{-1} D N$,
so $A^{-1}-D^{-1} = -(A^{-1} D) N \in M_n(\Z_p)$.
\end{proof}

\begin{corollary}
\label{C:Prob A^{-1} = D^{-1}}
Let $n$ be even, let $e_1,\ldots,e_{n/2} \in \Z_{\ge 0}$,
and let
\[
D=
\begin{pmatrix}
  0 & \diag(p^{e_1},\ldots,p^{e_{n/2}}) \\
 -\diag(p^{e_1},\ldots,p^{e_{n/2}}) & 0 \\
\end{pmatrix} \in M_n(\Z_p)_{\alt}.
\]
Let $m =2\#\{i \in \{1,\ldots,n/2\} : e_i=0\}$.
If $A \in M_n(\Z_p)_{\alt}$ is chosen at random
with respect to Haar measure, then
\[
	\Prob\left(\det A \ne 0 \textup{ and }
		A^{-1} - D^{-1} \in M_n(\Z_p)\right)
	= \frac{\#\GL_m(\F_p)_{\alt}}{\#M_m(\F_p)_{\alt}} |\det D|_p^{n-1}.
\]
\end{corollary}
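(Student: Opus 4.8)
The plan is to parametrize the set of good matrices with Lemma~\ref{L:A=D+DND}, push the normalized Haar measure on $M_n(\Z_p)_{\alt}$ through the resulting affine change of variables, and then reduce the surviving factor to a count of invertible alternating matrices over $\F_p$. By Lemma~\ref{L:same pairing}, the condition ``$\det A\ne 0$ and $A^{-1}-D^{-1}\in M_n(\Z_p)$'' is exactly the condition of Lemma~\ref{L:A=D+DND}, and since $\det D\ne 0$ that lemma identifies the set of $A\in M_n(\Z_p)$ satisfying it with $\{A\in D+D M_n(\Z_p)D:\rk\Abar=\rk\Dbar\}$. First I would check that intersecting with $M_n(\Z_p)_{\alt}$ just replaces $M_n(\Z_p)$ by $M_n(\Z_p)_{\alt}$ inside: if $A=D+DND$ is alternating with $N\in M_n(\Z_p)$, then $DND=A-D$ is alternating, and $N=D^{-1}(A-D)D^{-1}$ satisfies $x^t N x=-(D^{-1}x)^t(A-D)(D^{-1}x)=0$ for all $x$ (using $(D^{-1})^t=-D^{-1}$), hence $N\in M_n(\Z_p)_{\alt}$; conversely $D\,M_n(\Z_p)_{\alt}\,D\subseteq M_n(\Z_p)_{\alt}$ by the same identity $x^t(DND)x=-(Dx)^t N(Dx)$. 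Thus the good set equals $\phi(S)$, where $\phi(N)\colonequals D+DND$ and $S\colonequals\{N\in M_n(\Z_p)_{\alt}:\rk(\overline{D+DND})=\rk\Dbar\}$, and $\phi$ is injective on $M_n(\Q_p)_{\alt}$ since $\det D\ne 0$.

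Next I would compute how $\phi$ scales the measure. Translation by $D$ is measure-preserving and $N\mapsto DND$ is $\Z_p$-linear, so $\mu(\phi(S))=|\det(N\mapsto DND)|_p\cdot\mu(S)$. Now $N\mapsto DND$ is, up to the sign $(-1)^{\binom n2}$ from $DND=-(D^t N D)$, the congruence action $N\mapsto D^t N D$ of $D$ on the space of alternating bilinear forms on $\Z_p^n$, which has determinant $(\det D)^{n-1}$; hence the scaling factor is $|\det D|_p^{n-1}$. (Alternatively one can read this off the block shape of $D$: each $e_i$ enters the relevant coordinate weights with total multiplicity $2(n-1)$.)

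It remains to evaluate $\mu(S)$, which, since the defining condition depends only on $\Nbar\colonequals N\bmod p$, equals the probability that $\rk(\Dbar+\Dbar\Nbar\Dbar)=\rk\Dbar$ for $\Nbar$ uniform in $M_n(\F_p)_{\alt}$. Reorder so $e_1=\cdots=e_{m/2}=0$ and $e_{m/2+1},\ldots>0$; then $\Dbar$ is supported on the $m$ coordinates with $e_i=0$, where it equals the standard symplectic matrix $D_0\in M_m(\F_p)_{\alt}$, so $\rk\Dbar=m$. The block formula for $DND$ shows $\Dbar\Nbar\Dbar$ is supported on the same coordinates and equals $D_0\Nbar_0 D_0$, where $\Nbar_0\in M_m(\F_p)_{\alt}$ is the corresponding sub-block of $\Nbar$ and is uniform because extracting it is a coordinate projection. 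Hence $\rk(\Dbar+\Dbar\Nbar\Dbar)=\rk(D_0+D_0\Nbar_0 D_0)=\rk(I_m+\Nbar_0 D_0)$, and since $D_0^2=-I_m$ we have $I_m+\Nbar_0 D_0=(\Nbar_0-D_0)D_0$, which is invertible iff $\Nbar_0-D_0\in\GL_m(\F_p)$. As $\Nbar_0-D_0$ is again uniform in $M_m(\F_p)_{\alt}$, this probability is $\#\GL_m(\F_p)_{\alt}/\#M_m(\F_p)_{\alt}$; multiplying by $|\det D|_p^{n-1}$ gives the claimed formula.

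I expect the only delicate points to be the determinant identity $|\det(N\mapsto DND)|_p=|\det D|_p^{n-1}$ and the verification that restricting to alternating matrices is harmless; the latter, at $p=2$, genuinely needs the characterization ``$N$ alternating $\iff x^t N x=0$ for all $x$'' rather than mere antisymmetry. Everything else is routine bookkeeping.
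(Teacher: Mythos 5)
Your proof is correct, and it follows essentially the same route as the paper: both reduce to Lemma~\ref{L:A=D+DND} and factor the probability into the volume $|\det D|_p^{n-1}$ of the coset condition $A \in D + D M_n(\Z_p) D$ times the mod-$p$ probability $\#\GL_m(\F_p)_{\alt}/\#M_m(\F_p)_{\alt}$ of the rank condition. The only difference is bookkeeping: the paper reads both factors off the entrywise congruences $a_{ij}\equiv d_{ij}\pmod{p^{e_i+e_j}\Z_p}$ and the invertibility of the free minor $\Bbar$, while you parametrize the alternating matrices in the coset by alternating $N$ (a point you rightly verify, including the $p=2$ subtlety) and invoke the determinant $(\det D)^{n-1}$ of the congruence action on alternating matrices; both computations are sound and give the same two factors.
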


\begin{proof}
Let $e_{n/2+i}=e_i$ for $i=1,\ldots,n/2$.
For $A \in M_n(\Z_p)_{\alt}$,
the condition $A \in D + D M_n(\Z_p) D$
is equivalent to $a_{ij} \equiv d_{ij} \pmod{p^{e_i} p^{e_j} \Z_p}$ for all $i<j$,
so
\[
	\Prob\left(A \in D + D M_n(\Z_p) D \right)
	=\prod_{i<j} p^{-e_i} p^{-e_j}
	= \prod_{i=1}^n (p^{-e_i})^{n-1}
	= |\det D|_p^{n-1}.
\]
Let $B \in M_m(\Z_p)_{\alt}$
be the minor formed by the entries $a_{ij}$ such that $e_i=e_j=0$.
The condition $\rk \Abar = \rk \Dbar$ is equivalent to $\Bbar \in \GL_m(\F_p)$,
which is independent of the congruences above
and which holds with probability $\#\GL_m(\F_p)_{\alt}/\#M_m(\F_p)_{\alt}$.
Multiplying yields the result, by Lemma~\ref{L:A=D+DND}.
\end{proof}

Combining Corollary~\ref{C:Prob A^{-1} = D^{-1}}
with Lemma~\ref{L:same pairing} yields

\begin{corollary}
\label{C:A=D}
Retain the notation of Corollary~\ref{C:Prob A^{-1} = D^{-1}}.  Then
\[
	\Prob\left([\;,\;]_A = [\;,\;]_D\right)
	= \frac{\#\GL_m(\F_p)_{\alt}}{\#M_m(\F_p)_{\alt}} |\det D|_p^{n-1}.
\]
\end{corollary}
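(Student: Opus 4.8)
The plan is to read this corollary as nothing more than a bookkeeping combination of the two immediately preceding results, Lemma~\ref{L:same pairing} and Corollary~\ref{C:Prob A^{-1} = D^{-1}}; no genuinely new argument is needed, only a careful matching of events.

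First I would check that Lemma~\ref{L:same pairing} actually applies to the matrix $D$ of Corollary~\ref{C:Prob A^{-1} = D^{-1}}: that $D$ has determinant $\det D = \prod_{i=1}^{n/2} p^{2e_i} \ne 0$, so the hypothesis $\det D \ne 0$ holds. Then Lemma~\ref{L:same pairing} tells us that for $A \in M_n(\Z_p)_{\alt}$ the condition $[\;,\;]_A = [\;,\;]_D$ is equivalent to the conjunction $\det A \ne 0$ and $A^{-1} - D^{-1} \in M_n(\Z_p)$. Consequently the two subsets
\[
  \{A \in M_n(\Z_p)_{\alt} : [\;,\;]_A = [\;,\;]_D\}
  \quad\text{and}\quad
  \{A \in M_n(\Z_p)_{\alt} : \det A \ne 0 \text{ and } A^{-1}-D^{-1} \in M_n(\Z_p)\}
\]
coincide as sets, hence have the same Haar measure.

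Next I would simply invoke Corollary~\ref{C:Prob A^{-1} = D^{-1}}, which evaluates the Haar measure of the second set as $\frac{\#\GL_m(\F_p)_{\alt}}{\#M_m(\F_p)_{\alt}}\,|\det D|_p^{n-1}$ with $m = 2\#\{i : e_i=0\}$, exactly the claimed value. That finishes the proof.

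The only point that needs a word of care — and it is not really an obstacle — is the standing convention that $[\;,\;]_A$ is defined via $A^{-1}$, as in~\eqref{E:bracket A}, only when $\det A \ne 0$; the equality $[\;,\;]_A = [\;,\;]_D$ is thus implicitly taken to include the requirement $\det A \ne 0$, which is precisely how Lemma~\ref{L:same pairing} is phrased, so the identification of the two events above is unambiguous. With that understood, the corollary is immediate.
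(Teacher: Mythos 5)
Your proposal is correct and is exactly the paper's argument: the paper introduces the corollary with the single phrase ``Combining Corollary~\ref{C:Prob A^{-1} = D^{-1}} with Lemma~\ref{L:same pairing} yields,'' which is precisely your identification of the two events followed by the probability computation. Your added remark on the convention that $[\;,\;]_A$ presupposes $\det A \ne 0$ is a fair clarification but not a departure from the paper's route.
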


\begin{corollary}
\label{C:probability of pairing}
Fix {\em any} alternating pairing
$[\;,\;] \colon \Z_p^n \times \Z_p^n \to \Q/\Z$
inducing a nondegenerate pairing on a finite quotient $G$ of $\Z_p^n$.
Let $m=n-\dim_{\F_p} G[p]$.
If $A \in M_n(\Z_p)_{\alt}$ is chosen at random, then
\begin{equation}
\label{E:prob of pairing}
	\Prob\left([\;,\;]_A = [\;,\;]\right)
	= \frac{\#\GL_m(\F_p)_{\alt}}{\#M_m(\F_p)_{\alt}} (\#G)^{1-n}.
\end{equation}
\end{corollary}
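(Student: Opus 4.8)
The plan is to reduce the general case to Corollary~\ref{C:A=D} by classifying alternating pairings on $\Z_p^n$ up to the $\GL_n(\Z_p)$-action. First I would observe that the statement depends only on the pairing $[\;,\;]$ through the quantities appearing on the right-hand side, namely $n$ and $\#G$, so it suffices to prove it for one pairing in each $\GL_n(\Z_p)$-orbit. Indeed, if $M \in \GL_n(\Z_p)$ and we replace $[\;,\;]$ by its pullback along $M$, then by the change-of-basis description in Section~\ref{S:pairings} the matrix $D$ representing it changes to $M^t D M$; since Haar measure on $M_n(\Z_p)_{\alt}$ is invariant under $A \mapsto M^t A M$ (the map is measure-preserving, being linear with determinant a unit), the event $[\;,\;]_A = [\;,\;]$ has the same probability for both pairings. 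Moreover $M^t D M$ is nonsingular iff $D$ is, the finite quotient $G = \coker D$ is replaced by an isomorphic group, and $\dim_{\F_p} G[p]$ is unchanged, so every quantity in~\eqref{E:prob of pairing} is invariant under the orbit.

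Next I would argue that $[\;,\;]$ must in fact be of the form $[\;,\;]_D$ for a nonsingular alternating $D \in M_n(\Z_p)_{\alt}$. The hypothesis is that $[\;,\;]$ induces a nondegenerate pairing on a finite quotient $G = \Z_p^n/H$. Nondegeneracy on $G$ forces $H$ to be exactly the "radical" $\{x : [x, \Z_p^n] = 0\}$, hence $H$ is saturated, hence $H$ is a direct summand and $G$ is finite; so $H = p^? \cdots$ need not hold but $H$ has finite index, forcing $H$ to have rank $n$. Writing the Gram matrix $D_0 \in M_n(\Z_p)_{\alt}$ of $[\;,\;]$ with respect to the standard basis, the condition that the induced pairing land in $\Q/\Z$ and be nondegenerate on $\Z_p^n/H$ translates, after unwinding~\eqref{E:bracket A}, into $D_0$ being invertible over $\Q_p$ with $\coker D_0 \isom G$; I would spell this out using the structure theory of $M_n(\Z_p)_{\alt}$ under $\GL_n(\Z_p)$, namely that every nonsingular element is equivalent to the block-diagonal standard form
\[
D=
\begin{pmatrix}
  0 & \diag(p^{e_1},\ldots,p^{e_{n/2}}) \\
 -\diag(p^{e_1},\ldots,p^{e_{n/2}}) & 0 \\
\end{pmatrix},
\]
(in particular $n$ is even) with $\coker D \isom \bigoplus_i (\Z/p^{e_i})^2$. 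Then $\#G = \prod_i p^{2 e_i} = |\det D|_p^{-1}$ and $\dim_{\F_p} G[p] = 2\#\{i : e_i > 0\} = n - m$, i.e.\ $m = 2\#\{i : e_i = 0\}$ as in Corollary~\ref{C:Prob A^{-1} = D^{-1}}.

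Finally I would combine these reductions: by orbit-invariance we may take $[\;,\;] = [\;,\;]_D$ with $D$ in the standard form above, and then Corollary~\ref{C:A=D} gives
\[
	\Prob\left([\;,\;]_A = [\;,\;]_D\right)
	= \frac{\#\GL_m(\F_p)_{\alt}}{\#M_m(\F_p)_{\alt}} |\det D|_p^{n-1}
	= \frac{\#\GL_m(\F_p)_{\alt}}{\#M_m(\F_p)_{\alt}} (\#G)^{1-n},
\]
using $|\det D|_p = (\#G)^{-1}$. I expect the main obstacle to be the bookkeeping in the second step: carefully checking that the abstract hypothesis "$[\;,\;]$ induces a nondegenerate pairing on a finite quotient $G$" really does force the Gram matrix into a nonsingular alternating matrix over $\Z_p$ with the stated cokernel (in particular that the ambient rank-$n$ module is exactly the dual lattice $L^T$ in the notation of Section~\ref{S:pairings}, so that $[\;,\;]_D$ and $[\;,\;]$ genuinely agree as maps $\Z_p^n \times \Z_p^n \to \Q/\Z$ and not merely after passing to $G$), together with recalling or citing the elementary-divisor normal form for alternating matrices over a PID. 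Everything else is either invariance of Haar measure or a direct appeal to Corollary~\ref{C:A=D}.
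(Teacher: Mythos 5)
Your proposal is correct and takes essentially the same route as the paper, whose proof likewise invokes the structure theorem for symplectic modules over a principal ideal domain to write $[\;,\;]=[\;,\;]_{M^t D M}$ with $M \in \GL_n(\Z_p)$ and $D$ in the standard form of Corollary~\ref{C:Prob A^{-1} = D^{-1}}, and then reduces to Corollary~\ref{C:A=D} by the change of basis (your Haar-measure-invariance observation for $A \mapsto M^t A M$ is exactly the justification the paper leaves implicit, as is the bookkeeping $|\det D|_p = (\#G)^{-1}$ and $m = 2\#\{i : e_i = 0\}$). One small slip in your second step: nondegeneracy on $G$ does not make $H = \ker(\Z_p^n \to G)$ saturated (a saturated submodule with finite quotient would force $G=0$); but since all you actually use is that $H$ has finite index, hence rank $n$, before citing the normal form for alternating forms over a PID, this does not affect the argument.
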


\begin{proof}
The structure theorem for symplectic modules over principal ideal domains
implies that there exists a change-of-basis matrix $M \in \GL_n(\Z_p)$ and
a matrix $D$ as in Corollary~\ref{C:Prob A^{-1} = D^{-1}}
such that $[\;,\;]=[\;,\;]_{M^t D M}$.
The change of basis reduces the statement to Corollary~\ref{C:A=D}.
\end{proof}

The fraction on the right side of~\eqref{E:prob of pairing}
can be evaluated:

\begin{lemma}
\label{L:invertible alternating matrices}
For $m \in 2\Z_{\ge 0}$,
\[
	\frac{\#\GL_m(\F_p)_{\alt}}{\#M_m(\F_p)_{\alt}}
	= \prod_{i=1}^{m/2} (1-p^{1-2i}).
\]
\end{lemma}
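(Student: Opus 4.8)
The plan is to count $\#\GL_m(\F_p)_{\alt}$ and $\#M_m(\F_p)_{\alt}$ separately and take the ratio. The denominator is immediate: an alternating $m\times m$ matrix over $\F_p$ is determined freely by its entries strictly above the diagonal, so $\#M_m(\F_p)_{\alt}=p^{\binom{m}{2}}$. The work is in the numerator, i.e.\ counting nondegenerate alternating forms on $\F_p^m$; equivalently, since all nondegenerate alternating forms on a fixed even-dimensional space are equivalent (the symplectic form is unique up to isomorphism), $\#\GL_m(\F_p)_{\alt}$ equals the size of the $\GL_m(\F_p)$-orbit of a fixed symplectic form under the action $A\mapsto M^tAM$. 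By orbit-stabilizer, this orbit has size $\#\GL_m(\F_p)/\#\Sp_m(\F_p)$, where $\Sp_m(\F_p)$ is the symplectic group (the stabilizer of the standard form).

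So the plan reduces to invoking the standard order formulas
\[
	\#\GL_m(\F_p) = p^{\binom{m}{2}}\prod_{i=1}^{m}(p^i-1),
	\qquad
	\#\Sp_m(\F_p) = p^{(m/2)^2}\prod_{i=1}^{m/2}(p^{2i}-1),
\]
valid for $m$ even, and then simplifying
\[
	\frac{\#\GL_m(\F_p)_{\alt}}{\#M_m(\F_p)_{\alt}}
	= \frac{\#\GL_m(\F_p)}{\#\Sp_m(\F_p)\cdot p^{\binom{m}{2}}}.
\]
Writing $m=2g$, one checks $\binom{m}{2} - g^2 = g^2 - g$, so the powers of $p$ contribute $p^{g^2}/(p^{g^2-g}\cdot p^{g^2-g})$... more carefully: the $p^{\binom{m}{2}}$ from $\GL$ cancels against $p^{\binom{m}{2}}$ from $M_m(\F_p)_{\alt}$, leaving $\prod_{i=1}^{m}(p^i-1)$ over $p^{g^2}\prod_{i=1}^{g}(p^{2i}-1)$. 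Splitting $\prod_{i=1}^{2g}(p^i-1)$ into even and odd indices, the even part $\prod_{i=1}^{g}(p^{2i}-1)$ cancels, leaving $\prod_{i=1}^{g}(p^{2i-1}-1)$ over $p^{g^2}$. Finally $\sum_{i=1}^{g}(2i-1)=g^2$ lets one absorb the $p^{g^2}$ into the product as $\prod_{i=1}^{g}\bigl(1-p^{1-2i}\bigr)$, which with $g=m/2$ is exactly the claimed formula.

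If one prefers to avoid quoting $\#\Sp_m(\F_p)$, an alternative is a direct recursion on $m$: pick the first basis vector $e_1$ (any nonzero vector: $p^m-1$ choices), then a vector $f_1$ with $\calA(e_1,f_1)=1$ (the pairing $\calA(e_1,-)$ is a nonzero functional, so the fiber over $1$ has size $p^{m-1}$), then observe the form restricts to a nondegenerate alternating form on $\langle e_1,f_1\rangle^\perp\cong\F_p^{m-2}$, giving $\#\GL_m(\F_p)_{\alt} = (p^m-1)p^{m-1}\cdot\#\GL_{m-2}(\F_p)_{\alt}$; unwinding this recursion and dividing by $p^{\binom{m}{2}}$ yields the same product. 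There is no real obstacle here — the only thing requiring care is the bookkeeping of powers of $p$ when passing between $\binom{m}{2}$, $g^2$, and $\sum(2i-1)$, and the parity split of the index range $1,\dots,2g$. I would present the orbit-stabilizer argument as the main line, since it is shortest, and cite a standard reference for the orders of $\GL_m$ and $\Sp_m$.
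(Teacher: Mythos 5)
Your main argument is correct, and it takes a genuinely different route from the paper. You compute $\#\GL_m(\F_p)_{\alt}$ by orbit--stabilizer: all nondegenerate alternating forms on $\F_p^m$ are equivalent, so $\#\GL_m(\F_p)_{\alt}=\#\GL_m(\F_p)/\#\Sp_m(\F_p)$, and the product formula drops out of the standard orders of $\GL_m$ and $\Sp_m$ (your power-of-$p$ bookkeeping, after the self-correction, comes out right). The paper instead runs a self-contained induction directly on the matrix: the first column of an invertible alternating matrix has $p^{m-1}-1$ possibilities (its diagonal entry is forced to be $0$, and it must be nonzero); a change of basis fixing $(1,0,\ldots,0)^t$ shows that the number of ways to complete the matrix is independent of this choice; and when the first column is $(0,1,0,\ldots,0)^t$ there are $p^{m-2}$ choices for the second column, after which the lower $(m-2)\times(m-2)$ block is an arbitrary element of $\GL_{m-2}(\F_p)_{\alt}$. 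This gives $\#\GL_m(\F_p)_{\alt}=(p^{m-1}-1)\,p^{m-2}\,\#\GL_{m-2}(\F_p)_{\alt}$, and dividing by $\#M_m(\F_p)_{\alt}=p^{\binom{m}{2}}$ yields the lemma. Your route is shorter but imports the order of $\Sp_m(\F_p)$ and the uniqueness of the symplectic form; the paper's is elementary and needs neither.

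One caution about your proposed alternative: the recursion $\#\GL_m(\F_p)_{\alt}=(p^m-1)\,p^{m-1}\,\#\GL_{m-2}(\F_p)_{\alt}$ is false. Choosing $e_1$ and then $f_1$ with $\calA(e_1,f_1)=1$ is not part of the data of the form, so what you are counting is pairs consisting of a nondegenerate form together with a hyperbolic pair; that is, you have written the recursion for $\#\Sp_m(\F_p)$ (the number of symplectic bases of a fixed form), not for the number of forms. Already for $m=2$ it gives $p(p^2-1)=\#\Sp_2(\F_p)$, whereas $\#\GL_2(\F_p)_{\alt}=p-1$. The correct elementary recursion is exactly the paper's: $p^{m-1}-1$ choices of first column (not $p^m-1$, since the diagonal entry vanishes), followed by a change-of-basis normalization before counting the $p^{m-2}$ choices for the second column. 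Since your main line does not rely on this alternative, the proof as proposed still stands.
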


\begin{proof}
There are $p^{m-1}-1$ possibilities for the first column of
a matrix in $\GL_m(\F_p)_{\alt}$,
and the number of choices for the rest of the matrix is independent of
this first choice,
as one sees by performing a change of basis of $\F_p^n$ 
fixing $(1,0,\ldots,0)^t$.
If the first column is $(0,1,0,0,\ldots,0)^t$,
there are $p^{m-2}$ possibilities for the second column
(it has the shape $(1,0,*,\cdots,*)$),
and then the lower $(m-2) \times (m-2)$ block is an arbitrary element
of $\GL_{m-2}(\F_p)_{\alt}$.
Thus
\[
	\#\GL_m(\F_p)_{\alt} = (p^{m-1}-1) p^{m-2} \#\GL_{m-2}(\F_p)_{\alt}.
\]
Induction on $m$ yields $\#\GL_m(\F_p)_{\alt}$,
and we divide by $\#M_m(\F_p)_{\alt} = p^{\binom{m}{2}}$.
\end{proof}

The following will be used in Section~\ref{S:comparison}.

\begin{lemma}
\label{L:kernel at least n/2}
The probability that a random $A \in M_n(\F_p)_{\alt}$
satisfies $\dim \ker A \ge n/2$
tends to $0$ as $n \to \infty$.
\end{lemma}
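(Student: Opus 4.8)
The plan is to estimate the number of alternating matrices $A \in M_n(\F_p)_{\alt}$ with $\dim\ker A \ge n/2$ and show that, divided by $\#M_n(\F_p)_{\alt} = p^{\binom{n}{2}}$, this ratio tends to $0$. The basic idea is a union bound over the possible kernels. Write $k = \dim\ker A$; we will sum over $k \ge n/2$. First I would fix an even integer $r$ with $0 \le r \le n$ (the rank $n-k$), noting that the rank of an alternating matrix is always even, so effectively $k$ ranges over values with $n-k$ even. For a fixed subspace $U \subseteq \F_p^n$ of dimension $k$, the alternating matrices $A$ with $\ker A \supseteq U$ are exactly those whose associated alternating bilinear form factors through $\F_p^n/U$; the space of such forms has dimension $\binom{n-k}{2}$. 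The number of $k$-dimensional subspaces $U$ is the Gaussian binomial coefficient $\binom{n}{k}_p$, which is at most $p^{k(n-k)}$ (up to a bounded constant, say $\le C p^{k(n-k)}$ for an absolute constant $C$, e.g.\ $C = \prod_{i\ge 1}(1-p^{-i})^{-1}$).

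Combining these, the number of alternating $A$ with $\dim\ker A \ge n/2$ is at most
\[
\sum_{\substack{n/2 \le k \le n}} C\, p^{k(n-k)} \, p^{\binom{n-k}{2}},
\]
where I have bounded $\#\{A : \ker A \supseteq U\}$ by $p^{\binom{n-k}{2}}$ and then summed the indicator $[\ker A \supseteq U]$ over all $U$ of dimension exactly $k$ (this overcounts, which is fine for an upper bound). Dividing by $p^{\binom{n}{2}}$, the $k$-th term becomes $C\, p^{k(n-k) + \binom{n-k}{2} - \binom{n}{2}}$. Writing $j = n-k$, so $0 \le j \le n/2$, the exponent is
\[
(n-j)j + \binom{j}{2} - \binom{n}{2} = nj - j^2 + \tfrac{j(j-1)}{2} - \tfrac{n(n-1)}{2}.
\]
A short computation simplifies this to $-\tfrac{1}{2}\bigl((n-j)^2 - (n-j) - j^2\bigr) = -\tfrac{1}{2}(n-2j)\,(n-j) + \tfrac{1}{2}(n-j)$; the point is that for $j \le n/2$ one has $n - 2j \ge 0$, and the dominant term $-\tfrac{1}{2}(n-2j)(n-j)$ is $\le -\tfrac{1}{4}(n-2j)\cdot n$ when $j \le n/2$ (since $n - j \ge n/2$). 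So each term is bounded by $C\, p^{-\frac{n}{4}(n-2j) + \frac{n}{2}}$, and summing the geometric-type series over $j$ from $0$ to $n/2$ gives a bound of the shape $C'\, p^{-c n}$ for absolute constants $C', c > 0$ (the $j = n/2$ term, which is the largest, contributes $C p^{n/2 - \text{something}}$ — one must double-check that even this extreme term goes to $0$, which it does because the constraint $n - k$ even forces $j$ to be an even integer, so when $n$ is even the term $j = n/2$ requires $n/2$ even, and in any case the correction term $+\tfrac12(n-j)$ is swamped once $n-2j \ge 1$; for the single borderline case one verifies directly that $\binom{n/2}{2} + (n/2)^2 - \binom{n}{2}$ is a large negative multiple of $n$). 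Hence the probability tends to $0$.

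The main obstacle I anticipate is bookkeeping near the boundary $k = n/2$: the naive exponent count is tightest there, and one must be careful that the overcounting in the union bound (a matrix with kernel of dimension $> k$ is counted for many $U$) does not actually matter, and that the arithmetic genuinely yields a negative exponent growing linearly in $n$ rather than a bounded or positive one. A cleaner alternative that sidesteps delicate constants: condition on the first $\lceil n/2 \rceil$ columns and observe that once the rank has reached $\lceil n/2\rceil$ the remaining columns are already determined on a subspace of codimension $\lceil n/2\rceil$, forcing independent linear conditions; but the union-bound computation above is more elementary and self-contained, so I would present that, relegating the boundary check to a one-line verification that $k(n-k) + \binom{n-k}{2} - \binom{n}{2} \to -\infty$ uniformly for $k \ge n/2$, which is a routine inequality.
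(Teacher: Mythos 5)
Your approach is essentially the paper's: bound the count by pairing matrices with subspaces contained in their kernels, using that the alternating matrices vanishing on a $k$-dimensional subspace $U$ are exactly the alternating forms on $\F_p^n/U$ (of which there are $p^{\binom{n-k}{2}}$) and that there are $O(p^{k(n-k)})$ choices of $U$; the paper merely streamlines the bookkeeping by fixing $k=\lceil n/2\rceil$ and counting pairs $(A,K)$ with $K$ a $k$-dimensional subspace of $\ker A$, which avoids your sum over $k$ entirely. One correction to your algebra: the exponent $k(n-k)+\binom{n-k}{2}-\binom{n}{2}$ simplifies exactly to $-\binom{k}{2}$ (with $j=n-k$, to $-\tfrac12\bigl((n-j)^2-(n-j)\bigr)$), not to your expression $-\tfrac12\bigl((n-j)^2-(n-j)-j^2\bigr)$, which is too large by $j^2/2$; your slip is in the conservative direction, but it is what created the spurious worry at the boundary $j=n/2$. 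With the correct value, every term with $k\ge n/2$ is at most a constant times $p^{-\binom{\lceil n/2\rceil}{2}}=p^{-n(n-2)/8+O(n)}$, so the sum of at most $n$ terms tends to $0$ and no delicate boundary check is needed.
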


\begin{proof}
Let $k=\lceil n/2 \rceil$.
The number of $A \in M_n(\F_p)_{\alt}$ with $\dim \ker A \ge n/2$
is at most the number of pairs $(A,K)$ with $A \in M_n(\F_p)_{\alt}$
and $K$ a $k$-dimensional subspace of $\ker A$.
The number of $K$'s is $O(p^{k(n-k)})$.
For each $K$, the $A$'s vanishing on $K$
correspond to alternating maps from the $(n-k)$-dimensional space
$\F_p^n/K$ to its dual,
of which there are $p^{(n-k)(n-k-1)/2}$.
Thus the total number of $A$ with $\dim \ker A \ge n/2$ is at most
\[
	O(p^{k(n-k)}) \cdot p^{(n-k)(n-k-1)/2} = O(p^{(n-k)(n+k-1)/2}).
\]
Dividing by $\# M_n(\F_p)_{\alt} = p^{n(n-1)/2}$ yields $O(p^{-k(k-1)/2})$,
which tends to $0$ as $n \to \infty$.
\end{proof}

\begin{remark}
In fact, Lemma~\ref{L:kernel at least n/2} remains true
if $n/2$ is replaced by any function of $n$ tending to $\infty$,
but the $n/2$ version suffices for our application.
\end{remark}

\subsection{The distribution in the nonsingular case}

The following theorem states that the limit $\Adist_0$ 
in Theorem~\ref{T:A=T}\eqref{I:Adist exists} 
exists, and provides an explicit formula for its value:

\begin{theorem}
\label{T:u=0}
For each symplectic $p$-group $G$,
if $A$ is chosen at random in $M_n(\Z_p)_{\alt}$
with respect to Haar measure for even $n$,
then
\[
	\lim_{\substack{n \to \infty \\ n \textup{ even}}} \Prob\left( \coker A \isom G \right)
	= \frac{\# G}{\#\Sp(G)} \prod_{i=1}^\infty (1-p^{1-2i}).
\]
Moreover, the sum of the right side over all such $G$ equals $1$.
\end{theorem}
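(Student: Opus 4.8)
The plan is to compute $\Prob(\coker A \isom G)$ exactly for every even $n$, pass to the limit $n \to \infty$, and verify separately that the resulting masses sum to $1$. For the first point I would observe that, on the full-measure set where $\det A \ne 0$, the construction of Section~\ref{S:nonsingular} makes $(\coker A, \langle\;,\;\rangle_A)$ a symplectic $p$-group, with $\langle\;,\;\rangle_A$ induced by the pairing $[\;,\;]_A$ on $\Z_p^n$. Fixing a symplectic structure $[\;,\;]_G$ on $G$, and using that two symplectic abelian groups are isomorphic exactly when their underlying groups are, the event $\{\coker A \isom G\}$ coincides up to a null set with the event that $[\;,\;]_A$ lies in the set $\mathcal{P}_{G,n}$ of alternating pairings $\Z_p^n \times \Z_p^n \to \Q/\Z$ whose nondegenerate quotient is symplectically isomorphic to $(G, [\;,\;]_G)$. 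Each such pairing factors through $(\Z/p^c\Z)^n$, where $p^c$ is the exponent of $G$, so $\mathcal{P}_{G,n}$ is finite, and Corollary~\ref{C:probability of pairing} — whose value depends only on $\#G$ and $\dim_{\F_p} G[p]$, hence is constant over $\mathcal{P}_{G,n}$ — gives
\[
\Prob(\coker A \isom G) = \#\mathcal{P}_{G,n} \cdot \frac{\#\GL_m(\F_p)_{\alt}}{\#M_m(\F_p)_{\alt}}\,(\#G)^{1-n}, \qquad m = n - \dim_{\F_p} G[p].
\]

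Next I would count $\mathcal{P}_{G,n}$ by showing that $\pi \mapsto \pi^*[\;,\;]_G$ is a map $\Surj(\Z_p^n, G) \to \mathcal{P}_{G,n}$ that is surjective — compose $\Z_p^n \to \Z_p^n/\operatorname{rad}(P)$ with a symplectic isomorphism onto $(G,[\;,\;]_G)$ — and whose fibers are exactly the orbits of the free $\Sp(G)$-action on $\Surj(\Z_p^n, G)$, so that $\#\mathcal{P}_{G,n} = \#\Surj(\Z_p^n, G)/\#\Sp(G)$. A Nakayama argument (a homomorphism $\Z_p^n \to G$ is surjective iff it is so modulo $p$) gives $\#\Surj(\Z_p^n, G) = (\#G)^n \prod_{i=0}^{d-1}(1 - p^{i-n})$ with $d = \dim_{\F_p} G[p]$, which is even because $G$, being symplectic, is a square $H \oplus H$; hence for even $n$ the integer $m = n - d$ is even and Lemma~\ref{L:invertible alternating matrices} applies. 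Assembling the three quantities,
\[
\Prob(\coker A \isom G) = \frac{\#G}{\#\Sp(G)} \prod_{i=0}^{d-1}(1 - p^{i-n}) \prod_{i=1}^{(n-d)/2}(1 - p^{1-2i}),
\]
and letting $n \to \infty$ through even integers sends the first product to $1$ and the second to $\prod_{i=1}^\infty (1 - p^{1-2i})$, which is the asserted limit.

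For the total mass, I would use that for each even $n$ the cokernel $\coker A$ is finite — equivalently, isomorphic to some symplectic $p$-group — if and only if $\det A \ne 0$, and that on $M_n(\Z_p)_{\alt} \isom \Aff^{\binom{n}{2}}_{\Z_p}$ one has $\det = \operatorname{Pf}^2$ with $\operatorname{Pf}$ a nonzero polynomial, so $\{\det A = 0\}$ has strictly smaller dimension and thus measure $0$ by Proposition~\ref{P:measure}\eqref{I:lower dimensional}. Therefore $\sum_G \Prob(\coker A \isom G) = \Prob(\det A \ne 0) = 1$ for every even $n$. Writing $f_n(G) := \Prob(\coker A \isom G)$, $f(G) := \frac{\#G}{\#\Sp(G)} \prod_{i \ge 1}(1 - p^{1-2i})$, and $c := \prod_{i \ge 1}(1 - p^{1-2i}) > 0$, the formula above gives $0 \le f_n(G) \le c^{-1} f(G)$; Fatou's lemma then shows $\sum_G f(G) \le 1$, so $c^{-1} f$ is a summable dominating function and dominated convergence yields $\sum_G f(G) = \lim_n \sum_G f_n(G) = 1$.

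I expect the only step needing real care to be the identification $\#\mathcal{P}_{G,n} = \#\Surj(\Z_p^n, G)/\#\Sp(G)$ — pinning down the surjectivity and exact fiber structure of $\pi \mapsto \pi^*[\;,\;]_G$, which leans on the cited fact that a finite abelian $p$-group carries a nondegenerate alternating pairing essentially uniquely. Everything else is bookkeeping with Corollary~\ref{C:probability of pairing} and Lemma~\ref{L:invertible alternating matrices} together with a routine exchange of limit and sum.
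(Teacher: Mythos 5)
Your proposal is correct and follows essentially the same route as the paper's proof: counting the admissible pairings as $\#\Surj(\Z_p^n,G)/\#\Sp(G)$ via the free $\Sp(G)$-action, applying Corollary~\ref{C:probability of pairing} and Lemma~\ref{L:invertible alternating matrices}, and handling the total mass by the measure-zero locus $\det A=0$ together with Fatou and dominated convergence with the dominating function $\#G/\#\Sp(G)$. The only differences (writing out $\#\Surj(\Z_p^n,G)$ exactly and invoking the Pfaffian) are cosmetic.
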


\begin{proof}
Define 
\[
	\pi_n(G) \colonequals \Prob\left( \coker A \isom G \right), \quad\textup{and}\quad \pi(G) \colonequals \lim_{\substack{n \to \infty \\ n \textup{ even}}} \pi_n(G).
\]
Let $m=n-\dim_{\F_p} G[p] \in 2\Z_{\ge 0}$.
Given a surjection $f \colon \Z_p^n \to G$,
we may pull back the pairing on $G$ to obtain an alternating pairing
on $\Z_p^n$.
This defines a surjection from $\Surj(\Z_p^n,G)$
to the set of alternating pairings
$[\;,\;] \colon \Z_p^n \times \Z_p^n \to \Q/\Z$
such that the induced $S$-group is isomorphic to $G$.
Each fiber is the orbit of a free action of $\Sp(G)$ on $\Surj(\Z_p^n,G)$
(by post-composition),
so the number of such $[\;,\;]$'s is $\# \Surj(\Z_p^n,G) / \#\Sp(G)$.
By Corollary~\ref{C:probability of pairing},
the probability that $[\;,\;]_A$ equals any fixed one of these $[\;,\;]$
equals
\[
	\frac{\#\GL_m(\F_p)_{\alt}}{\#M_m(\F_p)_{\alt}} (\#G)^{1-n}.
\]
Multiplying yields
\begin{equation}
\label{E:coker prob for finite n}
	\pi_n(G) = \frac{\# \Surj(\Z_p^n,G)}{\#\Sp(G)}
	\frac{\#\GL_m(\F_p)_{\alt}}{\#M_m(\F_p)_{\alt}} (\#G)^{1-n}.
\end{equation}
As $n \to \infty$, we have $m \to \infty$ through even integers,
and $\#\Surj(\Z_p^n,G)/(\#G)^n \to 1$
since almost all homomorphisms $\Z_p^n \to G$ are surjective,
so by Lemma~\ref{L:invertible alternating matrices},
we obtain
\[
	\pi(G) = \frac{\# G}{\#\Sp(G)} \prod_{i=1}^\infty (1-p^{1-2i}).
\]

It remains to prove that $\sum_G \pi(G)=1$.
For fixed $n$, the event that $\coker A$ is infinite
corresponds to the $\Z_p$-points of a hypersurface in 
the affine space of alternating matrices,
so Proposition~\ref{P:measure}\eqref{I:lower dimensional}
shows that it has probability~$0$; thus $\sum_G \pi_n(G)=1$.
By Fatou's lemma, $\sum_G \pi(G) \le 1$.
In particular,
\[
	\sum_G \frac{\#G}{\#\Sp(G)} \le \prod_{i=1}^\infty (1-p^{1-2i})^{-1} 
	< \infty.
\]
In \eqref{E:coker prob for finite n}, we have
$\# \Surj(\Z_p^n,G) \le \#G^n$
and $\#\GL_m(\F_p)_{\alt} \le \#M_m(\F_p)_{\alt}$,
so $\pi_n(G) \le \#G/\#\Sp(G)$;
this lets us apply the dominated convergence theorem to deduce
\[
	\sum_G \pi(G) = \sum_G \lim \pi_n(G) = \lim \sum_G \pi_n(G) = 1.\qedhere
\]
\end{proof}

\subsection{The distribution in the singular case}
\label{S:singular distribution}

Fix $n \ge 0$.
The variety $\Aff^{n(n-1)/2}$ parametrizing
alternating $n \times n$ matrices over any field
can be stratified according to the rank of the matrix.
Namely, given an integer $r$ with $0 \le r \le n$ and $n-r$ even,
let $\stratum_{n,r}$ be the locally closed subvariety
parametrizing alternating $n \times n$ matrices of rank $n-r$.
(The hypotheses on $r$ are needed
to ensure that $\stratum_{n,r}$ is nonempty.)
The existence of symplectic bases shows that $\stratum_{n,r}$
is a homogeneous space for the action of $\GL_n$ on $\Aff^{n(n-1)/2}$.

\begin{lemma}
\label{L:dim stratum}
$\dim \stratum_{n,r} = \binom{n}{2} - \binom{r}{2} \equalscolon d$.
\end{lemma}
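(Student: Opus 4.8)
The plan is to compute $\dim \stratum_{n,r}$ by realizing it as a homogeneous space and then applying the orbit--stabilizer relation $\dim \stratum_{n,r} = \dim \GL_n - \dim \Stab$, where $\Stab$ is the stabilizer of a fixed point $A_0 \in \stratum_{n,r}$ under the action $M \cdot A = M A M^t$ (equivalently the left action on the associated bilinear form). Since $\GL_n$ acts transitively on $\stratum_{n,r}$ by the existence of symplectic bases — a fact already invoked in the paragraph preceding the lemma — this is legitimate over any field, and it suffices to work over, say, $\overline{k}$ to compute the dimension of the variety. So the first step is to pick the standard representative: choose a basis so that $A_0$ is block-diagonal with an $(n-r) \times (n-r)$ standard hyperbolic alternating block and an $r \times r$ zero block.

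The second step is to identify the stabilizer. Writing $M$ in $n = (n-r) + r$ block form $M = \begin{pmatrix} P & Q \\ S & U \end{pmatrix}$, the condition $M A_0 M^t = A_0$ unwinds, using that $A_0$ kills the last $r$ coordinates and is a nondegenerate symplectic form $J$ on the first $n-r$, into: $P J P^t = J$ (so $P \in \Sp_{n-r}$), together with conditions forcing $S = 0$ — because the bottom-left block of $M A_0 M^t$ is $S J P^t$, which must vanish, and $J P^t$ is invertible — and leaving $Q$ arbitrary and $U \in \GL_r$ arbitrary. Thus $\Stab$ is (as a variety, which is all we need) isomorphic to $\Sp_{n-r} \times \GL_r \times \Aff^{(n-r)r}$, a parabolic-type subgroup. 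Its dimension is therefore $\binom{n-r+1}{2} + r^2 + (n-r)r$.

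The final step is the arithmetic: $\dim \GL_n = n^2$, so
\[
\dim \stratum_{n,r} = n^2 - \binom{n-r+1}{2} - r^2 - (n-r)r,
\]
and one checks by a direct expansion that the right-hand side equals $\binom{n}{2} - \binom{r}{2}$. (Concretely, $n^2 - r^2 - (n-r)r = n^2 - nr = n(n-r)$, and $n(n-r) - \binom{n-r+1}{2} = \binom{n}{2} - \binom{r}{2}$ after substituting $\binom{n-r+1}{2} = \frac{(n-r)(n-r+1)}{2}$.) I do not expect any genuine obstacle here; the only point requiring a little care is the stabilizer computation — in particular verifying that no relation is imposed on $Q$ and that $S$ is forced to vanish — and the fact that we may compute dimension of a homogeneous space via orbit--stabilizer even in positive characteristic, which holds because $\GL_n$ is smooth and the action is algebraic, so all orbits are smooth and the dimension formula is valid. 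An alternative, if one wishes to avoid group-scheme subtleties entirely, is to count parameters directly: an alternating matrix of rank $n-r$ is determined by a choice of its row space (an $(n-r)$-dimensional isotropic-complementary datum) together with a nondegenerate alternating form on the quotient, and bookkeeping these choices reproduces $\binom{n}{2} - \binom{r}{2}$; but the homogeneous-space argument is cleaner and is the route I would write up.
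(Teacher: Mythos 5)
Your proof is correct, and the stabilizer computation checks out: for $A_0=\begin{pmatrix} J & 0\\ 0 & 0_r\end{pmatrix}$ the equation $MA_0M^t=A_0$ gives $PJP^t=J$ (whence $P$ is invertible and lies in $\Sp_{n-r}$), then $SJP^t=0$ forces $S=0$, and no condition is placed on $Q$ or on $U\in\GL_r$; the arithmetic $n^2-\binom{n-r+1}{2}-r^2-(n-r)r=\binom{n}{2}-\binom{r}{2}$ is right, and your remark that only the reduced structure of the stabilizer matters is the correct way to dispose of any characteristic-$p$ scruples (the fiber-dimension theorem for the orbit map only sees topological dimensions, and in any case the stabilizer you exhibit is visibly smooth). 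What you have written up is, however, exactly the route the paper relegates to its one-sentence ``alternatively'': computing the stabilizer of the standard rank-$(n-r)$ alternating matrix inside $\GL_n$ in $2\times 2$ block form. The paper's primary proof is instead the parameter count you mention only in passing at the end: the morphism $\stratum_{n,r}\to\Gr_{n-r,n}$, $A\mapsto\im(A)$, whose fiber over a given $(n-r)$-plane consists of the nondegenerate alternating maps $k^n/(\im A)^\perp\isom\im(A)^T\to\im(A)$, giving $\dim\stratum_{n,r}=r(n-r)+\binom{n-r}{2}$, which equals $\binom{n}{2}-\binom{r}{2}$. The fibration argument is slightly more economical in that it needs neither the transitivity statement nor any orbit--stabilizer formalism (so no discussion of reducedness or smoothness of stabilizers at all), while your homogeneous-space argument buys an explicit description of the stabilizer as a parabolic-type subgroup $(\Sp_{n-r}\times\GL_r)\ltimes\Aff^{(n-r)r}$, which is more information than the lemma asks for; either write-up is acceptable.
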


\begin{proof}
Sending a matrix $A \in M_n(k)$ to $\im(A)$
defines a morphism from $\stratum_{n,r}$ to the Grassmannian
of $(n-r)$-planes $\Pi$ in $n$-space.
The fiber above $\Pi$ parametrizes nondegenerate alternating maps
from $k^n/(\im A)^\perp \isom \im(A)^T$ to $\im(A)$,
so each fiber has dimension $\binom{n-r}{2}$.
Thus $\dim \stratum_{n,r} = r(n-r) + \binom{n-r}{2} = \binom{n}{2} - \binom{r}{2}$.

Alternatively, one could compute the dimension of the stabilizer of
\[
\begin{pmatrix}
 & I_{(n-r)/2} & \\
-I_{(n-r)/2} & & \\
& & 0_r
\end{pmatrix} \in \stratum_{n,r}
\]
by writing an equation in $2 \times 2$ block matrices 
with blocks of size $n-r$ and $r$.
\end{proof}

We have the locally closed stratification
\[
	\Aff^{n(n-1)/2} = \Union_r \stratum_{n,r},
\]
where $r$ ranges over integers with $0 \le r \le n$ and $n-r$ even.
The Zariski closure $\overline{\stratum}_{n,r}$ of $\stratum_{n,r}$
in $\Aff^{n(n-1)/2}$ is the locus $\Union_{s \ge r} \stratum_{n,s}$
of alternating matrices of rank {\em at most} $n-r$:
this is closed since it is cut out by the vanishing of
the $(n-r+1) \times (n-r+1)$ minors,
and it is in the closure of $\stratum_{n,r}$, as one can see
from using standard symplectic matrices.
We may extend $\overline{\stratum}_{n,r}$ to a closed subscheme
of $\Aff^{n(n-1)/2}_{\Z_p}$ defined by the same equations.

Let $\Zpstratum_{n,r} = \stratum_{n,r}(\Q_p) \intersect M_n(\Z_p)_{\alt}$,
so we have an analogous locally closed stratification of topological spaces
\[
	M_n(\Z_p)_{\alt} = \Union_r \Zpstratum_{n,r}.
\]
The closure $\overline{\Zpstratum}_{n,r}$ of $\Zpstratum_{n,r}$
equals $\overline{\stratum}_{n,r}(\Z_p) = \Union_{s \ge r} \Zpstratum_{n,s}$.

Fix $r$.
Proposition~\ref{P:measure} and 
Corollary~\ref{C:prob measure} applied to $X=\overline{\stratum}_{n,r}$
yields measures $\mu$ and $\nu$ on $\overline{\Zpstratum}_{n,r}$.
By Proposition~\ref{P:measure}\eqref{I:lower dimensional},
$\mu(\overline{\Zpstratum}_{n,s})=0$ for $s>r$,
so the probability measure $\nu$ restricts to a probability measure $\nu$
on the open subset $\Zpstratum_{n,r}$.
We use $\mu$ to denote the $\mu$ for different varieties;
the meaning will be clear from context.

The following generalization of Theorem~\ref{T:u=0}
states that the limit $\Adist_r$ in Theorem~\ref{T:A=T}\eqref{I:Adist exists} 
exists for each $r \in \Z_{\ge 0}$ and gives an explicit formula
for its value:

\begin{theorem}
\label{T:singular distribution}
Fix $r \in \Z_{\ge 0}$,
and fix a symplectic $p$-group $G$.
If $A \in \Zpstratum_{n,r}$ is chosen at random with respect to $\nu$, then
\[
	\lim_{\substack{n \to \infty \\ n-r \textup{ even}}} \Prob\left( (\coker A)_{\tors} \isom G \right)
	= \frac{(\# G)^{1-r}}{\#\Sp(G)} \prod_{i=r+1}^\infty (1-p^{1-2i}).
\]
Moreover, the sum of the right side over all such $G$ equals $1$.
\end{theorem}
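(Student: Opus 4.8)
The plan is to reduce the singular case to the nonsingular case via the reduction $A \mapsto A_1$ described in Section~\ref{S:singular distribution}, keeping careful track of the fibers of this map and the measures involved. Fix $r$ and $G$; write $\Adist_{n,r}(G) \colonequals \Prob((\coker A)_\tors \isom G)$ for $A \in \Zpstratum_{n,r}$ chosen according to $\nu$. First I would observe that for $A \in \Zpstratum_{n,r}$, the torsion subgroup $(\coker A)_\tors$ depends only on the induced nonsingular pairing $\calA_1$ on $L/L_0$ (where $L_0 = \ker A$), together with an identification of this pairing's cokernel. So the strategy is to push forward $\nu$ along the map $\Zpstratum_{n,r} \to \{\text{nonsingular alternating pairings on rank-}(n-r)\ \Z_p\text{-lattices}\}$ and compare the result with the nonsingular distribution from Theorem~\ref{T:u=0}.

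The key steps, in order: (1) Parametrize $\Zpstratum_{n,r}$ by the data $(\Pi, B)$ where $\Pi = \ker A \subseteq \Q_p^n$ is an $r$-dimensional subspace (automatically $\Pi = \Pi^{\sat}$ since $A \in M_n(\Z_p)_\alt$ and $\ker A$ is saturated), and $B$ is a nonsingular alternating pairing on a complement; more precisely, use $\im(A) \subseteq L^T$, which is a rank-$(n-r)$ submodule with $\im(A)^{\sat} \cong (\coker A)_\tors \oplus \im(A)$-type structure, and show the fiber of $A \mapsto (\coker A)_\tors$ over $G$ is governed by $\Surj$-counts exactly as in the proof of Theorem~\ref{T:u=0}. (2) Track how the measure $\nu$ on $\Zpstratum_{n,r}$, normalized using $d = \binom{n}{2} - \binom{r}{2}$ from Lemma~\ref{L:dim stratum}, decomposes under this parametrization: the codimension of the stratum is $\binom{r}{2}$, and this is exactly where the shift from $\prod_{i=1}^\infty$ to $\prod_{i=r+1}^\infty$ and the exponent $1-n$ to $1-r$ should come from, via a count of alternating matrices of rank $n-r$ over $\F_p$ analogous to Lemma~\ref{L:invertible alternating matrices}. (3) Carry out the finite-$n$ computation: derive an explicit formula for $\Adist_{n,r}(G)$ of the shape $\frac{\#\Surj(\Z_p^{n-r}, G)}{\#\Sp(G)} \cdot (\text{local factor at }p) \cdot (\#G)^{?}$, mirroring~\eqref{E:coker prob for finite n}. (4) Take $n \to \infty$ through integers with $n - r$ even, using $\#\Surj(\Z_p^{n-r},G)/(\#G)^{n-r} \to 1$ and the evaluation of the $\F_p$-matrix count, to get the stated limit $\frac{(\#G)^{1-r}}{\#\Sp(G)} \prod_{i=r+1}^\infty (1-p^{1-2i})$.

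For the final claim that the probabilities sum to $1$: as in Theorem~\ref{T:u=0}, for fixed $n$ the event that $\coker A$ has rank exceeding $r$ (i.e., $\dim_{\Q_p}\ker A_{\Q_p} > r$) is the $\Z_p$-locus of a proper closed subscheme of $\overline{\stratum}_{n,r}$ of smaller dimension, hence has $\nu$-measure $0$ by Proposition~\ref{P:measure}\eqref{I:lower dimensional}; so $\sum_G \Adist_{n,r}(G) = 1$. Then Fatou gives $\sum_G \Adist_r(G) \le 1$, which bounds $\sum_G \frac{(\#G)^{1-r}}{\#\Sp(G)}$, and the finite-$n$ formula provides a dominating bound $\Adist_{n,r}(G) \le C_r \cdot \frac{(\#G)^{1-r}}{\#\Sp(G)}$ (using $\#\Surj \le \#G^{n-r}$ and that the $\F_p$-count is at most $1$), so dominated convergence yields $\sum_G \Adist_r(G) = \lim_n \sum_G \Adist_{n,r}(G) = 1$.

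The main obstacle I expect is step~(2): correctly identifying the Jacobian/measure-theoretic factor relating the canonical measure $\nu$ on the rank-$(n-r)$ stratum to the product of (Haar measure on the choice of $\ker A$, i.e., a $p$-adic Grassmannian) times (Haar measure on nonsingular alternating pairings on the quotient lattice). One must verify that the normalization by $(p^e)^d$ with $d = \binom{n}{2}-\binom{r}{2}$ exactly compensates the count of how many alternating matrices over $\Z/p^e\Z$ reduce to a given rank-$(n-r)$ configuration, and that no spurious powers of $p$ survive in the limit beyond those producing $\prod_{i=r+1}^\infty(1-p^{1-2i})$. This is essentially a $p$-adic stratified-Jacobian computation; once it is pinned down, the rest follows the template of Theorem~\ref{T:u=0} closely.
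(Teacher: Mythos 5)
Your overall strategy is the same as the paper's — reduce the rank-$(n-r)$ stratum to the nonsingular case, invoke Theorem~\ref{T:u=0}, and finish with the Fatou/dominated-convergence argument for $\sum_G = 1$ — but the decisive step is exactly the one you defer as "the main obstacle I expect," and as written your plan does not contain the idea needed to carry it out. The paper resolves it with Lemma~\ref{L:pushforward}: parametrizing $\Zpstratum_{n,r}$ by the map $\beta(M,A)=M^t\left(\begin{smallmatrix} A & 0 \\ 0 & 0\end{smallmatrix}\right)M$ with $M \in \GL_n(\Z_p)$ and $A \in \Zpstratum_{n-r,0}$, and showing that $\beta_*(\mu \times |\det|^r\mu) = c\,\mu$. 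The essential point is that the fiber count over a given $B$ of rank $n-r$ is \emph{not} constant: the number of admissible $M$'s carries a factor $|\det C|_p^{-r}$ (the count of blocks $Q$ with $CQ=0$), so the correct weight on the nonsingular factor is $|\det A|_p^{r}$, i.e.\ $(\#\coker A)^{-r}$. This weight is precisely what produces both corrections in the final formula: on the locus $\coker A \isom G$ it contributes $\#G^{-r}$ (turning $\#G$ into $\#G^{1-r}$), and its total mass is evaluated by the Igusa integral of Lemma~\ref{L:Igusa}, $\int_{M_m(\Z_p)_{\alt}}|\det|^r\,\mu = \prod_{i=1}^{m/2}\frac{1-p^{1-2i}}{1-p^{1-2i-2r}}$, which is what shifts $\prod_{i=1}^{\infty}(1-p^{1-2i})$ to $\prod_{i=r+1}^{\infty}(1-p^{1-2i})$ in the denominator-versus-numerator comparison.

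By contrast, your heuristic in step~(2) — that the shift should come from the codimension $\binom{r}{2}$ of the stratum together with an $\F_p$-count of rank-$(n-r)$ alternating matrices analogous to Lemma~\ref{L:invertible alternating matrices} — is not how the correction arises, and a finite-$n$ formula of the shape you posit in step~(3) cannot be derived without first identifying the weight $|\det|^r$; your dominating bound for the dominated convergence step also presupposes that formula. Your parametrization by $(\ker A, \text{nonsingular pairing on a complement})$ is workable in principle, but then you must still compute the Radon--Nikodym factor relating $\nu$ (normalized by $(p^e)^d$ with $d=\binom{n}{2}-\binom{r}{2}$) to the product measure, and that computation \emph{is} the content of Lemma~\ref{L:pushforward} plus Lemma~\ref{L:Igusa}; the mod-$p^e$ fiber count there is a congruence count over $\Z/p^e\Z$, not an $\F_p$ rank count. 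The remainder of your outline (measure zero of the deeper strata in $\overline{\Zpstratum}_{n,r}$ via Proposition~\ref{P:measure}, Fatou, dominated convergence) does match the paper's treatment of the "sums to $1$" claim and is fine once the finite-$n$ formula is in hand.
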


To prove Theorem~\ref{T:singular distribution},
we need the following two lemmas.
Let $|\det|^s\colon M_n(\Z_p) \to \R$ 
be the function $A \mapsto |\det A|_p^s$.

\begin{lemma}
\label{L:Igusa}
For any $s \in \R_{\ge 0}$, we have 
$\displaystyle \int_{M_n(\Z_p)_{\alt}} |\det|^s \, \mu 
= \prod_{i=1}^{n/2} \frac{1-p^{1-2i}}{1-p^{1-2i-2s}}$.
\end{lemma}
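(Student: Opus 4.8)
The plan is to compute the integral $I_n(s) \colonequals \int_{M_n(\Z_p)_{\alt}} |\det|^s\,\mu$ by induction on the even integer $n$, peeling off one hyperbolic plane at a time. Fix the basis and think of an alternating matrix $A$ as the pair consisting of its first row/column data and the remaining $(n-2)\times(n-2)$ alternating block. Concretely, write
\[
	A = \begin{pmatrix} 0 & a & v^t \\ -a & 0 & w^t \\ -v & -w & B \end{pmatrix},
\]
where $a \in \Z_p$, $v,w \in \Z_p^{n-2}$, and $B \in M_{n-2}(\Z_p)_{\alt}$. First I would reduce to the case $a \in \Z_p^\times$: the condition $a \in p\Z_p$ cuts out a subset of measure $p^{-1}$, and on it $\det A$ is divisible by $p^2$ (expanding along the first two rows, every term picks up a factor from $\{\pm a\}$ or lies in the lower block in a way that forces the extra valuation), so that the contribution of $\{a \in p\Z_p\}$ is controlled. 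Rather than estimate, the cleaner route is a recursion: on the open set $a \in \Z_p^\times$, a $\GL_n(\Z_p)$-change of basis (row/column operations using the unit $a$ to clear $v$ and $w$) brings $A$ to block-diagonal form $\mathrm{diag}\!\left(\begin{psmallmatrix} 0 & a \\ -a & 0\end{psmallmatrix},\, B'\right)$ with $B'$ again alternating over $\Z_p$ and $B'$ uniformly distributed given the Haar measure; crucially $\det A = a^2 \det B'$, so $|\det A|_p^s = |a|_p^{2s}\,|\det B'|_p^s$.

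The key step, then, is to set up the exact recursion. Partitioning $M_n(\Z_p)_{\alt}$ according to $\ord_p(a)$ — or better, observing that the substitution above shows $A \mapsto (a, B')$ is measure-preserving after accounting for the normalization — one obtains
\[
	I_n(s) = I_{n-2}(s) \cdot \int_{\Z_p} |a|_p^{2s}\,da \,\bigg/\, (\text{normalizing factor for the extra }a\text{-coordinate}),
\]
and $\int_{\Z_p} |a|_p^{2s}\,da = \sum_{k\ge 0} p^{-k}(1-p^{-1})p^{-2ks} = \dfrac{1-p^{-1}}{1-p^{-1-2s}}$. Matching this against the claimed product $\prod_{i=1}^{n/2}\frac{1-p^{1-2i}}{1-p^{1-2i-2s}}$ forces the $i=n/2$ factor to be $\frac{1-p^{1-n}}{1-p^{1-n-2s}}$, i.e. the single-plane integral must come out to $\frac{1-p^{1-n}}{1-p^{1-n-2s}}$ rather than $\frac{1-p^{-1}}{1-p^{-1-2s}}$. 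The discrepancy is exactly the measure of the locus $a \in p\Z_p$ interacting with the larger block: the correct bookkeeping is that, before any change of basis, one should integrate over \emph{all} $A$, split off the event that the first hyperbolic block becomes degenerate, and feed that back into a lower-rank alternating integral. So I would instead run the induction via $I_n(s) = (1 - p^{1-n}) \cdot (\text{unit-}a\text{ piece}) + p^{1-n}\cdot(\text{degenerate piece})$, where $1-p^{1-n} = \#\GL_n(\F_p)_{\alt}/\#M_n(\F_p)_{\alt} \big/ \prod_{i=1}^{n/2-1}(1-p^{1-2i})$ appears because the reduction mod $p$ of $A$ lands in $\GL_n(\F_p)_{\alt}$ with that conditional probability (Lemma~\ref{L:invertible alternating matrices}).

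The clean formulation is therefore: condition on $\Abar \colonequals A \bmod p$. With probability $\prod_{i=1}^{n/2}(1-p^{1-2i})$ we have $\det\Abar \ne 0$, hence $|\det A|_p = 1$ and the integrand is $1$. Otherwise, $A$ lies over the singular locus, and the fiberwise structure (Section~\ref{S:singular distribution}, applied with the smallest positive $r$, namely $r=2$) lets one write the contribution as $p^{-(\text{codim})\cdot s}$ times an integral of the same shape in dimension $n-2$; the codimension computation is $\dim V_{n,0} - \dim V_{n,2} = \binom{n}{2} - \big(\binom{n}{2}-\binom{2}{2}\big) = 1$ from Lemma~\ref{L:dim stratum}, but the relevant exponent for $|\det|$ is that a rank-$(n-2)$ alternating matrix over $\Z_p$ generically has $|\det(\text{of the }(n-2)\text{-block})|_p$ distributed exactly as $I_{n-2}(s)$ while the "missing" $2\times 2$ hyperbolic block contributes a factor $\int_{p\Z_p}|a|_p^{2s}\,\frac{da}{p^{-1}}$-type term. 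Assembling,
\[
	I_n(s) = \prod_{i=1}^{n/2}(1-p^{1-2i}) \;+\; \Big(\text{singular contribution}\Big),
\]
and the singular contribution telescopes against $I_{n-2}(s)$ to produce the missing denominator factors $\prod(1-p^{1-2i-2s})^{-1}$. Comparing with the base case $I_0(s)=1$ (empty product) closes the induction.

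The main obstacle I anticipate is getting the exact measure-theoretic bookkeeping of the $a \in p\Z_p$ / singular stratum right — in particular, verifying that after a $\GL_n(\Z_p)$ normalization the residual alternating block is \emph{exactly} Haar-distributed and that $\det$ factors cleanly as $a^2 \det B'$ with no Jacobian correction beyond the obvious one. An alternative that sidesteps this is to recognize $I_n(s)$ as a special value of Igusa's local zeta function for the polynomial $\det$ restricted to the space of alternating matrices — this is a classical computation (essentially in Igusa's work on zeta functions of prehomogeneous vector spaces, the pair $(\GL_n, \Lambda^2)$), where the answer $\prod_{i=1}^{n/2}\frac{1-p^{1-2i}}{1-p^{1-2i-2s}}$ is exactly the known formula; if the authors are willing to cite that, the lemma is immediate and the inductive argument above is just an elementary rederivation. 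Either way, no step requires anything beyond the measure $\mu$ of Proposition~\ref{P:measure}, the $\GL_n(\Z_p)$-invariance of Haar measure on $M_n(\Z_p)_{\alt}$, and Lemma~\ref{L:invertible alternating matrices} for the base of the recursion.
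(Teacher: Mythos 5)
Your overall plan (induct on $n$ by splitting off one hyperbolic plane) is the right one, and your fallback at the end---just citing Igusa's computation---is in fact exactly the paper's proof, which consists of the single line ``easy induction on $n$, see Igusa, p.~164''. But your own induction, as written, never closes: there is a genuine gap at precisely the step you flag as the ``main obstacle''. You correctly notice that conditioning on $a=a_{12}$ being a unit yields the wrong single-plane factor $\frac{1-p^{-1}}{1-p^{-1-2s}}$ instead of the needed $\frac{1-p^{1-n}}{1-p^{1-n-2s}}$, but the repair you sketch is not a proof: in the ``clean formulation'' you split off the event $\det\Abar\neq 0$ and then merely assert that the complementary contribution ``telescopes against $I_{n-2}(s)$'' --- that contribution, $\int_{\{\det\Abar=0\}}|\det|^s\,\mu$, is exactly the nontrivial content of the lemma, and it is never computed. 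Moreover the machinery you invoke for it is off target: the strata $\Zpstratum_{n,r}$ of Section~\ref{S:singular distribution} concern matrices singular over $\Q_p$, a measure-zero locus, whereas what you must handle is the positive-measure locus of matrices singular mod $p$; the codimension count from Lemma~\ref{L:dim stratum} does not by itself produce the exponent you need, and no clean factor of the form $\int_{p\Z_p}|a|^{2s}$ appears.

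The missing idea is that the degenerate piece is \emph{self-similar}, not a lower-rank integral. Condition on the first row of $\Abar$. With probability $1-p^{1-n}$ it is nonzero; then a congruence $A\mapsto M^tAM$ with $M\in\GL_n(\Z_p)$ (which preserves Haar measure on $M_n(\Z_p)_{\alt}$) makes the first row $(0,1,0,\dots,0)$, and clearing the second row and column does not touch the lower-right $(n-2)\times(n-2)$ block, which remains Haar-distributed; since $|\det M|_p=1$, the conditional expectation of $|\det A|^s$ is $I_{n-2}(s)$. With probability $p^{1-n}$ the whole first row (hence first column) of $A$ is divisible by $p$; writing $A=D\tilde A D$ with $D\colonequals\diag(p,1,\dots,1)$, the matrix $\tilde A$ is again Haar on $M_n(\Z_p)_{\alt}$ and $|\det A|^s=p^{-2s}|\det\tilde A|^s$, so the conditional expectation is $p^{-2s}I_n(s)$. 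Hence
\[
	I_n(s)=(1-p^{1-n})\,I_{n-2}(s)+p^{1-n-2s}\,I_n(s),
	\qquad\text{so}\qquad
	I_n(s)=\frac{1-p^{1-n}}{1-p^{1-n-2s}}\,I_{n-2}(s),
\]
and induction from $I_0(s)=1$ gives the stated product. This is the ``easy induction'' the paper alludes to; with this recursion supplied (or with the direct citation of Igusa that you propose as an alternative), your argument is complete.
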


\begin{proof}
The proof is an easy induction on $n$: see~\cite{Igusa2000}*{p.~164}.
\end{proof}

\begin{lemma}
\label{L:pushforward}
Define
\begin{align*}
  \beta \colon \GL_n(\Z_p) \times \Zpstratum_{n-r,0} &\To \Zpstratum_{n,r} \\
	(M,A) &\longmapsto M^t \begin{pmatrix} A & 0 \\ 0 & 0 \end{pmatrix} M.
\end{align*}
Then $\beta_*(\mu \times |\det|^r \mu) = c \mu$ for some $c>0$
depending on $n$ and $r$.
\end{lemma}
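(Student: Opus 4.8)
The plan is to show that the pushforward measure $\beta_*(\mu \times |\det|^r \mu)$ is invariant under the $\GL_n(\Z_p)$-action on $\Zpstratum_{n,r}$, and then to invoke uniqueness of the invariant measure (up to scaling) on a homogeneous space to conclude that it is a constant multiple of $\mu$, which is itself $\GL_n(\Z_p)$-invariant because $\Zpstratum_{n,r}$ is a homogeneous space for $\GL_n$ and $\mu$ is built from a $\GL_n(\Z_p)$-equivariant construction (the reduction maps mod $p^e$). First I would record the precise $\GL_n(\Z_p)$-action: $g \in \GL_n(\Z_p)$ sends $B \in \Zpstratum_{n,r}$ to $g^t B g$. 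I would check that $\beta$ is $\GL_n(\Z_p)$-equivariant for a suitable action on the source: namely $g$ acts on $\GL_n(\Z_p) \times \Zpstratum_{n-r,0}$ by $(M,A) \mapsto (Mg, A)$, since $\beta(Mg, A) = (Mg)^t \left(\begin{smallmatrix} A & 0 \\ 0 & 0\end{smallmatrix}\right)(Mg) = g^t\beta(M,A)g$. The factor measure $\mu \times |\det|^r\mu$ on the source is invariant under this action: $\mu$ on $\GL_n(\Z_p)$ is Haar measure, hence invariant under right translation by $g$, and the second factor is untouched. Therefore $\beta_*(\mu \times |\det|^r\mu)$ is $\GL_n(\Z_p)$-invariant on $\Zpstratum_{n,r}$.

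Next I would argue that $\beta$ is surjective and that its image has full measure, so the pushforward is not the zero measure. Surjectivity is exactly the statement that every alternating matrix of rank $n-r$ over $\Z_p$ can be written, after a $\GL_n(\Z_p)$ change of basis, with its nondegenerate part in the upper-left $(n-r)\times(n-r)$ block: this follows from the structure theory of alternating forms over the PID $\Z_p$ — the radical $L_0 = \ker B$ is a saturated (hence free direct summand) submodule of $\Z_p^n$ of rank $r$, so we may choose a basis adapted to $L_0$, and the induced nonsingular alternating form on the rank-$(n-r)$ quotient corresponds to some $A \in \Zpstratum_{n-r,0}$. Since $\mu$ and $\nu$ on $\Zpstratum_{n,r}$ are mutually absolutely continuous with $\mu$ of finite positive total mass (Proposition~\ref{P:measure}, Corollary~\ref{C:prob measure}, using that $\stratum_{n,r}$ is smooth of dimension $d$ over $\Q_p$), and since $\beta_*(\mu\times|\det|^r\mu)$ is a nonzero $\GL_n(\Z_p)$-invariant finite Borel measure — finiteness of its total mass follows from Lemma~\ref{L:Igusa}, which bounds $\int |\det|^r\,\mu$ on $M_{n-r}(\Z_p)_{\alt}$, combined with finiteness of Haar measure on the compact group $\GL_n(\Z_p)$ — uniqueness of invariant measures on the homogeneous space $\Zpstratum_{n,r}$ forces $\beta_*(\mu\times|\det|^r\mu) = c\,\mu$ for a unique constant $c \ge 0$, and $c > 0$ since the measure is nonzero.

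The main obstacle I expect is making the ``uniqueness of invariant measure'' step fully rigorous in the $p$-adic analytic (rather than locally compact group) setting: $\Zpstratum_{n,r}(\Z_p)$ is a $p$-adic analytic manifold on which $\GL_n(\Z_p)$ acts transitively, but one must confirm that the orbit map identifies it with a quotient $\GL_n(\Z_p)/\Stab(B_0)$ as topological (indeed analytic) spaces — here $B_0$ is the standard block matrix of Lemma~\ref{L:dim stratum} — so that one genuinely has the uniqueness of Haar-type measures on homogeneous spaces of compact groups available. A clean way to sidestep delicate point-set issues is to verify the equality $\beta_*(\mu\times|\det|^r\mu) = c\,\mu$ at the level of the finite quotients $\overline{\stratum}_{n,r}(\Z/p^e\Z)$: one checks that the fibers of the reduction of $\beta$ mod $p^e$ all have the same cardinality over the top stratum (again by transitivity of the $\GL_n(\Z/p^e\Z)$-action and counting stabilizers), and then takes the limit defining $\mu$ in Proposition~\ref{P:measure}\eqref{I:measure exists}. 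The weighting $|\det A|_p^r$ is precisely the Jacobian factor needed to match the counting: a matrix $A$ with $|\det A|_p = p^{-k}$ arises from $p^{k \cdot r}$-times-as-few lifts in the relevant fiber comparison, which is exactly the content packaged analytically as $|\det|^r\mu$. Either route works; the homogeneous-space route is shorter provided one is willing to cite the standard fact that a transitive smooth action of a compact $p$-adic analytic group on a $p$-adic analytic manifold realizes the manifold as the quotient.
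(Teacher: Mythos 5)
There is a genuine gap, and it lies at the heart of your main argument: $\GL_n(\Z_p)$ does \emph{not} act transitively on $\Zpstratum_{n,r}$, so the ``homogeneous space plus uniqueness of invariant measure'' route collapses. The variety $\stratum_{n,r}$ is a homogeneous space for $\GL_n$ over a \emph{field} (this is how the paper uses it, over $\Q_p$ or $\F_p$), but the $\GL_n(\Z_p)$-orbits on integral points are separated by the elementary divisors of the form: already for $n=2$, $r=0$, the matrices $\left(\begin{smallmatrix} 0 & a \\ -a & 0 \end{smallmatrix}\right)$ with $a \in \Z_p\setminus\{0\}$ fall into infinitely many orbits indexed by $v_p(a)$, since $M^t A M$ multiplies $a$ by $\det M \in \Z_p^\times$. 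Consequently there are many mutually non-proportional $\GL_n(\Z_p)$-invariant measures on $\Zpstratum_{n,r}$ (weight $\mu$ by any function of the elementary divisors), and invariance of $\beta_*(\mu \times |\det|^r \mu)$ --- which you do establish correctly via the equivariance $\beta(Mg,A) = g^t\,\beta(M,A)\,g$ --- cannot force proportionality to $\mu$. A symptom that the symmetry argument is missing the real content: it would apply verbatim with $|\det|^s\mu$ for any $s\ge 0$ in place of $|\det|^r\mu$, and those pushforwards are certainly not all proportional to $\mu$. The exponent $r$ is exactly what a pure invariance argument cannot see.

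Your fallback sketch (compare fiber counts of $\beta$ modulo $p^e$ and pass to the limit defining $\mu$) is the right strategy and is in fact what the paper does, but as written it contains the same error and omits the decisive computation. The mod-$p^e$ fibers of $\beta$ do \emph{not} all have the same cardinality over the top stratum --- that is precisely why the weight is needed --- and ``transitivity of the $\GL_n(\Z/p^e\Z)$-action'' is again unavailable. The paper's proof reduces (after a change of basis) to $B = \left(\begin{smallmatrix} C & 0 \\ 0 & 0\end{smallmatrix}\right)$ with $C$ nonsingular of size $n-r$, parametrizes the fiber by matrices $N = \left(\begin{smallmatrix} P & Q \\ R & S \end{smallmatrix}\right) \in \GL_n(\Z/p^e\Z)$ subject to $P \in \GL_{n-r}(\Z/p^e\Z)$ and $CQ = 0$, and computes that the number of admissible $Q$ is $|\det C|_p^{-r}$ (each of the $r$ columns lies in $\ker C$, of size $|\det C|_p^{-1}$), while the counts of $P$ and of the remaining blocks are independent of $B$; hence weighting each pair $(M,A)$ by $|\det A|_p^{r}$ makes the weighted fiber count independent of $B$, which is the lemma. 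Your phrase that $|\det A|_p^r$ ``is precisely the Jacobian factor needed'' asserts exactly this statement without proving it, so the essential step of the lemma is still missing from your proposal.
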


\begin{proof}
Given $B \in M_n(\Z/p^e\Z)$ in the reduction of $\Zpstratum_{n,r}$,
we must count the number of 
$(M,A) \in \GL_n(\Z/p^e\Z) \times M_{n-r}(\Z/p^e\Z)_{\alt}$
such that 
$M^t \begin{pmatrix} A & 0 \\ 0 & 0 \end{pmatrix} M = B$.
We may assume that $e$ is large enough that some $(n-r) \times (n-r)$ minor
of $B$ has nonzero determinant mod $p^e$.
We may assume also that $B$ itself is of the form 
$\begin{pmatrix} C & 0 \\ 0 & 0 \end{pmatrix}$;
then the set of $(M,A)$ is 
\[
	\left\{(N^{-1},N^t B N) : N \in \GL_n(\Z/p^e\Z), N^t B N \textup{ has the form } \begin{pmatrix} * & 0 \\ 0 & 0 \end{pmatrix} \right\}.
\]
If $N =
\begin{pmatrix}
  P & Q \\ R & S
\end{pmatrix}$,
then the condition on $N$ is equivalent to 
$P \in \GL_{n-r}(\Z/p^e\Z)$, $P^t C Q = 0$, $Q^t C P=0$, and $Q^t C Q=0$
(invertibility of $P$ follows from comparing
determinants of minors of $B$ to those of $N^t B N$).
Since $C^t=-C$, these conditions are equivalent to $P \in \GL_{n-r}(\Z/p^e\Z)$
and $CQ=0$.
The number of possibilities for $P$ 
is $\#\GL_{n-r}(\Z/p^e\Z)$,
which is independent of $B$.
On the other hand,
if we view $C$ as a map from the finite group $(\Z/p^e\Z)^{n-r}$ to itself,
its kernel has the same size as its cokernel, which is $|\det C|_p^{-1}$,
so the number of possibilities for $Q$ is $|\det C|_p^{-r}$.
Thus if each pair $(M,A)$ is weighted by $|\det A|_p^r=|\det C|_p^r$,
then the weighted count of such pairs is independent of $B$.
\end{proof}

\begin{proof}[Proof of Theorem~\ref{T:singular distribution}]
Define 
\[
	\Zpstratum_{n,r}(G)\colonequals \{A \in \Zpstratum_{n,r} : 
			(\coker A)_{\tors} \isom G \}.
\]
As $n \to \infty$ through integers with $n-r$ even,
\begin{align*}
  \nu(\Zpstratum_{n,r}(G)) 
	&= \frac{\int_{\Zpstratum_{n,r}(G)} \mu}
		{\int_{\Zpstratum_{n,r}} \mu} \\
	&= \frac{\int_{\GL_n(\Z_p)} \mu \cdot \int_{\Zpstratum_{n-r,0}(G)} |\det|^r \mu}
		{\int_{\GL_n(\Z_p)} \mu \cdot \int_{\Zpstratum_{n-r,0}} |\det|^r \mu} 
		\qquad\textup{(by Lemma~\ref{L:pushforward})} \\
	&= \frac{\#G^{-r} \int_{\Zpstratum_{n-r,0}(G)} \mu}
		{\int_{\Zpstratum_{n-r,0}} |\det|^r \mu} \\
	&\to \frac{\#G^{-r} \frac{\# G}{\#\Sp(G)} \prod_{i=1}^\infty (1-p^{1-2i})}
		{\prod_{i=1}^{\infty} \frac{1-p^{1-2i}}{1-p^{1-2i-2r}}} 
                \qquad\textup{(by Theorem~\ref{T:u=0} and Lemma~\ref{L:Igusa})} \\
	&= \frac{\#G^{1-r}}{\#\Sp(G)} \prod_{i=r+1}^\infty (1-p^{1-2i}).
\end{align*}
The same argument as in the proof of Theorem~\ref{T:u=0}
shows that these numbers sum to $1$.
\end{proof}

\section{Orthogonal Grassmannians}
\label{S:orthogonal Grassmannian}

\subsection{Grassmannians}

Given $0 \le m \le n$,
for each commutative ring $A$ 
let $\Gr_{m,n}(A)$ be the set of direct summands $W$ of $A^n$
that are locally free of rank $m$.
As is well known, this functor is represented by 
a smooth projective scheme $\Gr_{m,n}$ of relative dimension $m(n-m)$ over $\Z$,
called a \defi{Grassmannian}.

\subsection{Maximal isotropic direct summands}
\label{S:maximal isotropic direct summands}

Now equip $A^{2n}$ with the 
hyperbolic quadratic form $Q \colon A^{2n} \to A$
given by
\[
	Q(x_1,\ldots,x_n,y_1,\ldots,y_n) \colonequals \sum_{i=1}^n x_i y_i.
\]
The associated bilinear pairing is 
$\langle a,b \rangle \colonequals Q(a+b)-Q(a)-Q(b)$.
A direct summand $Z$ is called \defi{isotropic} if $Q|_Z=0$
(in general this is stronger than requiring that 
$\langle \;,\; \rangle|_{Z \times Z} = 0$).
Let $\OGr_n(A)$ be the set of isotropic $Z \in \Gr_{n,2n}(A)$.
Such $Z$ will also be called \defi{maximal isotropic direct summands} 
of $A^{2n}$.
Let $W$ be the maximal isotropic direct summand $\Z^n \times 0$ of $\Z^{2n}$.

\begin{lemma}
\label{L:X and X'}
Let $A$ be a ring.
Let $X,X' \in \OGr_n(A)$ be such that $X \directsum X' \to A^{2n}$
is an isomorphism.
\begin{enumerate}[\upshape (a)]
\item \label{I:X' is X^T}
The restriction of $\langle \;,\; \rangle$ to $X \times X'$
identifies $X'$ with $X^T$.
\item \label{I:graph of alternating}
Let $\phi \colon X \to X'$ be an $A$-module homomorphism.
Then $\Graph(\phi) \in \OGr_n(A)$ 
if and only if $\phi$ is alternating 
(with respect to the identification above).
\end{enumerate}
\end{lemma}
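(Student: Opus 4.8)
The plan is to treat the two parts in sequence, using the standard fact that if $X \directsum X' \xrightarrow{\sim} A^{2n}$ then every element of $A^{2n}$ decomposes uniquely as $x + x'$ with $x \in X$, $x' \in X'$. For part~\eqref{I:X' is X^T}, I would first observe that the bilinear pairing $\langle\;,\;\rangle$ is everywhere nondegenerate on $A^{2n}$ (it is the hyperbolic pairing, whose Gram matrix $\left(\begin{smallmatrix} 0 & I \\ I & 0\end{smallmatrix}\right)$ is invertible over any ring), so it identifies $A^{2n}$ with its own dual $(A^{2n})^T$. Restricting the induced map $A^{2n} \to (A^{2n})^T$ and using that $X$ is isotropic (hence $\langle\;,\;\rangle$ kills $X \times X$), I would get that the composite $X' \hookrightarrow A^{2n} \to (A^{2n})^T \twoheadrightarrow X^T$ (the last map being restriction of functionals to $X$) is the map I want to show is an isomorphism. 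Since $X$ and $X'$ are both projective of rank $n$ and $X^T$ is projective of rank $n$, it suffices to check this composite is an isomorphism; I would do this by noting that the full map $A^{2n} \to (A^{2n})^T \isom X^T \directsum (X')^T$ is an isomorphism, that $X$ maps into $(X')^T$ (again by isotropy of $X$, now using that $X'$ is isotropic so functionals from $X$ vanish on... — more carefully: $X \to X^T$ part of this is zero by isotropy of $X$), so the block-triangular structure forces $X' \to X^T$ to be an isomorphism.

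For part~\eqref{I:graph of alternating}, write $\Graph(\phi) = \{x + \phi(x) : x \in X\} \subseteq A^{2n}$; this is automatically a direct summand isomorphic to $X$ via the obvious projection, so it lies in $\Gr_{n,2n}(A)$. The content is the isotropy condition $Q|_{\Graph(\phi)} = 0$. Expanding, for $x \in X$,
\[
	Q(x + \phi(x)) = Q(x) + \langle x, \phi(x)\rangle + Q(\phi(x)) = \langle x, \phi(x)\rangle,
\]
using that $X$ and $X'$ are each isotropic so $Q(x) = Q(\phi(x)) = 0$. Hence $\Graph(\phi) \in \OGr_n(A)$ if and only if $\langle x, \phi(x)\rangle = 0$ for all $x \in X$. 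Under the identification of part~\eqref{I:X' is X^T}, the homomorphism $\phi \colon X \to X' = X^T$ is exactly a bilinear form $B_\phi(x,y) \colonequals \langle x, \phi(y)\rangle$ on $X$, and the condition $\langle x,\phi(x)\rangle = 0$ for all $x$ is precisely the statement that $B_\phi$ is alternating in the sense defined in Section~\ref{S:notation} (zero on the diagonal). So the equivalence is immediate once the dictionary of part~(a) is in place.

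The main obstacle, such as it is, is purely bookkeeping in part~\eqref{I:X' is X^T}: making sure the two identifications (the self-duality of $A^{2n}$ via the hyperbolic pairing, and the splitting $A^{2n} = X \directsum X'$) are compatible, so that the claimed isomorphism $X' \isom X^T$ really is induced by $\langle\;,\;\rangle|_{X \times X'}$ and not merely abstractly. I would handle this by checking the single commutative square relating the pairing map $A^{2n} \to (A^{2n})^T$ to the decomposition and its dual, and then reading off the $X'$-to-$X^T$ block. One should also take care that ``alternating'' is being used consistently with Section~\ref{S:notation} (diagonal zero, which over a general ring is strictly stronger than skew-symmetry) — this matches the fact that $Q$, not just $\langle\;,\;\rangle$, is required to vanish on $Z$, which is the whole point of the parenthetical remark in Section~\ref{S:maximal isotropic direct summands}. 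No delicate estimates or limiting arguments are needed; everything is a finite, functorial computation valid over an arbitrary commutative ring $A$.
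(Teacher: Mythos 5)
Your proposal is correct, and part~(b) is essentially the paper's argument verbatim: both compute $Q(x+\phi(x))=\langle x,\phi(x)\rangle$ using isotropy of $X$ and $X'$ and read off the equivalence with the diagonal-vanishing definition of alternating. For part~(a), however, you take a genuinely different route. The paper reduces to the case of a field by tensoring with $A/\mm$ for every maximal ideal $\mm$ (a map of finite projective modules is an isomorphism if and only if it is one modulo each $\mm$), and then argues over a field that the kernel of $X'\to X^T$ is orthogonal to both $X$ and $X'$, hence zero by nondegeneracy, and concludes by equality of dimensions. You instead stay over the arbitrary ring $A$: the hyperbolic pairing is unimodular (Gram matrix $\left(\begin{smallmatrix}0&I\\ I&0\end{smallmatrix}\right)$), so it gives an isomorphism $A^{2n}\isomto (A^{2n})^T \isom X^T \directsum (X')^T$, and with respect to the splitting $A^{2n}=X\directsum X'$ this isomorphism has zero blocks $X\to X^T$ and $X'\to (X')^T$ by isotropy of $X$ and of $X'$, which forces the remaining block $X'\to X^T$ (which is exactly the map induced by $\langle\;,\;\rangle|_{X\times X'}$) to be an isomorphism. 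This works and avoids the residue-field reduction entirely, at the cost of the small bookkeeping you acknowledge; two points of polish: the structure you invoke is block \emph{anti-diagonal} rather than triangular, and to conclude cleanly you should either use isotropy of $X'$ (killing the $X'\to(X')^T$ block, after which invertibility of the anti-diagonal matrix immediately gives invertibility of each off-diagonal block) or, if you only use isotropy of $X$, note that you then get surjectivity of $X'\to X^T$ and must add the standard remark that a surjection between finite projective modules of the same constant rank is an isomorphism. Either way the ingredients are all present, so this is a stylistic difference rather than a gap: the paper's proof is shorter, yours is more uniform over the base ring.
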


\begin{proof}
\hfill
\begin{enumerate}[\upshape (a)]
\item
By tensoring with $A/\mm$ for every maximal ideal $\mm \subseteq A$,
we reduce to the case in which $A$ is a field.
The kernel of $X' \to X^T$ is orthogonal to $X$, but also to $X'$
since $X'$ is isotropic.
By nondegeneracy of $\langle\;,\;\rangle$ on $A^{2n}$, this kernel is $0$.
Since $X'$ and $X^T$ are vector spaces of the same dimension, 
$X' \to X^T$ is an isomorphism.
\item 
For $x \in X$, 
\[
	\langle x, \phi(x) \rangle = Q(x +\phi(x)) - Q(x) - Q(\phi(x)) 
	= Q(x+\phi(x)).
\]
By definition, $\phi$ is alternating if and only if the left side is $0$
for all $x$.
Also by definition, 
$\Graph(\phi) \in \OGr_n(A)$ if and only if the right side is $0$
for all $x \in X$.\qedhere
\end{enumerate}
\end{proof}

\begin{proposition}
\label{P:orthogonal group acts transitively}
Let $\Orthogonal_{2n}$ be the orthogonal group of $(\Z^{2n},Q)$.
\begin{enumerate}[\upshape (a)]
\item \label{I:transitive on 1}
Let $A$ be a field, a discrete valuation ring, or a quotient thereof.
The action of $\Orthogonal_{2n}(A)$ on $\OGr_n(A)$ is transitive.
\item \label{I:transitive on 2}
Let $k$ be a field.
For each $m \in \{0,1,\ldots,n\}$,
the action of $\Orthogonal_{2n}(k)$ on 
$\{ (Y,Z) \in \OGr_n(k)^2 : \dim(Y \intersect Z)=m \}$
is transitive.
\end{enumerate}
\end{proposition}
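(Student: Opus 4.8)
The plan is to prove both parts by exhibiting enough orthogonal transformations to connect any two configurations, using Witt's extension theorem as the underlying engine together with the explicit description of $\OGr_n$ in terms of graphs of alternating maps from Lemma~\ref{L:X and X'}. I would treat part~\eqref{I:transitive on 1} first and in three stages according to the nature of $A$. When $A=k$ is a field, any two maximal isotropic subspaces $Z$, $Z'$ have the same dimension $n$, and since the restriction of $Q$ to each is zero, Witt's cancellation/extension theorem produces an isometry of $(k^{2n},Q)$ carrying $Z$ to $Z'$; thus $\Orthogonal_{2n}(k)$ acts transitively. When $A$ is a discrete valuation ring with residue field $\kappa$ and fraction field $F$, I would take $Z, Z' \in \OGr_n(A)$, reduce mod the maximal ideal to get $\Zbar, \Zbar' \in \OGr_n(\kappa)$, use the field case to find $g \in \Orthogonal_{2n}(\kappa)$ with $g\Zbar = \Zbar'$, lift $g$ to $\Orthogonal_{2n}(A)$ (the orthogonal group scheme is smooth over $\Z$, so $\Orthogonal_{2n}(A)\to\Orthogonal_{2n}(\kappa)$ is surjective), and thereby reduce to the case $\Zbar = \Zbar'$. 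In that case I would fix a splitting $A^{2n} = W \directsum W'$ with $W, W' \in \OGr_n(A)$ complementary, and — after first moving $Z$ by an element of $\Orthogonal_{2n}(A)$ so that $Z$ is transverse to $W'$, possible because transversality is an open condition detected mod the maximal ideal — write both $Z$ and $Z'$ as graphs of alternating maps $\phi, \phi' \colon W \to W' = W^T$ by Lemma~\ref{L:X and X'}\eqref{I:graph of alternating}. The transvection-type map $(w,w') \mapsto (w, w' + (\phi'-\phi)(w))$ is then an orthogonal transformation (one checks it preserves $Q$ precisely because $\phi'-\phi$ is alternating) sending $\Graph(\phi)$ to $\Graph(\phi')$. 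Finally, for $A$ a quotient of a field or a DVR, I would reduce to those cases by the same smoothness/lifting argument, or simply note that a quotient of a field is a field and a quotient of a DVR is either the DVR, a field, or an Artinian local ring $A/\mm^e$ to which the lifting argument again applies.

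For part~\eqref{I:transitive on 2}, with $k$ a field, I would again invoke Witt's theorem but now for pairs. Given $(Y,Z)$ and $(Y',Z')$ with $\dim(Y\intersect Z) = \dim(Y'\intersect Z') = m$, the linear-algebraic classification of a pair of maximal isotropic subspaces of a hyperbolic space is governed by a single invariant, the dimension of their intersection: concretely, one can choose a symplectic-type basis adapted to the flag $Y\intersect Z \subseteq Y$, $Y \intersect Z \subseteq Z$, writing $k^{2n}$ as an orthogonal direct sum of the $m$-dimensional hyperbolic-degenerate piece spanned by $Y\intersect Z$ together with a complementary hyperbolic subspace of dimension $2(n-m)$ inside which $Y$ and $Z$ become a transverse (hence "opposite") pair of maximal isotropics. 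Making such a normal-form choice for both $(Y,Z)$ and $(Y',Z')$ and comparing gives the desired $g\in\Orthogonal_{2n}(k)$. Alternatively and more cleanly, I would build $g$ in two moves: first apply transitivity from part~\eqref{I:transitive on 1} (the field case) to arrange $Y = Y'$; then observe that the stabilizer of $Y$ in $\Orthogonal_{2n}(k)$ acts on the maximal isotropics $Z$ with $\dim(Y\intersect Z)=m$ transitively, because such a $Z$ is determined up to that stabilizer by the subspace $Y\intersect Z$ of $Y$ (a codimension-$(n-m)$ subspace, all of which are equivalent under $\GL(Y)\subseteq \Stab(Y)$) together with a choice of transverse complement encoded by an alternating form, again adjustable by elements fixing $Y$.

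The main obstacle I expect is the DVR case of part~\eqref{I:transitive on 1}: over a field everything follows from Witt's theorem, but over a DVR one must be careful that the reduction-mod-$\mm$ argument actually lands one in a position where the remaining transformation can be realized \emph{integrally}, i.e.\ by an element of $\Orthogonal_{2n}(A)$ rather than merely $\Orthogonal_{2n}(F)$. This is why the argument is staged as "lift the residual isometry, then handle the case $\Zbar = \Zbar'$ by an explicit integral transvection on graphs"; the key input making the lifting step legitimate is the smoothness of the orthogonal group scheme over $\Z$ (equivalently, that $Q$ is the split form), which guarantees the surjectivity $\Orthogonal_{2n}(A)\twoheadrightarrow\Orthogonal_{2n}(A/\mm^e)$ needed for Hensel-style lifting. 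Everything else — the transvection computations, the openness of transversality, the passage to quotient rings — is routine once this backbone is in place.
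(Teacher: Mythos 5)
Your treatment of part (b) is essentially the paper's own argument (an adapted normal-form basis built from a basis of $Y\intersect Z$, extended and corrected), and your field case of (a) is fine, granted that in characteristic $2$ one cites Witt's theorem in its quadratic-form version. The genuine problem is the backbone of your DVR case: you justify the surjectivity of $\Orthogonal_{2n}(A)\to\Orthogonal_{2n}(A/\mm)$ by ``smoothness of the orthogonal group scheme plus Hensel-style lifting.'' Formal smoothness gives lifting only along nilpotent thickenings or over henselian (e.g.\ complete) local rings, whereas the proposition allows an arbitrary discrete valuation ring such as $\Z_{(p)}$ or $k[t]_{(t)}$, which is not henselian; for a non-henselian DVR, smoothness does not imply that $X(A)\to X(A/\mm)$ is surjective (for instance the smooth affine $\Z_{(5)}$-scheme $x^2+y^2=3$ has $\F_5$-points but no $\Z_{(5)}$-points). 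The same unproved lifting is hidden in your later step of moving $Z$ so that $\Zbar$ becomes transverse to $\Wbar'$, which again asks for an element of $\Orthogonal_{2n}(A)$ with prescribed behaviour modulo $\mm$. The surjectivity you want is in fact true for the split form, but proving it amounts to constructing a hyperbolic basis of $A^{2n}$ lifting given residual data --- i.e.\ it is essentially the content of the proposition itself --- so as written the key step of your argument is unsupported in the stated generality. (For the paper's applications, where $A$ is $\Z_p$, $\Z/p^e\Z$, or a field, your argument would go through.)

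The paper's proof sidesteps all reduction and lifting: since each allowed $A$ is a field or a local ring, every direct summand of $A^{2n}$ is free, so given $Z\in\OGr_n(A)$ one chooses a basis $z_1,\ldots,z_n$ of $Z$, a basis $y_1,\ldots,y_n$ of any module complement normalized so that $\langle y_i,z_j\rangle=\delta_{ij}$, and sets $y_i'=y_i-Q(y_i)z_i-\sum_{j>i}\langle y_i,y_j\rangle z_j$; the map sending the standard basis to $z_1,\ldots,z_n,y_1',\ldots,y_n'$ lies in $\Orthogonal_{2n}(A)$ and carries $W$ to $Z$. This single division-free construction handles fields, DVRs, and their quotients (and characteristic $2$) uniformly. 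If you wish to keep your architecture, replace the Hensel appeal by such a direct construction of an isotropic complement/hyperbolic basis over $A$; your transvection-on-graphs computation is correct and can then do the remaining work.
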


\begin{proof}
\hfill
\begin{enumerate}[\upshape (a)]
\item
The hypothesis on $A$ implies that every direct summand of $A^{2n}$ is free.
Let $Z \in \OGr_n(A)$.
Choose a basis $z_1,\ldots,z_n$ of $Z$.
Choose a basis $y_1,\ldots,y_n$ 
for an $A$-module complement $Y$ of $Z$ in $A^{2n}$.
Since $\langle \;,\; \rangle$ is nondegenerate,
we can change the basis of $Y$ to assume that
$\langle y_i,z_j \rangle = \delta_{ij}$.
Let $y'_i \colonequals y_i - Q(y_i)z_i - \sum_{j>i} \langle y_i, y_j\rangle z_j$.
Then the $A$-linear map sending
the standard basis of $A^{2n}$ to $z_1,\ldots,z_n,y'_1,\ldots,y'_n$ 
is an element of $\Orthogonal_{2n}(A)$ sending $W$ to $Z$.
\item 
Given $(Y,Z)$ in the set, 
choose a basis $x_1,\ldots,x_m$ for $Y \intersect Z$,
extend it to bases $x_1,\ldots,x_m,z_{m+1},\ldots,z_n$ of $Z$
and $x_1,\ldots,x_m,y_{m+1},\ldots,y_n$ of $Y$,
and replace $y_{m+1},\ldots,y_n$ by linear combinations
so that $\langle y_i, z_j \rangle = \delta_{ij}$ for $i,j \in [m+1,n]$.
Inductively choose $w_i \in k^{2n}$ for $i=1,\ldots,m$
so that $w_i$ is orthogonal to the $w_j$ for $j<i$
and to all the $x_j,y_j,z_j$ except $\langle w_i,x_i \rangle = 1$.
Adjust each $w_i$ by a multiple of $x_i$ in order to assume
in addition that $Q(w_i)=0$.

Now, given another pair $(Y',Z')$ in the set,
the $A$-linear map sending the $w_i,x_i,y_i,z_i$
to their counterparts
is an element of $\Orthogonal_{2n}(A)$ sending $(Y,Z)$ to $(Y',Z')$.\qedhere
\end{enumerate}
\end{proof}

\begin{lemma}
\label{L:ugly}
Let $\Wbar \in \OGr_n(\F_p)$ be the mod $p$ reduction of $W$.
Let $Y \le \Wbar$ be an $\F_p$-subspace.
Then the subgroup of $\Orthogonal_{2n}(\Z_p)$ preserving $W$ and $Y$
acts transitively on $\{X \in \OGr_n(\F_p) : X \intersect \Wbar = Y\}$.
\end{lemma}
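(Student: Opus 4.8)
The plan is to reduce to the case $Y = 0$ by first passing to a convenient complement, and then to handle the general $Y$ by choosing subspaces adapted to the splitting. First I would invoke Lemma~\ref{L:X and X'}: pick a maximal isotropic direct summand $W'$ of $\Z_p^{2n}$ with $W \directsum W' \to \Z_p^{2n}$ an isomorphism (e.g.\ the span of the last $n$ standard basis vectors), so that $\langle\;,\;\rangle$ identifies $W'$ with $W^T$. Every $X \in \OGr_n(\F_p)$ with $X \intersect \Wbar = Y$ satisfies $\dim X = n$ and $\dim(X \intersect \Wbar) = \dim Y$; I would like to say that such $X$ is the graph of an alternating map $\phi \colon \Wbar \to \Wbar'$ whose kernel is exactly $Y$, using Lemma~\ref{L:X and X'}\eqref{I:graph of alternating}. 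This requires knowing $X \directsum \Wbar' \to \F_p^{2n}$ is an isomorphism, i.e.\ $X \intersect \Wbar' = 0$; this need not hold for every such $X$, but one can first move $X$ by an element of $\Orthogonal_{2n}(\Z_p)$ preserving both $W$ and $Y$ into such general position — more precisely, I would argue that the subgroup preserving $W$ (hence $\Wbar$) and $Y$ already acts transitively on those $X$ with $X \intersect \Wbar = Y$ and $X \intersect \Wbar' = 0$, and then show every $X$ with $X \intersect \Wbar = Y$ can be brought to that sublocus.

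For the transitivity on the open sublocus, once $X = \Graph(\phi_X)$ and $X_1 = \Graph(\phi_{X_1})$ with $\phi_X, \phi_{X_1} \colon \Wbar \to \Wbar^T$ alternating and both with kernel exactly $Y$, I want an element $g$ of $\Orthogonal_{2n}(\Z_p)$ fixing $W$ and $Y$ (elementwise on $Y$, or at least setwise — fixing $\Wbar$ setwise and $Y$ setwise suffices) with $g \cdot \Graph(\phi_X) = \Graph(\phi_{X_1})$. The natural candidates are the ``transvection-type'' elements: for $W \directsum W' \to \Z_p^{2n}$ and $h \in \GL_n(\Z_p)$ one has the block-diagonal orthogonal element acting as $h$ on $W$ and $(h^t)^{-1}$ on $W'$, which sends $\Graph(\phi)$ to $\Graph((h^t)^{-1} \phi h^{-1})$; and for alternating $\psi \colon W' \to W$ (equivalently $\psi \colon W^T \to W$) the unipotent element $\begin{pmatrix} I & \psi \\ 0 & I \end{pmatrix}$, which fixes $W$ and sends $\Graph(\phi)$ to $\Graph(\phi + \text{(correction)})$. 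The key algebraic fact to extract is the classification of alternating forms on $\F_p^n$ with prescribed radical $Y$ up to the action of $\{h \in \GL_n(\Z_p) : h \text{ preserves } Y \bmod p\}$: any two nondegenerate alternating forms on $\Wbar/Y$ of the same rank (necessarily $n - \dim Y$, which must be even) are equivalent, by the structure theorem for symplectic spaces over $\F_p$ already used in the proof of Corollary~\ref{C:probability of pairing}. Lifting a suitable $\bar h \in \GL(\Wbar)$ preserving $Y$ to $h \in \GL_n(\Z_p)$ preserving $Y$'s preimage, and composing with a unipotent element to absorb the mod-$p$ discrepancy in $\phi$ itself (not just the induced form), gives the required $g$.

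The remaining point — moving an arbitrary $X$ with $X \intersect \Wbar = Y$ into the sublocus $X \intersect \Wbar' = 0$ by a group element fixing $W$ and $Y$ — I would handle by a dimension/general-position count: the set of maximal isotropic $Z$ with $Z \intersect \Wbar' \ne 0$ is a proper closed subset, while the $\Orthogonal_{2n}(\Z_p)$-orbit (under the stabilizer of $W$ and $Y$) of a single good $X$ is large enough to meet the complement; alternatively, and more cleanly, one can simply replace $W'$ by another maximal isotropic complement $\Wbar''$ of $\Wbar$ with $X \intersect \Wbar'' = 0$ and $Y \subseteq \Wbar$ unchanged — such $\Wbar''$ exists because $X$ and $\Wbar$ together span a space of dimension $2n - \dim Y$, and one needs a complement to $\Wbar$ avoiding $X$, which a counting argument over $\F_p$ provides — then run the graph argument relative to $\Wbar''$, using that $\Orthogonal_{2n}(\Z_p)$ acts transitively on maximal isotropic complements of $W$ fixing $W$ (a consequence of Proposition~\ref{P:orthogonal group acts transitively}, applied over $\F_p$ and lifted by smoothness of $\OGr_n$). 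I expect the main obstacle to be exactly this bookkeeping: keeping the group element's action on $W$, on $\Wbar$, and on $Y$ all under control simultaneously while it adjusts $\phi$ — the lifting from $\F_p$ to $\Z_p$ is routine by smoothness of the relevant homogeneous spaces, but ensuring the lifted element fixes $Y$ (not merely $\Wbar$) is where the argument must be done carefully.
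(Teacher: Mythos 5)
There is a genuine gap, and it is not the bookkeeping you flag at the end but the very first reduction: the claim that an arbitrary $X$ with $X \intersect \Wbar = Y$ can be brought into the locus $X \intersect \Wbar'' = 0$ for \emph{some} maximal isotropic complement $\Wbar''$ of $\Wbar$ (whether by moving $X$ with a stabilizer element or by changing the complement) is false whenever $n - \dim Y$ is odd. Indeed, by Corollary~\ref{C:sum of three intersection dimensions} applied to $(X,\Wbar,\Wbar'')$, any maximal isotropic $\Wbar''$ with $\Wbar'' \intersect \Wbar = 0$ satisfies $\dim(X \intersect \Wbar'') \equiv n - \dim Y \pmod 2$, so in the odd case $X$ meets \emph{every} isotropic complement of $\Wbar$ nontrivially; equivalently, a graph of an alternating map $\phi\colon \Wbar \to \Wbar''$ has $\ker\phi$ of dimension $\equiv n \pmod 2$, so no $X$ in your set is such a graph (already for $n=1$, $Y=0$, the only candidate $X$ is $\Wbar'$ itself). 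Your ``general position''/counting argument ignores both the isotropy constraint on $\Wbar''$ and this parity obstruction, so it cannot be repaired as stated. The correct normal form in general is the hybrid one used later in Section~\ref{S:comparison}: $X = \Wbar_1'' \directsum \Graph(\phi)$ relative to a decomposition $\Wbar = \Wbar_1 \directsum \Wbar_2$ (equivalently, pass to $Y^\perp/Y$, where $X/Y$ really is a complement of $\Wbar/Y$ and a graph); making your strategy work would then require additional transitivity statements for the $\Wbar_1$-piece and a lifting argument compatible with $W$ and $Y$, none of which is supplied. The part of your plan that does work is the symplectic classification step: on the locus of genuine graphs, block-diagonal elements $h \oplus (h^t)^{-1}$ with $\bar h$ preserving $Y$ already give transitivity (your extra unipotent ``absorbing the mod-$p$ discrepancy in $\phi$'' is a red herring, since over $\F_p$ the graph is exactly the form, and the unipotents fixing $W$ do not simply translate $\phi$).

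For comparison, the paper's proof takes a much shorter route that sidesteps normal forms entirely: Proposition~\ref{P:orthogonal group acts transitively}\eqref{I:transitive on 2} over $\F_p$ gives $\bar\alpha \in \Orthogonal_{2n}(\F_p)$ with $\bar\alpha(\Wbar,X) = (\Wbar,X')$, and such an $\bar\alpha$ automatically preserves $Y = X \intersect \Wbar = X' \intersect \Wbar$; the only real content is lifting $\bar\alpha$ from $\Stab_{\Orthogonal_{2n}(\F_p)}(\Wbar)$ to $\Stab_{\Orthogonal_{2n}(\Z_p)}(W)$, which follows by Hensel's lemma from smoothness of the stabilizer group scheme of $W$ (shown via the extension $1 \to B \to S \to \GL_W \to 1$ with $B$ the alternating maps $W' \to W$). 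Note also that the point you expected to be delicate --- making the lift preserve $Y$ --- is automatic, since the action on $\Wbar$ (hence on $Y$) depends only on the reduction mod $p$.
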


\begin{proof}
For any $X,X'$ in the set,
Proposition~\ref{P:orthogonal group acts transitively}\eqref{I:transitive on 2}
yields an element $\bar{\alpha} \in \Orthogonal_{2n}(\F_p)$
sending $(\Wbar,X)$ to $(\Wbar,X')$.
It remains to lift $\bar{\alpha} \in \Stab_{\Orthogonal_{2n}(\F_p)}(\Wbar)$
to an element $\alpha \in \Stab_{\Orthogonal_{2n}(\Z_p)}(W)$,
since such an $\alpha$ will preserve also 
$X \intersect \Wbar = X' \intersect \Wbar = Y$.
By Hensel's lemma, the lift exists if the group scheme
stabilizer $S \le \Orthogonal_{2n}$ of $W$ is smooth over $\Z$.
In fact, if we define $W' \colonequals 0 \times \Z^n \in \OGr_n(\Z)$,
then there is a short exact sequence of group schemes
\[
	1 \to B \to S \to \GL_W \to 1
\]
where $B$ is the additive group scheme of alternating maps
$\beta \colon W' \to W$;
namely, $\beta \in B$ maps to the unique $s \in S$
such that $s(w') = w' + \beta(w')$ for all $w' \in W'$,
and $S \to \GL_W$ is defined by the action of $S$ on $W$.
Since $B$ and $\GL_W$ are smooth, so is $S$.
\end{proof}

\subsection{Orthogonal Grassmannians}

\begin{proposition}
\label{P:OGr is smooth}
For each $n \ge 0$, the functor $\OGr_n$ is represented by
a smooth projective scheme of relative dimension $n(n-1)/2$ over $\Z$,
called an \defi{orthogonal Grassmannian}.
\end{proposition}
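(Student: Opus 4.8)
The plan is to realize $\OGr_n$ as a closed subscheme of the ordinary Grassmannian $\Gr_{n,2n}$ cut out by the vanishing of the quadratic form on the tautological subbundle, and then to check smoothness and the dimension count on the fibers over $\Spec \Z$ using the local description of $\OGr_n$ afforded by Lemma~\ref{L:X and X'}. First I would recall that $\Gr_{n,2n}$ is smooth projective over $\Z$ of relative dimension $n^2$, with tautological exact sequence $0 \to \mathcal{T} \to \calO^{2n} \to \calQ \to 0$ where $\mathcal{T}$ is locally free of rank $n$. The restriction of $Q$ to $\mathcal{T}$ is a quadratic form on $\mathcal{T}$, i.e.\ a section of $\Sym^2 \mathcal{T}^\vee$ (note $Q|_{\mathcal{T}}$ automatically kills the bilinear part issues since we are dealing with a genuine quadratic form, not just its polarization). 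The locus where this section vanishes is a closed subscheme $\OGr_n \subseteq \Gr_{n,2n}$, and by the very definition of the representable functor $\Gr_{n,2n}$ together with the definition of ``isotropic direct summand'' in Section~\ref{S:maximal isotropic direct summands}, this closed subscheme represents the functor $\OGr_n$. Projectivity over $\Z$ is then immediate since a closed subscheme of a projective $\Z$-scheme is projective.

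The substance is smoothness and the relative dimension. For this I would work locally: fix a point and a maximal isotropic direct summand $X'$ complementary to the tautological one, which exists \'etale-locally (indeed Zariski-locally) on the base by the transitivity statement Proposition~\ref{P:orthogonal group acts transitively}\eqref{I:transitive on 1} applied after reduction, or more elementarily by a direct splitting argument. By Lemma~\ref{L:X and X'}\eqref{I:X' is X^T}, $X'$ is identified with $X^T$, and by part~\eqref{I:graph of alternating}, the isotropic direct summands near $X$ are exactly the graphs $\Graph(\phi)$ of \emph{alternating} homomorphisms $\phi \colon X \to X^T$. This gives an affine chart of $\OGr_n$ isomorphic to the affine space of alternating $n \times n$ matrices, which is $\Aff^{n(n-1)/2}$ over the base. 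Hence $\OGr_n$ is, Zariski-locally, affine space of relative dimension $n(n-1)/2$ over $\Z$; this simultaneously establishes smoothness over $\Z$ and pins down the relative dimension. One should check that these charts cover $\OGr_n$: given any field-valued (or more generally any) point $Z$ of $\OGr_n$, one needs an isotropic complement $X'$, which again follows from Lemma~\ref{L:X and X'}-type reasoning together with the fact that over a local ring every direct summand is free and one can build an isotropic complement as in the proof of Proposition~\ref{P:orthogonal group acts transitively}\eqref{I:transitive on 1} (the $y'_i$ constructed there span an isotropic complement).

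The main obstacle I anticipate is the bookkeeping around the quadratic form versus its associated bilinear pairing, especially in characteristic $2$: one must be careful that ``isotropic direct summand'' is defined via $Q|_Z = 0$ (a section of $\Sym^2 \mathcal{T}^\vee$, which in characteristic $2$ is genuinely stronger than the vanishing of $\langle\,,\,\rangle|_{Z}$), and that the chart description via alternating maps $\phi$ (Lemma~\ref{L:X and X'}\eqref{I:graph of alternating}) is exactly the condition $Q(x + \phi(x)) = 0$ for all $x$, with ``alternating'' meaning $\phi$ seen as a bilinear form satisfies the diagonal-vanishing condition. Since Lemma~\ref{L:X and X'} is stated and proved for an arbitrary ring $A$, this identification is already available, so the characteristic~$2$ case requires no separate treatment beyond invoking that lemma. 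The remaining routine point is that the vanishing locus of the section of $\Sym^2\mathcal{T}^\vee$ agrees scheme-theoretically with the union of the alternating-matrix charts; this follows because both are the functor $\OGr_n$, but one could also verify it directly on each chart by writing $Q(x+\phi(x))$ in coordinates and seeing that its vanishing is precisely the alternating condition, which imposes no equations at all once $\phi$ ranges over alternating matrices.
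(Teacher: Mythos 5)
Your architecture is the standard one (and differs from the paper only in that the paper does not argue at all: it simply cites \cite{SGA7.2}*{XII, Proposition~2.8}): realize $\OGr_n$ inside $\Gr_{n,2n}$ as the vanishing locus of the section of $\Sym^2$ of the dual tautological bundle induced by $Q$, which gives representability and projectivity, and then use Lemma~\ref{L:X and X'} to describe isotropic summands complementary to a fixed isotropic summand as graphs of alternating maps, giving charts isomorphic to $\Aff^{n(n-1)/2}$. That part is correct, including the characteristic~$2$ bookkeeping.

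The gap is in the step where you conclude smoothness \emph{over $\Z$}. The chart attached to a complementary pair $(X,X')$ of maximal isotropic direct summands is an open subscheme of $\OGr_{n,A}$ isomorphic to $\Aff^{n(n-1)/2}_A$ only over the ring $A$ over which the pair is defined. Your covering argument takes a point of $\OGr_n$ with residue field $k$ and produces an isotropic complement $X'$ over $k$ (via the construction in the proof of Proposition~\ref{P:orthogonal group acts transitively}\eqref{I:transitive on 1}). Such a chart is an open subscheme of the fiber $\OGr_{n,k}$, not of $\OGr_n$ relative to $\Z$; so what this actually proves is that every fiber of $\OGr_n \to \Spec \Z$ is smooth of dimension $n(n-1)/2$. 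That does not yet yield smoothness over $\Z$: flatness is missing, and the assertion ``$\OGr_n$ is Zariski-locally affine space of relative dimension $n(n-1)/2$ over $\Z$'' is exactly what has not been established. To repair it along your lines you need charts indexed by pairs defined over $\Z$ that cover, i.e.\ the standard lemma that any maximal isotropic subspace over any field is complementary to at least one of the $2^n$ coordinate maximal isotropic summands (spanned by choosing, for each hyperbolic pair, one of the two standard basis vectors), all of which lie in $\OGr_n(\Z)$; this is true but requires proof, and the pointwise construction of a complement over $k$ cannot substitute for it. Alternative repairs: check the infinitesimal lifting criterion directly (lift an isotropic direct summand through a square-zero extension, locally, by the same graph computation), or prove flatness separately, e.g.\ by observing that $\OGr_n$ is cut out in the smooth $\Z$-scheme $\Gr_{n,2n}$ by a section of a rank-$n(n+1)/2$ bundle and that all fibers have the expected codimension, so it is a local complete intersection, hence Cohen--Macaulay, and flatness over the regular base $\Z$ then follows from constancy of fiber dimension. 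With any one of these supplements your proof is complete; as written, the relative statement is asserted rather than proved.
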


\begin{proof}
See \cite{SGA7.2}*{XII, Proposition~2.8},
where $\OGr_n$ is denoted $\textup{G\'en}(X)$.
The expression for the relative dimension arises in the proof there
as the rank of $\bigwedge^2 W$.
\end{proof}

If $V \isom A^{2n}$ for some ring $A$,
also write $\OGr_V$ for the $A$-scheme $\OGr_{n,A}$.

\begin{proposition}
\label{P:OGr+}
Fix $n>0$.
\begin{enumerate}[\upshape (a)]
\item \label{I:two components}
The scheme $\OGr_n$ is a disjoint union of 
two isomorphic schemes $\OGr_n^{\even}$ and $\OGr_n^{\odd}$,
distinguished by the property that for $Z \in \OGr_n(k)$ for a field $k$,
\begin{equation}
  \label{E:OGr_n^even}
	Z \in \OGr_n^{\even}(k) \iff \textup{$\dim(Z \intersect W_k)$ is even}.
\end{equation}
\item 
If $k$ is a field, 
then $\OGr_{n,k}^{\even}$ and $\OGr_{n,k}^{\odd}$ are geometrically integral.
\item \label{I:Z intersect Z'}
For any field $k$,
two points $Z,Z' \in \OGr_n(k)$ 
belong to the same component of $\OGr_{n,k}$ 
if and only if $\dim(Z \intersect Z') \equiv n \pmod{2}$.
\end{enumerate}
\end{proposition}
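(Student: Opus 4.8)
The plan is to pin down the component structure over algebraically closed fields by means of the orthogonal group action, and then descend the resulting decomposition to $\Z$. The one non-formal ingredient I would establish first is a \emph{stabilizer fact}: for every field $k$ and every $Z \in \OGr_n(k)$, the stabilizer $\Stab_{\Orthogonal_{2n}(k)}(Z)$ is contained in $\operatorname{SO}_{2n}(k)$, where $\operatorname{SO}_{2n}$ is the identity component of $\Orthogonal_{2n}$ (cut out by $\det = 1$ when the residue characteristic is not $2$, and by the Dickson invariant in characteristic $2$). By Proposition~\ref{P:orthogonal group acts transitively}\ref{I:transitive on 1} it is enough to handle $Z = W_k$, and the exact sequence $1 \to B \to S \to \GL_W \to 1$ from the proof of Lemma~\ref{L:ugly} exhibits the stabilizer group scheme $S$ of $W$ as an extension of the connected $\Z$-groups $\GL_W$ and $B$ (the additive group of alternating maps $W' \to W$); hence $S$ has connected fibers, so $S_k \subseteq (\Orthogonal_{2n,k})^{\circ} = \operatorname{SO}_{2n,k}$, and the general $Z$ follows by conjugating by an element carrying $W_k$ to $Z$ and using normality of $\operatorname{SO}_{2n}$. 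I record also that any transposition $\rho$ exchanging some $e_j$ with $e_{n+j}$ is an orthogonal reflection, so $\rho \notin \operatorname{SO}_{2n}(k)$ for every $k$.

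Next I would work over $\bar k$. Here Proposition~\ref{P:orthogonal group acts transitively}\ref{I:transitive on 1} makes $\OGr_n(\bar k)$ a single $\Orthogonal_{2n}(\bar k)$-orbit; since the stabilizer of $W_{\bar k}$ lies in the index-$2$ subgroup $\operatorname{SO}_{2n}(\bar k)$, this orbit is the disjoint union of exactly two $\operatorname{SO}_{2n}(\bar k)$-orbits $C_1 = \operatorname{SO}_{2n}(\bar k) \cdot W_{\bar k}$ and $C_2 = \rho \cdot C_1$, which are distinct (otherwise $\rho$ would carry some $Z \in C_1$ into $\operatorname{SO}_{2n}(\bar k) \cdot Z$, forcing $\rho \in \operatorname{SO}_{2n}(\bar k)$ by the stabilizer fact). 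Each $C_i$ is smooth and irreducible (a homogeneous space for the connected group $\operatorname{SO}_{2n}$), the map $Z \mapsto \rho(Z)$ is an isomorphism $C_1 \isom C_2$, and since $\OGr_{n,\bar k}$ is smooth of pure dimension $n(n-1)/2$ (Proposition~\ref{P:OGr is smooth}), a dimension count shows each $C_i$ is open, hence a connected component; thus $\OGr_{n,\bar k} = C_1 \sqcup C_2$, with $C_1$ and $C_2$ geometrically integral and interchanged by $\rho$. To identify them by the parity invariant, put $W_{(m)} \colonequals \langle e_1, \dots, e_m \rangle \oplus \langle e_{n+m+1}, \dots, e_{2n} \rangle \in \OGr_n$ for $0 \le m \le n$, so that $\dim(W \cap W_{(m)}) = m$ and $W_{(m)} = \sigma_m(W)$ with $\sigma_m$ a product of $n - m$ transpositions of the kind above; by the stabilizer fact, $W_{(m)} \in C_1$ iff $\sigma_m \in \operatorname{SO}_{2n}(\bar k)$, iff $n - m$ is even. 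Given arbitrary $Z, Z' \in \OGr_n(\bar k)$ with $\dim(Z \cap Z') = m$, Proposition~\ref{P:orthogonal group acts transitively}\ref{I:transitive on 2} produces $g \in \Orthogonal_{2n}(\bar k)$ with $(gZ, gZ') = (W_{\bar k}, W_{(m)})$; since $g$ normalizes $\operatorname{SO}_{2n}$, the pair $(Z, Z')$ lies in one $\operatorname{SO}_{2n}(\bar k)$-orbit exactly when $(W_{\bar k}, W_{(m)})$ does, so $Z$ and $Z'$ lie in the same component iff $m \equiv n \pmod 2$ — this is part~\ref{I:Z intersect Z'} over $\bar k$ — and taking $Z' = W_{\bar k}$ identifies $C_1$ with $\{Z : \dim(Z \cap W_{\bar k}) \equiv n \pmod 2\}$.

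Finally I would descend and fix the labelling. Since $\OGr_n \to \Spec\Z$ is smooth and proper (Proposition~\ref{P:OGr is smooth}), the finite part of its Stein factorization is a finite étale $\Spec\Z$-scheme, of degree $2$ by the previous paragraph, over which $\OGr_n$ has geometrically connected fibers; as $\Spec\Z$ has trivial étale fundamental group (Minkowski: $\Q$ has no nontrivial everywhere-unramified extension), this scheme is $\Spec\Z \sqcup \Spec\Z$, so $\OGr_n = X_1 \sqcup X_2$ with each $X_i \to \Spec\Z$ smooth and proper with geometrically connected — hence, being smooth, geometrically integral — fibers, and $\rho$ interchanges $X_1$ and $X_2$ (it does so on a geometric fiber), so $X_1 \cong X_2$. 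The section $W \colon \Spec\Z \to \OGr_n$ has connected image and so lies in one $X_i$; I define $\OGr_n^{\even}$ to be that $X_i$ when $n$ is even and the other when $n$ is odd, and $\OGr_n^{\odd}$ to be the remaining one. On every geometric fiber this agrees with the parity criterion above (since $\dim(W \cap W) = n$), which yields~\eqref{E:OGr_n^even} over an arbitrary field $k$ because $\dim_k(Z \cap W_k)$ is unchanged by extension of $k$; and part~\ref{I:Z intersect Z'} over a general field follows from its geometric case for the same reason, the connected components of $\OGr_{n,k}$ being exactly the geometrically connected schemes $\OGr_{n,k}^{\even}$ and $\OGr_{n,k}^{\odd}$. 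The step I expect to be the main obstacle is the stabilizer fact — and above all making it uniform in the residue characteristic, since a determinant argument works only away from $2$ and one must instead go through connectedness of the stabilizer group scheme and the Dickson invariant; everything afterward is orbit bookkeeping together with the standard theory of Stein factorization and the simple connectedness of $\Spec\Z$.
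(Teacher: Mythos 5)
Your proof is correct, but it takes a genuinely different route from the paper's. The paper disposes of all three parts by citation: SGA~7, Exp.~XII, Proposition~2.8 supplies a morphism $e\colon \OGr_n \to \Spec(\Z\times\Z)$ with geometrically connected fibers, Proposition~1.12 there gives the parity statements \eqref{E:OGr_n^even} and \eqref{I:Z intersect Z'}, and ``geometrically connected plus smooth implies geometrically integral'' finishes. You instead rebuild the component structure from scratch: over $\bar{k}$ you combine transitivity (Proposition~\ref{P:orthogonal group acts transitively}) with connectedness of the stabilizer group scheme $S$ of $W$ (read off from the extension $1 \to B \to S \to \GL_W \to 1$ in the proof of Lemma~\ref{L:ugly}) to conclude $\Stab(W)\subseteq \operatorname{SO}_{2n}$, so the single $\Orthogonal_{2n}(\bar{k})$-orbit splits into exactly two $\operatorname{SO}_{2n}(\bar{k})$-orbits swapped by a reflection; these are the two components, the explicit points $W_{(m)}$ give the parity bookkeeping (which yields part~\eqref{I:Z intersect Z'} directly, rather than via a second citation), and you descend to $\Z$ through the Stein factorization (finite \'etale of degree $2$ for a smooth proper morphism) together with $\pi_1(\Spec\Z)=1$. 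The trade-off is that the SGA~7 citation is replaced by the classical structure theory of even orthogonal groups --- $\operatorname{SO}_{2n}$ connected and of index $2$, reflections nontrivial for the Dickson invariant, uniformly in characteristic $2$ --- which you rightly identify as the crux; in exchange you obtain a self-contained argument with an explicit description of the components as $\operatorname{SO}_{2n}$-orbits. If you write this up, two steps deserve a sentence each: the openness of each orbit uses that both orbits have dimension $n(n-1)/2$, equal to that of the smooth equidimensional $\OGr_{n,\bar{k}}$ (so each orbit closure is a union of connected components, which are disjoint by smoothness), and the degree-$2$ \'etale count in the Stein factorization requires the two-component computation in every residue characteristic, including $2$, which is exactly what your Dickson-invariant formulation provides.
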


\begin{proof}
See the proof of \cite{SGA7.2}*{XII, Proposition~2.8},
which shows that there is a morphism $e \colon \OGr_n \to \Spec(\Z \times \Z)$
with geometrically connected fibers.
Define $\OGr_n^{\even}$ and $\OGr_n^{\odd}$ as the preimages
of the components of $\Spec(\Z \times \Z)$;
they can be chosen so that 
\eqref{E:OGr_n^even} and \eqref{I:Z intersect Z'} hold,
by \cite{SGA7.2}*{XII, Proposition~1.12}.
Geometrically connected and smooth imply geometrically integral.
\end{proof}

\begin{remark}
For $n=0$, we may define $\OGr_0^{\even} \colonequals \OGr_0$
and $\OGr_0^{\odd} \colonequals \emptyset$.
\end{remark}

\begin{corollary}
\label{C:sum of three intersection dimensions}
If $Z_1,Z_2,Z_3 \in \OGr_n(k)$ for a field $k$,
then
\[
	  \dim(Z_1 \intersect Z_2) 
	+ \dim(Z_2 \intersect Z_3) 
	+ \dim(Z_3 \intersect Z_1) 
	\equiv n \pmod{2}.
\]
\end{corollary}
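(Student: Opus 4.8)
The plan is to reduce the congruence to a statement about connected components of the orthogonal Grassmannian and then apply Proposition~\ref{P:OGr+}\eqref{I:Z intersect Z'}. The key observation is that $\OGr_{n,k}$ has exactly two geometrically connected components (Proposition~\ref{P:OGr+}), so there is a well-defined "component invariant'' $c(Z) \in \Z/2\Z$ for each $Z \in \OGr_n(k)$, where $k$ may be replaced by $\kbar$ without changing anything since the two components are already defined over the prime field. By Proposition~\ref{P:OGr+}\eqref{I:Z intersect Z'}, for any $Z, Z' \in \OGr_n(k)$ we have
\[
	\dim(Z \intersect Z') \equiv n + c(Z) + c(Z') \pmod 2,
\]
because $\dim(Z \intersect Z') \equiv n \pmod 2$ precisely when $c(Z) = c(Z')$, i.e.\ when $c(Z) + c(Z') \equiv 0$.

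First I would record this displayed identity; it is the only input needed. Then I would simply sum it over the three pairs $(Z_1,Z_2)$, $(Z_2,Z_3)$, $(Z_3,Z_1)$:
\[
	  \dim(Z_1 \intersect Z_2)
	+ \dim(Z_2 \intersect Z_3)
	+ \dim(Z_3 \intersect Z_1)
	\equiv 3n + 2\bigl(c(Z_1) + c(Z_2) + c(Z_3)\bigr) \pmod 2.
\]
The term $2\bigl(c(Z_1)+c(Z_2)+c(Z_3)\bigr)$ vanishes mod $2$, and $3n \equiv n \pmod 2$, giving the claim.

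There is essentially no obstacle here: the content is entirely in Proposition~\ref{P:OGr+}, which has already been established, and the remainder is the observation that each dimension mod $2$ depends only on the (additive, $\Z/2\Z$-valued) component invariants of its two arguments, so that "telescoping'' around the triangle $Z_1 \to Z_2 \to Z_3 \to Z_1$ makes the component contributions cancel in pairs. The one point worth stating carefully is the passage from the "same component'' criterion to the additive formula for $\dim(Z\intersect Z')\bmod 2$ in terms of $c(Z)$ and $c(Z')$; after that the conclusion is immediate. (If one prefers, one can avoid naming $c$ and instead argue by cases on how many of $Z_1,Z_2,Z_3$ lie in $\OGr_n^{\even}(\kbar)$, but the additive reformulation is cleaner.)
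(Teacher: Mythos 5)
Your proposal is correct and is essentially the paper's own argument: the paper likewise invokes Proposition~\ref{P:OGr+}\eqref{I:Z intersect Z'} to say that $\dim(Z\intersect Z')-n$ mod~$2$ records whether $Z$ and $Z'$ lie in the same component, and then notes that the component switches around the cycle $Z_1\to Z_2\to Z_3\to Z_1$ cancel in pairs. Your explicit additive invariant $c(Z)\in\Z/2\Z$ is just a cleaner bookkeeping device for the same telescoping argument.
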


\begin{proof}
By Proposition~\ref{P:OGr+}\eqref{I:Z intersect Z'},
the parity of $\dim(Z_1 \intersect Z_2)-n$ measures whether 
$Z_1$ and $Z_2$ belong to the same component.
Summing three such integers gives the parity of the number of 
component switches in hopping from $Z_1$ to $Z_2$ to $Z_3$ and
back to $Z_1$; the latter number is even.
\end{proof}

\begin{lemma}
\label{L:J_V}
Let $q=p^e$ for a prime $p$ and $e \ge 1$.
Then 
\[
	\#\OGr_n(\Z/q\Z) = q^{n(n-1)/2} \prod_{i=1}^n (1+p^{i-n}).
\]
\end{lemma}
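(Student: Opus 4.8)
The plan is to reduce the count over $\Z/q\Z$ to the count over $\F_p$ and then count maximal isotropic subspaces of a hyperbolic quadratic space over $\F_p$ by a standard recursion. For the first reduction: since $\OGr_n$ is smooth of relative dimension $n(n-1)/2$ over $\Z$ by Proposition~\ref{P:OGr is smooth}, the reduction map $\OGr_n(\Z/p^e\Z) \to \OGr_n(\F_p)$ is surjective with all fibers of the same size $p^{(e-1)n(n-1)/2}$ (smoothness gives a formally smooth lift, and the fiber over a point is a torsor under the tangent space reduced mod $p^{e-1}$, which has $p^{(e-1)n(n-1)/2}$ elements). Hence $\#\OGr_n(\Z/q\Z) = q^{n(n-1)/2} \cdot \#\OGr_n(\F_p) / p^{n(n-1)/2}$, and it suffices to prove $\#\OGr_n(\F_p) = p^{n(n-1)/2} \prod_{i=1}^n (1+p^{i-n})$. (Equivalently one can invoke Lang--Weil-type exactness of smooth point-counting, but the torsor argument is cleaner and elementary here.)

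Next I would count $\#\OGr_n(\F_p)$, the number of maximal isotropic subspaces of the split quadratic space $(\F_p^{2n}, Q)$ of dimension $n$. Pick a nonzero isotropic vector $v$; the number of such $v$ is $(p^n-1)(p^{n-1}+1)$ — this is the number of $\F_p$-points on the projective quadric $\{Q=0\}$ of dimension $2n-2$, scaled by $p-1$, or one computes it directly by the standard formula for a split quadratic form in $2n$ variables. Each maximal isotropic $Z$ of dimension $n$ contains $p^n - 1$ nonzero vectors, all isotropic. By transitivity of $\Orthogonal_{2n}(\F_p)$ on nonzero isotropic vectors (which follows from Witt's theorem, or from the transitivity statements already in Proposition~\ref{P:orthogonal group acts transitively}), the quantity $\#\{Z : v \in Z\}$ is independent of the choice of isotropic $v \neq 0$. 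Therefore, double-counting pairs $(v, Z)$ with $0 \neq v \in Z$ gives
\[
  \#\OGr_n(\F_p) \cdot (p^n - 1) = (p^n - 1)(p^{n-1}+1) \cdot \#\{Z \in \OGr_n(\F_p) : v_0 \in Z\}
\]
for a fixed isotropic $v_0 \neq 0$. The stabilizer structure identifies $\{Z : v_0 \in Z\}$ with $\OGr_{n-1}(\F_p)$: modulo $v_0$ inside $v_0^\perp$ one gets a split quadratic space of dimension $2(n-1)$, and maximal isotropics through $v_0$ correspond bijectively to maximal isotropics of that space. This yields the recursion
\[
  \#\OGr_n(\F_p) = (p^{n-1}+1)\,\#\OGr_{n-1}(\F_p),
\]
with base case $\#\OGr_0(\F_p) = 1$. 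Unwinding gives $\#\OGr_n(\F_p) = \prod_{i=1}^n (p^{i-1}+1) = \prod_{i=1}^n p^{i-1}(1+p^{1-i}) = p^{n(n-1)/2}\prod_{i=1}^n (1+p^{1-i})$, and reindexing $i \mapsto n+1-i$ rewrites this as $p^{n(n-1)/2}\prod_{i=1}^n(1+p^{i-n})$, as desired.

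The main obstacle I anticipate is pinning down the two input facts cleanly: the count of nonzero isotropic vectors in the split form (which requires knowing the form is split, i.e.\ of $+$ type, and computing $\#\{Q=0\}$ — this is where a small amount of care with the quadric point-count is needed, and one should probably just cite the standard formula or prove the one-line induction for it in parallel), and the claim that the reduction-mod-$p$ map on $\Z/p^e$-points of a smooth $\Z$-scheme is surjective with equidimensional fibers of the expected size. The latter is standard but worth stating explicitly since it is exactly what converts the $\F_p$-count into the $\Z/q\Z$-count; alternatively, since everything in sight is smooth, one could deduce the whole formula at once from the fact that $\#X(\Z/p^e\Z) = p^{(e-1)\dim X}\#X(\F_p)$ for smooth $X$, applied to $X = \OGr_n$.
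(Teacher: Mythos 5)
Your argument is correct and follows the same route as the paper: the paper also reduces to the case $e=1$ using smoothness of $\OGr_n$ of relative dimension $n(n-1)/2$ (so that $\#\OGr_n(\Z/p^e\Z)=p^{(e-1)n(n-1)/2}\,\#\OGr_n(\F_p)$), and for $e=1$ it simply cites \cite{Poonen-Rains2012-selmer}*{Proposition~2.6(b)}, whereas you supply a direct proof of that count via the standard double-counting recursion $\#\OGr_n(\F_p)=(p^{n-1}+1)\#\OGr_{n-1}(\F_p)$. Both your recursion and your lifting step are sound, so the only difference from the paper is that you prove the cited $\F_p$-count rather than quoting it.
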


\begin{proof}
The case $e=1$ is \cite{Poonen-Rains2012-selmer}*{Proposition~2.6(b)}.
The $e=1$ case implies the general case
since $\OGr_n$ is smooth of relative dimension $n(n-1)/2$.
\end{proof}

\subsection{Schubert subschemes}

Suppose that $0 \le r \le n$.
For a field $k$, let $\Schubert_{n,r}(k)$
be the set of $Z \in \OGr_n^{\parity(r)}(k)$
such that $\dim(Z \intersect W_k) \ge r$,
or equivalently, 
the set of $Z \in \OGr_n(k)$
such that $\dim(Z \intersect W_k) - r \in 2\Z_{\ge 0}$.
For an arbitrary ring $R$,
let $\Schubert_{n,r}(R)$ be the set of $Z \in \OGr_n(R)$
such that $Z_k \in \Schubert_{n,r}(k)$
for every field $k$ that is a quotient of $R$.

\begin{proposition}
\label{P:Schubert}
For $0 \le r \le n$, the functor $\Schubert_{n,r}$ is represented by
a closed subscheme of $\OGr_n$
of relative dimension $n(n-1)/2 - r(r-1)/2 = (n-r)(n+r-1)/2$ over $\Z$.
\end{proposition}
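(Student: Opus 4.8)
The plan is to realize $\Schubert_{n,r}$ as an explicitly defined closed subscheme of $\OGr_n$, compute its relative dimension fiberwise over $\Spec\Z$ using the homogeneity established earlier, and then invoke flatness to conclude. First I would define the candidate closed subscheme. Inside $\OGr_n$ there is the universal isotropic direct summand $\mathcal{Z}\subseteq \OO_{\OGr_n}^{2n}$, and we have the fixed $W=\Z^n\times 0$. The intersection $\mathcal{Z}\cap (W\otimes\OO)$ is, locally, the kernel of the composite map $\mathcal{Z}\hookrightarrow\OO^{2n}\twoheadrightarrow \OO^{2n}/(W\otimes\OO)\isom \OO^n$; equivalently, via Lemma~\ref{L:X and X'}\eqref{I:X' is X^T} applied with $X'=W'=0\times\Z^n$, the pairing $\langle\;,\;\rangle$ gives a map $\mathcal{Z}\to W^T\isom\OO^n$ whose degeneracy locus measures $\dim(Z\cap W_k)$. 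I would define $\Schubert_{n,r}$ as the locus where this map $\phi\colon \mathcal{Z}\to\OO^n$ has rank $\le n-r$, cut out by the vanishing of the $(n-r+1)\times(n-r+1)$ minors of $\phi$. Then on $k$-points one has $\dim\ker\phi_k=\dim(Z\cap W_k)\ge r$, and the parity condition defining $\Schubert_{n,r}(k)$ is automatic from Proposition~\ref{P:OGr+}\eqref{I:Z intersect Z'}, since $\dim(Z\cap W_k)\equiv n\pmod 2$ would be needed for $Z$ to lie in the same component as $W$; more precisely, $\dim(Z\cap W_k)$ and $r$ have the same parity exactly when $\dim(Z\cap W_k)\ge r$ is compatible with the component of $Z$. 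This needs to be checked carefully, but it follows from the fact that on each geometric component of $\OGr_{n,k}$ the parity of $\dim(Z\cap W_k)$ is constant.

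Next I would verify that this determinantal subscheme represents the functor $\Schubert_{n,r}$ as defined in the text. The functor-of-points description says $Z\in\Schubert_{n,r}(R)$ iff for every field quotient $k$ of $R$ the fiber $Z_k$ satisfies the rank bound. By the theory of Fitting ideals, the closed subscheme cut out by the $(n-r+1)$-minors of $\phi$ is precisely the locus where the cokernel of $\phi$ has rank $\ge r$ at every point, which is exactly the stated pointwise condition; so the two agree as closed subfunctors of $\OGr_n$.

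The dimension computation is the place where I expect to spend real effort, though the structure is parallel to Lemma~\ref{L:dim stratum}. Working over an algebraically closed field $k$, the orthogonal group $\Orthogonal_{2n}(k)$ acts on the open stratum $\Schubert_{n,r}(k)\setminus\Schubert_{n,r+2}(k) = \{Z : \dim(Z\cap W_k)=r\}$, and this action is transitive (one checks this by the same symplectic-basis argument as in Proposition~\ref{P:orthogonal group acts transitively}\eqref{I:transitive on 2}, now with one of the two isotropics fixed to be $W_k$; alternatively it follows from that proposition applied to the pair $(W_k,Z)$). Hence the open stratum is smooth and irreducible, and its dimension equals $\dim\OGr_n - \dim(\text{stabilizer quotient})$. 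To get the number, I would fibre the map $Z\mapsto Z\cap W_k$ over the Grassmannian of $r$-planes $\Pi$ inside $W_k$ (dimension $r(n-r)$): the fiber over $\Pi$ consists of maximal isotropics $Z$ with $Z\cap W_k=\Pi$, and such $Z$ are parametrized, after modding out by $\Pi$ and its orthogonal, by maximal isotropic direct summands of the $(2n-2r)$-dimensional hyperbolic space $\Pi^\perp/\Pi$ transverse to $W_k/\Pi$ — this is an affine space of alternating maps in $\binom{n-r}{2}$ variables, exactly as in Lemma~\ref{L:X and X'}\eqref{I:graph of alternating}. Thus $\dim = r(n-r)+\binom{n-r}{2} = \binom{n}{2}-\binom{r}{2}=(n-r)(n+r-1)/2$, matching the claim. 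Finally, since $\Schubert_{n,r}$ is cut out inside the $\Z$-flat scheme $\OGr_n$ by equations whose geometric fibers all have this same dimension independent of the characteristic, the closed subscheme is flat over $\Z$ of the asserted relative dimension (one can cite that a closed subscheme of a flat finite-type $\Z$-scheme with equidimensional fibers of the expected dimension is flat, or check flatness directly via the determinantal structure). The main obstacle is thus the bookkeeping in the fiberwise dimension count and confirming the parity statement is automatically encoded by the determinantal equations; both are handled by reducing to the transitive group action and the hyperbolic reduction $\Pi^\perp/\Pi$.
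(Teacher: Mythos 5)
The central step of your construction does not work as stated: the parity condition is \emph{not} automatically encoded by the determinantal equations. Proposition~\ref{P:OGr+} says that the parity of $\dim(Z \intersect W_k)$ is constant on each of the two components of $\OGr_{n,k}$; it does not say that the vanishing of the $(n-r+1)\times(n-r+1)$ minors of $\phi$ knows which component $Z$ lies on. On $\OGr_n^{\parity(r)}$ the inequality $\dim(Z\intersect W_k)\ge r$ is indeed equivalent to $\dim(Z\intersect W_k)-r\in 2\Z_{\ge 0}$, but your equations are imposed on all of $\OGr_n$, and on the \emph{other} component they cut out the locus $\dim(Z\intersect W_k)\ge r+1$, which is nonempty whenever $r\le n-1$ and consists of points not in $\Schubert_{n,r}(k)$. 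The failure is starkest at $r=0$: your subscheme is all of $\OGr_n$, while $\Schubert_{n,0}=\OGr_n^{\even}$ is a single component; for $n=2$, $r=1$ your subscheme is $\OGr_2^{\odd}$ together with the spurious point $Z=W$. So the closed subscheme you define has the wrong $k$-points and cannot represent the functor. The repair is exactly the paper's first step: intersect the rank locus (equivalently, the classical Schubert condition pulled back from $\Gr_{n,2n}$) with the open-and-closed subscheme $\OGr_n^{\parity(r)}$; since that subscheme is clopen, the result is still closed in $\OGr_n$.

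Your dimension count for the open stratum $\{\dim(Z\intersect W_k)=r\}$ --- fibering over $\Gr_{r,n}$ (the $r$-planes in $W_k$) with fibers the affine space $\Aff^{(n-r)(n-r-1)/2}$ of alternating maps on the hyperbolic reduction $X^\perp/X$ --- is the same as the paper's computation of the dimension of $\Schubert^{\circ}_{n,r}$ and is correct, but to get the dimension of the \emph{closed} locus you must still control the deeper strata: either note that the stratum $\dim(Z\intersect W_k)=r'$ has dimension $\binom{n}{2}-\binom{r'}{2}<\binom{n}{2}-\binom{r}{2}$ for $r'>r$, or use the paper's incidence scheme $\Schubert'_{n,r}\subseteq\Schubert_{n,r}\times\Gr_{r,n}$, whose projection to $\Gr_{r,n}$ has fibers $\OGr_{n-r}$, to obtain the upper bound. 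Two further cautions: the proposition asserts only the fiber dimension over $\Z$, so your appeal to flatness is unnecessary, and the criterion you cite (equidimensional fibers of the expected dimension inside a flat ambient scheme imply flatness) is false in general --- e.g.\ $\Spec \Z[x]/(px,x^2)$ has zero-dimensional fibers but is not flat over $\Z$. Also, the $R$-points of a Fitting-ideal subscheme require the minors to vanish in $R$ itself, not merely in its field quotients, so your one-sentence identification of the two functors is more delicate than you indicate (the paper is admittedly also brief on this point).
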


\begin{proof}
There is a closed subscheme of $\Gr_{n,2n}$ 
whose $k$-points parametrize $n$-dimensional subspaces $Z$
with $\dim(Z \intersect W) \ge r$.
Its intersection with the closed subscheme $\OGr_n^{\parity(r)}$ is 
$\Schubert_{n,r}$.

To compute the relative dimension, we work over a field $k$,
and consider the closed subscheme
$\Schubert'_{n,r} \subseteq \Schubert_{n,r} \times \Gr_{r,n}$
parametrizing pairs $(Z,X)$ such that $X \subseteq Z \intersect W$.
Given $X \subseteq W$, 
the quadratic form $Q$ restricts to a hyperbolic quadratic form
on $X^\perp/X$,
and the $Z$'s containing $X$ are in bijection with 
the maximal isotropic subspaces of $X^\perp/X$,
via $Z \mapsto Z/X$.
Thus the second projection $\Schubert'_{n,r} \to \Gr_{r,n}$
has fibers isomorphic to $\OGr_{n-r}$,
so 
\[
	\dim \Schubert'_{n,r} 
	= \dim \Gr_{r,n} + \dim \OGr_{n-r} 
	= r(n-r) + (n-r)(n-r-1)/2 
	= (n-r)(n+r-1)/2.
\]

On the other hand, 
there is an open subscheme $\Schubert_{n,r}^\circ \subseteq \Schubert_{n,r}$
above which $\Schubert_{n,r}' \to \Schubert_{n,r}$ is an isomorphism,
namely the subscheme parametrizing $Z$ for which $\dim(Z \intersect W)$
\emph{equals} $r$.
If we view $\Schubert_{n,r}^\circ$ as an open subscheme of $\Schubert'_{n,r}$,
which maps to $\Gr_{r,n}$,
then its fiber above $X$ is the open subscheme of $\OGr_{X^\perp/X}$
consisting of subspaces $Y$ not meeting $W/X$,
and those subspaces are exactly the graphs of alternating maps
from an $(n-r)$-dimensional space to its dual,
so the fiber is $\Aff^{(n-r)(n-r-1)/2}$.
It follows that $\Schubert_{n,r}^\circ$ has the same dimension 
as $\Schubert'_{n,r}$.
Since $\Schubert_{n,r}$ is sandwiched in between,
it too has the same dimension.
\end{proof}

Call $\Schubert_{n,r}$ a \defi{Schubert subscheme}.
It could also have been defined as the closure of 
the locally closed subscheme $\Schubert_{n,r}^\circ \subseteq \OGr_n$.

\section{Modeling Selmer groups using maximal isotropic submodules}

\subsection{Properties of the short exact sequence}
\label{S:RST}

Let $Z$ and $W$ be maximal isotropic direct summands of $V$ 
as in Section~\ref{S:intersection}.
(For the time being, they do not need to be random;
what we say here applies to any choice of $Z$ and $W$.)
{}From $Z$ and $W$ construct 
\[
	0 \to R \to S \to T \to 0
\]
as in Section~\ref{S:intersection}.

\begin{proposition}
\label{P:maximal divisible subgroup}
The maximal divisible subgroup of $S$ is $R$.
\end{proposition}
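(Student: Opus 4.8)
The plan is to show each of the two inclusions $R \subseteq S_{\mathrm{div}}$ and $S_{\mathrm{div}} \subseteq R$, where $S_{\mathrm{div}}$ denotes the maximal divisible subgroup of $S$. The first inclusion is easy: recall from Section~\ref{S:intersection} that $R = (Z \intersect W) \tensor \frac{\Q_p}{\Z_p}$, and $Z \intersect W$ is a submodule of the free module $V$, hence torsion-free and finitely generated over $\Z_p$, so $R$ is isomorphic to $(\Q_p/\Z_p)^s$ for some $s$; in particular $R$ is divisible, and since $R \subseteq S$ we get $R \subseteq S_{\mathrm{div}}$.

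For the reverse inclusion, the key is that $S/R \isom T$ is \emph{finite} (as asserted in Section~\ref{S:RST}), so it suffices to show that a divisible subgroup $D \subseteq S$ maps to zero in the finite quotient $T$. A divisible group has no nonzero finite quotients (the image of a divisible group under any homomorphism is divisible, and the only finite divisible $\Z_p$-module is $0$), so the composite $D \injects S \surjects T$ is zero, whence $D \subseteq \ker(S \to T) = R$. Therefore $S_{\mathrm{div}} \subseteq R$, and combined with the first inclusion we conclude $S_{\mathrm{div}} = R$.

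The only thing I am invoking beyond Section~\ref{S:intersection} is the finiteness of $T$ and the structural facts about co-finite type $\Z_p$-modules already recalled there; the argument itself is the standard observation that for a short exact sequence $0 \to R \to S \to T \to 0$ of abelian groups with $R$ divisible and $T$ finite, one has $R = S_{\mathrm{div}}$. I expect the main (very minor) obstacle is simply making precise the claim that $T$ is finite if one does not wish to cite Section~\ref{S:RST} as a black box; but since the excerpt explicitly states ``the module $T$ is finite, and the sequence splits'' and permits us to assume earlier results, no further work is needed. One could alternatively use the splitting $S \isom R \directsum T$ directly: then $S_{\mathrm{div}} = R_{\mathrm{div}} \directsum T_{\mathrm{div}} = R \directsum 0 = R$, which is perhaps the cleanest route and the one I would write up.
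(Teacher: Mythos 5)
There is a genuine gap: your proof is circular relative to the paper's logical structure. Both your main route (quotienting into the finite group $T$) and your preferred alternative (using the splitting $S \isom R \directsum T$) take as input that $T$ is finite, respectively that the sequence splits. But these facts are not available before Proposition~\ref{P:maximal divisible subgroup}: the sentence in Section~\ref{S:intersection} asserting that ``the module $T$ is finite, and the sequence splits'' is a forward reference to Section~\ref{S:RST}, and there the finiteness of $T$ (Corollary~\ref{C:finiteness of Sha}) is deduced \emph{from} this very proposition --- once $R$ is known to be the maximal divisible subgroup of the co-finite type module $S$, the quotient $T=S/R$ is finite because the maximal divisible subgroup has finite index --- and the splitting (Corollary~\ref{C:splits}) likewise comes afterwards. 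So you are invoking statements whose only justification in the paper is the proposition you are asked to prove.

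Nor is the finiteness of $T$ a triviality you could supply on the side: it is essentially equivalent to the hard direction of the proposition, namely that the corank of $S$ is at most $\rk(Z \intersect W)$. The subtlety is that an element of $S$ is represented by a pair $z \in Z \tensor \Q_p$, $w \in W \tensor \Q_p$ with $z-w \in V$ but not necessarily $z=w$, so a divisible element of $S$ does not obviously come from $(Z \intersect W)\tensor \Q_p$. The paper's proof addresses exactly this point: writing an infinitely divisible $a \in S$ as $a = p^m a_m$, lifting each $a_m$ to $z_m \in Z\tensor\Q_p$ and $w_m \in W\tensor\Q_p$, and using compactness of $p^{-n}V$ to extract subsequences along which $p^m z_m$ and $p^m w_m$ converge to a common limit, which lies in $(Z\intersect W)\tensor\Q_p$ and represents $a$. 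Your easy inclusion $R \subseteq S_{\mathrm{div}}$ is correct and matches the paper, but to repair the argument you need a proof of $S_{\mathrm{div}} \subseteq R$ along these lines (or an algebraic substitute), not an appeal to the finiteness of $T$ or to the splitting.
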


\begin{proof}
Since the group 
$R = (Z \intersect W) \tensor \frac{\Q_p}{\Z_p}$ is divisible,
it suffices to show that every infinitely divisible element $a$ of
$S$ is in $R$.
Suppose that $a \in S$ is infinitely divisible.
For each $m \ge 1$, write $a = p^m a_m$ for some $a_m \in S$.
By definition of $S$, we have $a_m = (z_m \bmod V) = (w_m \bmod V)$
for some $z_m \in Z \tensor \Q_p$ and $w_m \in W \tensor \Q_p$.
Choose $n$ such that $a \in p^{-n} V$;
then all the $p^m z_m$ and $p^m w_m$ lie in $p^{-n} V$,
which is compact,
so there is an infinite subsequence of $m$ 
such that the $p^m z_m$ converge and the $p^m w_m$ converge.
The limits must be equal, since $p^m z_m - p^m w_m \in p^m V$.
The common limit in $(Z \intersect W) \tensor \Q_p$
represents $a$.
\end{proof}

\begin{corollary}
\label{C:finiteness of Sha}
The group $T$ is finite.
\end{corollary}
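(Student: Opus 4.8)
The plan is to read off finiteness of $T$ from Proposition~\ref{P:maximal divisible subgroup} together with the structure of co-finite type $\Z_p$-modules. By construction $T \isom S/R$ with $R \subseteq S$, and $S$ is a $\Z_p$-submodule of $V \tensor \frac{\Q_p}{\Z_p} \isom \left(\frac{\Q_p}{\Z_p}\right)^{2n}$ (using $V = \Z_p^{2n}$).

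First I would observe that $S$ is itself of co-finite type. Indeed, applying Pontryagin duality to the inclusion $S \injects V \tensor \frac{\Q_p}{\Z_p}$ yields a surjection from the dual of $V \tensor \frac{\Q_p}{\Z_p}$, namely $\Z_p^{2n}$, onto the dual of $S$; hence the dual of $S$ is finitely generated over $\Z_p$, which is exactly the condition for $S$ to be of co-finite type. Thus $S \isom \left(\frac{\Q_p}{\Z_p}\right)^s \directsum F$ for some $s \ge 0$ and some finite abelian $p$-group $F$.

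Next, the maximal divisible subgroup of such an $S$ is the divisible summand $\left(\frac{\Q_p}{\Z_p}\right)^s$, since the only divisible subgroup of the finite group $F$ is $0$ and an extension of a divisible group by a divisible group is divisible. By Proposition~\ref{P:maximal divisible subgroup} this maximal divisible subgroup equals $R$, so $T \isom S/R \isom F$ is finite.

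There is no real obstacle here; the argument is a two-step deduction, and the only point deserving justification—that a $\Z_p$-submodule of a co-finite type $\Z_p$-module is again of co-finite type—is the Pontryagin duality remark above. (Alternatively one can avoid the structure theorem: $\left(\frac{\Q_p}{\Z_p}\right)^{2n}$ is an Artinian $\Z_p$-module, hence so is its subquotient $T$; the descending chain $pT \supseteq p^2 T \supseteq \cdots$ stabilizes at some $p^N T$, which is then divisible, hence $0$ since $T = S/R$ has trivial maximal divisible subgroup by maximality of $R$; and an Artinian $\Z_p$-module killed by $p^N$ is finite.)
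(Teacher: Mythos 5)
Your argument is correct and is essentially the paper's proof: the paper also deduces finiteness of $T$ from Proposition~\ref{P:maximal divisible subgroup} together with the fact that the maximal divisible subgroup of a co-finite type $\Z_p$-module has finite index. You have merely spelled out the two supporting facts (that $S$ is of co-finite type, via duality with $\Z_p^{2n}$, and the structure of such modules) that the paper takes as known.
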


\begin{proof}
The maximal divisible subgroup of a co-finite-type $\Z_p$-module 
is of finite index.
\end{proof}

\begin{corollary}
\label{C:splits}
The exact sequence $0 \to R \to S \to T \to 0$ splits.
\end{corollary}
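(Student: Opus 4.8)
The plan is to invoke the standard fact that over a principal ideal domain every divisible module is injective. Concretely, $R = (Z \intersect W) \tensor \frac{\Q_p}{\Z_p}$ is a divisible $\Z_p$-module, and $\Z_p$ is a PID, so $R$ is an injective object in the category of $\Z_p$-modules. First I would apply this injectivity to the identity map $R \to R$ along the inclusion $R \injects S$: this produces a retraction $S \to R$ whose existence is precisely the statement that the sequence $0 \to R \to S \to T \to 0$ splits.

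A second, essentially equivalent route proceeds through structure theory, and may be worth mentioning since the ambient facts have already been recorded. Every co-finite type $\Z_p$-module $M$ is isomorphic to $(\Q_p/\Z_p)^s \directsum F$ for some $s \in \Z_{\ge 0}$ and finite abelian $p$-group $F$, and in such a decomposition the maximal divisible subgroup is exactly the factor $(\Q_p/\Z_p)^s$. By Proposition~\ref{P:maximal divisible subgroup}, $R$ is the maximal divisible subgroup of $S$; writing $S \isom (\Q_p/\Z_p)^s \directsum F$ we therefore get $R \isom (\Q_p/\Z_p)^s$ and $T \isom S/R \isom F$, and this very decomposition displays $S$ as $R \directsum T$, hence the splitting.

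I do not expect a genuine obstacle: the content is elementary module theory, and the only nontrivial inputs — that $R$ is the maximal divisible subgroup of $S$ (Proposition~\ref{P:maximal divisible subgroup}), that $R$ is a finite power of $\Q_p/\Z_p$, and that $T$ is finite (Corollary~\ref{C:finiteness of Sha}) — are all already in hand. The one point to be slightly careful about is simply to phrase the divisibility-implies-injectivity step over $\Z_p$ rather than over $\Z$, which is immediate since $\Z_p$ is a PID.
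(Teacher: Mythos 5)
Your proposal is correct and matches the paper's argument, which is the one-line observation that the sequence splits because $R$ is divisible: divisibility over the PID $\Z_p$ means $R$ is injective, so the inclusion $R \injects S$ admits a retraction. Your second route via the structure of co-finite type $\Z_p$-modules and Proposition~\ref{P:maximal divisible subgroup} is a fine alternative phrasing of the same elementary fact.
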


\begin{proof}
This follows since $R$ is divisible.
\end{proof}

\begin{proposition}
\label{P:S[q]}
If $q$ is a power of $p$,
then $S[q]$ is isomorphic to the intersection $Z/qZ \intersect W/qW$ in $V/qV$.
\end{proposition}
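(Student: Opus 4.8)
The plan is to identify the $q$-torsion subgroup of $V \tensor \frac{\Q_p}{\Z_p}$ in a way that is functorial in submodules, and then intersect. First I would record the following general fact: for any finite free $\Z_p$-module $M$, multiplication by $q$ on $\frac{\Q_p}{\Z_p}$ is surjective with kernel $\frac{1}{q}\Z_p/\Z_p \isom \Z/q\Z$, and since $M$ is flat over $\Z_p$, applying $M \tensor_{\Z_p}(-)$ preserves this, giving a natural isomorphism $\bigl(M \tensor \frac{\Q_p}{\Z_p}\bigr)[q] \isom M/qM$. Concretely, under the identification $M \tensor \frac{\Q_p}{\Z_p} = (M \tensor \Q_p)/M$, this isomorphism sends $\frac{1}{q}m + M$ to $m + qM$, with inverse multiplication by $\frac{1}{q}$.

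Next I would apply this to $M = Z$, $M = W$, and $M = V$ simultaneously. Because $Z$ and $W$ are direct summands of $V$, the inclusions $Z \injects V$ and $W \injects V$ are split; hence $\tensor\frac{\Q_p}{\Z_p}$ keeps them injective (this is precisely the identification used in Section~\ref{S:intersection} to view $Z \tensor \frac{\Q_p}{\Z_p}$ and $W \tensor \frac{\Q_p}{\Z_p}$ as submodules of $V \tensor \frac{\Q_p}{\Z_p}$), and reduction mod $q$ keeps $Z/qZ \injects V/qV$ and $W/qW \injects V/qV$ injective as well. By naturality, the isomorphism of the first step fits into commuting squares identifying $\bigl(Z \tensor \frac{\Q_p}{\Z_p}\bigr)[q] \injects \bigl(V \tensor \frac{\Q_p}{\Z_p}\bigr)[q]$ with $Z/qZ \injects V/qV$, and likewise for $W$. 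Finally, since $q$-torsion commutes with intersection of subgroups of a common abelian group --- $(A \cap B)[q] = A[q]\cap B[q]$ --- we get
\[
	S[q] = \Bigl(\bigl(Z \tensor \tfrac{\Q_p}{\Z_p}\bigr) \cap \bigl(W \tensor \tfrac{\Q_p}{\Z_p}\bigr)\Bigr)[q] = \bigl(Z \tensor \tfrac{\Q_p}{\Z_p}\bigr)[q] \cap \bigl(W \tensor \tfrac{\Q_p}{\Z_p}\bigr)[q],
\]
and transporting this intersection through the isomorphism for $M = V$ turns it into the intersection of the images of $Z/qZ$ and $W/qW$ in $V/qV$, which by definition is $Z/qZ \cap W/qW$.

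The argument is essentially bookkeeping, and the one place a hypothesis is genuinely needed --- hence the main (mild) obstacle --- is the use of the fact that $Z$ and $W$ are \emph{direct summands} of $V$: this is what makes the maps $Z \tensor \frac{\Q_p}{\Z_p} \injects V \tensor \frac{\Q_p}{\Z_p}$ and $Z/qZ \injects V/qV$ injective and makes their $q$-torsion subgroups match up under the canonical isomorphism. I would take care to state the naturality of $\bigl(M \tensor \frac{\Q_p}{\Z_p}\bigr)[q] \isom M/qM$ precisely enough that the commuting squares, and hence the final identification, are immediate.
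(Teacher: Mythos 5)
Your argument is correct and is essentially the paper's proof: the paper also intersects $S$ with the $q$-torsion subgroup $\frac{1}{q}V/V$ of $V \tensor \frac{\Q_p}{\Z_p}$, obtains $S[q] = \frac{\frac{1}{q}Z}{Z} \intersect \frac{\frac{1}{q}W}{W}$, and transports this via the multiplication-by-$q$ isomorphism $\frac{1}{q}V/V \to V/qV$, which is exactly your natural isomorphism $\bigl(M \tensor \frac{\Q_p}{\Z_p}\bigr)[q] \isom M/qM$ made functorial. You have merely spelled out the bookkeeping (splitness of the inclusions and $(A \cap B)[q] = A[q] \cap B[q]$) that the paper leaves implicit.
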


\begin{proof}
Intersecting
\[
	S = \left( Z \tensor \frac{\Q_p}{\Z_p} \right)
		\intersect \left( W \tensor \frac{\Q_p}{\Z_p} \right)
\]
with the $q$-torsion subgroup $\frac{1}{q} V/V$ of $V \tensor \frac{\Q_p}{\Z_p}$
yields
\[
	S[q] = \frac{\frac{1}{q} Z}{Z} \intersect \frac{\frac{1}{q} W}{W}.
\]
The multiplication by $q$ isomorphism $\frac{1}{q} V/V \to V/qV$
sends this to $Z/qZ \intersect W/qZ$.
\end{proof}

\subsection{Model for the Cassels--Tate pairing}
\label{S:model for Cassels-Tate}

Here we define a natural nondegenerate alternating pairing on $T$.
Extend $Q$ to a quadratic form $V \tensor \Q_p \to \Q_p$
and define 
$\langle \;,\; \rangle \colon (V \tensor \Q_p) \times (V \tensor \Q_p) \to \Q_p$
by $\langle x,y \rangle \colonequals Q(x+y)-Q(x)-Q(y)$.
Then $\langle \;,\; \rangle \bmod \Z_p$ identifies $V \tensor \Q_p$
with its own Pontryagin dual, and the subgroup
\[
	Z^\perp \colonequals \{ v \in V \tensor \Q_p : 
	\langle v,z \rangle \bmod \Z_p = 0 \textup{ for all $z \in Z$} \}
\]
equals $Z \tensor \Q_p + V$.
Similarly, $W^\perp = W \tensor \Q_p + V$.

Suppose that $x,y \in T$.
Lift $x$ to $\widetilde{x} \in \left( Z \tensor \frac{\Q_p}{\Z_p} \right)
		\intersect \left( W \tensor \frac{\Q_p}{\Z_p} \right)$.
Choose $z_x \in Z \tensor \Q_p$ whose image in 
$V \tensor \frac{\Q_p}{\Z_p}$ equals $\widetilde{x}$.
Define $w_x$, $\widetilde{y}$, $z_y$, and $w_y$ analogously.

\begin{proposition}
The map
\begin{align*}
  [\;,\;] \colon T \times T &\to \frac{\Q_p}{\Z_p} \\
	x,y & \mapsto Q(z_x-w_y) \bmod \Z_p
\end{align*}
is well-defined, and it is a nondegenerate alternating bilinear pairing.
\end{proposition}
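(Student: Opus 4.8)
The plan is to establish, in order: (1) well-definedness, (2) bilinearity, (3) the alternating property, and (4) nondegeneracy. Throughout I would exploit the identity $Z^\perp = Z \tensor \Q_p + V$ and $W^\perp = W \tensor \Q_p + V$ to convert statements about $Q$ or $\langle\;,\;\rangle$ modulo $\Z_p$ into membership statements. The key computational device is that for $a \in Z \tensor \Q_p$ and $b \in W \tensor \Q_p$, the quantity $Q(a-b) \bmod \Z_p$ can be rewritten using $Q(a-b) = Q(a) - \langle a,b\rangle + Q(b)$; since $Z$ is isotropic, $Q|_{Z \tensor \Q_p}$ is a quadratic form but need not vanish on $Z \tensor \Q_p$ (it vanishes on $Z$ itself but $Q(p^{-1}z)=p^{-2}Q(z)$ need not be integral) --- however $Q(z_x)$ is only well-defined up to the ambiguity in $z_x$, which is precisely an element of $Z$, and $Q$ is $\Z_p$-valued on $Z$, so $Q(z_x) \bmod \Z_p$ depends only on $\widetilde{x}$. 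This observation is the engine for well-definedness.

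For \textbf{well-definedness}: I would first check independence of the choice of $z_x$. Two choices differ by an element of $Z \intersect (V \tensor \Q_p)$ mapping to $0$ in $V \tensor \Q_p/\Z_p$, i.e. by an element of $Z \intersect V = Z$; changing $z_x$ by $\zeta \in Z$ changes $Q(z_x - w_y)$ by $\langle z_x - w_y, \zeta\rangle + Q(\zeta)$. Now $Q(\zeta) \in \Z_p$, $\langle z_x,\zeta\rangle \in \Z_p$ since $Z$ is isotropic (so $\langle\;,\;\rangle|_{Z\times Z}$ is $\Z_p$-valued), and $\langle w_y,\zeta\rangle \bmod \Z_p = 0$ because $w_y \in W \tensor \Q_p \subseteq W^\perp$ and --- here is where I must be careful --- $\zeta \in Z$, but I actually need $\zeta \in W^\perp$'s pairing partner, so I should instead use that $\widetilde x$ also lies in $W \tensor \frac{\Q_p}{\Z_p}$, hence $z_x$ can simultaneously be compared with $w_x \in W \tensor \Q_p$ with $z_x - w_x \in V$; then $\langle w_y, \zeta\rangle = \langle w_y, z_x - w_x\rangle - \langle w_y, z_x - \zeta\rangle + \dots$. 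The cleanest route: show $Q(z_x - w_y) \equiv Q(w_x - w_y) \pmod{\Z_p}$ and also $\equiv Q(z_x - z_y) \pmod{\Z_p}$, using $z_x - w_x \in V$, $Q(V) \subseteq \Z_p$, and $\langle V, w_y\rangle, \langle V, z_y\rangle \subseteq \Z_p$; this symmetric reformulation makes independence of all choices transparent and is also the key to the next steps. Independence of the lift $\widetilde x$ (changing it by an element of $V \tensor \frac{\Q_p}{\Z_p}$ that lies in the intersection, i.e. lifting to $V$) follows the same way.

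For \textbf{bilinearity} I would use the reformulation $[x,y] = Q(z_x - z_y) \bmod \Z_p$; since $z_{x+x'}$ can be taken as $z_x + z_{x'}$, expanding $Q(z_x + z_{x'} - z_y) = Q(z_x - z_y) + Q(z_{x'} - z_y) - Q(z_y) + \langle z_x, z_{x'}\rangle$, and observing $Q(z_y) \bmod \Z_p = [0,y]$-type terms vanish appropriately while $\langle z_x, z_{x'}\rangle \bmod \Z_p = 0$ since $z_x, z_{x'} \in Z \tensor \Q_p$ which is isotropic for $\langle\;,\;\rangle$ modulo... again I must check $\langle Z\tensor\Q_p, Z\tensor\Q_p\rangle \subseteq \Z_p$, which follows from $Q|_Z = 0$ only on $Z$, not $Z\tensor\Q_p$ --- so the correct statement is $\langle z_x, z_{x'}\rangle \bmod \Z_p = 0$ because we may choose the lifts so this holds, or because the bracket is computed in a quotient where the ambiguity absorbs it. For the \textbf{alternating property}, $[x,x] = Q(z_x - w_x) \bmod \Z_p$ and $z_x - w_x \in V$ gives $Q(z_x - w_x) \in \Z_p$, so $[x,x] = 0$; this is the slickest step and confirms the definition $Q(z_x - w_y)$ (rather than $Q(z_x - z_y)$) is the "right" one for seeing it. For \textbf{nondegeneracy}: if $[x,y] = 0$ for all $y \in T$, unwind to show $z_x \in Z \tensor \Q_p + (\text{something forcing } \widetilde x \text{ to lift to } R)$; concretely $Q(z_x - w_y) \in \Z_p$ for all $y$ forces $z_x - w_y \in$ (integral locus), and ranging over $y$ (equivalently over a generating set of $T$) pins $\widetilde x$ down to the image of $R = (Z\intersect W)\tensor\frac{\Q_p}{\Z_p}$, i.e. $x = 0$ in $T$. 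The \textbf{main obstacle} I anticipate is bookkeeping the $\Z_p$-ambiguities cleanly: each of $z_x, w_x, \widetilde x$ carries its own indeterminacy, and one must consistently reduce $Q$-values modulo $\Z_p$ while tracking which cross-terms $\langle\cdot,\cdot\rangle$ land in $\Z_p$ for reasons of isotropy versus integrality --- the symmetric reformulation $[x,y] \equiv Q(z_x-z_y) \equiv Q(w_x-w_y) \pmod{\Z_p}$ is the organizing idea that should make all four parts fall out uniformly, and establishing that reformulation first is the real content.
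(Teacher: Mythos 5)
The organizing idea of your proposal is false, and it is fatal. You claim the ``symmetric reformulation'' $[x,y]\equiv Q(z_x-z_y)\equiv Q(w_x-w_y)\pmod{\Z_p}$ and call establishing it ``the real content.'' But isotropy of $Z$ means $Q|_Z=0$ identically (not merely $\Z_p$-valued), and since $Q$ is homogeneous of degree $2$ and its associated bilinear form $\langle\;,\;\rangle$ is $\Q_p$-bilinear, $Q$ and $\langle\;,\;\rangle$ vanish identically on $Z\tensor\Q_p$, and likewise on $W\tensor\Q_p$. Hence $Q(z_x-z_y)=0$ and $Q(w_x-w_y)=0$ for every choice of lifts, so your reformulation would make the pairing identically zero, contradicting the nondegeneracy you are trying to prove whenever $T\ne 0$. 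The error traces to the purported integralities $\langle V,w_y\rangle,\langle V,z_y\rangle\subseteq\Z_p$: the elements $w_y\in W\tensor\Q_p$ and $z_y\in Z\tensor\Q_p$ have denominators and do not lie in $V$, so pairing them against $V$ is not integral. The correct facts are $\langle V,W\rangle\subseteq\Z_p$ and $\langle V,Z\rangle\subseteq\Z_p$, and indeed $Q(z_x-w_y)=-\langle z_x,w_y\rangle=-\langle z_x-w_x,w_y\rangle$, so the cross term you propose to discard modulo $\Z_p$ \emph{is} the pairing. Your repeated worries about whether $Q$ or $\langle\;,\;\rangle$ is merely integral on $Z\tensor\Q_p$ stem from the same misreading of ``isotropic.''

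What survives is the alternating step: $z_x-w_x\in V$ and $Q(V)\subseteq\Z_p$ give $[x,x]=0$, exactly as in the paper. Well-definedness should instead be done by varying $z_x$ by an element of $Z$ and $w_y$ by an element of $W$, using $Q(z_x-w_y)=-\langle z_x-w_x,w_y\rangle$ and $\langle V,W\rangle,\langle V,Z\rangle\subseteq\Z_p$; bilinearity then follows from $[x,y]=-\langle z_x,w_y\rangle\bmod\Z_p$ with additive choices of lifts. Most importantly, nondegeneracy and the descent from $S=\left(Z\tensor\frac{\Q_p}{\Z_p}\right)\intersect\left(W\tensor\frac{\Q_p}{\Z_p}\right)$ to $T$ are nowhere proved in your proposal: saying that ranging over $y$ ``pins $\widetilde{x}$ down to the image of $R$'' restates the goal. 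The paper's argument is a chain of equivalences you would need: $[x,y]=0$ for all $y$ iff $\langle z_x-w_x,w\rangle\in\Z_p$ for all $w\in Z^\perp\intersect W^\perp=(Z\tensor\Q_p+V)\intersect(W\tensor\Q_p+V)$, iff $z_x-w_x\in(Z^\perp\intersect W^\perp)^\perp=Z+W$, iff $\widetilde{x}\in(Z\intersect W)\tensor\frac{\Q_p}{\Z_p}$; this identifies the kernel on $S$ as exactly $R$, which simultaneously shows the pairing is well defined on $T$ (independence of the lift $\widetilde{x}$) and nondegenerate there. Without that duality computation, and with the false reformulation removed, the proposal does not constitute a proof.
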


\begin{proof}
First, 
\[
	z_x-w_x \in 
	\ker\left(V \tensor \Q_p \to V \tensor \frac{\Q_p}{\Z_p} \right) 
	= V.
\]
Since $Z$ and $W$ are isotropic, 
\[
	Q(z_x-w_y) = -\langle z_x,w_y \rangle = -\langle z_x-w_x,w_y \rangle,
\]
so changing $w_y$ (by an element of $W$)
changes $Q(z_x-w_y)$ by an element of $\langle V,W \rangle \subseteq \Z_p$,
so $Q(z_x-w_y) \bmod \Z_p$ is unchanged.
Similarly, changing $z_x$ (by an element of $Z$)
does not change $Q(z_x-w_y) \bmod \Z_p$.
If $\widetilde{x}=\widetilde{y}$, then we may choose $w_y=w_x$,
so $Q(z_x-w_y) = Q(z_x-w_x) \in Q(V) \subseteq \Z_p$,
so $Q(z_x-w_y) \bmod \Z_p = 0$.
Thus we have an alternating bilinear pairing on
$\left( Z \tensor \frac{\Q_p}{\Z_p} \right)
		\intersect \left( W \tensor \frac{\Q_p}{\Z_p} \right)$
and it remains to show that the kernel on either side is
$(Z \intersect W) \tensor \frac{\Q_p}{\Z_p}$
so that it induces a nondegenerate alternating pairing on $T$.

The following are equivalent for 
$\widetilde{x} \in \left( Z \tensor \frac{\Q_p}{\Z_p} \right)
		\intersect \left( W \tensor \frac{\Q_p}{\Z_p} \right)$:
\begin{itemize}
\item $Q(z_x-w_y) \bmod \Z_p = 0$ 
for all $\widetilde{y} \in \left( Z \tensor \frac{\Q_p}{\Z_p} \right)
		\intersect \left( W \tensor \frac{\Q_p}{\Z_p} \right)$,
\item $\langle z_x-w_x,w_y \rangle \in \Z_p$,
for all $\widetilde{y} \in \left( Z \tensor \frac{\Q_p}{\Z_p} \right)
		\intersect \left( W \tensor \frac{\Q_p}{\Z_p} \right)$,
\item $\langle z_x-w_x,w \rangle \in \Z_p$,
for all $w \in (Z \tensor \Q_p + V) \intersect (W \tensor \Q_p + V) = Z^\perp \intersect W^\perp$,
\item $z_x-w_x \in (Z^\perp \intersect W^\perp)^\perp = Z + W$, and
\item $\widetilde{x} \in (Z \intersect W) \tensor \frac{\Q_p}{\Z_p}$.\qedhere
\end{itemize}
\end{proof}

\subsection{Predictions for rank}
\label{S:rank}

Corollary~\ref{C:prob measure} defines a probability measure on $\OGr_n(\Z_p)$.
In the definition of $\Qdist_{2n}$ 
we chose both $Z$ and $W$ randomly from $\OGr_n(\Z_p)$.
But since the orthogonal group of $(V,Q)$ acts transitively on $\OGr_V(\Z_p)$,
fixing $W$ to be $\Z_p^n \times 0$ 
as in Section~\ref{S:maximal isotropic direct summands}
and choosing only $Z$ at random
would produce the same distribution.
A similar comment applies to $\Tdist_{2n,r}$.
{}From now on, we assume that $W$ is fixed as above.

\begin{proposition}
\label{P:rank 0 or 1}
Fix $n$.
If $Z$ is chosen randomly from $\OGr_n(\Z_p)$,
then the $\Z_p$-module $Z \intersect W$ is free of rank $0$ or $1$,
with probability $1/2$ each.
\end{proposition}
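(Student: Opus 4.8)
The plan is to compute the distribution of $\dim(Z \intersect W)$ over $\F_p$ first, and then to argue that over $\Z_p$ the intersection is free of exactly that rank with probability one. Because the orthogonal group $\Orthogonal_{2n}(\Z_p)$ acts transitively on $\OGr_n(\Z_p)$ (Proposition~\ref{P:orthogonal group acts transitively}\eqref{I:transitive on 1}) and the measure $\nu$ on $\OGr_n(\Z_p)$ is the canonical one from Corollary~\ref{C:prob measure}, the reduction $\Zbar \colonequals Z \bmod p$ is uniformly distributed over $\OGr_n(\F_p)$; indeed, by smoothness of $\OGr_n$ (Proposition~\ref{P:OGr is smooth}), the reduction map $\OGr_n(\Z_p) \to \OGr_n(\F_p)$ pushes $\nu$ forward to the uniform measure, and more generally the reduction mod $p^e$ is uniform on $\OGr_n(\Z/p^e\Z)$. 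So I first compute, for the fixed $\Wbar \in \OGr_n(\F_p)$, the proportion of $\Zbar \in \OGr_n(\F_p)$ with $\dim(\Zbar \intersect \Wbar) = m$ for each $m$.

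For that count I would stratify $\OGr_n(\F_p)$ by $m = \dim(\Zbar \intersect \Wbar)$, which by Proposition~\ref{P:OGr+}\eqref{E:OGr_n^even} and \eqref{I:Z intersect Z'} forces $\Zbar$ to lie in the component $\OGr_n^{\parity(n-m)}$ (equivalently $\OGr_n^{\parity(m)}$ after re-indexing) — but the key numerical input is that the number of $\Zbar$ meeting $\Wbar$ in a subspace of dimension $\ge r$ is governed by the Schubert subscheme $\Schubert_{n,r}$, whose dimension $n(n-1)/2 - r(r-1)/2$ was computed in Proposition~\ref{P:Schubert}. Since the ambient $\OGr_n$ has dimension $n(n-1)/2$, the locus with $\dim(\Zbar\intersect\Wbar)\ge 2$ has codimension $\ge 1$, hence (over $\F_p$) a proportion that is $O(1/p)$ — but that estimate alone is not sharp enough to get exactly $1/2$, so I would instead count directly. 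The clean way: the stabilizer of $\Wbar$ in $\Orthogonal_{2n}(\F_p)$ acts on each stratum, and Lemma~\ref{L:ugly}'s $\F_p$-analogue (transitivity on $\{X : X\intersect\Wbar = Y\}$ for each fixed $Y \le \Wbar$) reduces the count to counting subspaces $Y \le \Wbar$ of each dimension times the number of isotropic complements-type lifts. Alternatively — and this is likely the cleanest route — I would invoke the exact formula $\#\OGr_n(\F_p) = p^{n(n-1)/2}\prod_{i=1}^n(1+p^{i-n})$ from Lemma~\ref{L:J_V} together with the recursive structure ``$Z$'s containing a fixed isotropic $X \le \Wbar$ of dimension $r$ correspond to maximal isotropics of the hyperbolic space $X^\perp/X$ of rank $n-r$,'' exactly as in the proof of Proposition~\ref{P:Schubert}. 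Summing the resulting geometric-type series in $1/p$ should collapse to give $\Prob(\dim(\Zbar\intersect\Wbar)=0) = \Prob(=1) + (\text{lower order?})$ — in fact the right statement to prove here is the cleaner one over $\Z_p$, so let me pivot.

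The cleanest argument avoids the delicate $\F_p$-count entirely by working at all finite levels at once. For a power $q=p^e$, Proposition~\ref{P:S[q]} identifies $S[q]$ with $Z/qZ \intersect W/qW$ inside $V/qV$, and the rank of the free $\Z_p$-module $Z\intersect W$ equals $\dim_{\F_p}(Z\intersect W)\tensor\F_p$ which by Proposition~\ref{P:maximal divisible subgroup} equals the corank of $S$, i.e.\ $\dim_{\F_p} S[p]$. So it suffices to show $\dim_{\F_p}(Z/pZ\intersect W/pW) \in \{0,1\}$ with probability $1/2$ each. Now $Z/pZ$ and $W/pW$ are maximal isotropic subspaces of the hyperbolic space $V/pV$ over $\F_p$ \emph{in the same or opposite component according to the parity of $n$} — but crucially, since $Z/pZ$ is uniform in $\OGr_n(\F_p)$, this is precisely the setting of the $\F_p$-model, and I claim the relevant computation is already essentially done: by Corollary~\ref{C:sum of three intersection dimensions} the parity of $\dim(\Zbar\intersect\Wbar)$ is determined, so for $n$ odd it is always odd (hence $\ge 1$) and the content is $\Prob(=1)=1/2$, while for $n$ even it is always even and the content is $\Prob(=0)=1/2$. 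To pin down the $1/2$ I would use the involution on $\OGr_n(\F_p)$ given by composing with a reflection in $\Orthogonal_{2n}(\F_p)\setminus\mathrm{SO}_{2n}$, which swaps the two components $\OGr_n^{\even} \leftrightarrow \OGr_n^{\odd}$ and changes $\dim(\Zbar\intersect\Wbar)$ by $\pm 1$; combined with the fact (from Lemma~\ref{L:J_V}, or from the Schubert codimension count in Proposition~\ref{P:Schubert}) that $\Prob(\dim(\Zbar\intersect\Wbar)\ge 2) \to$ a value one shows is exactly balanced so that each of the two lowest values gets $1/2$.

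Honestly, the main obstacle — and the step I would spend the most care on — is this last combinatorial identity: showing that the measures of the strata $\{\dim(Z\intersect W) = 0\}$ and $\{\dim(Z\intersect W)=1\}$ are \emph{equal} (each $1/2$), not merely that the union of all strata with $\dim \ge 2$ has measure zero (the latter is immediate from Proposition~\ref{P:Schubert}'s dimension count together with Proposition~\ref{P:measure}\eqref{I:lower dimensional}). I expect the equality to fall out of an explicit bijection: over $\Z_p$, the locus $\dim(Z\intersect W)\le 1$ is the complement of a positive-codimension subscheme, so it has full measure, and on it the parity constraint from Corollary~\ref{C:sum of three intersection dimensions} forces $\dim(Z\intersect W)$ to take a single value depending only on $n \bmod 2$ — wait, that would give probability $1$, not $1/2$. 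The resolution is that $Z$ ranges over \emph{both} components of $\OGr_n$, and the two components have equal measure (each is a smooth geometrically integral scheme of the same dimension, by Proposition~\ref{P:OGr+}, so by Proposition~\ref{P:measure}\eqref{I:positive measure} and the symmetry from an orthogonal reflection they carry equal mass $1/2$). On one component, $\dim(Z\intersect W) \equiv n$; on the other, $\equiv n+1$; and on each, the generic (full-measure) value is the minimal one compatible with the parity, namely $0$ or $1$. Hence $Z\intersect W$ is free of rank $0$ with probability $1/2$ and rank $1$ with probability $1/2$. That is the argument I would write up.
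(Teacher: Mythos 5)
Your final paragraph is essentially the paper's own proof: the locus where $\rk(Z\intersect W)\ge 2$ has measure zero by the Schubert dimension count (Proposition~\ref{P:Schubert}) together with Proposition~\ref{P:measure}\eqref{I:lower dimensional}, and the two components of $\OGr_n$ carry equal mass (the paper cites their isomorphism from Proposition~\ref{P:OGr+}\eqref{I:two components}; your orthogonal-reflection symmetry gives the same thing) while forcing opposite parities of $\rk(Z\intersect W)$, yielding probability $1/2$ each. The earlier detours — in particular the claim that the parity of $\dim(\Zbar\intersect\Wbar)$ is determined by $n$ alone, and the labels ``$\equiv n$'' versus ``$\equiv n+1$'' (the parity is even on $\OGr_n^{\even}$ and odd on $\OGr_n^{\odd}$, by \eqref{E:OGr_n^even}) — are inaccurate but, as you yourself noted, immaterial to the corrected final argument, which is sound.
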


\begin{proof}
By Proposition~\ref{P:Schubert}, $\dim \Schubert_{n,r} < \dim \OGr_n$
for $r \ge 2$,
so the probability that $\rk(Z \intersect W) \ge 2$ is $0$
by Proposition~\ref{P:measure}\eqref{I:lower dimensional}.
On the other hand, $\OGr_n^{\even}$ and $\OGr_n^{\odd}$ are isomorphic 
by Proposition~\ref{P:OGr+}\eqref{I:two components},
so the parity of $\rk(Z \intersect W)$ is equidistributed.
\end{proof}

Conjecture~\ref{C:main} implies that the distribution 
of $E(k) \tensor \frac{\Q_p}{\Z_p}$
matches that of $(Z \intersect W) \tensor \frac{\Q_p}{\Z_p}$,
or equivalently that 
the distribution of $E(k) \tensor \Z_p$
matches that of $Z \intersect W$.
Thus it implies that 50\% of elliptic curves
over $k$ have rank $0$, and 50\% have rank $1$.

\subsection{Random models and their compatibility}
\label{S:comparison}

One can show that the locus of $Z \in \OGr_n(\Z_p)$ for which 
the sequence $0 \to R \to S \to T \to 0$
is isomorphic to a given sequence
is locally closed in the $p$-adic topology,
and hence measurable.
This would show that $\Qdist_{2n}$ is well-defined.
A similar argument using the probability measure on $\Schubert_{n,r}(\Z_p)$
would show that $\Tdist_{2n,r}$ is well-defined.
We find it more convenient, however, to prove these measurability claims
and to prove that $\lim_{n \to \infty} \Qdist_{2n}$ 
and $\lim_{n \to \infty} \Tdist_{2n,r}$
exist by relating them to the distributions $\Adist_{n,r}$.
Recall that we 
already proved 
in Section~\ref{S:singular distribution}
that the $\Adist_{n,r}$ exist and converge 
to a limit $\Adist_r$ as $n \to \infty$ through integers
with $n-r \in 2\Z_{\ge 0}$.

Before giving the proof that 
the limit $\lim_{n \to \infty} \Tdist_{2n,r} \equalscolon \Tdist_r$ exists
and coincides with $\Adist_r$ (Theorem~\ref{T:A=T}\eqref{I:Adist=Tdist}),
let us explain the idea.
There is a simple relationship between alternating matrices $A$
and maximal isotropic direct summands $Z$: 
namely, if we view $A$ as a linear map $W \to W^T$, 
then $Z \colonequals \Graph(A) \subset W \directsum W^T$ is maximal isotropic.
But not every maximal isotropic direct summand $Z \le W \directsum W^T$
comes from an $A$.
Over a field, the $Z$'s that arise are 
those that intersect $W^T$ trivially; at the other extreme is $W^T$ itself;
a general $Z$ is a hybrid of these two extremes:
namely, they arise by writing $W = W_1 \directsum W_2$,
forming the corresponding decomposition $W^T = W_1^T \directsum W_2^T$,
and taking $Z \colonequals W_1^T \directsum \Graph(A)$ 
for some alternating $A \colon W_2 \to W_2^T$.
We need to work over $\Z_p$ instead of a field,
but we can still represent 
a general $Z$ in terms of a decomposition as above
(note, however, that the $Z$'s that arise directly from an $A$
on the whole of $W$ 
are those for which the \emph{mod $p$ reductions} of $Z$ and $W^T$
intersect trivially).
Moreover, we will show that the uniform distribution of $Z \in \Schubert_{n,r}$
can be obtained by choosing the decompositions of $W$ and $W^T$
at random (with respect to a suitable measure) 
and then choosing $A \colon W_2 \to W_2^T$
at random from those alternating maps whose kernel has rank $r$.
The distribution $\Tdist_{2n,r}$ is defined 
in terms of the group $T$ arising from $Z$.
It turns out that $T \isom (\coker A)_{\tors}$,
and one shows that with high probability as $n \to \infty$, 
the size of $A$ is large, 
so the distribution of $(\coker A)_{\tors}$ 
is well approximated by $\Adist_r$.

\begin{proof}[Proofs of Theorems \ref{T:T distribution} and~\ref{T:A=T}\eqref{I:Adist=Tdist}]
We use the notation $\Vbar \colonequals V/pV$.
Fix a maximal isotropic $\F_p$-subspace $\Lambda \le \Vbar$
with respect to the $\F_p$-valued quadratic form $(Q \bmod p)$
such that $\Lambda \intersect \Wbar = 0$ in $\Vbar$.
Define a distribution $\GG$ on submodules $Z \le V$ as follows:
\begin{enumerate}[\upshape 1.]
\item 
Choose a maximal isotropic direct summand $W^T \le V$
at random conditioned on $\overline{W^T} = \Lambda$.
(Then $\langle\;,\;\rangle|_{W \times W^T}$ is nondegenerate mod $p$,
so it identifies $W^T$ with the $\Z_p$-dual of $W$,
so the name $W^T$ makes sense.
Also, $V = W \directsum W^T$.)
\item
Choose $m \in \{0,1,\ldots,n-r\}$ 
at random so that its distribution matches
the distribution of $\dim(\overline{\calZ} \intersect \Lambda)$
for $\calZ$ chosen from $\Schubert_{n,r}(\Z_p)$.
The scheme $\Schubert_{n,r}$ is contained in $\OGr_n^{\parity(r)}$,
so $\dim(\overline{\calZ} \intersect \Wbar) \equiv r \pmod{2}$.
Corollary~\ref{C:sum of three intersection dimensions}
applied to $(\calZ,\Wbar,\Lambda)$ implies that $m+r \equiv n \pmod{2}$.
\item
Choose a random $\Z_p$-module decomposition of $W$ as $W_1 \directsum W_2$
such that $\rk W_1 = m$.
Let $W^T = W_1^T \directsum W_2^T$ be the induced decomposition of $W^T$;
i.e., $W_2^T$ is the annihilator of $W_1$ 
with respect to $\langle\;,\;\rangle|_{W \times W^T}$,
and $W_1^T$ is the annihilator of $W_2$.
(Then $W_i^T$ is isomorphic to the $\Z_p$-dual of $W_i$ for $i=1,2$.)
\item
Choose an alternating $\Z_p$-linear map $A \colon W_2 \to W_2^T$
at random from maps whose kernel has rank $r$
(since $\rk W_2 = n-m \equiv r \pmod{2}$, the set of such $A$ is nonempty).
Let $\Graph(A) \le W_2 \times W_2^T$ be its graph.
Let $Z = W_1^T \directsum \Graph(A)$.
\end{enumerate}
Since $A$ is alternating, the direct summand $\Graph(A)$ of $V$ is isotropic.
Since $W_1^T \le W^T$, the direct summand $W_1^T$ is isotropic.
Under $\langle\;,\;\rangle|_{W \times W^T}$, 
the direct summand $W_1^T$ annihilates $W_2^T$ 
(since both are contained in $W^T$)
and $W_2$ (by definition).
The previous three sentences show that $Z$ is an isotropic direct summand.
Its rank is $\rk W_1^T + \rk W_2 = \rk W_1 + \rk W_2 = n$,
so $Z$ is a maximal isotropic direct summand.

Reducing modulo $p$ yields
\[
	\Zbar = \overline{W_1^T} \directsum \Graph(\overline{A}),
\]
so in $\Vbar$ we have
\[
	\Zbar \intersect \Lambda = 
	\Zbar \intersect \overline{W^T} = \overline{W_1^T},
\]
which is of $\F_p$-dimension $m$.

{\bf Claim:} \emph{$\GG$ coincides with the uniform distribution on $\Schubert_{n,r}$.}
Both distributions assign the uniform measure to the set 
of maximal isotropic direct summands $Z$ with $\dim(Z \intersect W)=r$
having a fixed mod~$p$ reduction $\Zbar$,
so it suffices to show that the distributions of $\Zbar$ match.
For each $m$,
both distributions for $\Zbar$ are uniform over all maximal isotropic
subspaces of $\Vbar$ 
for which $\dim(\Zbar \intersect \Wbar) \ge r$
and $\dim(\Zbar \intersect \Lambda) = m$,
so it suffices to prove that the distribution of the integer
$\dim(\Zbar \intersect \Lambda)$ 
is the same for both distributions.
The latter holds by the choice of $m$.
This proves the claim.

For $Z$ sampled from $\GG$, the definition of $Z$ yields
\[
	\left(Z \tensor \frac{\Q_p}{\Z_p} \right) \intersect 
		\left( W \tensor \frac{\Q_p}{\Z_p} \right)
	= \Graph\left(A \tensor \frac{\Q_p}{\Z_p} \right) \intersect \left(W \tensor \frac{\Q_p}{\Z_p} \right)
	\isom \ker\left(A\tensor \frac{\Q_p}{\Z_p}\right),
\]
whose Pontryagin dual is $\coker A$.
The quotient $T$ of the left side
by its maximal divisible subgroup $(Z \intersect W) \tensor \frac{\Q_p}{\Z_p}$
is dual to the finite group $(\coker A)_{\tors}$,
hence isomorphic to $(\coker A)_{\tors}$.
Thus the distribution $\Tdist_{2n,r}$ of $T$ is a weighted average over $m$ 
of the distribution 
$\Adist_{n-m,u}$ of $(\coker A)_{\tors}$ for $A \in \Zpstratum_{n-m,u}$;
this proves in particular that $\Tdist_{2n,r}$ is well-defined.

We next show that as $n \to \infty$, the probability that $m$ is small,
say less than $n/2$, tends to $1$.  
In fact, we show that this holds even after conditioning
on the intersection $\overline{\calZ}\cap \Wbar$;
i.e., we will prove that
\[
	\inf_{Y} 
	\Prob \left(m < n/2 \mid \overline{\calZ}\cap \Wbar=Y \right) \to 1
\]
as $n \to \infty$,
where $Y$ ranges over the possibilities for $\overline{\calZ}\cap \Wbar$.
Fix $Y$.
Let $y \colonequals \dim Y$.
Since $m + y \le \dim \overline{\calZ} = n$, 
the probability is $1$ if $y>n/2$, so assume that $y \le n/2$.
The subgroup of $\Orthogonal_{2n}(\Z_p)$ preserving $W$ and $Y$
acts transitively on the maximal isotropic subspaces $\overline{\calZ}$ 
of $\F_p^{2n}$ satisfying $\overline{\calZ} \intersect \Wbar = Y$,
by Lemma~\ref{L:ugly}, 
so the distribution of $\overline{\calZ}$ is uniform among such subspaces.
Thus $\overline{\calZ}/Y$ is a uniformly random maximal isotropic subspace
of $Y^\perp/Y$ intersecting $\Wbar/Y$ trivially.
In $Y^\perp/Y$, the image of $\Lambda \intersect Y^\perp$ 
is a maximal isotropic complement $C$ of $\Wbar/Y$,
so $\overline{\calZ}/Y$ is the graph of a uniformly random alternating map 
$B \colon C \to \Wbar/Y$ 
(see Lemma~\ref{L:X and X'}\eqref{I:graph of alternating}).
Then $m = \dim \ker B$.
By Lemma~\ref{L:kernel at least n/2},
$m < (n-y)/2$ with high probability,
so $m < n/2$ with high probability.

So the size $n-m$ of the matrix $A$ is large with high probability,
and we have already seen that $n-m \equiv r \pmod{2}$.
Thus the weighted average converges as $n \to \infty$ to $\Adist_r$.
In other words, $\Tdist_r$ exists and coincides with $\Adist_r$.
\end{proof}

We now prove that the distributions $\Qdist_{2n}$ exist 
and converge to $\Qdist$ as $n \to \infty$.

\begin{proof}[Proof of Theorem~\ref{T:limit of RST distribution}]
Define a new distribution $\Qdist'_{2n}$ on short exact sequences as follows.
Choose $r \in \{0,1\}$ uniformly at random, and let $R=(\Q_p/\Z_p)^r$.
Choose $T$ with respect to the distribution $\Tdist_{2n,r}$.
Form the exact sequence
\[
	0 \To R \To R \directsum T \To T \to 0.
\]
By Proposition~\ref{P:rank 0 or 1} and Corollary~\ref{C:splits}, 
the distribution $\Qdist_{2n}$ coincides with $\Qdist'_{2n}$; 
in particular, it is well-defined.

For each $r$, the distribution $\Tdist_{2n,r}$ tends to a limit
as $n \to \infty$,
so the same is true of $\Qdist'_{2n} = \Qdist_{2n}$.
\end{proof}

\subsection{Predictions for \texorpdfstring{$\Sel_{p^e}$}{Sel p e}}
\label{S:p^e}

\begin{lemma}
\label{L:torsion=0}
Fix a global field $k$.
Asymptotically 100\% of elliptic curves over $k$ satisfy $E(k)_{\tors}=0$.
\end{lemma}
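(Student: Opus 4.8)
The plan is to reduce the statement to two standard finiteness inputs: first, that for each fixed prime $\ell$ the set of elliptic curves over $k$ with a nontrivial $\ell$-torsion point (equivalently, a rational point of order $\ell$) has density $0$; and second, that for all but finitely many primes $\ell$ there is a uniform bound preventing any elliptic curve over $k$ from having a rational $\ell$-torsion point, so that only finitely many primes need to be treated. Since $E(k)_{\tors} \ne 0$ implies $E(k)$ has a point of order $\ell$ for some prime $\ell$, and $E(k)_{\tors}$ is uniformly bounded over number fields (by Mazur for $k = \Q$, Merel in general) — or, in the function field case, one uses that the possible torsion is constrained by the conductor — the union is effectively a \emph{finite} union of density-zero sets, hence density zero, and the complement has density $100\%$.

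First I would set up the counting framework: elliptic curves over $k$ are ordered by height as in \cite{Bhargava-Shankar-preprint1} and \cite{Poonen-Rains2012-selmer}, say parametrized by pairs $(A,B)$ with $4A^3 + 27B^2 \ne 0$ up to the usual scaling, with height $\max(|A|^3, |B|^2)$ or its analogue. For a fixed prime $\ell$, the condition that $E_{A,B}$ has a rational point of order $\ell$ is a Zariski-closed condition on a nontrivial (positive-codimension) subvariety of the $(A,B)$-parameter space — concretely, it forces the $\ell$-division polynomial to have a rational root with the corresponding $y$-coordinate rational, which is cut out by the modular curve $X_1(\ell)$ mapping to the $j$-line by a map of degree $> 1$ once $\ell$ is large enough (or $\ell \in \{2,3,5,7,\dots\}$ small, handled individually). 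A point count / geometry-of-numbers estimate then shows the number of $(A,B)$ of height up to $X$ with this condition is $o$ of the total count, giving density $0$ for each such $\ell$.

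Second I would invoke uniform boundedness of torsion to restrict to finitely many $\ell$. Over a number field $k$, Merel's theorem bounds $\#E(k)_{\tors}$ in terms of $[k:\Q]$ alone, so there is a finite set $\mathcal L$ of primes such that no elliptic curve over $k$ has a rational point of order $\ell$ for $\ell \notin \mathcal L$; over a global function field one uses the analogous finiteness (the conductor bounds the torsion, and curves of bounded height have bounded conductor, or one cites the relevant uniform bound). Then
\[
	\{E \in \EE : E(k)_{\tors} \ne 0\} \subseteq \bigcup_{\ell \in \mathcal L} \{E \in \EE : E(k)[\ell] \ne 0\},
\]
a finite union, each term of density $0$ by the previous paragraph, so the union has density $0$ and its complement has density $1$.

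The main obstacle is the density-zero estimate for a fixed $\ell$: one must show rigorously that the locus of $E$ with a rational $\ell$-torsion point is a genuinely thin set in the height ordering, which requires a count of integral (or $k$-rational, bounded-height) points on a positive-codimension subscheme of the parameter space — exactly the kind of estimate carried out in \cite{Bhargava-Shankar-preprint1} and \cite{Poonen-Rains2012-selmer} for sieving out singular or otherwise degenerate curves. I would cite those references for the estimate rather than reproduce it; the remaining ingredients (Merel/Mazur for the reduction to finitely many $\ell$, and the elementary observation that nontrivial torsion forces an $\ell$-torsion point) are immediate.
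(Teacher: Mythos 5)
Your overall strategy (density zero for each fixed prime, then uniform boundedness of torsion to reduce to finitely many primes) matches the paper's, and the second step is fine: the paper likewise cites Mazur/Kamienny--Mazur/Merel for number fields and Levin for function fields, so only finitely many primes $\ell$ need be considered. The problem is in your first step. The condition that $E_{A,B}$ has a rational point of order $\ell$ is \emph{not} a Zariski-closed, positive-codimension condition on the $(A,B)$-parameter space. The locus in question is Zariski dense: already for $\ell=2$, every pair of the form $(A,\,-a^{3}-Aa)$ lies in it, and for general $\ell$ it is the image of the $k$-points of a degree $>1$ cover of the parameter space (a thin set in Serre's sense), not the vanishing locus of equations. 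Consequently the counting estimates you propose to cite from \cite{Bhargava-Shankar-preprint1} and \cite{Poonen-Rains2012-selmer}, which bound points of bounded height on a proper closed subscheme (e.g.\ the discriminant hypersurface), do not apply. What is needed is a count of bounded-height points in a thin set, i.e.\ a (quantitative) Hilbert irreducibility statement; this is exactly how the paper argues: the generic elliptic curve over $k(a_1,\dots,a_6)$ has no nonzero rational $p$-torsion (via modular curves, and Igusa curves in characteristic $p$), and then Hilbert irreducibility gives density $0$ for each fixed prime.

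Two secondary points. In characteristic $p>0$ your division-polynomial/$X_1(\ell)$ description only covers $\ell\neq\Char k$; the prime $\ell=p$ must also be handled (the paper's mention of Igusa curves is precisely for this), since $E(k)_{\tors}$ can contain $p$-torsion. And your fallback for function fields (``the conductor bounds the torsion, and curves of bounded height have bounded conductor'') would not suffice as stated: if the set of relevant primes were allowed to grow with the height cutoff, the finite-union argument breaks without uniformity in the per-prime density estimates; you do need a torsion bound depending only on $k$, as in \cite{Levin1968}, which you also mention as an alternative and which is the route to take.
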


\begin{proof}
For each global field $k$ and prime $p$,
the theory of modular curves and Igusa curves
shows that the generic elliptic curve
(over $k(a_1,a_2,a_3,a_4,a_6)$)
has no nonzero rational $p$-torsion point.
By the Hilbert irreducibility theorem,
the same holds for asymptotically 100\% of elliptic curves over $k$.
The size of the torsion subgroup is bounded by a constant 
depending only on $k$ 
\cites{Levin1968,Mazur1977,Kamienny-Mazur1995,Merel1996},
so we need consider only finitely many $p$.
Thus 100\% of $E \in \EE$ satisfy $E(k)_{\tors}=0$.
\end{proof}

\begin{remark}
One could also prove Lemma~\ref{L:torsion=0} without using~\cite{Merel1996}:
the torsion subgroup can also be controlled by 
using reduction modulo primes.
\end{remark}

\begin{proposition}
\label{P:small Selmer, big Selmer}
Suppose that $E$ is an elliptic curve over a global field $k$
with $E(k)_{\tors}=0$.
Let $m$ and $n$ be positive integers such that $\Char k \nmid m,n$ and $m|n$.
Then
\begin{enumerate}[\upshape (a)]
\item The inclusion $E[m] \to E[n]$ induces an isomorphism
$\HH^1(k,E[m]) \to \HH^1(k,E[n])[m]$.
\item\label{I:Sel_m in Sel_n} 
This isomorphism identifies $\Sel_m E$ with $(\Sel_n E)[m]$.
\item \label{I:Sel_q in Sel_p^infty}
If $p$ is a prime number and $e \in \Z_{\ge 0}$, then 
$\Sel_{p^e} E \isom (\Sel_{p^\infty} E)[p^e]$.
\end{enumerate}
\end{proposition}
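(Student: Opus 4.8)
The plan is to deduce all three parts from the two short exact sequences of $G_k$-modules obtained by splitting multiplication by $m$ on $E[n]$, using the hypothesis $E(k)_{\tors}=0$ only to force certain $\HH^0$-groups to vanish. For part~(a), write $M\colonequals E[n]$, so $E[m]$ is the submodule $M[m]$ (as $m\mid n$), and, since $\Char k\nmid n$, multiplication by $m$ on $M$ has image $mM=E[n/m]$ and cokernel $M/mM\isom E[m]$ (via multiplication by $n/m$). I would factor $[m]\colon M\to M$ through the short exact sequences $0\to M[m]\to M\to mM\to 0$ and $0\to mM\to M\to M/mM\to 0$. Because $E(k)_{\tors}=0$ forces $\HH^0(k,mM)=E(k)[n/m]=0$, the long exact sequence of the first shows that the inclusion-induced map $\iota_*\colon\HH^1(k,E[m])\to\HH^1(k,E[n])$ is injective with image $\ker\bigl([m]_*\colon\HH^1(k,M)\to\HH^1(k,mM)\bigr)$. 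Because $E(k)_{\tors}=0$ also forces $\HH^0(k,M/mM)=E(k)[m]=0$, the long exact sequence of the second shows $\HH^1(k,mM)\to\HH^1(k,M)$ is injective; since the composite $\HH^1(k,M)\xrightarrow{[m]_*}\HH^1(k,mM)\to\HH^1(k,M)$ is multiplication by $m$, the kernel of $[m]_*$ is $\HH^1(k,M)[m]$. Hence $\operatorname{im}(\iota_*)=\HH^1(k,E[n])[m]$, proving~(a).

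For part~(b), the key observation is that $E[m]\hookrightarrow E[n]\hookrightarrow E$ is the inclusion $E[m]\hookrightarrow E$, so for each place $v$ the square with top row $\HH^1(k_v,E[m])\to\HH^1(k_v,E)$, bottom row $\HH^1(k_v,E[n])\to\HH^1(k_v,E)$, left map $\iota_*$, and right map the identity is commutative (as is the analogous global square). Since $\Sel_N E$ is exactly the set of classes in $\HH^1(k,E[N])$ whose restriction to every $k_v$ dies in $\HH^1(k_v,E)$, commutativity of these squares gives $c\in\Sel_m E\iff\iota_*(c)\in\Sel_n E$ for $c\in\HH^1(k,E[m])$. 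Combining with the bijection $\HH^1(k,E[m])\xrightarrow{\ \sim\ }\HH^1(k,E[n])[m]$ from~(a) yields $\iota_*(\Sel_m E)=(\Sel_n E)\intersect\HH^1(k,E[n])[m]=(\Sel_n E)[m]$, which is~(b).

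For part~(c), I would apply~(b) with $m=p^e$ and $n=p^f$ for $f\ge e$; functoriality of cohomology in the inclusions $E[p^e]\hookrightarrow E[p^f]\hookrightarrow E[p^g]$ makes the isomorphisms $\Sel_{p^e}E\isom(\Sel_{p^f}E)[p^e]$ compatible as $f$ increases, with the transition maps $(\Sel_{p^f}E)[p^e]\hookrightarrow(\Sel_{p^g}E)[p^e]$ corresponding to $\operatorname{id}_{\Sel_{p^e}E}$. Passing to the direct limit over $f$ and using exactness of filtered colimits (so that they commute with formation of $p^e$-torsion) gives $\Sel_{p^e}E\isom\bigl(\varinjlim_f\Sel_{p^f}E\bigr)[p^e]=(\Sel_{p^\infty}E)[p^e]$. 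The only genuine work is the image computation in part~(a); I expect the main subtlety there to be correctly identifying the Galois-module isomorphisms $mE[n]=E[n/m]$ and $E[n]/mE[n]\isom E[m]$, after which parts~(b) and~(c) are essentially bookkeeping.
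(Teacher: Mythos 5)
Your proposal is correct and follows essentially the same route as the paper: the kernel–cokernel analysis of multiplication by $m$ on $E[n]$ via the two exact sequences $0\to E[m]\to E[n]\to E[n/m]\to 0$ and $0\to E[n/m]\to E[n]\to E[n]/mE[n]\to 0$ (the paper's ``replace $m$ by $n/m$'' step), with $E(k)_{\tors}=0$ supplying the two $\HH^0$ vanishings, followed by the same local-condition comparison for (b) and the same passage to the direct limit for (c). The only difference is that you spell out the points the paper leaves implicit (where torsion-freeness is used, and the identification of the cokernel with $E[m]$), which is fine.
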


\begin{proof}\hfill
\begin{enumerate}[\upshape (a)]
\item 
Taking cohomology of $0 \to E[m] \to E[n] \stackrel{m}\to E[n/m] \to 0$
yields a homomorphism $\alpha$ fitting into the exact sequence
\[
	0 \to \HH^1(k,E[m]) \to \HH^1(k,E[n]) \stackrel{\alpha}\to \HH^1(k,E[n/m]).
\]
Replacing $m$ by $n/m$ shows that $E[n/m] \injects E[n]$
induces an injection $\HH^1(k,E[n/m]) \to \HH^1(k,E[n])$.
The composition $E[n] \stackrel{m}\to E[n/m] \injects E[n]$
induces a composition 
$\HH^1(k,E[n]) \stackrel{\alpha}\to \HH^1(k,E[n/m]) \injects \HH^1(k,E[n])$
that equals multiplication by $m$,
so $\HH^1(k,E[m]) \isom \ker \alpha \isom \HH^1(k,E[n])[m]$.
\item 
An element of $\HH^1(k,E[m])$ lies in the subgroup $\Sel_m E$
if and only if its image in $\HH^1(k,E[n])$ lies in $\Sel_n E$,
since the condition for either is 
that it map to $0$ in $\HH^1(k_v,E)$ for every $v$.
\item 
Apply \eqref{I:Sel_m in Sel_n} to $p^e | p^n$ 
and take the direct limit as $n \to \infty$.\qedhere
\end{enumerate}
\end{proof}

Let $q=p^e$ for some prime $p$ and $e \ge 0$.
Because of 
Propositions \ref{P:S[q]}
and~\ref{P:small Selmer, big Selmer}\eqref{I:Sel_q in Sel_p^infty},
Conjecture~\ref{C:main}
implies that the distribution of $\Sel_q E$ 
is the limit as $n \to \infty$
of the distribution
of $Z \intersect W$ for random $Z,W \in \OGr_n(\Z/q\Z)$.
Taking $q=p$, we recover~\cite{Poonen-Rains2012-selmer}*{Conjecture~1.1(a)}.

Given $m \in \Z_{\ge 0}$ and a finite $\Z/q\Z$-module $G$, let $I_m(G)$
be the number of injective homomorphisms $(\Z/q\Z)^m \to G$.
An inclusion-exclusion argument shows that $I_m(G)$
is a monic degree $m$ polynomial in $\#G$ with coefficients in $\Z[q]$.
Thus if $G$ is sampled from some distribution on finite $\Z/q\Z$-modules,
then knowledge of the averages of $I_m(G)$ for all $m \ge 0$
is equivalent to knowledge of all moments of $\#G$.
For the distribution of $Z \intersect W$ for $Z,W \in \OGr_n(\Z/q\Z)$,
it turns out that the formulas for 
the averages of $I_m(G)$ are simpler than the formulas for the moments:

\begin{theorem}
\label{T:free m-tuples in Sel_q}
Fix $m \in \Z_{\ge 0}$.
The average of $I_m(Z \intersect W)$ as $Z,W$ vary over $\OGr_n(\Z/q\Z)$
tends to $q^{m(m+1)/2}$ as $n \to \infty$.
\end{theorem}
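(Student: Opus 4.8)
The plan is to evaluate the average exactly for each finite $n$ by linearity of expectation and then let $n\to\infty$. Write $V=(\Z/q\Z)^{2n}$. An injection $(\Z/q\Z)^m\to Z\intersect W$ is the same as an injection $\phi\colon(\Z/q\Z)^m\to V$ whose image lies in both $Z$ and $W$, so since $Z$ and $W$ are chosen independently and with the same law,
\[
	\mathbb{E}\bigl[I_m(Z\intersect W)\bigr]=\sum_{\phi}\Prob\bigl[\im\phi\subseteq Z\bigr]^{2},
\]
the sum running over injective $\phi$. The probability depends only on $M\colonequals\im\phi$, a free $\Z/q\Z$-module of rank $m$; since $\Z/q\Z$ is a quotient of the principal ideal domain $\Z$ it is self-injective, so $(\Z/q\Z)^m$ is an injective $\Z/q\Z$-module and $M$ is automatically a direct summand of $V$, and isotropic whenever the probability is nonzero. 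Grouping the $\phi$ according to $M$ (there are $\#\GL_m(\Z/q\Z)$ of them for each $M$), we reduce to computing (i) the number $J_{m,n}$ of injective $\phi$ with isotropic image, and (ii) the common value of $\Prob\bigl[M\subseteq Z\bigr]$ over isotropic rank-$m$ direct summands $M$.

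The structural input for both steps is the following Witt-type fact over the local ring $\Z/q\Z$: if $M$ is an isotropic direct summand of $V$ of rank $i$, then $M^\perp$ is a direct summand containing $M$, and $M^\perp/M$, with its induced quadratic form, is isometric to the hyperbolic $\Z/q\Z$-module of rank $2(n-i)$. This should be proved by building a hyperbolic frame: for a primitive isotropic $v_1$ choose a vector $u_1$ with $\langle v_1,u_1\rangle=1$ and $Q(u_1)=0$, split off the hyperbolic plane $\langle v_1,u_1\rangle$, and iterate — in the spirit of the proof of Proposition~\ref{P:orthogonal group acts transitively}. Granting this, for (ii) the map $Z\mapsto Z/M$ identifies $\{Z\in\OGr_n(\Z/q\Z):M\subseteq Z\}$ with $\OGr_{n-m}(\Z/q\Z)$, so $\Prob\bigl[M\subseteq Z\bigr]=\#\OGr_{n-m}(\Z/q\Z)/\#\OGr_n(\Z/q\Z)$ for every such $M$; by Lemma~\ref{L:J_V} this equals $q^{\binom{m+1}{2}-mn}$ times a factor tending to $1$ as $n\to\infty$.

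For (i), realize $\phi$ through the images $v_1,\dots,v_m$ of the standard basis and build the tuple one vector at a time. If $M_i=\langle v_1,\dots,v_i\rangle$ is an isotropic rank-$i$ direct summand, then $v_{i+1}$ is admissible exactly when it lies in $M_i^\perp$ and its image in $M_i^\perp/M_i$ is a primitive isotropic vector of that hyperbolic module of rank $2(n-i)$; each admissible image has $q^{i}$ preimages, so the number of choices is $q^{i}c_{2(n-i)}$, where $c_{2\ell}$ is the number of primitive isotropic vectors in the hyperbolic $\Z/q\Z$-module of rank $2\ell$, a quantity independent of $M_i$. By Hensel's lemma (the affine quadric $\{Q=0\}$ over $\Z_p$ being smooth away from the origin), $c_{2\ell}$ equals $p^{(e-1)(2\ell-1)}$ times the classical count $p^{2\ell-1}+p^{\ell}-p^{\ell-1}-1$ over $\F_p$, i.e.\ $q^{2\ell-1}$ times a factor tending to $1$ as $\ell\to\infty$. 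Hence $J_{m,n}=\prod_{i=0}^{m-1}q^{i}c_{2(n-i)}$, which is $q^{2mn-\binom{m+1}{2}}$ times a factor tending to $1$. Combining (i) and (ii),
\[
	\mathbb{E}\bigl[I_m(Z\intersect W)\bigr]=J_{m,n}\Bigl(\frac{\#\OGr_{n-m}(\Z/q\Z)}{\#\OGr_n(\Z/q\Z)}\Bigr)^{2}\longrightarrow q^{2mn-\binom{m+1}{2}}\cdot q^{2\binom{m+1}{2}-2mn}=q^{\binom{m+1}{2}},
\]
and $\binom{m+1}{2}=m(m+1)/2$.

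The step I expect to be the main obstacle is establishing the structural fact of the second paragraph — the splitting of $M^\perp/M$ and the resulting identification of $\{Z:M\subseteq Z\}$ with $\OGr_{n-m}(\Z/q\Z)$ — cleanly over the ring $\Z/q\Z$ rather than over a field, and being careful that the self-injectivity observation giving ``image is a direct summand'' is used correctly. Once that is in place, step (i) is a Hensel-lifted point count over $\F_p$, and the passage to the limit is routine, every correction factor above tending to $1$.
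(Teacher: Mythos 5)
Your argument is correct, and it shares the paper's two essential ingredients: linearity of expectation over injections $(\Z/q\Z)^m \to V$, and the identification of $\{Z \in \OGr_n(\Z/q\Z): M \subseteq Z\}$ with $\OGr_{n-m}(\Z/q\Z)$ via the hyperbolic module $M^\perp/M$ (which the paper, like you, asserts without proof; your Witt-style frame-building sketch is the right way to justify it over $\Z/q\Z$, and it is essentially the argument of Proposition~\ref{P:orthogonal group acts transitively}\eqref{I:transitive on 1}, whose hypotheses allow quotients of discrete valuation rings). Where you diverge is in how the symmetry is exploited. The paper fixes $W$ once and for all (legitimate by the transitive orthogonal-group action, which makes the count of injections into a random $W$ constant), so the expectation becomes (number of injections $(\Z/q\Z)^m \to W$) times a single factor $\#\OGr_{n-m}(\Z/q\Z)/\#\OGr_n(\Z/q\Z)$; no count of isotropic configurations in $V$ is ever needed. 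You instead keep $Z$ and $W$ both random, use independence to square that probability, and then must compute $J_{m,n}$, the number of isotropic $m$-frames in $V$, which costs you the extra inductive count of primitive isotropic vectors over $\Z/q\Z$ via the $\F_p$ point count and Hensel lifting (your counts check out: the inductive step is valid because $Q$ descends to $M_i^\perp/M_i$ and is constant on cosets of $M_i$, and the two routes are consistent, both giving $q^{m(m+1)/2}$ up to factors tending to $1$). So your proof buys nothing extra here except a formula for $J_{m,n}$ that might be of independent use; adopting the fix-$W$ trick would eliminate your step (i) entirely and reduce the argument to your step (ii) plus the elementary count of injections into $(\Z/q\Z)^n$, which is exactly the paper's proof. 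One small presentational point: your appeal to self-injectivity of $\Z/q\Z$ to see that the image of an injection is a direct summand is fine, but state it as the standard fact that $\Z/q\Z$ is quasi-Frobenius (being a nonzero quotient of a PID), rather than leaving the inference implicit.
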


\begin{proof}
For each $n$, we may fix $W$.
The desired number is the number
of injective homomorphisms $h \colon (\Z/q\Z)^m \to W$
times the probability that a random $Z \in \OGr_n(\Z/q\Z)$ contains $\im(h)$.
The number of $h$'s is 
$(\#W)^m \prod_{i=0}^{m-1} (1-q^{i-n})$.
The $Z$'s containing $\im(h)$ correspond to the maximal isotropic
direct summands of $\im(h)^\perp/\im(h)$,
a hyperbolic quadratic $\Z/q\Z$-module of rank $2n-2m$,
so their number is $\#\OGr_{n-m}(\Z/q\Z)$.
Using Lemma~\ref{L:J_V},
we compute
\[
	(\#W)^m \prod_{i=0}^{m-1} (1-p^{i-n}) \frac{\#\OGr_{n-m}(\Z/q\Z)}{\#\OGr_n(\Z/q\Z)}
	= q^{m(m+1)/2} \prod_{i=0}^{m-1} (1-p^{i-n}) \prod_{i=n-m}^{n-1} (1+p^{-i}),
\]
which tends to $q^{m(m+1)/2}$ as $n \to \infty$.
\end{proof}

Theorem~\ref{T:free m-tuples in Sel_q} suggests the following:

\begin{conjecture}
\label{C:moments}
For each $m \ge 0$, the average of $I_m(\Sel_q E)$ over $E \in \EE$
exists and equals $q^{m(m+1)/2}$.
\end{conjecture}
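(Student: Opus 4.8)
There are two routes, a conditional one and an unconditional one. Conditionally: by Lemma~\ref{L:torsion=0} we may restrict to the $100\%$ of $E$ with $E(k)_{\tors}=0$, and then Proposition~\ref{P:S[q]} together with Proposition~\ref{P:small Selmer, big Selmer}\eqref{I:Sel_q in Sel_p^infty} shows that, granting Conjecture~\ref{C:main}, the distribution of $\Sel_q E$ is the $n\to\infty$ limit of the distribution of $Z/q\Z \intersect W/q\Z$ for random $Z,W\in\OGr_n(\Z/q\Z)$. Since $I_m(G)$ is a monic degree-$m$ polynomial in $\#G$, it would then remain only to justify interchanging the average over $E$ with the limit over $n$ --- i.e.\ a uniform-integrability (equivalently, higher-moment) bound for $\#\Sel_q E$ --- after which Theorem~\ref{T:free m-tuples in Sel_q} delivers the value $q^{m(m+1)/2}$. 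The plan for the honest, unconditional statement is to reproduce this same computation on the arithmetic side by geometry of numbers, in the style of Bhargava--Shankar \cite{Bhargava-Shankar-preprint1}.

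For the unconditional route, first I would use the arithmetic realization of the Selmer group --- the prime-power analogue, valid for $100\%$ of $E$, of the $\F_p$-statement of \cite{Poonen-Rains2012-selmer}: $\Sel_q E$ is the intersection $H_{\textup{loc}} \intersect H_{\textup{glob}}$ of two maximal isotropic subgroups of the quadratic $\Z/q\Z$-module $\HH^1(\Adeles,E[q])$, where $H_{\textup{loc}}$ is the restricted product of the local images $\im(E(k_v)/q \to \HH^1(k_v,E[q]))$ and $H_{\textup{glob}}$ is the image of $\HH^1(k,E[q])$, with $H_{\textup{loc}}$ a direct summand. Under this identification, $I_m(\Sel_q E)$ counts the injections $(\Z/q\Z)^m \injects \HH^1(\Adeles,E[q])$ landing in both subgroups. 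Second, I would set up an orbit parametrization encoding a pair $(E,\iota)$ --- with $\iota$ a marked free rank-$m$ submodule of $\Sel_q E$ --- as an integral orbit of a suitable representation of an algebraic group over $\Z$, then count integral orbits of bounded height by averaging over a fundamental domain and counting lattice points (with the Bhargava averaging device to tame the cusp), and run a local computation: at each place $v$, the $v$-adic density of orbits for which $\iota$ is locally soluble, whose product over $v$ should reproduce $q^{m(m+1)/2}$ --- structurally the same calculation as in the proof of Theorem~\ref{T:free m-tuples in Sel_q} (fix $W$, count injections $(\Z/q\Z)^m\to W$, intersect with a random maximal isotropic direct summand). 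Third, I would bound the reducible or non-soluble orbits and show they contribute only a negligible lower-order term.

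The hard part is the parametrization step: outside the small cases $q\in\{2,3,4,5\}$ (with $m$ small) no coregular representation is known whose integral orbits parametrize elliptic curves equipped with a free rank-$m$ submodule of $\Sel_q E$, so there is nothing for the geometry-of-numbers machinery to act on --- which is precisely why the statement is recorded here as a conjecture rather than a theorem. Even for $q=2$, the case $m\ge 2$ requires counting linearly independent $m$-tuples of Selmer classes, a rare event for a typical curve, and hence a strictly more delicate count than the first-moment argument that yields average $\#\Sel_2 E = 3$. The conditional route faces its own obstacle (proving Conjecture~\ref{C:main}, or at least enough of it together with the moment bound, which is the deepest open problem of this paper). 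So in practice I would expect to establish Conjecture~\ref{C:moments} only for the finitely many pairs $(q,m)$ where an orbit parametrization is available, and would regard a uniform proof as requiring a genuinely new idea at that step.
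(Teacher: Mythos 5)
The paper offers no proof of this statement---it is recorded as a conjecture suggested by Theorem~\ref{T:free m-tuples in Sel_q}---and your conditional route coincides exactly with the paper's own remark following the conjecture: Conjecture~\ref{C:main} together with Theorem~\ref{T:free m-tuples in Sel_q} gives $q^{m(m+1)/2}$ only after additionally assuming boundedness of the moments of $\#\Sel_q E$, so that no density-zero subset of $\EE$ can contribute to the average. Your diagnosis that an unconditional proof would need new orbit parametrizations beyond the known small cases, and that this is why the statement remains a conjecture, is consistent with the paper's treatment.
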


\begin{remark}
The combination of Conjecture~\ref{C:main} 
and Theorem~\ref{T:free m-tuples in Sel_q} 
does not quite imply Conjecture~\ref{C:moments},
because it could be that a density~$0$ subset of $\EE$
contributes a positive amount towards the average.
But if we assume also the weak conjecture that
every moment of $\#\Sel_q E$ is bounded (in the $\limsup$ sense),
then the boundedness of the $(m+1)^{\textup{st}}$ moment
implies that no density~$0$ subset of $\EE$
contributes a positive amount towards the $m^{\textup{th}}$ moment,
so the average of $I_m(\Sel_q E)$ for $E \in \EE$
is $q^{m(m+1)/2}$.
\end{remark}

\begin{remark}
The case of Theorem~\ref{T:free m-tuples in Sel_q} 
in which $q$ is a prime $p$
is equivalent to \cite{Poonen-Rains2012-selmer}*{Proposition~2.22(a)},
which states that the $m^{\tH}$ moment of $\#(Z \intersect W)$ 
equals $\prod_{i=1}^m (p^i+1)$.
Theorem~\ref{T:free m-tuples in Sel_q} 
makes it possible to compute the moments also for non-prime $q$,
but the answers appear to be complicated.
See \cite{Delaunay-Jouhet-preprint} for an analogous calculation
of the conjectural moments of $\#\Sha[p^e]$.
\end{remark}

\begin{remark}
For $q=2$ and $m=1$, 
the result of Theorem~\ref{T:free m-tuples in Sel_q}
can be related to the Tamagawa number $\tau(\PGL_2)=2$.
(See \cite{Bhargava-Shankar-preprint1} and \cite{Poonen-bourbaki-preprint}.)
Is there a Tamagawa number explanation for all $q$ and $m$?
\end{remark}

\subsection{Considering all \texorpdfstring{$p$}{p}-primary parts at once}
\label{S:Zhat}

Let $\Sel E \colonequals \varinjlim_n \Sel_n E$
be the direct limit over all $n \in \Z_{>0}$, ordered by divisibility,
so $\Sel E \isom \Directsum_p \Sel_{p^\infty} E$.
It fits in an exact sequence
\[
	0 \To E(k) \tensor \frac{\Q}{\Z} \To \Sel E 
	\To \Sha \To 0
\]
of discrete $\Zhat$-modules (i.e., torsion abelian groups).
The $p$-primary parts of this sequence should not be completely independent,
because if $\Sha$ is finite, then the $\Z_p$-corank of
the $p$-primary part $\Sel_{p^\infty} E$ of $\Sel E$
is independent of $p$.

Therefore we condition on the rank $r$,
in which case we need only focus on the model for $\Sha$.
Here is our model:
independently for each prime $p$,
choose a finite symplectic abelian $p$-group $T_p$ 
with respect to $\Tdist_r$
(or equivalently $\Adist_r$, by Theorem~\ref{T:A=T}\eqref{I:Adist=Tdist}),
and define $T \colonequals \Directsum_p T_p$.

\begin{theorem}
\label{T:Sha for r at least 1}
If $r \ge 1$, then the group $T$ above is finite with probability~$1$, 
and has the distribution of \cite{Delaunay2001}*{Heuristic Assumption}, 
with the correction that $r/2$ is replaced by $r$.
\end{theorem}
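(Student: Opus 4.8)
The plan is to reduce everything to the explicit one-prime formula of Theorem~\ref{T:singular distribution} (equivalently Theorem~\ref{T:A=T}), according to which $\Tdist_r$ assigns to a symplectic $p$-group $G_p$ the mass $\frac{(\#G_p)^{1-r}}{\#\Sp(G_p)}\prod_{i=r+1}^{\infty}(1-p^{1-2i})$. For the finiteness assertion, note that $\Prob(T_p\neq 0)=1-\prod_{i=r+1}^{\infty}(1-p^{1-2i})$ by Theorem~\ref{T:singular distribution}, so the elementary inequality $\prod_i(1-x_i)\ge 1-\sum_i x_i$ gives
\[
	\Prob(T_p\neq 0)\;\le\;\sum_{i=r+1}^{\infty}p^{1-2i}\;=\;\frac{p^{-1-2r}}{1-p^{-2}}.
\]
For $r\ge 1$ the right-hand side is $O(p^{-3})$, so $\sum_p\Prob(T_p\neq 0)<\infty$, and the Borel--Cantelli lemma shows that with probability $1$ all but finitely many $T_p$ are trivial; hence $T=\Directsum_p T_p$ is finite almost surely. (For $r=0$ the same bound only gives $O(p^{-1})$ and the series diverges; this is why the hypothesis $r\ge1$ is imposed, and it is consistent with $T$ being typically infinite in that case.)

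Next I would pin down the law of the finite group $T$. For a finite symplectic abelian group $G$, the $p$-parts $G_p$ are pairwise orthogonal under the $\Q/\Z$-valued pairing, so a symplectic structure on $G$ is the same datum as a symplectic structure on each $G_p$; in particular $\#G=\prod_p\#G_p$ and $\#\Sp(G)=\prod_p\#\Sp(G_p)$. Since the $T_p$ are independent and $T$ is almost surely finite, the event $\{T\isom G\}$ is the intersection of the independent events $\{T_p\isom G_p\}$ (with $G_p=0$ for all but finitely many $p$), whence
\[
	\Prob(T\isom G)=\prod_p\Prob(T_p\isom G_p)=\frac{(\#G)^{1-r}}{\#\Sp(G)}\prod_{i=r+1}^{\infty}\prod_p(1-p^{1-2i})=\frac{(\#G)^{1-r}}{\#\Sp(G)}\prod_{i=r+1}^{\infty}\zeta(2i-1)^{-1},
\]
the interchange of products and their convergence being legitimate for $r\ge1$. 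That these numbers sum to $1$ over all isomorphism classes of finite symplectic groups follows from the fact that $\sum_{G_p}\Tdist_r(G_p)=1$ for each $p$ (Theorem~\ref{T:singular distribution}) together with the almost-sure finiteness of $T$, by a dominated-convergence argument over the countable index set, just as in the proof of Theorem~\ref{T:u=0}.

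Finally I would identify this distribution with the one in \cite{Delaunay2001}*{Heuristic Assumption}, corrected so that $u/2$ is replaced by $u=r$. By Theorem~\ref{T:T distribution}(c) the one-prime distribution $\Tdist_r$ already coincides with Delaunay's prediction for $\Sha[p^\infty]$, so what remains is to check that Delaunay's prediction for the whole group $\Sha$ is the product over $p$ of his $p$-primary predictions; given the displayed product formula, this is a matter of comparing Delaunay's explicit normalization with $\prod_{i\ge r+1}\zeta(2i-1)^{-1}$, using the identity $\#\{\text{nondegenerate alternating pairings on }G\}=\#\Aut(G)/\#\Sp(G)$ to pass between his presentation of the weight and ours. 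I expect this last bookkeeping step --- tracking Delaunay's normalization through the $u/2\mapsto u$ correction --- to be the only real obstacle, alongside the measure-theoretic care needed to justify passing to the countable product of marginals and to verify that the total mass equals $1$.
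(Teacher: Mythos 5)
Your proposal is correct and follows essentially the same route as the paper: finiteness via the $G=0$ case of Theorem~\ref{T:singular distribution} and Borel--Cantelli, then the identification $\Prob(T\isom G)=\prod_p\Prob(T_p\isom G[p^\infty])$, and finally the comparison with Delaunay. The ``bookkeeping'' you flag at the end is exactly what the paper dispatches in one line by noting that Delaunay's formula is multiplicative on $p$-primary parts, so no further obstacle remains.
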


\begin{proof}
By Theorem~\ref{T:singular distribution} for $G=0$,
\[
	\Prob(T_p \ne 0) = 1 - \prod_{i=r+1}^\infty (1-p^{1-2i}) = O(p^{-1-2r}).
\]
If $r \ge 1$, then $\sum_p \Prob(T_p \ne 0)$ converges,
so the Borel--Cantelli lemma implies that $T_p=0$ for all but finitely $p$
with probability~$1$,
so $T$ is finite with probability~$1$.
The probability that $T$ is isomorphic to 
a given symplectic abelian group $G$
is the (convergent) product over $p$ 
of the probability that $T_p \isom G[p^\infty]$.
Since the formula in \cite{Delaunay2001}*{Heuristic Assumption}
is multiplicative on $p$-primary parts, the result follows.
\end{proof}

For the rest of this section, assume that $r=0$.
Then $\sum_p \Prob(T_p \ne 0)$ diverges, 
and the probability that
$T$ is isomorphic to any particular finite abelian group is $0$, so we
do not obtain a discrete probability distribution on finite abelian
groups.
This situation is similar to that for class groups of imaginary 
quadratic fields: the density of such fields having 
a specified class group is $0$.
In the class group setting, 
the article \cite{Cohen-Lenstra1983} 
formulated nontrivial statements
by measuring the probability not of individual groups
but of certain infinite sets of isomorphism classes of groups,
and more generally, by computing the average 
of certain functions $f$ defined on such isomorphism classes.
Following \cite{Delaunay2001}, we will do something analogous
for symplectic abelian groups.

Let $\EE_{0,<X}$ be the set of $E \in \EE_0$ of height less than $X$.
We use $\sum_G$ to denote a sum over (isomorphism classes of)
symplectic abelian groups;
we often restrict the sum by imposing conditions on the size of $G$.
For a symplectic abelian group $G$, define
\[
	w_G \colonequals \frac{\#G}{\#\Sp(G)}.
\]
For $k=\Q$, 
Delaunay~\cite{Delaunay2001}*{Heuristic Assumption}, 
inspired by~\cite{Cohen-Lenstra1983},
proposed the heuristic
\begin{equation}
  \label{E:Delaunay}
	\lim_{X \to \infty} \frac{\sum_{E \in \EE_{0,<X}} f(\Sha(E))}
	{\sum_{E \in \EE_{0,<X}} 1}
	\stackrel{?}= 
	\lim_{n \to \infty}
	\frac{\sum_{\#G \le n} f(G) w_G}
	{\sum_{\#G \le n} w_G}.
\end{equation}

Some hypotheses on $f$ are necessary since one can construct 
wildly oscillating functions $f$
for which even the ``easy'' limit on the right side of~\eqref{E:Delaunay}
fails to exist.
Let us now describe a class of functions for which we expect 
equality in~\eqref{E:Delaunay}.
Fix a set of primes $P$ such that $\sum_{p \in P} 1/p < \infty$.
Given $G$,
write $G=H_G \times H'_G$ where $\#H_G$ is divisible only
by primes in $P$, and $\#H'_G$ is divisible only by primes not in $P$.
We use $\sum_H$ (resp.\ $\sum_{H'}$) to denote a sum restricted 
to symplectic abelian groups of order divisible only by primes in $P$
(resp.,\ not in $P$); again we may also impose restrictions 
on the size of $H$ or $H'$.
Then $\sum_{p \in P} \Prob(T_p \ne 0) \le \sum_{p \in P} O(1/p) < \infty$,
so the Borel--Cantelli lemma implies that the random group
$\Directsum_{p \in P} T_p$ 
is given by a discrete probability distribution
on the set of isomorphism classes of (finite) symplectic abelian groups $H$ 
of order divisible only by primes in $P$;
in fact, Theorem~\ref{T:u=0} implies that
$\Prob\left(\Directsum_{p \in P} T_p \isom H \right) = c_P w_H$,
where $c_P$ is a normalizing constant defined as
the convergent product $\prod_{p \in P} \prod_{i=1}^\infty (1-p^{1-2i})$;
in particular, $\sum_H w_H < \infty$.
By an \defi{$L^1$ function} on the set of such $H$,
we mean a real-valued function $f$ such that 
$\sum_H |f(H)| w_H < \infty$;
in particular, bounded functions are $L^1$.
Given such an $L^1$ function $f$, we define 
\[
	\int f \colonequals 
		\frac{\sum_H f(H) w_H}
		{\sum_H w_H}
\]
and extend $f$ to all symplectic abelian groups $G$
by defining $f(G) \colonequals f(H_G)$.
It is reasonable to conjecture~\eqref{E:Delaunay} for $L^1$ functions $f$.
On the other hand, our model suggests the conjecture
that 
\begin{equation}
  \label{E:our f(Sha)}
	\lim_{X \to \infty} \frac{\sum_{E \in \EE_{0,<X}} f(\Sha(E))}
	{\sum_{E \in \EE_{0,<X}} 1}
	\stackrel{?}= 
	\int f
\end{equation}
for such $L^1$ functions $f$.
We now prove that Delaunay's prediction agrees with ours,
i.e., that the right sides of \eqref{E:Delaunay} and \eqref{E:our f(Sha)} 
are equal.

\begin{theorem}
\label{T:Sha for r=0}
Let $P$ be a set of primes such that $\sum_{p \in P} 1/p < \infty$.
Let $f$ be an $L^1$ function 
on the set of (isomorphism classes of) symplectic abelian groups $H$
of order divisible only by primes in $P$.
Extend $f$ to all symplectic abelian groups $G$ by defining
$f(G) \colonequals f(H_G)$.
Then 
\begin{equation}
\label{E:limit of weighted average}
	\lim_{n \to \infty}
	\frac{\sum_{\#G \le n} f(G) w_G}
	{\sum_{\#G \le n} w_G}
	=
	\int f.
\end{equation}
\end{theorem}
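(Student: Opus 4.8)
The plan is to split $G=H_G\times H'_G$ into its $P$-part and $P'$-part and to show that, as the cutoff grows, the $P'$-variable averages out against a fixed limiting measure supported on $P$-parts. Since the pairing on a finite symplectic abelian group restricts to a nondegenerate alternating pairing on each $p$-primary component, we have $\Sp(G)=\prod_p\Sp(G[p^\infty])$, hence $w_G=\prod_p w_{G[p^\infty]}$ and in particular $w_G=w_{H_G}\,w_{H'_G}$. Writing $H$ (resp.\ $H'$) for a symplectic abelian group whose order is divisible only by primes in $P$ (resp.\ not in $P$), and putting
\[
	D(x)\colonequals\sum_{\#H'\le x}w_{H'},
\]
the bijection $G\leftrightarrow(H_G,H'_G)$ together with $f(G)=f(H_G)$ gives, for every $n$,
\[
	\sum_{\#G\le n}f(G)\,w_G=\sum_{H}f(H)\,w_H\,D(n/\#H),
	\qquad
	\sum_{\#G\le n}w_G=\sum_{H}w_H\,D(n/\#H),
\]
the sums over $H$ being effectively finite (the terms with $\#H>n$ vanish). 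So the left side of \eqref{E:limit of weighted average} is the average of the values $f(H)$ against the weights $w_H\,D(n/\#H)$, and it remains to see that, after normalization, these weights converge to ones proportional to $w_H$.

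Everything then comes down to two properties of $D$: (i) $D$ is nondecreasing with $D(x)\to\infty$; and (ii) $D$ is slowly varying, i.e.\ $D(\lambda x)/D(x)\to1$ as $x\to\infty$ for each fixed $\lambda>0$. Property (i) is immediate: $D(x)\ge\sum_{q\le\sqrt x,\,q\notin P}w_{(\Z/q\Z)^2}=\sum_{q\le\sqrt x,\,q\notin P}\tfrac{q}{q^2-1}\to\infty$, since $\sum_q 1/q$ diverges while $\sum_{q\in P}1/q<\infty$. For (ii), monotonicity reduces us to proving $D(2x)/D(x)\to1$ (the general case, and in particular $D(x/\#H)/D(x)\to1$ for each fixed $\#H$, follows by squeezing between dyadic multiples), and since $D\to\infty$ it then suffices to bound $D(2x)-D(x)=\sum_{x<\#H'\le 2x}w_{H'}$ independently of $x$. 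Here one invokes the structure theory: a symplectic $q$-group has order a perfect square, and from $\#\Sp_2(\Z/q^k\Z)=q^{3k}(1-q^{-2})$ one obtains $\#\Sp(G)\ge(1-q^{-2})(\#G)^{3/2}$ for every symplectic $q$-group $G$ (with equality for $G=(\Z/q^k\Z)^2$); multiplicativity of $w$ and the Euler product $\prod_q(1-q^{-2})^{-1}=\zeta(2)$ then give $w_{H'}\le\zeta(2)\,(\#H')^{-1/2}$ for all $H'$. On the other hand, a symplectic group of order $m^2$ is specified by a symplectic $q$-group of order $q^{2v_q(m)}$ for each $q$, so the number of symplectic groups of order at most $Y$ equals $\sum_{m\le\sqrt Y}a(m)$, where $a(m)$ is the number of abelian groups of order $m$; the classical estimate $\sum_{m\le M}a(m)=O(M)$ then yields $\#\{H':\#H'\le 2x\}=O(\sqrt x)$. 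Combining, $D(2x)-D(x)\le\zeta(2)\,x^{-1/2}\cdot O(\sqrt x)=O(1)$, which proves (ii). (Alternatively, (ii) follows analytically: the Euler product $F(u)=\sum_{H'}w_{H'}(\#H')^{-u}=\prod_{q\notin P}\bigl(\sum_{G_q}w_{G_q}(\#G_q)^{-u}\bigr)$ converges for $\re u>0$ and, after division by $\zeta(1+2u)$, extends holomorphically and nonvanishingly across $u=0$; shifting a contour in a smoothed form of $\sum_{x<\#H'\le 2x}w_{H'}$ then produces a main term of size $O(1)$ and an error $o(1)$.)

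Granting (i) and (ii), the conclusion follows by dominated convergence on the countable set of $P$-groups $H$ equipped with the \emph{finite} measure $H\mapsto w_H$ (recall $\sum_H w_H<\infty$). For each fixed $H$ we have $D(n/\#H)/D(n)\to1$ by (ii), while $0\le D(n/\#H)/D(n)\le1$ by (i); since $\sum_H|f(H)|\,w_H<\infty$ because $f$ is $L^1$, dominated convergence gives
\[
	\sum_{H}f(H)\,w_H\,\frac{D(n/\#H)}{D(n)}\longrightarrow\sum_{H}f(H)\,w_H,
	\qquad
	\sum_{H}w_H\,\frac{D(n/\#H)}{D(n)}\longrightarrow\sum_{H}w_H>0.
\]
Dividing these two limits and recalling the definition of $\int f$ yields \eqref{E:limit of weighted average}.

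The heart of the matter — and the only real obstacle — is property (ii): the slow variation of $D$, equivalently the uniform boundedness of $\sum_{x<\#H'\le 2x}w_{H'}$. It is genuinely delicate precisely because $\sum_{H'}w_{H'}=\infty$ (this divergence is the whole reason the limit in \eqref{E:limit of weighted average} must be taken with a growing cutoff rather than being a plain sum), so no tail estimate can suffice; one really must exploit both the sharp decay $w_{H'}\asymp(\#H')^{-1/2}$ and the sparsity of square-order (hence symplectic) groups.
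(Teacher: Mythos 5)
Your proof is correct, but it is organized around a different key lemma than the paper's. Both arguments begin with the same decomposition $G=H_G\times H'_G$ and the multiplicativity $w_G=w_{H_G}w_{H'_G}$, but you collapse the double sum in the opposite order: you fix $H$ and study the background counting function $D(x)=\sum_{\#H'\le x}w_{H'}$, prove it is slowly varying ($D(x)\to\infty$ while $D(2x)-D(x)=O(1)$), and finish by dominated convergence against the finite measure $H\mapsto w_H$. The paper instead normalizes $f$ so that $\int f=0$, fixes $H'$, and works with the bounded partial sums $S_M=\sum_{\#H\le M}f(H)w_H\to 0$, splitting the $H'$-sum at $n/m$ and invoking the two-sided logarithmic estimates of Corollary~\ref{C:sum w_G}, which come from the exact evaluation $\sum_{\#G=N^2}w_G$ in Lemma~\ref{L:1/n 2/n} (via the $q$-binomial theorem and Theorem~\ref{T:singular distribution}). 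Your route gets by with weaker inputs — divergence of $\sum_{q\notin P}w_{(\Z/q\Z)^2}\asymp\sum_{q\notin P}1/q$ for the lower bound, and $w_{H'}\ll(\#H')^{-1/2}$ plus the $O(\sqrt{x})$ count of square-order groups for the upper bound — and the dominated-convergence framing is arguably more transparent; the paper's route yields sharper quantitative information (explicit $\asymp\log n$ growth) from the exact identity. One repair worth noting: your asserted inequality $\#\Sp(G)\ge(1-q^{-2})(\#G)^{3/2}$ for an arbitrary symplectic $q$-group does not follow from the computation of $\#\Sp_2(\Z/q^k\Z)$ alone and is left unjustified; but all you need is $w_{H'}=O\bigl((\#H')^{-1/2}\bigr)$ with some absolute constant, which is immediate from Lemma~\ref{L:1/n 2/n} (an individual $w_G$ with $\#G=N^2$ is at most the full sum, hence at most $2/N$). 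With that substitution your ``delicate'' step $D(2x)-D(x)=O(1)$ also follows in one line, since $\sum_{\sqrt{x}<N\le\sqrt{2x}}2/N=O(1)$, so the gap is cosmetic rather than structural.
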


Before starting the proof of Theorem~\ref{T:Sha for r=0},
we prove bounds on sums involving $w_G$.

\begin{lemma}
\label{L:1/n 2/n}
For any $N \ge 1$, we have $1/N \le \sum_{\#G = N^2} w_G \le 2/N$.
\end{lemma}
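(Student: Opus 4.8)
The plan is to reduce to a local computation at each prime and then collapse a telescoping product. Since $\#G$ and $\#\Sp(G)$ are both multiplicative over the primary decomposition $G=\bigoplus_p G[p^\infty]$, the weight $w_G=\#G/\#\Sp(G)$ is multiplicative, so, writing $N=\prod_p p^{a_p}$,
\[
	\sum_{\#G=N^2} w_G \;=\; \prod_p S_p(a_p),
	\qquad
	S_p(a)\colonequals\sum_{\substack{H\ \textup{symplectic }p\textup{-group}\\ \#H=p^{2a}}} w_H .
\]
The lower bound then follows from a single group: for $G_0=(\Z/N\Z)^2$ with the standard symplectic pairing (scaled by $\det$ under change of basis), $\Sp(G_0)=\SL_2(\Z/N\Z)$ has order $N^3\prod_{p\mid N}(1-p^{-2})$, so $w_{G_0}=\bigl(N\prod_{p\mid N}(1-p^{-2})\bigr)^{-1}\ge 1/N$, whence $\sum_{\#G=N^2}w_G\ge 1/N$.

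The substance of the lemma is the local identity
\[
	S_p(a)\;=\;\frac{1}{p^{a}}\prod_{i=1}^{a}\frac{1}{1-p^{-2i}}\qquad(a\ge 0),
\]
which I would derive from Theorem~\ref{T:u=0} and Lemma~\ref{L:Igusa}. Put $c_p\colonequals\prod_{i\ge1}(1-p^{1-2i})$. By Theorem~\ref{T:u=0}, $w_H=c_p^{-1}\lim_n\Prob(\coker A\isom H)$ for a Haar-random $A\in M_n(\Z_p)_{\alt}$ with $n$ even; almost surely $\det A\ne0$, and then $\coker A$ is a symplectic $p$-group of order $|\det A|_p^{-1}$, which is an even power of $p$ since $\det A=\operatorname{Pf}(A)^{2}$. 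Summing over the finitely many such $H$ of order $p^{2a}$ gives $c_pS_p(a)=\lim_n\Prob\bigl(|\det A|_p=p^{-2a}\bigr)$. Substituting $x=p^{-2s}$ in Lemma~\ref{L:Igusa} shows $\sum_{a\ge0}\Prob_n(|\det A|_p=p^{-2a})\,x^{a}=\prod_{i=1}^{n/2}\frac{1-p^{1-2i}}{1-p^{1-2i}x}$, which tends as $n\to\infty$ to $c_p\prod_{j\ge0}(1-p^{-1-2j}x)^{-1}$; by the Euler identity $\prod_{j\ge0}(1-yq^{j})^{-1}=\sum_{a\ge0}y^{a}/\prod_{i=1}^{a}(1-q^{i})$ with $q=p^{-2}$, $y=p^{-1}x$, this equals $c_p\sum_{a\ge0}\bigl(p^{-a}/\prod_{i=1}^a(1-p^{-2i})\bigr)x^{a}$. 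On the other hand, each summand is bounded by $x^{a}$ with $\sum_a x^a<\infty$, so dominated convergence passes the limit inside the sum and identifies the left side with $\sum_{a\ge0}c_pS_p(a)\,x^{a}$; comparing coefficients yields the formula. (Alternatively one can compute $S_p(a)$ directly, indexing symplectic $p$-groups of order $p^{2a}$ by partitions of $a$ and summing an explicit formula for $1/\#\Sp(G_\lambda)$ over partitions.)

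Granting the local formula, the two displays combine to
\[
	\sum_{\#G=N^2}w_G \;=\; \frac1N\prod_{p\mid N}\prod_{i=1}^{a_p}\frac{1}{1-(p^{i})^{-2}}
	\;=\; \frac1N\prod_{m\in M}\frac{m^{2}}{m^{2}-1},
\]
where $M=\{\,p^{i}:p\mid N,\ 1\le i\le a_p\,\}$ is a finite set of \emph{distinct} prime powers, each $\ge 2$ (distinct because $p^{i}=q^{j}$ forces $p=q$, $i=j$). Every factor exceeds $1$, so enlarging $M$ to all of $\{2,3,4,\dots\}$ only increases the product, and that product telescopes:
\[
	\prod_{m\in M}\frac{m^{2}}{m^{2}-1}\;\le\;\prod_{m\ge2}\frac{m^{2}}{m^{2}-1}
	\;=\;\lim_{M_{0}\to\infty}\ \prod_{m=2}^{M_{0}}\frac{m}{m-1}\cdot\frac{m}{m+1}
	\;=\;\lim_{M_{0}\to\infty}\frac{2M_{0}}{M_{0}+1}\;=\;2,
\]
giving $\sum_{\#G=N^2}w_G\le 2/N$. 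The main obstacle is the local mass formula: Theorem~\ref{T:u=0} by itself yields only the aggregate $\sum_a S_p(a)=\prod_{i\ge1}(1-p^{1-2i})^{-1}$, which is far too crude here (for $N=2^{k}$ one needs the sharp decay $S_2(k)\asymp 2^{-k}$, not mere summability), so one genuinely must separate the terms $S_p(a)$ — hence the Igusa integral — and watch the $\lim_n$/sum interchange, which the uniform identity $\sum_a\Prob_n(|\det A|_p=p^{-2a})=1$ makes routine. Everything else — the multiplicative reduction, the one-group lower bound, and the telescoping product $\prod_{m\ge2}m^{2}/(m^{2}-1)=2$ — is elementary.
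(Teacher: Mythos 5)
Your argument is correct, and its skeleton matches the paper's: both reduce to the per-prime mass formula $\sum_{\#G=p^{2k}} w_G = p^{-k}\prod_{j=1}^{k}(1-p^{-2j})^{-1}$ and then multiply over the prime powers $m>1$ dividing $N$, bounding the result between $1/N$ and $(1/N)\prod_{m\ge 2}(1-m^{-2})^{-1}=2/N$. Where you genuinely diverge is in how the local formula is obtained. The paper gets the generating-function identity $\sum_k\bigl(\sum_{\#G=p^{2k}}w_G\bigr)t^k=\prod_{i\ge 1}(1-p^{1-2i}t)^{-1}$ almost for free: Theorem~\ref{T:singular distribution} (total mass $1$ of the rank-$r$ distribution) is exactly this identity evaluated at $t=p^{-2r}$ for every $r\in\Z_{\ge 0}$, which forces it identically in $\Q[[t]]$; Euler's $q$-binomial theorem then extracts the coefficients, just as in your computation. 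You instead use only the rank-$0$ statement (Theorem~\ref{T:u=0}) together with the finite-level Igusa integral (Lemma~\ref{L:Igusa}), interpreting $\int|\det|^s\,\mu$ as the generating function of $\Prob_n(|\det A|_p=p^{-2a})$ and paying for the limit $n\to\infty$ with a dominated-convergence interchange (domination by $x^a$ for fixed $x\in(0,1)$), which you justify correctly; morally this re-proves, at the level of $\Adist_0$ plus Igusa, the identity the paper extracts from the all-ranks theorem, whose own proof runs through the same Igusa integral, so the total work is comparable and your route is slightly more self-contained relative to Section~3. Two smaller variations: your lower bound via the single group $(\Z/N\Z)^2$, with $\#\Sp=\#\SL_2(\Z/N\Z)=N^3\prod_{p\mid N}(1-p^{-2})$, is a clean standalone shortcut, whereas the paper reads $\ge 1/N$ off the product formula since each factor $(1-m^{-2})^{-1}\ge 1$; and your explicit telescoping evaluation of $\prod_{m\ge 2}m^2/(m^2-1)=2$ spells out the computation the paper states without proof.
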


\begin{proof}
The sum in Theorem~\ref{T:singular distribution} being $1$ 
implies that 
\[
	\sum_{k=0}^\infty \sum_{\#G = p^{2k}} w_G t^k 
	= \prod_{i=1}^\infty (1-p^{1-2i} t)^{-1}
\]
holds for $t=p^{-2r}$ for all $r \in \Z_{\ge 0}$, 
so it holds identically in $\Q[[t]]$.
Apply the $q$-binomial theorem to the right side 
(take $x=p^{-2}$ and $z=pt$ in the expression~$Z$ in \cite{Euler1748}*{\S313})
and equate coefficients of $t^k$ to obtain
\[
	\sum_{\#G = p^{2k}} w_G
	= p^{-k} \prod_{j=1}^k (1-p^{-2j})^{-1}
\]
(which is equivalent to \cite{Delaunay2001}*{Corollary~6}).
Take the product over the prime powers in the factorization of 
a positive integer $N$, 
and use
\[
	1 \le \prod_{\textup{prime powers $m>1$ dividing $N$}} (1-m^{-2})^{-1}
	\le \prod_{m=2}^\infty (1-m^{-2})^{-1} 
	= 2.\qedhere
\]
\end{proof}

\begin{corollary}
\label{C:sum w_G}
We have $\sum_{\#G \le n} w_G \ge \frac12 \log n$
and $\sum_{\#G \in [\ell,n]} w_G = O(\log(n/\ell))$.
\end{corollary}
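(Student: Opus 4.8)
The plan is to deduce both estimates from Lemma~\ref{L:1/n 2/n} by an elementary harmonic-series comparison. The one structural input is that every symplectic abelian group $G$ has perfect-square order (a nondegenerate \emph{alternating} pairing forces this, as one sees from the structure theorem for symplectic modules already invoked in the proof of Corollary~\ref{C:probability of pairing}). So writing $\#G = N^2$ with $N \in \Z_{\ge 1}$, I would reorganize the two sums as
\[
	\sum_{\#G \le n} w_G = \sum_{1 \le N \le \sqrt{n}}\ \sum_{\#G = N^2} w_G
	\qquad\text{and}\qquad
	\sum_{\#G \in [\ell,n]} w_G = \sum_{\sqrt{\ell}\, \le N \le\, \sqrt{n}}\ \sum_{\#G = N^2} w_G,
\]
where in the first sum $N$ runs over integers in $[1,\sqrt n]$ and in the second over integers in $[\sqrt\ell,\sqrt n]$, and then replace each inner sum by its two-sided estimate $1/N \le \sum_{\#G = N^2} w_G \le 2/N$ from Lemma~\ref{L:1/n 2/n}.

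For the lower bound this gives $\sum_{\#G \le n} w_G \ge \sum_{1 \le N \le \lfloor\sqrt n\rfloor} 1/N$. Comparing termwise with $\int_N^{N+1} dx/x \le 1/N$ yields $\sum_{1\le N\le M} 1/N \ge \log(M+1)$, and since $\lfloor\sqrt n\rfloor + 1 > \sqrt n$ I conclude $\sum_{\#G\le n} w_G \ge \log\sqrt n = \tfrac12\log n$, as claimed. For the upper bound I would use $\sum_{\#G\in[\ell,n]} w_G \le 2\sum_{\lceil\sqrt\ell\rceil \le N \le \lfloor\sqrt n\rfloor} 1/N$ together with the elementary inequality $\sum_{a\le N\le b} 1/N \le 1 + \int_a^b dx/x = 1 + \log(b/a)$ for $1\le a\le b$ (the sum being empty, hence $0$, if $a>b$); since $\lfloor\sqrt n\rfloor / \lceil\sqrt\ell\rceil \le \sqrt{n/\ell}$, this produces the bound $\sum_{\#G\in[\ell,n]} w_G \le 2 + \log(n/\ell)$, which is $O(\log(n/\ell))$ in the range relevant for the application in Theorem~\ref{T:Sha for r=0} (and in any case $O(1+\log(n/\ell))$ uniformly).

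I do not expect a genuine obstacle here. The only points needing a little care are flagging that symplectic groups have square order so that the reindexing by $N=\sqrt{\#G}$ is legitimate, and being precise enough in the integral comparisons to land the explicit constant $\tfrac12$ in the first inequality rather than merely an unspecified multiple of $\log n$.
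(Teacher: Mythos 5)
Your proof is correct and is essentially the argument the paper intends: the corollary is stated without proof precisely because it follows from Lemma~\ref{L:1/n 2/n} by the harmonic-sum comparison you carry out (reindexing by $N=\sqrt{\#G}$, which is legitimate since symplectic abelian groups have square order, and comparing $\sum 1/N$ with $\log$). Your parenthetical caveat that the second bound is really $O(1+\log(n/\ell))$ uniformly is a fair reading of the paper's slightly loose $O(\log(n/\ell))$ and causes no trouble in the application.
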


\begin{proof}[Proof of Theorem~\ref{T:Sha for r=0}]
We may add a constant to $f$ in order to assume that $\int f = 0$;
in other words, $\sum_H f(H) w_H = 0$.
For any $M \in \R$, define $S_M \colonequals \sum_{\#H \le M} f(H) w_H$;
thus the $S_M$ are bounded and $\lim_{M \to \infty} S_M = 0$.
Suppose that $\epsilon>0$ is given;
fix $m$ such that $M > m$ implies $|S_M| < \epsilon$.
Then
\begin{align*}
	\left| \sum_{\#G \le n} f(G) w_G \right| 
	&= \left| \sum_{\# H' \le n} \; \sum_{\#H \le \frac{n}{\#H'}} f(H) w_H w_{H'} \right| \qquad\textup{(we write each $G$ as $H \times H'$)}\\
	&\le \sum_{\# H' \le n} w_{H'} \left| S_{n/\#H'} \right| \\
	&\le \sum_{\# H' < n/m} w_{H'} \epsilon + \sum_{\# H' \in [n/m,n]} w_{H'} O(1) \\
	&\le \left( \frac12 \log n \right) \epsilon  + O(\log m) \qquad\textup{(by Corollary~\ref{C:sum w_G})} \\
\intertext{and}
	\sum_{\#G \le n} w_G &\ge \frac12 \log n \qquad\textup{(by Corollary~\ref{C:sum w_G}).}
\end{align*}
Thus the lim sup of the absolute value of 
the ratio in~\eqref{E:limit of weighted average}
is bounded by $\epsilon$.
This holds for every $\epsilon$, so the limit is $0$,
matching $\int f$.
\end{proof}

\section{Arithmetic justification}
\label{S:arithmetic justification}

In this section, we prove results on the arithmetic of elliptic curves 
that partially explain why $\Sel_{p^e} E$
should behave like an intersection of maximal isotropic direct summands.

\subsection{Shafarevich--Tate groups of finite group schemes}

For any $G_k$-module or finite $k$-group scheme $M$, define 
\[
	\Sha^1(k,M) \colonequals 
	\ker\left( \HH^1(k,M) \to \prod_{v \in \Omega} \HH^1(k_v,M) \right).
\]
(If $M$ is not \'etale, then the cohomology 
should be interpreted as fppf cohomology.)

\begin{proposition}
\label{P:Sha^1}
Let $E$ be an elliptic curve over a global field $k$.
Let $p$ be a prime and let $e \in \Z_{\ge 0}$.
If $\Char k \ne p$,
suppose that the image $G$ of $G_k \to \Aut E[p^e] \isom \GL_2(\Z/p^e\Z)$
contains $\SL_2(\Z/p^e\Z)$.
If $\Char k=p$,
suppose that the image $G$ of $G_k \to \Aut E[p^e](\ksep)$ is cyclic.
Then $\Sha^1(k,E[p^e])=0$.
\end{proposition}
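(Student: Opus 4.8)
The plan is to reduce, via two applications of the Chebotarev density theorem, to a statement in group cohomology, supplemented in characteristic $p$ by a d\'evissage along the connected--\'etale filtration of $E[p^e]$.

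Write $M = E[p^e]$, let $L$ be the subfield of $\ksep$ fixed by the kernel of $G_k \to \Aut M(\ksep)$, and let $G = \Gal(L/k)$, so $G$ is the image in the statement. First I would show that every $c \in \Sha^1(k,M)$ dies under restriction to $L$: the class $\res_L(c)$ is a continuous homomorphism $G_L \to M(\ksep)$ (as $G_L$ acts trivially on $M(\ksep)$) cutting out a finite abelian extension $N/L$, and local triviality of $c$ forces $\res_L(c)$ to vanish on every decomposition group of $G_L$, so $N=L$ by Chebotarev. Hence $\Sha^1(k,M)$ injects into $\ker\bigl(\HH^1(k,M) \to \HH^1(L,M)\bigr) = \HH^1(G,M)$. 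A second Chebotarev argument sharpens this: for each $g \in G$ pick a place $v$ of good reduction, unramified in $L$, with $\Frob_v$ conjugate to $g$; then $L_w/k_v$ is unramified with group $\langle g\rangle$, so inflation $\HH^1(\langle g\rangle, M) \to \HH^1(k_v, M)$ is injective, and local triviality of $c$ at $v$ forces the image of $c$ in $\HH^1(\langle g\rangle, M)$ to vanish. Thus $\Sha^1(k,M)$ in fact embeds into
\[
	\Sha^1(G,M) \colonequals \bigcap_{\langle g\rangle \le G} \ker\bigl(\HH^1(G,M) \to \HH^1(\langle g\rangle, M)\bigr),
\]
the intersection running over cyclic subgroups of $G$.

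When $\Char k \ne p$ the module $M \cong (\Z/p^e\Z)^2$ is \'etale and $\SL_2(\Z/p^e\Z) \subseteq G$. If $p$ is odd, then $-\mathrm{Id} \in G$ is central and acts on $M$ by $-1$; since $G$ acts trivially on $\HH^1(G,M)$ while the coefficient action of $-\mathrm{Id}$ is multiplication by $-1$, we get $2\,\HH^1(G,M)=0$, and as $\HH^1(G,M)$ is a $p$-group this forces $\HH^1(G,M)=0$ and hence $\Sha^1(k,M)=0$. If $p=2$ this shortcut fails, and instead one must actually prove $\Sha^1(G,M)=0$ for $\SL_2(\Z/2^e\Z) \subseteq G \subseteq \GL_2(\Z/2^e\Z)$ acting on the natural module: for $e=1$ this is immediate (the Sylow $2$-subgroup is cyclic, or one notes the restricted module is free), while for $e\ge 2$ I would restrict to Sylow $2$-subgroups where the index permits, exploit the cyclic-subgroup constraints, and induct along $0 \to 2M \to M \to M/2M \to 0$. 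This $p=2$ group-cohomology computation is the main obstacle.

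When $\Char k = p$, $M$ is not \'etale, so I would filter by the connected--\'etale sequence. For ordinary $E$ this is $0 \to \mu_{p^e} \to E[p^e] \to \Z/p^e\Z \to 0$ with $G_k$ acting on the quotient through the \emph{cyclic} group $G$; then $\Sha^1(G,\Z/p^e\Z)=0$ for free, because $G$ is itself one of the cyclic subgroups appearing in the definition of $\Sha^1(G,\cdot)$. For the connected part, $\HH^1(k,\mu_{p^e}) = k^\times/(k^\times)^{p^e}$ by Kummer theory, and $\Sha^1(k,\mu_{p^e})=0$ by the Hasse principle for $p^e$-th powers in a global function field; feeding these into the long exact sequence of fppf cohomology (whose $\Sha^1$-subsequence is left exact, just as above) gives $\Sha^1(k,E[p^e])=0$. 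For supersingular $E$ the hypothesis is vacuous since $E[p^e](\ksep)=0$; here $E[p^e]$ is connected and self-dual, and I would argue directly by filtering $E[p]\subseteq\cdots\subseteq E[p^e]$ and computing the fppf cohomology of the simple subquotients together with local--global compatibility. Apart from the $p=2$ computation above, the remaining work is the characteristic-$p$ fppf bookkeeping; the Chebotarev reductions are routine.
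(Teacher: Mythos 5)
Your Chebotarev reductions to $\HH^1_{\cyc}(G,M)$ and your central-element trick for odd $p$ (using $-I\in\SL_2(\Z/p^e\Z)$ to force $2\,\HH^1(G,M)=0$ on a $p$-group) are correct, and the latter is if anything slicker than the paper's inflation--restriction through $\{\pm 1\}$. But there are genuine gaps. First, for $\Char k\ne 2$, $p=2$, $e\ge 2$ you give no proof at all: you call it \emph{the main obstacle} and gesture at Sylow subgroups and the sequence $0\to 2M\to M\to M/2M\to 0$. This is the substantive content of the proposition in that case. The paper handles it by proving $\HH^1(S_e,M)\isom\HH^1(S_e,M[2])\isom\HH^1(S_2,M[2])$, the second isomorphism resting on the group-theoretic identity $\Gamma_2=\Gamma_1^2[\Gamma_1,\Gamma_1]$ for the congruence filtration of $S_e=\SL_2(\Z/2^e\Z)$, and then checking directly that restriction to the cyclic unipotent subgroup $U_e$ is injective on $\HH^1(S_e,M)$, which kills $\HH^1_{\cyc}(S_e,M)$. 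Restriction to a Sylow $2$-subgroup by itself does not engage the cyclic-subgroup condition (the Sylow $2$-subgroup is large and non-cyclic for $e\ge 2$), so nothing in your sketch substitutes for this computation.

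Second, in characteristic $p$ your ordinary case does not go through as set up. Over $k$ the connected--\'etale sequence is $0\to M^\vee\to E[p^e]\to M\to 0$ with $M$ a possibly nonconstant form of $\Z/p^e\Z$, and even granting $\Sha^1(k,M^\vee)=\Sha^1(k,M)=0$, your parenthetical claim that the ``$\Sha^1$-subsequence is left exact'' is unjustified: a locally trivial $\xi$ dying in $\HH^1(k,M)$ lifts to some $\eta\in\HH^1(k,M^\vee)$ that is only locally in the image of the coboundaries $\HH^0(k_v,M)\to\HH^1(k_v,M^\vee)$, not locally trivial, so you cannot place $\eta$ in $\Sha^1(k,M^\vee)$. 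The paper's remedy is to pass first to the splitting field $L$ of $M$, where the quotient is the constant scheme $\Z/p^e\Z$ (so $\HH^0(k,\Z/p^e\Z)\to\HH^0(k_v,\Z/p^e\Z)$ is surjective) and where one has the stronger fact, from Poonen--Voloch and not from any Chebotarev-type Hasse principle (extracting $p^e$-th roots is inseparable), that $\HH^1(k,\mu_{p^e})\to\HH^1(k_v,\mu_{p^e})$ is injective at a \emph{single} place; a diagram chase then gives $\Sha^1(L,E[p^e])=0$, and fppf inflation--restriction identifies $\Sha^1(k,E[p^e])$ with $\Sha^1(\Gal(L/k),N)$ for $N=E[p^e](\ksep)$, which vanishes because the image in $\Aut N$ is cyclic. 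Note also that you invoke cyclicity for the Galois image on the points of the \'etale quotient $M$, whereas the hypothesis only constrains the image on $N=E[p^e](\ksep)$; for $p=2$ these can differ, since the image on $M(\ksep)$ lies in $(\Z/2^e\Z)^\times$ and only its reduction acts on $N$. Your supersingular case (filtering by copies of $\alpha_p$) is the right idea but leaves the key input unnamed: one needs injectivity of $k/k^p\to k_v/k_v^p$ at one place, propagated through the filtration using $\HH^0(k_v,\alpha_p)=0$, which is exactly what the paper's lemma does.
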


\begin{remark}
\label{R:hypothesis holds}
For each $k$, the hypothesis of Proposition~\ref{P:Sha^1}
holds for 100\% of elliptic curves over $k$,
as we now explain.
If $\Char k \ne p$, then the result follows from the Hilbert
irreducibility theorem.
If $\Char k=p$ and either $p>2$ or $e \le 2$,
then $E[p^e](\ksep)$ is cyclic of order $p^f$ for some $f \le e$,
and its automorphism group is $(\Z/p^f\Z)^\times$, which is cyclic;
thus the hypothesis holds for \emph{all} elliptic curves over $k$.
Finally, if $\Char k =2$, then an explicit calculation with Weierstrass
equations shows that $E[2](\ksep)=0$ for 100\% of $E$,
and in that case, $E[2^e](\ksep)=0$ follows.
\end{remark}

Before proving Proposition~\ref{P:Sha^1},
we introduce some more definitions and prove a few basic facts.
For any finite group $G$ and $G$-module $M$, define
\[
	\HH^1_{\cyc}(G,M) \colonequals 
	\Intersection_{\textup{cyclic $H \le G$}} 
	\ker\left( \HH^1(G,M)\to \HH^1(H,M) \right),
\]
which, like $\HH^1(G,M)$, is contravariant in $G$ and covariant in $M$.
For any Galois extension $L/k$ and $\Gal(L/k)$-module $M$, 
define
\[
	\Sha^1(L/k,M) \colonequals 
	\ker\left( \HH^1(\Gal(L/k),M) \to \prod_{v \in \Omega} \HH^1(\Gal(L_w/k_v),M) \right),
\]
where $\Gal(L_w/k_v)$ 
is a decomposition group associated to a chosen place $w$ of $L$ above $v$;
since the conjugation action $\Gal(L/k)$ on itself
induces the identity on $\HH^1(\Gal(L/k),M)$,
it does not matter which $w$ is chosen,
and we could alternatively take the kernel of the map
to the product over \emph{all} $w$
instead of using only one above each $v$.

\begin{lemma}[cf.~\cite{Bruin-Poonen-Stoll-preprint}*{Proposition~8.3}]
\label{L:basic facts}
\hfill
\begin{enumerate}[\upshape (a)]
\item\label{I:H^1cyc=0} 
If a finite group $G$ acts trivially on an abelian group $M$,
then $\HH^1_{\cyc}(G,M)=0$.
\item\label{I:Sha^1 in H^1cyc}
If $L/k$ is a finite Galois extension with Galois group $G$,
and $M$ is a $G$-module, 
then $\Sha^1(L/k,M) \subseteq \HH^1_{\cyc}(G,M)$.
\item\label{I:trivial Sha^1 is 0}
If $L/k$ is a Galois extension with Galois group $G$,
and $G$ acts trivially on an abelian group $M$,
then $\Sha^1(L/k,M)=0$.
\item\label{I:change of field} 
If $L/k$ is a finite Galois extension,
and $M$ is a $\Gal(L/k)$-module, 
and $L'/k$ is a larger Galois extension 
(so $\Gal(L'/L)$ acts trivially on $M$),
then inflation induces an isomorphism
$\Sha^1(L/k,M) \isomto \Sha^1(L'/k,M)$.
\item\label{I:big Sha^1 vs H^1cyc}
If $L'/k$ is a Galois extension,
and $M$ is a finite $\Gal(L'/k)$-module, 
and $G$ is the image of $\Gal(L'/k) \to \Aut M$,
then $\Sha^1(L'/k,M)$ is isomorphic to a subgroup of 
$\HH^1_{\cyc}(G,M)$.
\end{enumerate}
\end{lemma}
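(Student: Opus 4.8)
The plan is to prove parts \eqref{I:H^1cyc=0} through \eqref{I:big Sha^1 vs H^1cyc} in that order, each building on the previous ones, in the spirit of \cite{Bruin-Poonen-Stoll-preprint}*{Proposition~8.3}. The only genuinely global inputs are the Chebotarev density theorem (or its function field analogue) for part \eqref{I:Sha^1 in H^1cyc} and the inflation--restriction exact sequence for part \eqref{I:change of field}; everything else is formal.

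For part \eqref{I:H^1cyc=0}: when $G$ acts trivially on $M$ one has $\HH^1(G,M)=\Hom(G,M)$, and restriction to a cyclic subgroup $H=\langle g\rangle$ is honest restriction of homomorphisms, so a homomorphism killed by all such restrictions vanishes at every $g\in G$ and is therefore $0$. For part \eqref{I:Sha^1 in H^1cyc}: given a cyclic $H\le G$ with generator $\sigma$, I would invoke Chebotarev to produce a place $v\in\Omega$ unramified in $L$ with $\Frob_v$ conjugate to $\sigma$, so that the decomposition group $\Gal(L_w/k_v)$ is conjugate to $H$; since inner automorphisms act trivially on $\HH^1(G,M)$, vanishing of the restriction of a class to $\Gal(L_w/k_v)$ is equivalent to vanishing of its restriction to $H$, and as $H$ was arbitrary this shows $\Sha^1(L/k,M)\subseteq\HH^1_{\cyc}(G,M)$.

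Part \eqref{I:trivial Sha^1 is 0} reduces to the finite case: a class in $\HH^1(\Gal(L/k),M)$ is continuous, hence inflated from $\HH^1(\Gal(L''/k),M)$ for some finite Galois subextension $L''/k$ of $L/k$; compatibility of inflation with the decomposition-group maps puts the inflated class in $\Sha^1(L''/k,M)$, which is $0$ by parts \eqref{I:Sha^1 in H^1cyc} and \eqref{I:H^1cyc=0}, and injectivity of inflation on $\HH^1$ forces the original class to be $0$. For part \eqref{I:change of field} I would use the inflation--restriction sequence
\[
0\to\HH^1(\Gal(L/k),M)\xrightarrow{\inf}\HH^1(\Gal(L'/k),M)\xrightarrow{\res}\Hom(\Gal(L'/L),M),
\]
which is exact because $\Gal(L'/L)$ acts trivially on $M$. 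Inflation is injective, and by the same compatibility it carries $\Sha^1(L/k,M)$ into $\Sha^1(L'/k,M)$. For surjectivity, given $c'\in\Sha^1(L'/k,M)$, its restriction to $\Gal(L'/L)$ vanishes on each decomposition group $\Gal(L'_w/L_{w_0})$ — being a further restriction of the already-vanishing restriction of $c'$ to $\Gal(L'_w/k_v)$ — hence lies in $\Sha^1(L'/L,M)$, which is $0$ by part \eqref{I:trivial Sha^1 is 0} applied to the extension $L'/L$; so $c'=\inf(c)$ for some $c$, and injectivity of the local inflations $\HH^1(\Gal(L_{w_0}/k_v),M)\injects\HH^1(\Gal(L'_w/k_v),M)$ forces $c\in\Sha^1(L/k,M)$.

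Finally, for part \eqref{I:big Sha^1 vs H^1cyc} I would take $L\subseteq L'$ to be the fixed field of $\ker(\Gal(L'/k)\to\Aut M)$, so that $L/k$ is Galois with group $G$; since $M$ is finite, $G$ is finite, so part \eqref{I:change of field} identifies $\Sha^1(L'/k,M)$ with $\Sha^1(L/k,M)$ via inflation, and part \eqref{I:Sha^1 in H^1cyc} embeds the latter into $\HH^1_{\cyc}(G,M)$. The step I expect to require the most care is the surjectivity claim in \eqref{I:change of field}: one must correctly recognize $\res_{\Gal(L'/L)}(c')$ as an element of $\Sha^1(L'/L,M)$ and keep track of the inflation/restriction compatibilities for the local decomposition groups. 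The remaining parts are either formal diagram chasing or a routine appeal to Chebotarev.
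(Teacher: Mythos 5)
Your proposal is correct and follows essentially the same route as the paper: \eqref{I:H^1cyc=0} via homomorphisms, \eqref{I:Sha^1 in H^1cyc} via Chebotarev realizing each cyclic subgroup as a decomposition group, \eqref{I:trivial Sha^1 is 0} by reduction to finite subextensions (the paper's direct limit, phrased as continuity), \eqref{I:change of field} via inflation--restriction mapping to its local analogues, and \eqref{I:big Sha^1 vs H^1cyc} by combining \eqref{I:change of field} with \eqref{I:Sha^1 in H^1cyc}. The extra care you take with the surjectivity step in \eqref{I:change of field} is exactly what the paper's exact sequence $0 \to \Sha^1(L/k,M) \to \Sha^1(L'/k,M) \to \Sha^1(L'/L,M)$ encapsulates.
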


\begin{proof}
\hfill
\begin{enumerate}[\upshape (a)]
\item 
A homomorphism $G \to M$ that restricts to $0$ on each cyclic subgroup
of $G$ is $0$.
\item 
By the Chebotarev density theorem,
each cyclic subgroup of $G$ arises as a decomposition subgroup.
\item 
If $L/k$ is finite, 
this follows from \eqref{I:H^1cyc=0} and~\eqref{I:Sha^1 in H^1cyc}.
The general case follows by taking a direct limit.
\item 
{}From the inflation-restriction sequence
\[
	0 \to \HH^1(\Gal(L/k),M) \to \HH^1(\Gal(L'/k),M) \to \HH^1(\Gal(L'/L),M)
\]
mapping to its local analogues, we obtain an exact sequence
\[
	0 \to \Sha^1(L/k,M) \to \Sha^1(L'/k,M) \to \Sha^1(L'/L,M).
\]
The last term is $0$ by~\eqref{I:trivial Sha^1 is 0}.
\item 
The quotient $G$ of $\Gal(L'/k)$ is $\Gal(L/k)$ for 
a finite Galois extension $L/k$.
Apply \eqref{I:change of field}
and then~\eqref{I:Sha^1 in H^1cyc}.
\qedhere
\end{enumerate}
\end{proof}

\begin{proof}[Proof of Proposition~\ref{P:Sha^1} for $\Char k \ne p$]
The case $e=1$ is \cite{Poonen-Rains2012-selmer}*{Proposition~3.3(e)},
so assume $e \ge 2$.
Let $S_e \colonequals \SL_2(\Z/p^e\Z)$.
Let $M \colonequals E[p^e] \isom (\Z/p^e\Z)^2$.
By Lemma~\ref{L:basic facts}\eqref{I:big Sha^1 vs H^1cyc},
$\Sha^1(k,M)$ is isomorphic to a subgroup of $\HH^1_{\cyc}(G,M)$.
The invariant subgroup $M^{S_e}$ is $0$,
so the inflation-restriction sequence for $S_e \le G$
shows that $\HH^1_{\cyc}(G,M) \to \HH^1_{\cyc}(S_e,M)$ is injective.
It remains to show that $\HH^1_{\cyc}(S_e,M)=0$.

The inflation-restriction sequence associated to the central subgroup 
$\{\pm 1\} \le S_e$ is
\begin{equation}
  \label{E:inflation-restriction}
0\To \HH^1(S_e/\{\pm 1\},M[2])
 \stackrel{\inf}\To \HH^1(S_e,M)
 \To \HH^1(\{\pm 1\},M)^{S_e}.
\end{equation}
If $p$ is odd, $M[2]=0$
and $\HH^1(\{\pm 1\},M)=0$ (killed by both $2$ and $p$), 
so $\HH^1(S_e,M)=0$.

So assume that $p=2$.
Then $\HH^1(\{\pm 1\},M) \isom (\Z/2\Z)^2$,
on which $S_e$ acts through $S_1$ in the standard way,
so $\HH^1(\{\pm 1\},M)^{S_e}=0$,
so the map $\inf$ in~\eqref{E:inflation-restriction} is an isomorphism.
The map $\inf$ factors as 
\[
	\HH^1(S_e/\{\pm 1\},M[2]) \To \HH^1(S_e,M[2]) \To \HH^1(S_e,M),
\]
so the second map is surjective.
It is also injective, since $\HH^0(S_e,2M)=0$.
Thus $\HH^1(S_e,M[2]) \isom \HH^1(S_e,M)$.

Define a filtration 
$\{1\} \le \Gamma_{e-1} \le \cdots \le \Gamma_2 \le \Gamma_1 \le S_e$
by $\Gamma_m \colonequals \ker(S_e \to S_m)$.
We prove by induction on $e$ that the inclusion
$\Gamma_1^2 [\Gamma_1,\Gamma_1] \le \Gamma_2$
is an equality.
We check the cases $e=2$ and $e=3$ by hand.
For $e \ge 4$, every element of $\Gamma_{e-1}$ is represented by $1+2^{e-1} A$
for some trace-$0$ integer matrix $A$,
and is the square of $1+2^{e-2} A \in \Gamma_{e-2} \le \Gamma_1$;
now apply the inductive hypothesis to $S_{e-1}=S_e/\Gamma_{e-1}$.

The previous paragraph shows that $\Gamma_2$
is contained in the kernel of every homomorphism $\Gamma_1 \to \Z/2\Z$.
Thus the restriction map $\HH^1(\Gamma_1,M[2]) \to \HH^1(\Gamma_2,M[2])$ is $0$ 
(the actions are trivial).
Consider the maps $\alpha$ and $\beta$ in the inflation-restriction sequence
\[
	0 \To \HH^1(S_2,M[2]) \stackrel{\alpha}\To 
	\HH^1(S_e,M[2]) 
	\stackrel{\beta}\To \HH^1(\Gamma_2,M[2]).
\]
Since $\beta$ factors through the previous restriction map,
$\beta=0$, and $\alpha$ is an isomorphism.
Let $U_e \le S_e$
be the subgroup of unipotent upper triangular matrices.
The horizontal maps in the bottom row of the commutative diagram 
\[
\xymatrix{
\HH^1(S_2,M[2]) \ar[r]^{\sim} \ar[d]^{\res} & \HH^1(S_e,M[2]) \ar[r]^{\sim} \ar[d]^{\res} & \HH^1(S_e,M) \ar[d]^{\res} \\
\HH^1(U_2,M[2]) \ar@{^{(}->}[r]^{\inf} & \HH^1(U_e,M[2]) \ar[r] & \HH^1(U_e,M) \\
}
\]
are injective
(for the second map, observe that $M^{U_e} \stackrel{2}{\to} (2M)^{U_e}$ 
is surjective).
Direct calculation shows that the left vertical map is injective too
(in fact, it is an isomorphism between groups of order $2$).
So the right vertical map is injective.
In particular, $\HH^1_{\cyc}(S_e,M)=0$.
\end{proof}

The following two lemmas will be used in the proof of the 
$\Char k=p$ case of Proposition~\ref{P:Sha^1}.

\begin{lemma}
\label{L:structure of E[p^e]}
Let $k$ be a field of characteristic~$p$.
Let $E$ be an elliptic curve over $k$.
\begin{enumerate}[\upshape (a)]
\item\label{I:structure of ordinary E[p^e]}
If $E$ is ordinary, 
then for any $e \in \Z_{\ge 0}$
there is an exact sequence
\begin{equation}
  \label{E:ordinary E[p^e]}
	0 \to M^\vee \to E[p^e] \to M \to 0
\end{equation}
involving a finite \'etale group scheme $M$ of order $p^e$
and its Cartier dual $M^\vee$.
\item\label{I:structure of supersingular E[p^e]}
If $E$ is supersingular, then $E[p^e]$ is an iterated extension
of copies of $\alpha_p$.
\end{enumerate}
\end{lemma}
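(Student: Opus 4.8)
The plan is to handle both parts using two standard ingredients applied to the finite commutative $k$-group scheme $G \colonequals E[p^e]$, which has order $p^{2e}$: its connected--\'etale sequence $0 \to G^0 \to G \to G^{\et} \to 0$, and the self-duality $G \isom G^\vee$ provided by the Weil pairing $G \times G \to \mu_{p^e}$ (equivalently, by the canonical principal polarization of $E$). I will also invoke the familiar facts that Cartier duality is exact and interchanges group schemes of multiplicative type with finite \'etale group schemes, that a finite group scheme of multiplicative type and $p$-power order is connected in characteristic $p$, and that Cartier duality exchanges nilpotence of the relative Frobenius $F$ with nilpotence of the Verschiebung $V$.

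For part~\eqref{I:structure of ordinary E[p^e]}, I would first note that ``$E$ ordinary'' means the formal group $\hat E$ has height $1$, hence is of multiplicative type (a form of $\hat{\G}_m$); therefore $G^0 = \hat E[p^e]$ is of multiplicative type of order $p^e$, and so $M \colonequals G^{\et}$ has order $p^e$. Applying Cartier duality to $0 \to G^0 \to G \to M \to 0$ gives an exact sequence $0 \to M^\vee \to G^\vee \to (G^0)^\vee \to 0$ in which $M^\vee$ is of multiplicative type (hence connected) and $(G^0)^\vee$ is \'etale; by uniqueness of the connected--\'etale sequence, this is the connected--\'etale sequence of $G^\vee$. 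Transporting it across the Weil isomorphism $G \isom G^\vee$ identifies $G^0$ with $M^\vee$, and substituting this back into $0 \to G^0 \to G \to M \to 0$ gives the sequence~\eqref{E:ordinary E[p^e]}.

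For part~\eqref{I:structure of supersingular E[p^e]}, I would argue that ``$E$ supersingular'' forces $E[p^\infty] = \hat E$ to be connected (of height $2$), so $G$ is infinitesimal and hence $F$ is nilpotent on $G$. Since $G \isom G^\vee$ by the Weil pairing, $G^\vee$ is connected as well, and because connectedness of $G^\vee$ is equivalent to nilpotence of $V$ on $G$, the operator $V$ is also nilpotent on $G$. It then remains to apply the structure theory of finite commutative group schemes: a group scheme on which both $F$ and $V$ are nilpotent has a filtration all of whose successive quotients are isomorphic to $\alpha_p$. Over $\kbar$ this is because $\alpha_p$ is the unique simple local--local group scheme; over the given field $k$ one checks in addition that $\alpha_p$ has no nontrivial $k$-forms, since $\underline{\Aut}(\alpha_p) = \G_m$ and $\HH^1(\Spec k, \G_m) = 0$.

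Everything except the final step is routine, so the step I expect to require the most care is the last one in part~\eqref{I:structure of supersingular E[p^e]}: establishing that a finite commutative group scheme killed by a power of $F$ and a power of $V$ is an iterated extension of copies of $\alpha_p$, together with the verification that there are no nontrivial forms of $\alpha_p$ even when $k$ is imperfect.
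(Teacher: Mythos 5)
Your argument is correct in substance but follows a genuinely different route from the paper's. The paper's proof factors $p^e = V \circ F$ through the $p^e$-power Frobenius isogeny $F \colon E \to E'$ and its dual $V$, obtaining the single exact sequence $0 \to \ker F \to E[p^e] \to \ker V \to 0$ together with Mumford's duality theorem $\ker F \isom (\ker V)^\vee$ for kernels of dual isogenies; part (a) is then immediate ($\ker V$ is \'etale of order $\deg V = p^e$ when $E$ is ordinary), and part (b) is handled by reducing to $e=1$ via the filtration of $E[p^e]$ by copies of $E[p]$ and quoting Oort's identification of $\ker F$ and $\ker V$ with $\alpha_p$ over $\kbar$, followed by exactly the twisting argument you use ($\HH^1(k,\Aut \alpha_p)=\HH^1(k,\G_m)=0$). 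You instead derive the sequence in (a) from the connected--\'etale sequence plus Weil self-duality of $E[p^e]$ and uniqueness of the connected--\'etale sequence, and in (b) you bypass the reduction to $e=1$ by invoking nilpotence of $F$ and $V$ and the structure theory of local--local group schemes. Your route is more ``$p$-divisible group theoretic'' and avoids Mumford's kernel-duality theorem, at the cost of heavier standard inputs (exactness of Cartier duality, the multiplicative-type description of the formal group of an ordinary curve over a possibly imperfect field, and the classification of group schemes killed by $F$ and $V$ over such a field); the paper's route is shorter and needs only two pinpoint citations. One small patch in your final step of (b): knowing that $\alpha_p$ is the unique simple local--local group scheme over $\kbar$ and that $\alpha_p$ has no nontrivial $k$-forms does not by itself give the filtration over $k$; the clean argument is to take a Jordan--H\"older filtration over $k$ and note that each simple subquotient $N$ is connected with $\ker F_N$ and $\im V_N$ defined over $k$, so simplicity forces $F=V=0$ on $N$, whence $N_{\kbar} \isom \alpha_p^n$ and triviality of forms (via $\HH^1_{\fl}(k,\GL_n)=0$) together with simplicity gives $N \isom \alpha_p$ -- this is the precise content of the step you flagged as needing the most care.
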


\begin{proof}
Let $F \colon E \to E'$ be the $p^e$-Frobenius morphism,
and let $V \colon E' \to E$ be its dual.
Then $F$ is surjective and $VF=p^e$, so there is an exact sequence
\[
	0 \to \ker F \to E[p^e] \to \ker V \to 0.
\]
Moreover, $\ker F$ is the Cartier dual of $\ker V$,
by \cite{MumfordAV1970}*{III.15, Theorem~1}
(the proof there works over any field).
\begin{enumerate}[\upshape (a)]
\item 
If $E$ is ordinary, 
then $\ker V$ is a finite \'etale group scheme of order $\deg V = p^e$.
\item 
Suppose that $E$ is supersingular.
The group scheme $E[p^e]$ is an iterated extension of copies of $E[p]$,
so we may reduce to the case $e=1$.
If $e=1$, then $\ker F$ and $\ker V$ are isomorphic to $\alpha_p$:
over an algebraically closed field, 
this is well known~\cite{Oort1966}*{II.15.5},
and it follows over any field of characteristic~$p$
since the twists of $\alpha_p$ are classified by
$\HH^1(k,\AUT \alpha_p) = \HH^1(k,\G_m) = 0$.\qedhere
\end{enumerate}
\end{proof}

\begin{lemma}
\label{L:Sha^1 of alpha_p and mu_p}
Let $k$ be a global field of characteristic~$p$.
Let $M$ be a finite commutative group scheme over $k$
that is an iterated extension of copies of $\mu_p$ and $\alpha_p$.
If $v \in \Omega$, then $\HH^1(k,M) \to \HH^1(k_v,M)$ is injective.
In particular, $\Sha^1(k,M)=0$.
\end{lemma}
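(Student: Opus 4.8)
The plan is to reduce, by dévissage along the given filtration, to the two base cases $M=\mu_p$ and $M=\alpha_p$, and to treat each of those by identifying $\HH^1$ explicitly via fppf cohomology. First I would record the elementary fact that $\mu_p(F)=\{x\in F:x^p=1\}=\{1\}$ and $\alpha_p(F)=\{x\in F:x^p=0\}=\{0\}$ for every field $F$ of characteristic $p$; feeding this through the long exact sequence and inducting on the length of the filtration shows that $\HH^0(F,M)=0$ for \emph{every} iterated extension $M$ of copies of $\mu_p$ and $\alpha_p$, and for every such $F$. This makes the inductive step painless: writing $0\to M'\to M\to N\to 0$ with $N\in\{\mu_p,\alpha_p\}$ and $M'$ an iterated extension of length one less, the vanishing of $\HH^0(k,N)$ and $\HH^0(k_v,N)$ makes $\HH^1(k,M')\to\HH^1(k,M)$ and $\HH^1(k_v,M')\to\HH^1(k_v,M)$ injective; so if $x\in\HH^1(k,M)$ dies in $\HH^1(k_v,M)$, then its image in $\HH^1(k,N)$ dies in $\HH^1(k_v,N)$ and hence (base case for $N$) vanishes, so $x$ is the image of some $y\in\HH^1(k,M')$, which then dies in $\HH^1(k_v,M')$ and hence (inductive hypothesis) vanishes, forcing $x=0$.

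For the base cases, I would use the fppf-exact sequences $1\to\mu_p\to\G_m\xrightarrow{\,p\,}\G_m\to1$ and $0\to\alpha_p\to\G_a\xrightarrow{\,x\mapsto x^p\,}\G_a\to0$ (exactness holds because a $p$-th root of a unit, respectively of a scalar, can be adjoined by the finite faithfully flat extension $R[T]/(T^p-u)$). Since $\HH^1(k,\G_m)=0$ (Hilbert 90) and $\HH^1(k,\G_a)=0$ (coherent cohomology of the affine scheme $\Spec k$), and since $x\mapsto x^p$ is additive in characteristic $p$, these sequences give canonical identifications $\HH^1(k,\mu_p)\cong k^\times/(k^\times)^p$ and $\HH^1(k,\alpha_p)\cong k/\{x^p:x\in k\}$ (a quotient of additive groups), and likewise over $k_v$, compatibly with the base-change maps. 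Thus in both base cases the assertion to be proved is: if $a\in k$ admits a $p$-th root in $k_v$, then $a$ admits a $p$-th root in $k$.

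To prove this, let $b\in k_v$ satisfy $b^p=a\in k$. Then $b$ is algebraic over $k$ and $k(b)^p\subseteq k$, so $b$ lies in $k^{1/p}$, the purely inseparable extension of $k$ obtained by adjoining all $p$-th roots of elements of $k$ (embedded into a fixed algebraic closure of $k_v$); hence $b\in k^{1/p}\cap k_v$. Now $k$, being a global field of characteristic $p$, is finitely generated of transcendence degree $1$ over the perfect field $\F_p$, so $[k:k^p]=p$ and therefore $[k^{1/p}:k]=p$; consequently $k^{1/p}\cap k_v$ equals either $k$ or all of $k^{1/p}$. It cannot be $k^{1/p}$: choosing $\pi\in k^\times$ with $v(\pi)=1$ (possible since $v$ is normalized so that $v(k^\times)=\Z$), the element $\pi^{1/p}\in k^{1/p}$ cannot lie in $k_v$, as its valuation would be $1/p\notin\Z$. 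Hence $b\in k^{1/p}\cap k_v=k$ and $a=b^p$ is a $p$-th power in $k$. Finally, $\Sha^1(k,M)\subseteq\ker(\HH^1(k,M)\to\HH^1(k_v,M))=0$.

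There is no genuinely hard step here, but the point that requires care is that $M$ need not be étale, so one must work with fppf cohomology and with the explicit descriptions of $\HH^1(k,\mu_p)$ and $\HH^1(k,\alpha_p)$; and the whole argument rests on the single arithmetic input $[k:k^p]=p$, which is precisely where the hypothesis that $k$ is a global function field is used. I would expect the write-up of the fppf identifications and of the dévissage bookkeeping to occupy most of the space, while the ramification argument that closes the base case is a one-liner.
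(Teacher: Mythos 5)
Your proposal is correct and follows essentially the same route as the paper: the same d\'evissage by induction on the filtration (using $\HH^0(k_v,\cdot)=0$ for the top quotient to get injectivity of $\HH^1(k_v,M')\to\HH^1(k_v,M)$), and the same identifications $\HH^1(k,\mu_p)\isom k^\times/k^{\times p}$ and $\HH^1(k,\alpha_p)\isom k/k^p$ via fppf Kummer and Frobenius sequences. The only difference is that where the paper quotes \cite{Poonen-Voloch2010}*{Lemma~3.1} for the injectivity of $k^\times/k^{\times p}\to k_v^\times/k_v^{\times p}$ and $k/k^p\to k_v/k_v^p$, you prove the needed statement directly from $[k:k^p]=p$ together with the observation that a uniformizer of $v$ chosen in $k$ has no $p$-th root in $k_v$; this is a correct, self-contained substitute for that citation.
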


\begin{proof}
When $M=\mu_p$, 
Hilbert's theorem~90 implies that $\HH^1(k,M) \to \HH^1(k_v,M)$ 
is $k^\times/k^{\times p} \to k_v^\times/k_v^{\times p}$.
Similarly, when $M=\alpha_p$, it is the homomorphism of additive groups
$k/k^p \to k_v/k_v^p$.
Both homomorphisms are injective, by \cite{Poonen-Voloch2010}*{Lemma~3.1}.

If $0 \to M' \to M \to M'' \to 0$ is an extension of group schemes
as in the statement, and the result holds for $M'$ and $M''$,
then it holds for $M$ too (this uses injectivity of 
$\HH^1(k_v,M') \to \HH^1(k_v,M)$, which follows since $\HH^0(k_v,M'')=0$).
So the general case follows by induction on $\#M$.
\end{proof}

\begin{proof}[Proof of Proposition~\ref{P:Sha^1} for $\Char k = p$]
\hfill

\emph{Case~1: $E$ is supersingular.}
Combine 
Lemmas \ref{L:structure of E[p^e]}\eqref{I:structure of supersingular E[p^e]} 
and~\ref{L:Sha^1 of alpha_p and mu_p}.

\emph{Case~2: $E$ is ordinary.}
Let $M$ be as in 
Lemma~\ref{L:structure of E[p^e]}\eqref{I:structure of ordinary E[p^e]}.
Let $N = E[p^e](\ksep)$, which injects into $M(\ksep)$
under the map induced by~\eqref{E:ordinary E[p^e]}.
Let $L$ be the splitting field of $M$.
Thus $L$ is a Galois extension of $k$
and the image of $\Gal(L/k) \to \Aut N$ is $G$.
We now break into subcases.

\emph{Case~2a: $L=k$.}
Then \eqref{E:ordinary E[p^e]} has the form
\[
	0 \to \mu_{p^e} \to E[p^e] \to \Z/p^e\Z \to 0.
\]
By Lemma~\ref{L:basic facts}\eqref{I:trivial Sha^1 is 0}, 
$\Sha^1(k,\Z/p^e\Z)=0$,
so any $\xi \in \Sha^1(k,E[p^e])$ 
must come from an element $\eta \in \HH^1(k,\mu_{p^e})$.
Pick any $v \in \Omega$.
The middle vertical map in the commutative diagram
\[
\xymatrix{
\Z/p^e\Z \ar[r] \ar@{=}[d] & \HH^1(k,\mu_{p^e}) \ar[r] \ar[d] & \HH^1(k,E[p^e]) \ar[d] \\
\Z/p^e\Z \ar[r] & \HH^1(k_v,\mu_{p^e}) \ar[r] & \HH^1(k_v,E[p^e]) \\
}
\]
is injective by Lemma~\ref{L:Sha^1 of alpha_p and mu_p},
and a diagram chase shows that $\eta$ comes from an element of $\Z/p^e\Z$.
Thus $\xi=0$.

\emph{Case~2b: $L$ is general.}
By definition of $L$, we have $E[p^e](L)=N$.
For any place $w$ of $L$, 
every element of $L_w$ that is algebraic over $L$
is actually separable over $L$ \cite{Poonen-Voloch2010}*{Lemma~3.1},
so $E[p^e](L_w)=N$ too.
By Case~2a, $\Sha^1(L,E[p^e])=0$.
Because of the (fppf) inflation-restriction sequence
\[
	0 \to \HH^1(\Gal(L/k),N)
	\to \HH^1(k,E[p^e])
	\to \HH^1(L,E[p^e]),
\]
which maps to its analogue for each extension $L_w/k_v$ of local fields,
we have $\Sha^1(k,E[p^e]) \isom \Sha^1(\Gal(L/k),N)$.
By Lemma~\ref{L:basic facts}\eqref{I:big Sha^1 vs H^1cyc},
the latter is isomorphic to a subgroup of $\HH^1_{\cyc}(G,N)$,
which is trivial since $G$ is cyclic by assumption.
\end{proof}

\begin{remark}
In Proposition~\ref{P:Sha^1}, when $p=2$ and $e=3$,
the hypothesis that the image of $G_k \to \Aut E[p^e](\ksep)$
is cyclic can fail (but only for $0\%$ of $E \in \EE$,
as explained in Remark~\ref{R:hypothesis holds}).
The last line of the proof above 
cannot be immediately extended to the case
in which the image is non-cyclic,
because one can check that $\HH^1_{\cyc}((\Z/8\Z)^\times,\Z/8\Z) \ne 0$
for the standard nontrivial action.
The \emph{conclusion} of Proposition~\ref{P:Sha^1} might still hold, 
however.
\end{remark}

\subsection{Intersection of maximal isotropic subgroups}

For nonarchimedean $v$, let $\calO_v$ be the valuation ring in $k_v$.
Let $\Adeles = \sideset{}{'}\prod_{v \in \Omega} (k_v,\calO_v)$
be the adele ring of $k$.
Suppose that $E$, $k$, and $p^e$ satisfy the hypothesis of 
Proposition~\ref{P:Sha^1}, so that $\Sha^1(k,E[p^e])=0$.
Then \cite{Poonen-Rains2012-selmer}*{Theorem~4.14} applied with 
$\lambda \colon A \to \widetilde{A}$ being $[p^e] \colon E \to E$
shows that 
$\Sel_{p^e} E$ is isomorphic to 
the intersection of two maximal isotropic subgroups of 
\[
	\HH^1(\Adeles,E[p^e]) \colonequals 
	\sideset{}{'}\prod_{v \in \Omega} 
	(\HH^1(k_v,E[p^e]),\HH^1(\calO_v,E[p^e]))
	\isom
	\sideset{}{'}\prod_{v \in \Omega} 
	\left(\HH^1(k_v,E[p^e]),\frac{E(k_v)}{p^e E(k_v)}\right),
\]
namely the images of $E(\Adeles)/p^e E(\Adeles) = \prod_v E(k_v)/p^e E(k_v)$ 
and $\HH^1(k,E[p^e])$.

\subsection{Direct summands}

It is natural to ask whether these images are direct summands,
given that we \emph{modeled} $\Sel_{p^e} E$ 
by an intersection of direct summands.
Corollary~\ref{C:adelic direct summand} below
shows that at least the first of these images is a direct summand.

\begin{proposition}
\label{P:Skinner}
Let $E$ be an abelian variety over an arbitrary field $k$.
Let $n \in \Z_{>0}$.
Then the image of the coboundary map 
$E(k)/nE(k) \stackrel{\delta}\to \HH^1(k,E[n])$ is a direct summand
of $\HH^1(k,E[n])$.
\end{proposition}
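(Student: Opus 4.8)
The plan is to reduce to the case $n=p^e$ a prime power and then to show that the image $A\colonequals\im\delta$ is a \emph{pure} subgroup of $B\colonequals\HH^1(k,E[n])$; since $B$ is annihilated by $n$, the standard fact that a pure subgroup of an abelian group of bounded exponent is a direct summand (e.g.\ Fuchs or Kaplansky) will then finish the proof. The reduction to prime powers is immediate: $E[n]\isom\Directsum_{p\mid n}E[p^{v_p(n)}]$ as finite commutative group schemes (valid even when $\Char k\mid n$), and this decomposition is compatible with the coboundary maps, so it suffices to split each $p$-primary summand. (If $\Char k=p$ one works with fppf cohomology throughout; multiplication by $p^{e}$ is a finite flat isogeny, so the relevant Kummer sequences stay exact.)

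So fix $n=p^e$. For $0\le i\le e$, the long exact sequence of the Kummer sequence $0\to E[p^{e-i}]\to E\xrightarrow{p^{e-i}}E\to 0$ gives exactness of $E(k)/p^{e-i}E(k)\xrightarrow{\delta_{e-i}}\HH^1(k,E[p^{e-i}])\to\HH^1(k,E)$. The endomorphism $[p^i]$ of $E[p^e]$ has kernel $E[p^i]$ and image $E[p^{e-i}]$, hence factors as $E[p^e]\to E[p^{e-i}]\hookrightarrow E[p^e]$, with $j$ the inclusion; therefore on $\HH^1$ multiplication by $p^i$ equals $j_*\circ[p^i]_*$, where $[p^i]_*\colon\HH^1(k,E[p^e])\to\HH^1(k,E[p^{e-i}])$. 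Finally, the morphism of Kummer sequences with vertical maps $(j,\ \mathrm{id}_E,\ \times p^i)$ produces a commuting square showing $j_*\circ\delta_{e-i}=\delta_e\circ(\times p^i)$, where $\times p^i\colon E(k)/p^{e-i}E(k)\to E(k)/p^eE(k)$; in particular $j_*(\im\delta_{e-i})=\delta_e\bigl(p^iE(k)\bigr)=p^iA$.

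Now take $a\in A\cap p^iB$; the cases $i=0$ and $i\ge e$ are trivial, so assume $1\le i\le e-1$. Write $a=\delta_e(P)$ with $P\in E(k)$ and $a=p^ib$ with $b\in B$. By the factorization above, $a=p^ib=j_*(c)$ with $c\colonequals[p^i]_*b\in\HH^1(k,E[p^{e-i}])$. Since the composite $E[p^{e-i}]\xrightarrow{j}E[p^e]\hookrightarrow E$ is just the inclusion $E[p^{e-i}]\hookrightarrow E$, the image of $c$ in $\HH^1(k,E)$ equals the image of $a=j_*(c)$, which is $0$ because $a\in\im\delta_e$; by the exactness recalled above, $c\in\im\delta_{e-i}$, and hence $a=j_*(c)\in j_*(\im\delta_{e-i})=p^iA$. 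Thus $A\cap p^iB\subseteq p^iA$, the reverse inclusion is obvious, so $A$ is pure, hence a direct summand.

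I do not expect a serious obstacle here: once the purity criterion is in place the argument is short and formal. The only points needing care are (a) carrying out everything in the fppf topology when $\Char k=p$ — but the Kummer sequences remain exact, the factorization of $[p^i]$ is a factorization of finite flat group schemes, and the induced maps on fppf $\HH^1$ behave exactly as in the étale case — and (b) citing the correct form of the fact that pure subgroups of abelian groups of bounded exponent split. I would flag both and move on.
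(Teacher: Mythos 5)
Your proof is correct, but it takes a route different from (though closely parallel to) the paper's. The paper makes no reduction to prime powers and never invokes purity: it considers the surjection $\alpha \colon \HH^1(k,E[n]) \to \HH^1(k,E)[n]$ coming from the Kummer sequence, whose kernel is $\im\delta$, uses the commutative square comparing the Kummer sequences at levels $m \mid n$ to show that every element of $\HH^1(k,E)[n]$ of order $m$ lifts under $\alpha$ to an element of order $m$, and then splits $\alpha$ directly by writing $\HH^1(k,E)[n]$ as a direct sum of cyclic groups (Pr\"ufer--Baer) and lifting a generator of each cyclic summand to an element of the same order. You instead work on the kernel side: after splitting off $p$-primary pieces you verify the purity condition $\im\delta \cap p^i \HH^1(k,E[p^e]) = p^i \im\delta$, using the factorization of $[p^i]$ on $E[p^e]$ through $E[p^{e-i}]$ and naturality of the coboundaries, and then quote the theorem that a bounded pure subgroup is a direct summand. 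The two mechanisms are essentially equivalent --- for torsion groups, purity of $\ker\alpha$ is the same as the order-lifting property of $\alpha$, and the bounded-pure-summand theorem is itself proved via Pr\"ufer's structure theorem --- and both rest on the same arithmetic input, namely the compatibility of the Kummer sequences at levels dividing $n$. The paper's version buys brevity (no prime-power reduction, no purity formalism, the lifting read off from one diagram); yours buys an intrinsic treatment of the subgroup $\im\delta$ in which the quotient $\HH^1(k,E)[n]$ never needs to be named, and your explicit remarks on the fppf case in characteristic $p$ are accurate and apply equally to both arguments. One small point to tidy: purity means $\im\delta \cap m B = m\,\im\delta$ for \emph{all} $m$, not only $m=p^i$; since $B$ is killed by $p^e$, multiplication by any integer prime to $p$ is an automorphism of $B$ and of $\im\delta$, so your prime-power check does suffice, but this should be said.
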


\begin{proof}
(We thank Bart de Smit and Christopher Skinner for ideas used in this proof.)
For each $m|n$, the commutative diagram 
\[
\xymatrix{
\HH^1(k,E[m]) \ar@{->>}[r] \ar[d] & \HH^1(k,E)[m] \ar@{^{(}->}[d] \\
\HH^1(k,E[n]) \ar@{->>}[r]^{\alpha} & \HH^1(k,E)[n] \\
}
\]
shows that any order $m$ element of $\HH^1(k,E)[n]$
lifts to an order $m$ element of $\HH^1(k,E[n])$
under the surjection $\alpha$ in the diagram.
Any $\Z/n\Z$-module is a direct sum of cyclic groups \cite{Pruefer1923}*{\S17}, \cite{Baer1935}*{pp.~274--275};
applying this to $\HH^1(k,E)[n]$
and using the previous sentence shows that $\alpha$ is split.
Finally, $\ker(\alpha)=\im(\delta)$.
\end{proof}

\begin{corollary}
\label{C:adelic direct summand}
Let $E$ be an abelian variety over a global field $k$.
Let $n \in \Z_{>0}$.
Then the image of 
$E(\Adeles)/nE(\Adeles) \stackrel{\delta}\to \HH^1(\Adeles,E[n])$ 
is a direct summand of $\HH^1(\Adeles,E[n])$.
\end{corollary}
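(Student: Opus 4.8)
The plan is to deduce Corollary~\ref{C:adelic direct summand} from its local counterpart, Proposition~\ref{P:Skinner}, by assembling local complements across all places and exploiting the restricted-product structure of $\HH^1(\Adeles,E[n])$. First I would recall that
$\HH^1(\Adeles,E[n]) = \sideset{}{'}\prod_{v \in \Omega}\bigl(\HH^1(k_v,E[n]),\HH^1(\calO_v,E[n])\bigr)$,
the restricted product taken with respect to the distinguished subgroups $\HH^1(\calO_v,E[n]) \le \HH^1(k_v,E[n])$, and that for all but finitely many $v$ (the nonarchimedean places of good reduction) one has $E(\calO_v)=E(k_v)$ and the local coboundary map $\delta_v \colon E(k_v)/nE(k_v) \to \HH^1(k_v,E[n])$ induces an isomorphism onto $\HH^1(\calO_v,E[n])$; this is recalled above for $E[p^e]$ following \cite{Poonen-Rains2012-selmer}, and the argument is the same for general $n$. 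Since fppf cohomology commutes with restricted products for the finite-type scheme $E$, the image $M$ of the adelic coboundary map $\delta$ is then precisely $\sideset{}{'}\prod_{v \in \Omega}\bigl(\im\delta_v,\HH^1(\calO_v,E[n])\bigr)$, i.e.\ the set of $(x_v)_v \in \HH^1(\Adeles,E[n])$ with $x_v \in \im\delta_v$ for every $v$; in particular $\im\delta_v = \HH^1(\calO_v,E[n])$ for almost all $v$.

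Next, for each $v \in \Omega$ I would apply Proposition~\ref{P:Skinner} over the field $k_v$ to choose a subgroup $C_v \le \HH^1(k_v,E[n])$ with $\HH^1(k_v,E[n]) = \im\delta_v \oplus C_v$, and then set $C \colonequals \bigoplus_{v \in \Omega} C_v$. This is a well-defined subgroup of $\HH^1(\Adeles,E[n])$, because a tuple in $\bigoplus_v C_v$ has almost all components equal to $0 \in \HH^1(\calO_v,E[n])$. I then claim $\HH^1(\Adeles,E[n]) = M \oplus C$. Directness is checked componentwise: any element of $M \cap C$ has each component in $\im\delta_v \cap C_v = 0$. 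For the sum, given $(x_v)_v \in \HH^1(\Adeles,E[n])$, write $x_v = m_v + c_v$ with $m_v \in \im\delta_v$ and $c_v \in C_v$; for all but finitely many $v$ one has $x_v \in \HH^1(\calO_v,E[n]) = \im\delta_v$, whence $c_v = 0$ by uniqueness of the local decomposition, so $(c_v)_v$ lies in $\bigoplus_v C_v = C$ and $(m_v)_v = (x_v)_v - (c_v)_v$ lies in $M$. This exhibits $M$ as a direct summand, proving the corollary.

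I do not expect a genuine obstacle here: all the arithmetic content is already contained in Proposition~\ref{P:Skinner}, and what remains is bookkeeping for the restricted product. The one point to get right is that the local images $\im\delta_v$ coincide with the distinguished subgroups $\HH^1(\calO_v,E[n])$ for almost all $v$ — this is exactly what makes the ``small'' direct sum $\bigoplus_v C_v$, rather than the full product $\prod_v C_v$, both land inside $\HH^1(\Adeles,E[n])$ and serve as a complement to $M$.
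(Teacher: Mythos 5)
Your proposal is correct and is essentially the paper's own proof: apply Proposition~\ref{P:Skinner} at each place to get local complements $C_v$ and take $\Directsum_{v\in\Omega} C_v$ as the adelic complement. The paper leaves the restricted-product bookkeeping (that $\im\delta_v=\HH^1(\calO_v,E[n])$ for almost all $v$, so the direct sum lies in $\HH^1(\Adeles,E[n])$ and complements the image of $\delta$) implicit, and you have simply spelled it out.
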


\begin{proof}
Proposition~\ref{P:Skinner} yields a complement $C_v$ 
of $E(k_v)/nE(k_v)$ in $\HH^1(k_v,E[n])$.
Then $\Directsum_{v \in \Omega} C_v$ is a complement 
of $E(\Adeles)/nE(\Adeles)$ in $\HH^1(\Adeles,E[n])$.
\end{proof}

Is the other subgroup,
the image of $\HH^1(k,E[n]) \to \HH^1(\Adeles,E[n])$, 
a direct summand too?
Lemma~\ref{L:Chebotarev direct summand} below gives a positive answer
for some elliptic curves.
Although it applies only to $0\%$ of $E \in \EE$,
it may be that the answer is positive for all $E$.
We conjecture at least the following.

\begin{conjecture}
\label{C:global H^1 is direct summand}
Fix a global field $k$ and $n \ge 1$.
The image of $\HH^1(k,E[n]) \to \HH^1(\Adeles,E[n])$ is a direct summand
for $100\%$ of $E \in \EE$.
\end{conjecture}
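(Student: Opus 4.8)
The plan is to reduce to the case $n = p^e$ a prime power and then to recast the assertion as a purity statement about abelian groups. Since $E[n] \isom \Directsum_{p \mid n} E[p^{v_p(n)}]$, both $\HH^1(k,E[n])$ and $\HH^1(\Adeles,E[n])$ decompose compatibly into $p$-primary parts, and a submodule of such a direct sum is a direct summand exactly when each $p$-primary piece is, so one reduces to a single $E[p^e]$. For $e=1$ there is nothing to prove, because $\HH^1(\Adeles,E[p])$ is an $\F_p$-vector space and every subspace is a direct summand. So assume $e \ge 2$, and write $M$ for the image of $\HH^1(k,E[p^e]) \to \HH^1(\Adeles,E[p^e])$. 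Since $p^e M = 0$, the standard fact that a bounded pure subgroup of an abelian group is a direct summand (Pr\"ufer--Kulikov; the same input used in Proposition~\ref{P:Skinner}) shows that the Conjecture is equivalent to the assertion that $M$ is \emph{pure} in $\HH^1(\Adeles,E[p^e])$, i.e.\ that $p^j\HH^1(\Adeles,E[p^e]) \intersect M = p^j M$ for all $j$; equivalently, that every order-$p^j$ element of the quotient lifts to an order-$p^j$ element of $\HH^1(\Adeles,E[p^e])$.

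To understand that quotient I would invoke Poitou--Tate duality. By Remark~\ref{R:hypothesis holds}, the hypothesis of Proposition~\ref{P:Sha^1} holds for $100\%$ of $E$, so I may assume $\Sha^1(k,E[p^e]) = 0$, and hence also $\Sha^2(k,E[p^e]) \isom \Sha^1(k,E[p^e])^\vee = 0$. Identifying $E[p^e]$ with its own dual via the Weil pairing, the nine-term Poitou--Tate sequence collapses to a short exact sequence $\[ 0 \To \HH^1(k,E[p^e]) \To \HH^1(\Adeles,E[p^e]) \stackrel{\pi}{\To} \HH^1(k,E[p^e])^\vee \To 0 \]$ with $M = \ker\pi$; in particular the quotient is the Pontryagin dual of a group killed by $p^e$, hence a product of cyclic groups $\Z/p^j\Z$ with $j \le e$. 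Thus, exactly as in the proof of Proposition~\ref{P:Skinner}, it suffices to lift a topological generator of each cyclic factor to an element of $\HH^1(\Adeles,E[p^e])$ of the same order. One can attempt to manufacture such lifts from the exact sequences $0 \to E[p^j] \to E[p^e] \to E[p^{e-j}] \to 0$, together with the identifications of $p^j$-torsion in cohomology used in Proposition~\ref{P:small Selmer, big Selmer}, thereby reducing to vanishing statements for $\Sha^1$ and $\Sha^2$ of $E[p^j]$ and $E[p^{e-j}]$. This cannot be automatic, though: the subgroup $p\Z/p^2\Z$ of $\Z/p^2\Z$ with its standard perfect pairing is isotropic and equal to its own annihilator yet is not a direct summand, so genuine arithmetic input is unavoidable.

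The natural source of that input is a Chebotarev argument introducing an auxiliary place $v$ of $k$: choose $v$ of good reduction with $v \nmid np$ at which $\Frob_v$ has a prescribed characteristic polynomial on $E[p^e]$, so that the unramified local term $\HH^1(\calO_v,E[p^e]) \isom E[p^e](\F_v)$ is as large as possible, and then show that the localization $\HH^1(k,E[p^e]) \to \HH^1(k_v,E[p^e])$ already exhibits enough of the $\Z/p^e\Z$-module structure to split $M$ off. This is precisely the mechanism of Lemma~\ref{L:Chebotarev direct summand}, which succeeds for exactly those $E$ admitting such a place. I expect the main obstacle to be that for the density-$0$ but nonempty families (reducible mod-$p^e$ representations, CM elliptic curves, and other small-image cases) the required Frobenius behavior cannot be arranged at any $v$; and even when the mod-$p^e$ representation is surjective, turning the heuristic into a proof appears to require a real analysis of the cokernel of $\HH^2(k,E[p^j]) \to \prod_v \HH^2(k_v,E[p^j])$, which I do not see how to circumvent — this is ultimately why the statement is only conjectured, for $100\%$ of $E$, rather than proved for all $E$.
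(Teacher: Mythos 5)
You are addressing Conjecture~\ref{C:global H^1 is direct summand}: the paper does not prove this statement, and offers only partial evidence for it --- Corollary~\ref{C:adelic direct summand} (the \emph{other} subgroup, the image of $E(\Adeles)/nE(\Adeles)$, is a direct summand), Lemma~\ref{L:Chebotarev direct summand} (the conclusion holds when $G_k$ acts trivially on $E[n]$, a density-$0$ family), and the indirect consistency argument of Remark~\ref{R:why direct summands}. So there is no proof in the paper to compare yours against, and your proposal, which explicitly stops short of claiming a proof, reaches the correct verdict. Your intermediate reductions are essentially sound and go somewhat beyond what the paper records: the reduction to $n=p^e$; the observation that for a subgroup killed by $p^e$, being a direct summand is equivalent to purity (the same bounded-pure-implies-summand principle that underlies Proposition~\ref{P:Skinner} and Lemma~\ref{L:snake}); and the Poitou--Tate collapse --- granting $\Sha^1(k,E[p^e])=0$, which Proposition~\ref{P:Sha^1} and Remark~\ref{R:hypothesis holds} supply for $100\%$ of $E$, self-duality of $E[p^e]$ gives $\Sha^2(k,E[p^e]) \isom \Sha^1(k,E[p^e])^\vee = 0$, so the quotient of $\HH^1(\Adeles,E[p^e])$ by the global image is $\HH^1(k,E[p^e])^\vee$ and the problem becomes one of lifting generators of its cyclic factors to adelic classes of the same order. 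That lifting step is exactly the open content, and you correctly identify that nothing in the paper's toolkit supplies it.

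Three details deserve correction, though none changes the conclusion that the statement remains conjectural. First, Lemma~\ref{L:Chebotarev direct summand} is not an auxiliary-place Frobenius argument: it applies only when $G_k$ acts trivially on $E[n]$, and its mechanism is the local-global principle for $m$-th powers ($k^\times/k^{\times m} \injects \Adeles^\times/\Adeles^{\times m}$ for all $m \mid n$, via Chebotarev) fed into the purely algebraic Lemma~\ref{L:snake}; no single well-chosen place is involved, and the lemma does not characterize the curves for which the conclusion holds. Second, $\HH^1(k,E[p^e])^\vee$ is a Pontryagin dual, hence a priori a \emph{product} of cyclic groups; before applying the generator-lifting criterion of Proposition~\ref{P:Skinner} you must invoke Pr\"ufer's theorem that any group of bounded exponent is a direct \emph{sum} of cyclic groups. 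Third, when $\Char k = p$ the Poitou--Tate sequence must be taken in flat (fppf) cohomology, parallel to the paper's treatment of $\Sha^1$ in that case; this is available but is an input to cite, not automatic. With these caveats, your proposal is an honest reduction of the conjecture to a lifting problem rather than a proof --- which matches the status of the statement in the paper.
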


\begin{lemma}
\label{L:Chebotarev direct summand}
If $\Char k \nmid n$ and the action of $G_k$ on $E[n]$ is trivial,
then the image of $\HH^1(k,E[n]) \to \HH^1(\Adeles,E[n])$ is a direct summand.
\end{lemma}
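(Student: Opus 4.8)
The plan is to prove that $I \colonequals \im\bigl(\HH^1(k,E[n]) \to \HH^1(\Adeles,E[n])\bigr)$ is a \emph{pure} subgroup of $B \colonequals \HH^1(\Adeles,E[n])$. Since $B$ is killed by $n$, so is $I$, and a pure subgroup of bounded exponent of any abelian group is a direct summand (a classical fact; cf.\ the references \cite{Pruefer1923},\cite{Baer1935} cited in Proposition~\ref{P:Skinner}), so this gives the lemma. Purity means $I \intersect mB = mI$ for all $m \ge 1$; since $B$ has exponent dividing $n$ it is enough to consider divisors $m$ of $n$, and as $mI \subseteq I \intersect mB$ always holds, the content is the reverse inclusion. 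Write $\phi$ for the localization map $\HH^1(k,E[n]) \to B$.

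Two preliminary facts. \emph{(i)} The Weil pairing $E[n] \times E[n] \to \mu_n$ is a perfect $G_k$-equivariant pairing, so if $G_k$ acts trivially on $E[n]$ then it acts trivially on $\mu_n$; hence $\mu_n \subseteq k$, and $\mu_m \isom \Z/m\Z$ as $G_k$-modules for every $m \mid n$. \emph{(ii)} If $N$ is a finite $G_k$-module with trivial $G_k$-action and $\#N$ prime to $\Char k$, then $\Sha^1(k,N) = 0$: a nonzero continuous homomorphism $G_k \to N$ cuts out a nontrivial finite cyclic extension $L/k$, and by the Chebotarev density theorem some place $v$ has a Frobenius generating a nontrivial subgroup of $\Gal(L/k)$, so the homomorphism is nonzero on the decomposition group at $v$. (This is the only use of Chebotarev, and the origin of the name.) From \emph{(i)} and \emph{(ii)}, $\Sha^1(k,\mu_m) = 0$ for $m \mid n$; combined with $\HH^2(k,\mu_m) = \Br(k)[m]$ (Kummer theory) and the injectivity of $\Br(k) \to \prod_v \Br(k_v)$ (the fundamental exact sequence of global class field theory), this gives $\Sha^2(k,\mu_m) = 0$, hence also $\Sha^2(k,\Z/m\Z) = 0$, where $\Sha^2$ is defined as for $\Sha^1$.

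Now fix $m \mid n$ and let $c \in \HH^1(k,E[n]) = \Hom(G_k,E[n])$ satisfy $\phi(c) \in mB$; writing $\phi(c) = m\beta$ with $\beta = (\beta_v)_v \in B$ gives $c|_{k_v} = m\beta_v \in m\,\HH^1(k_v,E[n])$ for every $v \in \Omega$, and in particular each $c|_{k_v}$ has image in $mE[n]$. It suffices to show $c \in m\,\HH^1(k,E[n])$, for then $\phi(c) \in mI$. The composite $G_k \xrightarrow{\;c\;} E[n] \to E[n]/mE[n]$ is trivial on the decomposition group at each $v \in \Omega$, so it lies in $\Sha^1\bigl(k,\, E[n]/mE[n]\bigr) = 0$ by \emph{(ii)}; hence $c$ has image in $mE[n]$, and we may view $c \in \Hom(G_k,mE[n])$. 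Consider the exact sequence of trivial $G_k$-modules
\[
  0 \to E[m] \to E[n] \xrightarrow{\;\cdot m\;} mE[n] \to 0 .
\]
Its coboundary $\partial \colon \Hom(G_k,mE[n]) \to \HH^2(k,E[m])$ satisfies $c \in m\,\HH^1(k,E[n])$ if and only if $\partial(c) = 0$. For each $v \in \Omega$ the class $c|_{k_v}$ lies in the image of $\HH^1(k_v,E[n]) \xrightarrow{\cdot m} \HH^1(k_v,mE[n])$, so the image of $\partial(c)$ in $\HH^2(k_v,E[m])$ is $0$ by naturality; thus $\partial(c) \in \Sha^2(k,E[m])$. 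But $E[m] \isom \mu_m^2$ as $G_k$-modules by \emph{(i)}, so $\Sha^2(k,E[m]) \isom \Sha^2(k,\mu_m)^2 = 0$. Therefore $\partial(c) = 0$, so $c \in m\,\HH^1(k,E[n])$, which establishes purity of $I$ in $B$ and hence the lemma.

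The main obstacle is conceptual rather than computational. From \cite{Poonen-Rains2012-selmer}*{Theorem~4.14} one knows only that $I$ is a \emph{maximal isotropic} subgroup of $B$, and over $\Z/n\Z$ with $n$ not squarefree a maximal isotropic subgroup need not be a direct summand — so one genuinely needs the purity analysis above. Within that analysis the crucial observation is that triviality of the $G_k$-action forces $\mu_n \subseteq k$, which is exactly what makes the relevant $\Sha^2$ vanish and thereby lets the everywhere-local divisibility of a class be upgraded to global divisibility.
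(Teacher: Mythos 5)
Your proof is correct, but it takes a genuinely different route from the paper's. The paper uses the Weil-pairing observation ($\mu_n \subseteq k$) to identify $E[n] \isom \mu_n \times \mu_n$, then invokes Kummer theory to rewrite the map as $(k^\times/k^{\times n})^2 \to (\Adeles^\times/\Adeles^{\times n})^2$, and applies its Lemma~\ref{L:snake} (whose hypothesis, injectivity of $A/mA \to B/mB$ for all $m \mid n$, is exactly your purity condition), the required injectivity being the local--global principle for $m$-th powers, which in the presence of $\mu_m \subseteq k$ follows from Chebotarev alone. You instead verify purity directly in Galois cohomology: the step ``image of $c$ lands in $mE[n]$'' is your $\Sha^1$-of-a-trivial-module argument (again Chebotarev), while the lifting step along $E[n] \xrightarrow{m} mE[n]$ is controlled by $\Sha^2(k,E[m]) \isom \Sha^2(k,\mu_m)^2 = 0$, which requires the injectivity of $\Br(k) \to \prod_v \Br(k_v)$ (Albert--Brauer--Hasse--Noether). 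In the paper's route this second step is absorbed painlessly by Hilbert~90 via the Kummer sequence (the identification $\HH^1(k,\mu_n)/m \isom k^\times/k^{\times m} \isom \HH^1(k,\mu_m)$ is pure group theory once Kummer theory is in place), so the paper gets by with Chebotarev plus Hilbert~90, whereas your argument invokes a heavier class-field-theoretic input; on the other hand, your argument never leaves cohomology, makes the divisibility/purity structure explicit, and replaces Lemma~\ref{L:snake} by the classical theorem that a bounded pure subgroup is a direct summand (your citation of \cite{Pruefer1923} and \cite{Baer1935} is really to the ``direct sum of cyclics'' theorem underlying that classical fact, which is fine as a ``cf.''). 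Two cosmetic points: in your fact \emph{(ii)} the extension cut out by a nonzero homomorphism $G_k \to N$ need not be cyclic --- what you want is that, by Chebotarev, any chosen nontrivial element of the image is a Frobenius, hence lies in a decomposition group; and the local lifting claim ``$c|_{k_v}$ lies in the image of $\HH^1(k_v,E[n]) \to \HH^1(k_v,mE[n])$'' deserves the one-line remark that, the local action being trivial, these $\HH^1$'s are literally $\Hom$ groups, so the identity $c|_{k_v} = m\beta_v$ of homomorphisms gives the lift on the nose. Neither point affects the validity of the argument.
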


\begin{proof}
We have $E[n] \isom \mu_n \times \mu_n$,
so we must show that the image of 
$k^\times/k^{\times n} \to \Adeles^\times/\Adeles^{\times n}$
is a direct summand.
By Lemma~\ref{L:snake} below, it is enough to show that
$k^\times/k^{\times m} \to \Adeles^\times/\Adeles^{\times m}$
is injective for each $m|n$.
This ``local-global principle for $m^{\tH}$ powers''
is a well known consequence of the Chebotarev density theorem.
\end{proof}

\begin{lemma}
\label{L:snake}
Let $n \in \Z_{>0}$.
Let $\delta \colon A \to B$ be a homomorphism of $\Z/n\Z$-modules
such that the induced morphism $A/mA \to B/mB$ is injective
for every $m|n$.
Then $\delta(A)$ is a direct summand of $B$.
\end{lemma}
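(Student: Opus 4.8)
The plan is to reduce the statement to the classical fact that a pure subgroup of an abelian group of bounded exponent is a direct summand. First I would apply the hypothesis with $m=n$: since $A$ and $B$ are $\Z/n\Z$-modules, $nA=0$ and $nB=0$, so $A/nA=A$ and $B/nB=B$, and the hypothesis for $m=n$ says precisely that $\delta$ is injective. Identify $A$ with its image $\delta(A)\le B$. For a divisor $m\mid n$, the kernel of the induced map $A/mA\to B/mB$ is $\delta^{-1}(mB)/mA=(A\cap mB)/mA$, so the hypothesis for all $m\mid n$ is equivalent to the condition $A\cap mB=mA$ for all $m\mid n$.

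Next I would upgrade this to full purity. Since $nB=0$, for an arbitrary $m\in\Z$ we have $mB=dB$ and $mA=dA$, where $d=\gcd(m,n)$: indeed $d\in m\Z+n\Z$ gives $dB\subseteq mB+nB=mB$, while $d\mid m$ gives the reverse inclusion, and likewise for $A$. As $d\mid n$, it follows that $A\cap mB=A\cap dB=dA=mA$ for \emph{every} $m\in\Z$; that is, $A$ is a pure subgroup of $B$. Now $B$ has bounded exponent (dividing $n$), so by a classical theorem (Pr\"ufer; cf.\ \cite{Pruefer1923}, \cite{Baer1935}) the pure subgroup $A$ is a direct summand of $B$, which is what we want. (Equivalently: a group of bounded exponent is algebraically compact, hence pure-injective, so the identity map $A\to A$ extends along the pure inclusion $A\hookrightarrow B$ to a retraction $r\colon B\to A$, and then $B=A\oplus\ker r$.)

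The routine parts here --- the $m=n$ argument, the identification of the kernel of $A/mA\to B/mB$, and the $\gcd$ manipulation --- are all immediate; the substantive content is entirely the Pr\"ufer direct-summand theorem. If a self-contained proof of that theorem is wanted rather than a citation, the standard route is to split $A\subseteq B$ into $p$-primary components, reducing to $n$ a prime power, and then run the usual Zorn's-lemma argument (choose $C\le B$ maximal with $A\cap C=0$ and show, by induction on the exponent, that $A+C=B$, using purity). No input from the arithmetic of number fields or adeles is needed; the one place requiring care in that self-contained route is verifying that purity survives the reductions, and that is where I would expect to spend the most effort if the details were written out in full.
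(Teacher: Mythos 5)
Your proof is correct, but it takes a different route from the paper. You split the \emph{inclusion} $\delta(A)\hookrightarrow B$: after the (shared) first step that $m=n$ gives injectivity, you translate the hypothesis into the purity condition $A\cap mB=mA$ for all $m\mid n$, extend it to all integers $m$ via the $\gcd$ observation, and then quote the classical Pr\"ufer theorem that a bounded pure subgroup is a direct summand (equivalently, pure-injectivity of bounded groups). The paper instead splits the \emph{quotient} map: it forms $0\to A\to B\to C\to 0$, writes $C$ as a direct sum of cyclic groups (this is where \cite{Pruefer1923} and \cite{Baer1935} enter in the paper, in the proof of Proposition~\ref{P:Skinner}), notes via the snake lemma applied to multiplication by $m$ that the hypothesis is equivalent to surjectivity of $B[m]\to C[m]$ for each $m\mid n$, and then constructs a splitting of $B\to C$ explicitly by lifting a generator of each cyclic summand of $C$ to an element of $B$ of the same order. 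The two arguments exploit the same snake-lemma equivalence (injectivity of $A/mA\to B/mB$ versus surjectivity of $B[m]\to C[m]$, i.e.\ purity), but package it differently: the paper's version is self-contained modulo only the Pr\"ufer--Baer decomposition into cyclics and produces the retraction by hand, while yours is shorter at the cost of invoking the bounded-pure-subgroup theorem as a black box (whose standard proof is essentially the paper's lifting argument). Both are perfectly valid; your reduction steps (the $m=n$ case, the kernel identification, and the $\gcd$ manipulation) are all accurate.
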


\begin{proof}
(We thank Bart de Smit for this proof.)
Taking $m=n$ shows that $\delta$ is injective, 
so it fits into a short exact sequence
\[
	0 \to A \stackrel{\delta}\to B \to C \to 0
\]
of $\Z/n\Z$-modules.
Write $C$ as a direct sum of cyclic groups $C_i$.
For each $m$, the hypothesis together with the snake lemma
shows that $B[m] \to C[m]$ is surjective.
Thus we can construct a splitting of the surjection $B \to C$,
by lifting a generator of each $C_i$
to an element of $B$ of the same order.
\end{proof}

\begin{remark}
\label{R:why direct summands}
Suppose that for each $n$ we sample $Z$ and $W$
from a distribution on maximal isotropic subgroups of $(\Z/p^e\Z)^{2n}$
that is not necessarily supported on direct summands,
but still invariant under $\Orthogonal_{2n}(\Z/p^e\Z)$.
If $Z \intersect W$ models $\Sel_{p^e} E$,
then $\dim (Z \intersect W)[p]$ should model $\Sel_p E$,
and in particular should have the distribution 
predicted by and justified by \cite{Poonen-Rains2012-selmer}.
We will show that this happens only 
if the probability of $Z$ and $W$ being direct summands of $(\Z/p^e\Z)^{2n}$
tends to $1$ as $n \to \infty$.

If $(Z \intersect W)[p]=0$, then $Z \intersect W=0$,
so the homomorphism $Z \directsum W \to (\Z/p^e\Z)^{2n}$
between groups of equal size is an isomorphism;
i.e., $Z$ and $W$ are direct summands.
Therefore
\begin{align*}
	\Prob((Z \intersect W)[p]=0)
	\le & \Prob(\textup{$Z,W$ are direct summands}) \\
	& \cdot \Prob(Z \intersect W=0 \mid \textup{$Z,W$ are direct summands}).
\end{align*}
But $\lim_{n \to \infty} \Prob(Z \intersect W=0 \mid \textup{$Z,W$ are direct summands})$
equals the desired limiting value of $\Prob((Z \intersect W)[p]=0)$,
which is nonzero,
so $\Prob(\textup{$Z,W$ are direct summands})$ must tend to $1$.
\end{remark}

Remark~\ref{R:why direct summands} may be viewed as indirect evidence
for Conjecture~\ref{C:global H^1 is direct summand}.

\subsection{Freeness of the ambient group}

On the arithmetic side,
the $\Z/p^e\Z$-module $\HH^1(\Adeles,E[p^e])$ carrying a quadratic form 
is not always free.
But we have modeled it by the free module $(\Z/p^e\Z)^{2n}$ 
with $(Q \bmod p^e)$.

\begin{question}
Can we develop a more sophisticated model 
in which we start with a compatible system 
consisting of a quadratic form on a non-free $\Z/p^e\Z$-module
for each $e$?
\end{question}

Given the compatibility of our model with known theorems and conjectures,
we expect that incorporating non-freeness into the model would
not change the distribution constructed in Section~\ref{S:intersection}.

\section*{Acknowledgements}

We thank 
K\k{e}stutis \v{C}esnavi\v{c}ius, 
Bart de Smit, 
Christophe Delaunay, 
and
Christopher Skinner 
for comments.
This research was begun during the ``Arithmetic Statistics'' semester
at the Mathematical Sciences Research Institute,
and continued during the ``Cohen-Lenstra heuristics
for class groups'' workshop at the American Institute of Mathematics,
the 2012 Canadian Number Theory Association meeting 
at the University of Lethbridge,
the Centre Interfacultaire Bernoulli semester on 
``Rational points and algebraic cycles'',
the 2013 ``Explicit methods in number theory'' workshop 
at the Mathematisches Forschungsinstitut Oberwolfach,
and the ``Rational points 2013'' workshop at Schloss Thurnau.

\begin{bibdiv}
\begin{biblist}


\bib{Baer1935}{article}{
  author={Baer, Reinhold},
  title={Der Kern, ein charakteristische Untergruppe},
  journal={Compositio Math.},
  volume={1},
  date={1935},
  pages={254--283},
}

\bib{Bhargava-Shankar-preprint1}{misc}{
  author={Bhargava, Manjul},
  author={Shankar, Arul},
  title={Binary quartic forms having bounded invariants, and the boundedness of the average rank of elliptic curves},
  date={2010-06-09},
  note={Preprint, \texttt {arXiv:1006.1002v2}, to appear in {\em Annals of Math}},
}

\bib{Bruin-Poonen-Stoll-preprint}{misc}{
  author={Bruin, Nils},
  author={Poonen, Bjorn},
  author={Stoll, Michael},
  title={Generalized explicit descent and its application to curves of genus~$3$},
  date={2012-05-18},
  note={Preprint, \texttt {arXiv:1205.4456v1}},
}

\bib{Cohen-Lenstra1983}{article}{
  author={Cohen, H.},
  author={Lenstra, H. W., Jr.},
  title={Heuristics on class groups of number fields},
  conference={ title={Number theory, Noordwijkerhout 1983}, address={Noordwijkerhout}, date={1983}, },
  book={ series={Lecture Notes in Math.}, volume={1068}, publisher={Springer}, place={Berlin}, },
  date={1984},
  pages={33--62},
  review={\MR {756082 (85j:11144)}},
  doi={10.1007/BFb0099440},
}

\bib{Delaunay2001}{article}{
  author={Delaunay, Christophe},
  title={Heuristics on Tate-Shafarevitch groups of elliptic curves defined over $\mathbb {Q}$},
  journal={Experiment. Math.},
  volume={10},
  date={2001},
  number={2},
  pages={191--196},
  issn={1058-6458},
  review={\MR {1837670 (2003a:11065)}},
}

\bib{Delaunay2007}{article}{
  author={Delaunay, Christophe},
  title={Heuristics on class groups and on Tate-Shafarevich groups: the magic of the Cohen-Lenstra heuristics},
  conference={ title={Ranks of elliptic curves and random matrix theory}, },
  book={ series={London Math. Soc. Lecture Note Ser.}, volume={341}, publisher={Cambridge Univ. Press}, place={Cambridge}, },
  date={2007},
  pages={323--340},
  review={\MR {2322355 (2008i:11089)}},
}

\bib{Delaunay-Jouhet-preprint}{misc}{
  author={Delaunay, Christophe},
  author={Jouhet, Fr\'ed\'eric},
  title={$p^\ell $-torsion points in finite abelian groups and combinatorial identities},
  date={2013-03-31},
  note={Preprint, \texttt {arXiv:1208.6397v2}},
}

\bib{Delaunay-Jouhet-preprint2}{misc}{
  author={Delaunay, Christophe},
  author={Jouhet, Fr\'ed\'eric},
  title={The Cohen--Lenstra heuristics, moments and $p^j$-ranks of some groups},
  date={2013-03-31},
  note={Preprint, \texttt {arXiv:1303.7337v1}},
}

\bib{Euler1748}{book}{
  author={Euler, Leonhard},
  title={Introductio in analysin infinitorum. Tomus primus},
  publisher={Marcum-Michaelum Bousquet \& Socios},
  place={Lausanne},
  date={1748},
  translation={ author={Euler, Leonhard}, title={Introduction to analysis of the infinite. Book I}, note={Translated from the Latin and with an introduction by John D. Blanton}, publisher={Springer-Verlag}, place={New York}, date={1988}, pages={xvi+327}, isbn={0-387-96824-5}, review={\MR {961255 (89g:01067)}}, },
}

\bib{Feng-Xiong2012}{article}{
  author={Feng, Keqin},
  author={Xiong, Maosheng},
  title={On Selmer groups and Tate-Shafarevich groups for elliptic curves $y^2=x^3-n^3$},
  journal={Mathematika},
  volume={58},
  date={2012},
  number={2},
  pages={236--274},
  issn={0025-5793},
  review={\MR {2965971}},
  doi={10.1112/S0025579312000046},
}

\bib{Friedman-Washington1989}{article}{
  author={Friedman, Eduardo},
  author={Washington, Lawrence C.},
  title={On the distribution of divisor class groups of curves over a finite field},
  conference={ title={Th\'eorie des nombres}, address={Quebec, PQ}, date={1987}, },
  book={ publisher={de Gruyter}, place={Berlin}, },
  date={1989},
  pages={227--239},
  review={\MR {1024565 (91e:11138)}},
}

\bib{Goldfeld1979}{article}{
  author={Goldfeld, Dorian},
  title={Conjectures on elliptic curves over quadratic fields},
  conference={ title={Number theory, Carbondale 1979 (Proc. Southern Illinois Conf., Southern Illinois Univ., Carbondale, Ill., 1979)}, },
  book={ series={Lecture Notes in Math.}, volume={751}, publisher={Springer}, place={Berlin}, },
  date={1979},
  pages={108--118},
  review={\MR {564926 (81i:12014)}},
}

\bib{Igusa2000}{book}{
  author={Igusa, Jun-ichi},
  title={An introduction to the theory of local zeta functions},
  series={AMS/IP Studies in Advanced Mathematics},
  volume={14},
  publisher={American Mathematical Society},
  place={Providence, RI},
  date={2000},
  pages={xii+232},
  isbn={0-8218-2015-X},
  review={\MR {1743467 (2001j:11112)}},
}

\bib{Kamienny-Mazur1995}{article}{
  author={Kamienny, S.},
  author={Mazur, B.},
  title={Rational torsion of prime order in elliptic curves over number fields},
  note={With an appendix by A. Granville; Columbia University Number Theory Seminar (New York, 1992)},
  journal={Ast\'erisque},
  number={228},
  date={1995},
  pages={3, 81--100},
  issn={0303-1179},
  review={\MR {1330929 (96c:11058)}},
}

\bib{Katz-Sarnak1999a}{book}{
  author={Katz, Nicholas M.},
  author={Sarnak, Peter},
  title={Random matrices, Frobenius eigenvalues, and monodromy},
  series={American Mathematical Society Colloquium Publications},
  volume={45},
  publisher={American Mathematical Society},
  place={Providence, RI},
  date={1999},
  pages={xii+419},
  isbn={0-8218-1017-0},
  review={\MR { 2000b:11070}},
}

\bib{Katz-Sarnak1999b}{article}{
  author={Katz, Nicholas M.},
  author={Sarnak, Peter},
  title={Zeroes of zeta functions and symmetry},
  journal={Bull. Amer. Math. Soc. (N.S.)},
  volume={36},
  date={1999},
  number={1},
  pages={1--26},
  issn={0273-0979},
  review={\MR {1640151 (2000f:11114)}},
  doi={10.1090/S0273-0979-99-00766-1},
}

\bib{Levin1968}{article}{
  author={Levin, Martin},
  title={On the group of rational points on elliptic curves over function fields},
  journal={Amer. J. Math.},
  volume={90},
  date={1968},
  pages={456--462},
  issn={0002-9327},
  review={\MR {0230723 (37 \#6283)}},
}

\bib{Mazur1977}{article}{
  author={Mazur, B.},
  title={Modular curves and the Eisenstein ideal},
  journal={Inst. Hautes \'Etudes Sci. Publ. Math.},
  number={47},
  date={1977},
  pages={33--186 (1978)},
  issn={0073-8301},
  review={\MR {488287 (80c:14015)}},
}

\bib{Merel1996}{article}{
  author={Merel, Lo{\"{\i }}c},
  title={Bornes pour la torsion des courbes elliptiques sur les corps de nombres},
  language={French},
  journal={Invent. Math.},
  volume={124},
  date={1996},
  number={1-3},
  pages={437\ndash 449},
  issn={0020-9910},
  review={\MR {1369424 (96i:11057)}},
}

\bib{MumfordAV1970}{book}{
  author={Mumford, David},
  title={Abelian varieties},
  series={Tata Institute of Fundamental Research Studies in Mathematics, No. 5 },
  publisher={Published for the Tata Institute of Fundamental Research, Bombay},
  date={1970},
  pages={viii+242},
  review={\MR {0282985 (44 \#219)}},
}

\bib{Oesterle1982}{article}{
  author={Oesterl{\'e}, Joseph},
  title={R\'eduction modulo $p^{n}$ des sous-ensembles analytiques ferm\'es de ${\bf Z}^{N}_{p}$},
  language={French},
  journal={Invent. Math.},
  volume={66},
  date={1982},
  number={2},
  pages={325--341},
  issn={0020-9910},
  review={\MR {656627 (83j:12014)}},
  doi={10.1007/BF01389398},
}

\bib{Oort1966}{book}{
  author={Oort, F.},
  title={Commutative group schemes},
  series={Lecture Notes in Mathematics},
  volume={15},
  publisher={Springer-Verlag},
  place={Berlin},
  date={1966},
  pages={vi+133 pp. (not consecutively paged)},
  review={\MR {0213365 (35 \#4229)}},
}

\bib{Poonen-bourbaki-preprint}{misc}{
  author={Poonen, Bjorn},
  title={Average rank of elliptic curves},
  date={2012},
  note={Preprint},
}

\bib{Poonen-Rains2012-selmer}{article}{
  author={Poonen, Bjorn},
  author={Rains, Eric},
  title={Random maximal isotropic subspaces and Selmer groups},
  journal={J. Amer. Math. Soc.},
  volume={25},
  date={2012},
  number={1},
  pages={245--269},
  issn={0894-0347},
  review={\MR {2833483}},
  doi={10.1090/S0894-0347-2011-00710-8},
}

\bib{Poonen-Voloch2010}{article}{
  author={Poonen, Bjorn},
  author={Voloch, Jos{\'e} Felipe},
  title={The Brauer-Manin obstruction for subvarieties of abelian varieties over function fields},
  journal={Ann. of Math. (2)},
  volume={171},
  date={2010},
  number={1},
  pages={511--532},
  issn={0003-486X},
  review={\MR {2630046 (2011j:14048)}},
  doi={10.4007/annals.2010.171.511},
}

\bib{Pruefer1923}{article}{
  author={Pr\"ufer, Heinz},
  title={Untersuchungen \"uber die Zerlegbarkeit der abz\"ahlbaren prim\"aren Abelschen Gruppen},
  language={German},
  journal={Math. Z.},
  volume={17},
  date={1923},
  number={1},
  pages={35--61},
  issn={0025-5874},
  review={\MR {1544601}},
  doi={10.1007/BF01504333},
}

\bib{Serre1981}{article}{
  author={Serre, Jean-Pierre},
  title={Quelques applications du th\'eor\`eme de densit\'e de Chebotarev},
  language={French},
  journal={Inst. Hautes \'Etudes Sci. Publ. Math.},
  number={54},
  date={1981},
  pages={323--401},
  issn={0073-8301},
  review={\MR {644559 (83k:12011)}},
}

\bib{SGA7.2}{book}{
  title={Groupes de monodromie en g\'eom\'etrie alg\'ebrique. II},
  language={French},
  series={Lecture Notes in Mathematics, Vol. 340},
  note={S\'eminaire de G\'eom\'etrie Alg\'ebrique du Bois-Marie 1967--1969 (SGA 7 II); Dirig\'e par P. Deligne et N. Katz},
  publisher={Springer-Verlag},
  place={Berlin},
  date={1973},
  pages={x+438},
  review={\MR {0354657 (50 \#7135)}},
  label={SGA \ensuremath {7_{\text {II}}}},
}

\bib{Venkatesh-Ellenberg2010}{article}{
  author={Venkatesh, Akshay},
  author={Ellenberg, Jordan S.},
  title={Statistics of number fields and function fields},
  conference={ title={Proceedings of the International Congress of Mathematicians. Volume II}, },
  book={ publisher={Hindustan Book Agency}, place={New Delhi}, },
  date={2010},
  pages={383--402},
  review={\MR {2827801 (2012h:11160)}},
}

\bib{Xiong-Zaharescu2008}{article}{
  author={Xiong, Maosheng},
  author={Zaharescu, Alexandru},
  title={Distribution of Selmer groups of quadratic twists of a family of elliptic curves},
  journal={Adv. Math.},
  volume={219},
  date={2008},
  number={2},
  pages={523--553},
  issn={0001-8708},
  review={\MR {2435648 (2009e:11113)}},
  doi={10.1016/j.aim.2008.05.005},
}

\bib{Xiong-Zaharescu2009}{article}{
  author={Xiong, Maosheng},
  author={Zaharescu, Alexandru},
  title={Selmer groups and Tate-Shafarevich groups for the congruent number problem},
  journal={Comment. Math. Helv.},
  volume={84},
  date={2009},
  number={1},
  pages={21--56},
  issn={0010-2571},
  review={\MR {2466074 (2010c:11068)}},
  doi={10.4171/CMH/151},
}

\bib{Yu2005}{article}{
  author={Yu, Gang},
  title={Average size of 2-Selmer groups of elliptic curves. II},
  journal={Acta Arith.},
  volume={117},
  date={2005},
  number={1},
  pages={1--33},
  issn={0065-1036},
  review={\MR {2110501 (2006b:11054)}},
  doi={10.4064/aa117-1-1},
}

\end{biblist}
\end{bibdiv}

\end{document}